\documentclass[12pt]{amsart}
\usepackage{amsmath,fullpage}
\usepackage{amscd}
\usepackage{amssymb}
\usepackage[usenames]{color}

\usepackage[all,cmtip]{xy}

\newtheorem{theorem}{Theorem}[section]
\newtheorem{corollary}[theorem]{Corollary}
\newtheorem{lemma}[theorem]{Lemma}
\newtheorem{proposition}[theorem]{Proposition}


\theoremstyle{definition}
\newtheorem{definition}[theorem]{Definition}
\newtheorem{remark}[theorem]{Remark}

\newcommand{\cB}{{\mathcal B}}

\newcommand{\cO}{{\mathcal O}}

\newcommand{\cF}{{\mathcal F}}

\newcommand{\cH}{{\mathcal H}}
\newcommand{\cT}{{\mathcal T}}

\newcommand{\bQ}{{\mathbb Q}}

\newcommand{\tr}{{\text{tr}}}

\newcommand{\Hom}{{\text{Hom}}}

\newcommand{\End}{{\text{End}}}
\newcommand{\Ad}{{\text{Ad}}}

\newcommand{\supp}{{\text{supp}}}

\newcommand{\rc}{{\mathrm c}}

\newcommand{\Lb}{{\mathfrak{b}}}
\newcommand{\Lt}{{\mathfrak{t}}}
\newcommand{\Lg}{{\mathfrak g}}
\newcommand{\Lo}{{\mathfrak o}}
\newcommand{\Ln}{{\mathfrak{n}}}

\newcommand{\A}{{\textbf{A}}}
\newcommand{\tF}{{\textbf{F}}}
\newcommand{\tk}{{\textbf{k}}}

\newcommand{\p}{\perp}

\newcommand{\beq}{\begin{equation*}}
\newcommand{\eeq}{\end{equation*}}

\begin{document}
\title[Nilpotent orbits in the dual of classical Lie algebras ]{Nilpotent orbits
in the dual of classical Lie algebras in characteristic 2 and the
Springer correspondence}
        \author{Ting Xue}
        \address{Department of Mathematics, Massachusetts Institute of Technology,
Cambridge, MA 02139, USA}
        \email{txue@math.mit.edu}
\maketitle

\begin{abstract}
Let  $G$ be a simply connected algebraic group of type $B,C$ or $D$
over an algebraically closed field of characteristic 2. We construct
a Springer correspondence for the dual vector space of the Lie
algebra of $G$. In particular, we classify the nilpotent orbits in
the duals of symplectic  and orthogonal Lie algebras over
algebraically closed or  finite fields of characteristic 2.
\end{abstract}
\section{Introduction}
Throughout this paper, let $\tk$ be a field of characteristic 2. Let
$G$ be an algebraic group of type $B,C$ or $D$ over $\tk$ and $\Lg$
be its Lie algebra. Let $\Lg^*$ be the dual vector space of $\Lg$.
We have a natural action of $G$ on $\Lg^*$,
$g.\xi(x)=\xi(\Ad(g)^{-1}x)$ for $g\in G,\xi\in\Lg^*$ and $x\in\Lg$.
Fix a Borel subgroup $B$ of $G$ and let $\Lb$ be the Lie algebra of
$B$. Let $\Ln'=\{\xi\in\Lg^*|\xi(\Lb)=0\}$. An element $\xi$ in
$\Lg^*$ is called nilpotent if there exists $g\in G$ such that
$g.\xi\in\Ln'$( see \cite{KW}). We classify the nilpotent orbits in
$\Lg^*$ under the action of $G$ in the cases where $\tk$ is
algebraically closed and where $\tk$ is a finite field $\tF_q$. In
particular, we obtain the number of nilpotent orbits over $\tF_q$
and the structure of component groups of the centralizers of
nilpotent elements.

Let $G_s$ be a simply connected algebraic group of type $B,C$ or $D$
defined over $\tk$ (assume $\tk$ algebraically closed) and $\Lg_s$
be the Lie algebra of $G_s$. Let $\Lg_{s}^*$ be the dual vector
space of $\Lg_{s}$. Let $\mathfrak{A}'_{s}$ be the set of all pairs
$(\mathrm{c}',\mathcal{F}')$ where $\mathrm{c}'$ is a nilpotent
$G_{s}$-orbit in $\Lg_{s}^*$ and $\mathcal{F}'$ is an irreducible
$G_{s}$-equivariant local system on $\mathrm{c}'$ (up to
isomorphism). We construct a Springer correspondence for $\Lg_{s}^*$
using a similar construction as in \cite{Lu1,Lu4,X}. The
correspondence is a bijective map from the set of isomorphism
classes of irreducible representations of the Weyl group of $G_{s}$
to the set $\mathfrak{A}_{s}'$.

\section{symplectic groups}
In this section we study the nilpotent orbits in $\Lg^*$ where $G$
is a symplectic group.
\subsection{}\label{ssec-1-1}
Let $V$ be a vector space of dimension $2n$ over $\tk$ equipped with
a non-degenerate symplectic form $\beta:V\times V\rightarrow \tk$.
The symplectic group is defined as $G=Sp(2n)=\{g\in GL(V)\ |\
\beta(gv,gw)=\beta(v,w), \forall\ v,w\in V\}$ and its Lie algebra is
$\Lg=\mathfrak{sp}(2n)=\{x\in \mathfrak{gl}(V)\ |\
\beta(xv,w)+\beta(v,xw)=0, \forall\ v,w\in V\}$.

Let $\xi$ be an element of $\Lg^*$. There exists $X\in
\mathfrak{gl}(V)$ such that $\xi(x)=\tr(Xx)$ for any $x\in\Lg$. We
define a quadratic form $\alpha_\xi:V\rightarrow \tk$ by
$$\alpha_\xi(v)=\beta(v,Xv).$$
\begin{lemma}\label{lem-7}
The quadratic form $\alpha_\xi$ is well-defined.
\end{lemma}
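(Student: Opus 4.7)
The plan is to show that $\alpha_\xi(v)=\beta(v,Xv)$ does not depend on the choice of $X\in\mathfrak{gl}(V)$ representing $\xi$ via the trace pairing. Since the trace pairing on $\mathfrak{gl}(V)$ is non-degenerate, any two such representatives $X,X'$ differ by some $Y\in\mathfrak{gl}(V)$ that pairs trivially with $\Lg=\mathfrak{sp}(2n)$, i.e.\ $\tr(Yx)=0$ for all $x\in\Lg$. So the lemma reduces to proving that every such $Y$ satisfies $\beta(v,Yv)=0$ for all $v\in V$.

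The key idea is to, for each $v\in V$, produce an explicit element of $\Lg$ whose pairing with $Y$ computes $\beta(v,Yv)$. A natural candidate is the rank-one operator $\phi_v\in\mathfrak{gl}(V)$ defined by $\phi_v(w)=\beta(v,w)\,v$. First I would verify that $\phi_v\in\Lg$: using $\beta(v,w)=\beta(w,v)$ in characteristic $2$, one has $\beta(\phi_v(w),u)+\beta(w,\phi_v(u))=\beta(v,w)\beta(v,u)+\beta(v,u)\beta(w,v)=0$, so $\phi_v\in\mathfrak{sp}(2n)$. Next I would compute $\tr(Y\phi_v)$: since $Y\phi_v$ is the rank-one map $w\mapsto\beta(v,w)\,Yv$, its trace equals $\beta(v,Yv)$. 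Applying $\tr(Y\,\cdot)\equiv0$ on $\Lg$ to $\phi_v$ then gives $\beta(v,Yv)=0$, which is exactly what is needed.

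Concluding, $\beta(v,Xv)=\beta(v,X'v)$ for all $v$, so $\alpha_\xi$ is independent of the choice of $X$. (That $\alpha_\xi$ is a quadratic form in the characteristic-$2$ sense is automatic: $\alpha_\xi(\lambda v)=\lambda^2\alpha_\xi(v)$ and the polarization $\alpha_\xi(v+w)-\alpha_\xi(v)-\alpha_\xi(w)=\beta(v,Xw)+\beta(w,Xv)$ is bilinear.)

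The only mildly subtle point is the construction of the witness $\phi_v\in\Lg$; everything else is a direct computation. One might worry that the operator $\phi_v$ is too special to detect all of $\Lg^\perp$, but the point is that we do not need to describe $\Lg^\perp$ — we only need, for each fixed $v$, one element of $\Lg$ that couples $Y$ to the scalar $\beta(v,Yv)$, and $\phi_v$ does exactly that.
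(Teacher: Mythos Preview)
Your proof is correct and follows essentially the same strategy as the paper: both reduce well-definedness to the inclusion $\mathfrak{g}^\perp \subseteq \ker\Phi$, where $\Phi:X\mapsto(v\mapsto\beta(v,Xv))$. The paper simply asserts (without details) that $\ker\Phi$ coincides with $\mathfrak{g}^\perp$, whereas you supply the explicit rank-one witnesses $\phi_v(w)=\beta(v,w)v\in\mathfrak{sp}(2n)$ to verify the needed direction concretely; your argument is thus a fleshed-out version of the paper's coordinate-free sketch.
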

\begin{proof} Recall that the space $\text{Quad}(V)$ of
quadratic forms on $V$ coincides with the second symmetric power
$S^2(V^*)$ of $V^*$. Consider the following linear mapping \beq
\Phi:\End_\tk(V)\rightarrow S^2(V^*)=\text{Quad}(V),\quad
X\mapsto\alpha_X\eeq where $\alpha_X(v)=\beta(v,Xv)$. It is easy to
see that $\Phi$ is $G=Sp(V)$-equivariant. One can show that
$\ker\Phi$ coincides with the orthogonal complement $\Lg^\p$ of
$\Lg=\mathfrak{sp}(V)$ in $\End_\tk(V)$ under the nondegenerate
trace form. It follows that $\alpha_\xi$ does not depend on the
choice of $X$.
\end{proof}
\begin{remark}
I thank the referee for suggesting the present coordinate free
proofs of Lemmas \ref{lem-7},  \ref{lem-nilp1}, \ref{lem-3-1},
\ref{lem-n-3}, \ref{lem-vu}, \ref{lem-5} and \ref{lem-w3}. These
proofs replace my earlier proofs for which coordinates are used.
\end{remark}

Let $\beta_\xi$ be the symmetric bilinear form associated to
$\alpha_\xi$, namely,
$\beta_\xi(v,w)=\alpha_\xi(v+w)+\alpha_\xi(v)+\alpha_\xi(w)$,
$v,w\in V$. Define a linear map $T_\xi:V\rightarrow V$ by
$$\beta(T_\xi v,w)=\beta_\xi(v,w).$$
Assume $\xi\in\Lg^*$. We denote by $(V_\xi,\beta,\alpha_\xi)$ the
vector space $V$ equipped with the symplectic form $\beta$ and the
quadratic form $\alpha_\xi$.
\begin{definition}
Assume $\xi,\zeta\in\Lg^*$. We say that $(V_\xi,\beta,\alpha_\xi)$
is equivalent to $(V_\zeta,\beta,\alpha_\zeta)$ if there exists a
vector space isomorphism $g:V_\xi\rightarrow V_\zeta$ such that
$\beta(gv,gw)=\beta(v,w)$ and $\alpha_\zeta(gv)=\alpha_\xi(v)$ for
all $v,w\in V_\xi$.
\end{definition}

\begin{lemma}
Two elements $\xi,\zeta\in\Lg^*$ lie in the same $G$-orbit if and
only if there exists $g\in G$ such that
$\alpha_\xi(g^{-1}v)=\alpha_\zeta(v),\  \forall\  v\in V$.
\end{lemma}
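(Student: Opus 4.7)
The plan is to reduce both directions to the $G$-equivariance of the map $\xi\mapsto\alpha_\xi$ together with its injectivity, both of which essentially follow from Lemma \ref{lem-7}.

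First I would unwind what the action does in terms of endomorphisms. If $\xi$ is represented by $X\in\End_\tk(V)$, i.e.\ $\xi(x)=\tr(Xx)$, then using $\tr(Xg^{-1}xg)=\tr(gXg^{-1}x)$ one sees that $g.\xi$ is represented by $gXg^{-1}$. Combined with $\beta(gv,gw)=\beta(v,w)$ for $g\in G=Sp(V)$, this gives the key identity
\beq
\alpha_{g.\xi}(v)=\beta(v,gXg^{-1}v)=\beta(g^{-1}v,Xg^{-1}v)=\alpha_\xi(g^{-1}v).
\eeq
This calculation immediately proves the "only if" direction: if $\zeta=g.\xi$, then $\alpha_\zeta(v)=\alpha_\xi(g^{-1}v)$.

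For the "if" direction I would reinterpret Lemma \ref{lem-7} as saying that $\xi\mapsto\alpha_\xi$ is a well-defined injective map $\Lg^*\hookrightarrow\text{Quad}(V)$. Indeed, the trace form identifies $\Lg^*$ with $\End_\tk(V)/\Lg^\p$, and since $\ker\Phi=\Lg^\p$, the map $\Phi$ descends to an injection on $\Lg^*$. Hence if $g\in G$ satisfies $\alpha_\xi(g^{-1}v)=\alpha_\zeta(v)$ for all $v\in V$, then by the identity above $\alpha_{g.\xi}=\alpha_\zeta$, and injectivity forces $g.\xi=\zeta$.

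The only potential obstacle is really the injectivity statement, but this is already built into the kernel computation in Lemma \ref{lem-7}, so the proof should consist of little more than the two-line computation $\alpha_{g.\xi}(v)=\alpha_\xi(g^{-1}v)$ together with a sentence invoking the previous lemma. I would structure the write-up exactly in this order: identify $\xi$ with $X\in\End_\tk(V)$, compute that $g.\xi\leftrightarrow gXg^{-1}$, read off the transformation law for $\alpha_\xi$, deduce one direction directly and the other from injectivity of $\bar\Phi$.
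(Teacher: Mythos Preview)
Your proposal is correct and follows essentially the same route as the paper: represent $\xi$ by $X$, observe that $g.\xi$ is represented by $gXg^{-1}$, use $g\in Sp(V)$ to get $\alpha_{g.\xi}(v)=\alpha_\xi(g^{-1}v)$, and invoke the kernel computation $\ker\Phi=\Lg^\perp$ from Lemma~\ref{lem-7} for the converse. The only cosmetic difference is that the paper phrases everything as a single chain of ``if and only if'' equivalences rather than separating the two directions.
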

\begin{proof} The two elements $\xi,\zeta$ lie in the
same $G$-orbit if and only if there exists $g\in G$ such that
$g.\xi(x)=\xi(g^{-1}xg)=\zeta(x),\ \forall\ x\in \Lg$. Assume
$\xi(x)=\tr(X_\xi x)$ and $\zeta(x)=\tr(X_\zeta x)$. Similar
argument as in the proof of Lemma  \ref{lem-7} shows that
$g.\xi(x)=\tr(gX_\xi g^{-1}x)=\zeta(x)$ if and only if $\beta(gX_\xi
g^{-1}v,v)+\beta(X_\zeta v,v)=0$ if and only if
$\alpha_\xi(g^{-1}v)=\alpha_\zeta(v),\ \forall\ v\in V$.
\end{proof}

\begin{corollary}\label{cor-2}
Two elements $\xi,\zeta\in\Lg^*$ lie in the same $G$-orbit if and
only if $(V_\xi,\beta,\alpha_\xi)$ is equivalent to
$(V_\zeta,\beta,\alpha_\zeta)$.
\end{corollary}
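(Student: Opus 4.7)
The plan is to observe that this corollary is essentially a repackaging of the preceding lemma in the geometric language introduced by the preceding definition. The only content lies in matching up the two quantifier-statements, so there is no genuine obstacle; the proof should be a couple of lines.

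More precisely, I would proceed as follows. First, note that the group $G = Sp(V)$ is by its very definition the group of vector space automorphisms of $V$ preserving the symplectic form $\beta$, so the condition ``$g \in G$'' in the preceding lemma is the same as the condition ``$g: V \to V$ is a vector space isomorphism with $\beta(gv, gw) = \beta(v,w)$'' in the definition of equivalence of $(V_\xi, \beta, \alpha_\xi)$ and $(V_\zeta, \beta, \alpha_\zeta)$. Second, the relation $\alpha_\xi(g^{-1}v) = \alpha_\zeta(v)$ for all $v \in V$ is, after the substitution $v \mapsto gv$ (which is a bijection on $V$), equivalent to $\alpha_\xi(v) = \alpha_\zeta(gv)$ for all $v \in V$, which is exactly the second requirement in the definition of equivalence.

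Combining these two observations, a $g \in G$ satisfying the conclusion of the preceding lemma is the same datum as an equivalence $(V_\xi, \beta, \alpha_\xi) \to (V_\zeta, \beta, \alpha_\zeta)$. The forward direction (``same $G$-orbit implies equivalent'') and the backward direction (``equivalent implies same $G$-orbit'') then follow immediately from the preceding lemma.

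The only step requiring any care is the inversion $g \leftrightarrow g^{-1}$, which is harmless because $G$ is a group; no new calculation is required, and in particular no use of the auxiliary bilinear form $\beta_\xi$ or the operator $T_\xi$ introduced above is needed for this corollary.
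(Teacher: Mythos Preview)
Your proposal is correct and matches the paper's approach: the paper states this corollary without proof, treating it as an immediate consequence of the preceding lemma, and your argument simply makes explicit the tautological translation between ``$g\in G$ with $\alpha_\xi(g^{-1}v)=\alpha_\zeta(v)$'' and the definition of equivalence of form modules.
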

\subsection{}\label{ssec-1-2}
From now on we assume that $\xi\in\Lg^*$ is nilpotent.
\begin{lemma}\label{lem-nilp1}
Let $\xi\in\Lg^*$ be nilpotent. Then $T_\xi$ is a nilpotent element
in $\mathfrak{gl}(V_\xi)$.
\end{lemma}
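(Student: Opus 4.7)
The plan is to identify $T_\xi$ explicitly as an operator on $V$, reduce to $\xi\in\Ln'$ by equivariance, and then exhibit a strictly upper-triangular representative that forces $T_\xi$ to be nilpotent.

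First, I would unwind the definition of $T_\xi$. Substituting $\alpha_\xi(u)=\beta(u,Xu)$ into $\beta_\xi(v,w)=\alpha_\xi(v+w)+\alpha_\xi(v)+\alpha_\xi(w)$ and using that in characteristic 2 the symplectic form $\beta$ is symmetric, a short bilinear expansion yields $\beta(T_\xi v,w)=\beta(v,Xw)+\beta(w,Xv)=\beta(v,(X+X^*)w)$, where $X^*$ denotes the $\beta$-adjoint of $X$. Non-degeneracy of $\beta$ then gives $T_\xi=X+X^*$. This is independent of the choice of $X$: polarization of $\alpha_Y\equiv 0$ (as in the proof of Lemma \ref{lem-7}) shows $\Lg^\perp\subset\Lg$ in characteristic 2, so replacing $X$ by $X+Y$ with $Y\in\Lg^\perp$ adds $Y+Y^*=2Y=0$ to $T_\xi$.

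Next, I would verify the equivariance $T_{g.\xi}=gT_\xi g^{-1}$ using $X_{g.\xi}=gXg^{-1}$ and the identity $g^*=g^{-1}$ for $g\in Sp(V)$. Hence nilpotency of $T_\xi$ is $G$-invariant, and since $\xi$ is nilpotent I may replace it by a $G$-conjugate in $\Ln'$ and assume $\xi\in\Ln'$.

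For the main step, fix the complete isotropic flag $0=V_0\subset V_1\subset\cdots\subset V_{2n}=V$ with $V_i^\perp=V_{2n-i}$ stabilized by $B$. I would then show that $\xi\in\Ln'$ admits a representative $X\in\mathfrak{gl}(V)$ with $XV_i\subset V_{i-1}$ for every $i$. For any such strictly upper $X$ and any upper-triangular $Y\in\Lb$, the product $XY$ is strictly upper, so $\tr(XY)=0$; hence the image of the strictly upper operators in $\Lg^*$ is contained in $\Ln'$, and a dimension count (using $\Lg^\perp\subset\Lg$ to evaluate the kernel of the trace pairing) identifies this image with all of $\Ln'$. With such an $X$ in hand, $X^*$ is automatically strictly upper as well: for $v\in V_i$ and $w\in V_{i-1}^\perp=V_{2n-i+1}$, $\beta(X^*v,w)=\beta(v,Xw)$ vanishes because $Xw\in V_{2n-i}=V_i^\perp$. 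Consequently $T_\xi=X+X^*$ is strictly upper triangular with respect to $V_\bullet$, and therefore nilpotent.

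The main obstacle I foresee is the surjectivity claim that strictly upper triangular elements of $\mathfrak{gl}(V)$ represent every $\xi\in\Ln'$. This rests on the characteristic 2 phenomena $\Lg^\perp\subset\Lg$ and the trace-isotropy of $\Lb$ (the Cartan contribution $\tr(\diag(a_1,\ldots,a_n,a_n,\ldots,a_1)^2)=2\sum a_i^2$ vanishes), which together make the dimension count match. Once this point is settled, the rest of the argument is essentially formal.
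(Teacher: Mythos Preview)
Your argument is correct and genuinely different from the paper's. The paper proves nilpotency via the Hilbert--Mumford criterion: it chooses a cocharacter $\phi$ with $\xi\in\Lg^*(\phi;>0)$, lifts to $X\in\End_\tk(V)(\phi;>0)$, and then checks directly that $\Ad(\phi(a))T_\xi\to 0$ as $a\to 0$. You instead compute $T_\xi=X+X^*$ explicitly, use $G$-equivariance to reduce to $\xi\in\Ln'$, and produce a strictly upper-triangular representative. Your route is more elementary---no instability or one-parameter subgroups needed---at the cost of the characteristic-$2$ linear algebra you flag at the end. The paper's route is coordinate-free and is reused essentially verbatim for the orthogonal case (Lemma~\ref{lem-5}), which your flag-based argument would have to redo.

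On the surjectivity point you single out: your dimension-count sketch is a bit loose, but the claim is true and has a clean proof. With $\mathfrak u=\{X:XV_i\subset V_{i-1}\}$, one has $\mathfrak u^{\perp,\mathfrak{gl}}=\{Y:YV_i\subset V_i\}$, so $(\mathfrak u+\Lg^\perp)^{\perp,\mathfrak{gl}}=\mathfrak u^{\perp,\mathfrak{gl}}\cap\Lg=\Lb$, whence $\mathfrak u+\Lg^\perp=\Lb^{\perp,\mathfrak{gl}}$ and the image of $\mathfrak u$ in $\Lg^*$ is exactly $\Ln'$. This replaces the somewhat cryptic appeal to ``trace-isotropy of $\Lb$'' and makes the step airtight.
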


\begin{proof} Note that the nilpotent elements in
$\Lg^*$ (resp. $\Lg$) are precisely the "unstable" vectors $\xi$
(resp. $x$), namely, those $\xi$ (resp. $x$) for which the closure
of the $G$-orbit $G.\xi$ (resp. $\Ad(G)x$) contains $0$. By
Hilbert's criterion for instability, there exists a co-character
$\phi:\textbf{G}_m\rightarrow G$ such that $\lim_{a\rightarrow
0}\phi(a).\xi=0$. To show that $T_\xi$ is nilpotent, it is enough to
show that $\lim_{a\rightarrow 0}\Ad(\phi(a))T_\xi=0$.

For any $G$-representation $M$ and $i\in\mathbb{Z}$, we write
$M(\phi;i)$ for the $i$-weight space of the torus
$\{\phi(a)\}_{a\in\textbf{G}_m}$ and
$M(\phi;>i)=\oplus_{j>i}M(\phi;j)$, and similarly for $M(\phi;\geq
i)$, $M(\phi;\leq i)$ etc.

Since $\xi\in\Lg^*(\phi,>0)$, we may choose
$X\in\End_\tk(V)(\phi;>0)$ such that $\xi(x)=\tr(Xx)$ for all
$x\in\Lg$. Notice that we have
\begin{eqnarray*}
&&\beta((\Ad(\phi(a))T_\xi) v,w)=\beta(T_\xi
\phi(a)^{-1}v,\phi(a)^{-1}w)=\beta_\xi(\phi(a)^{-1}v,\phi(a)^{-1}w)
\\&&=\beta(X\phi(a)^{-1}v,\phi(a)^{-1}w)+\beta(\phi(a)^{-1}v,X\phi(a)^{-1}w)\\&&=
\beta((\Ad(\phi(a))X)v,w)+\beta(v,(\Ad(\phi(a))X)w).
\end{eqnarray*}
Since $X\in\End_\tk(V)(\phi;>0)$, $\Ad(\phi(a))X\rightarrow 0$ as
$a\rightarrow 0$ and thus $\beta(\Ad(\phi(a))T_\xi v,w)\rightarrow
0$ as $a\rightarrow 0$ for any $v,w\in V$. It follows that
$\Ad(\phi(a))T_\xi\rightarrow 0$ as $a\rightarrow 0$, since the
bilinear form $\beta$ is nondegenerate. Thus $T_\xi$ is nilpotent.
\end{proof}

Let $A=\tk[[t]]$ be the ring of formal power series in the
indeterminate $t$.  We consider $V_\xi$ as an $A$-module by $(\sum
a_kt^k)v=\sum a_kT_\xi^kv$. Let $E$ be the vector space spanned by
the linear functionals $t^{-k}: A\rightarrow \tk,\ \sum
a_it^i\mapsto a_k,\ k\geq 0$. Let $E_0$ and $E_1$ be the subspace
$\sum \tk t^{-2k}$ and $\sum \tk t^{-2k-1}$ respectively. Denote
$\pi_i:E\rightarrow E_i$, $i=0,1$ the natural projections. The
vector space $E$ is considered as an $A$-module by $ (au)(b)=u(ab)$
for $a,b\in A,u\in E$. Define $\varphi:V\times V\rightarrow
E,\psi:V\rightarrow E_1$ and $\varphi_\xi:V\times V\rightarrow
E,\psi_\xi:V\rightarrow E_0$ by
$$ \varphi(v,w)=\sum_{k\geq 0}\beta(t^kv,w) t^{-k},\ \psi(v)=\sum_{k\geq 0}
\beta(t^{k+1}v,t^k v)t^{-2k-1}$$ and $$ \varphi_\xi(v,w)=\sum_{k\geq
0}\beta_\xi(t^kv,w) t^{-k},\ \psi_\xi(v)=\sum_{k\geq 0}
\alpha_\xi(t^k v)t^{-2k}.$$ Notice that we have $\beta(T_\xi
v,v)=\beta_\xi(v,v)=0$ and $\beta_\xi(T_\xi v, v)=\beta(T_\xi
v,T_\xi v)=0$. By Proposition 2.7 in \cite{Hes}, we can identify
$(V_\xi,\alpha=0,\beta)$ with $(V_\xi,\varphi,\psi)$,
$(V_\xi,\alpha_\xi,\beta_\xi)$ with $(V_\xi,\varphi_\xi,\psi_\xi)$
and hence $(V_\xi,\beta,\alpha_\xi)$ with
$(V_\xi,\varphi,\psi,\varphi_\xi,\psi_\xi)$. The mappings
$\varphi,\psi$ and $\varphi_\xi,\psi_\xi$ satisfy the following
properties (\cite{Hes}):
\begin{enumerate}
  \item[(i)] The maps $\varphi(\cdot,w)$ and $\varphi_\xi(\cdot,w)$ are $A$-linear for every $w\in V_\xi$.
  \item[(ii)] $\varphi(v,w)=\varphi(w,v)$, $\varphi_\xi(v,w)=\varphi_\xi(w,v)$ for all $v,w\in V_\xi.$
  \item[(iii)] $\varphi(v,v)=\psi(v)$, $\varphi_\xi(v,v)=0$ for all $v\in V_\xi$.
  \item[(vi)] $\psi(v+w)=\psi(v)+\psi(w)$,
  $\psi_\xi(v+w)=\psi_\xi(v)+\psi_\xi(w)+\pi_0(\varphi_\xi(v,w))$ for all $v,w\in V_\xi.$
  \item[(v)] $\psi(a v)=a^2\psi(v)$,$\psi_\xi(a v)=a^2\psi_\xi(v)$ for all $v\in V_\xi,\ a\in A$.
\end{enumerate}

Following \cite{Hes}, we call $(V_\xi,\beta,\alpha_\xi)$ a form
module and $(V_\xi,\varphi,\psi,\varphi_\xi,\psi_\xi)$ an abstract
form module. Corollary \ref{cor-2} says that classifying the
nilpotent $G$-orbits in $\Lg^*$ is equivalent to classifying the
equivalence classes of the form modules $(V_\xi,\beta,\alpha_\xi)$.
In the following we classify the form modules
$(V_\xi,\beta,\alpha_\xi)$ via the identification with
$(V_\xi,\varphi,\psi,\varphi_\xi,\psi_\xi)$. We write
$V_\xi=(V_\xi,\beta,\alpha_\xi)$.

Since $T_\xi$ is nilpotent, there exists a unique sequence of
integers $p_1\geq\cdots\geq p_s\geq 1$ and a family of vectors
$v_1,\ldots,v_s$ such that $T_\xi^{p_i}v_i=0$ and the vectors
$T_\xi^{q_i}v_i$, $0\leq q_i\leq p_i-1$ form a basis of $V$. We
define $p(V_\xi)=p(T_\xi)=(p_1,\ldots,p_s)$. Define an index
function $\chi_{V_\xi}:\mathbb{Z}\rightarrow \mathbb{N}$ for
$(V_\xi,\beta,\alpha_\xi)$
 by
$$\chi_{V_\xi}(m)=\text{min}\{i\geq 0|T_\xi^mv=0\Rightarrow
\alpha_\xi(T_\xi^iv)=0\}.$$ Define $\mu(V_\xi)$ to be the minimal
integer $m\geq 0$ such that $T_\xi^m V_\xi=0$. For $v\in V_\xi$, we
define $\mu(v)=\mu(Av)$. We define $\mu(E)$ for $E$ and $\mu(u)$ for
$u\in E$ similarly.

\begin{lemma}\label{lem-2}
We have $\psi(v)=0$ and $\varphi_\xi(v,w)=t\varphi(v,w)$ for all
$v,w\in V_\xi$.
\end{lemma}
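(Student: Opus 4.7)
The plan is to verify both identities coefficient-by-coefficient directly from the definitions, using two ingredients already set up in the paper: the relation $\beta(T_\xi v, w) = \beta_\xi(v,w)$ defining $T_\xi$, and the $A$-module structure on $E$ given by $(au)(b) = u(ab)$.

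For $\psi(v) = 0$, I would observe that the associated bilinear form of a quadratic form is alternating in characteristic $2$: polarizing $\alpha_\xi(w+w) = \alpha_\xi(2w) = 0$ gives $\beta_\xi(w,w) = 0$ for every $w \in V_\xi$. Combined with the defining relation $\beta(T_\xi w, w) = \beta_\xi(w,w)$, this gives $\beta(T_\xi w, w) = 0$ for all $w$. Taking $w = T_\xi^k v = t^k v$ for each $k \geq 0$ shows that every coefficient $\beta(t^{k+1} v, t^k v)$ appearing in $\psi(v) = \sum_{k\geq 0} \beta(t^{k+1} v, t^k v) t^{-2k-1}$ vanishes, so $\psi(v) = 0$.

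For the identity $\varphi_\xi(v,w) = t \varphi(v,w)$, I would first unwind the $A$-action on $E$: from $(au)(b) = u(ab)$ one computes $t \cdot t^{-k} = t^{-(k-1)}$ for $k \geq 1$ and $t \cdot t^0 = 0$, so
\beq
t \varphi(v,w) = t \sum_{k \geq 0} \beta(t^k v, w) t^{-k} = \sum_{k \geq 1} \beta(t^k v, w) t^{-(k-1)} = \sum_{k \geq 0} \beta(t^{k+1} v, w) t^{-k}.
\eeq
On the other hand, since $t$ acts on $V_\xi$ as $T_\xi$, the definition of $\beta_\xi$ gives $\beta_\xi(t^k v, w) = \beta(T_\xi t^k v, w) = \beta(t^{k+1} v, w)$, so
\beq
\varphi_\xi(v,w) = \sum_{k \geq 0} \beta_\xi(t^k v, w) t^{-k} = \sum_{k \geq 0} \beta(t^{k+1} v, w) t^{-k},
\eeq
and the two expressions coincide.

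Neither step presents a genuine obstacle; the only point to be careful about is the index bookkeeping, namely that the shift produced by the $A$-action on $E$ precisely matches the shift produced by pulling $T_\xi$ out of $\beta$. Both assertions are then immediate from the definitions.
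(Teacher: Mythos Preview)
Your proof is correct and follows the same approach as the paper: both assertions rest on the identities $\beta(T_\xi v,v)=\beta_\xi(v,v)=0$ and $\beta_\xi(T_\xi^k v,w)=\beta(T_\xi^{k+1}v,w)$, exactly as in the paper's proof. You simply make explicit the coefficient-by-coefficient verification and the computation of $t\varphi(v,w)$ via the $A$-action on $E$ that the paper leaves implicit.
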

\begin{proof}
The first assertion follows from $\beta(T_\xi v,v)=0$, $\forall\
v\in V_\xi$. The second assertion follows from
$\beta_\xi(T_\xi^kv,w)=\beta(T_\xi^{k+1}v,w)$.
\end{proof}

 We study the orthogonal decomposition of
$V_\xi$ with respect to $\varphi$, which is also an orthogonal
decomposition of $V_\xi$ with respect to $\varphi_\xi$ since
$\varphi(v,w)=0$ implies $ \varphi_\xi(v,w)=0$ (Lemma \ref{lem-2}).
Recall that an orthogonal decomposition of $V$ is an expression of
$V$ as a direct sum $V=\sum_{i=1}^r V_i$ of mutually orthogonal
submodules $V_i$. A form module $V$ is called indecomposable if
$V\neq 0$ and for every orthogonal decomposition $V=V_1\oplus V_2$
we have $V_1=0$ or $V_2=0$. Every form module $V$ has some
orthogonal decomposition $V=\sum_{i=1}^r V_i$ in indecomposable
submodules $V_1,V_2,\ldots,V_r$.

We first classify the indecomposable modules (with respect to
$\varphi$) that appear in the orthogonal decompositions of form
modules $(V_\xi,\beta,\alpha_\xi)$. Let
$(V_\xi,\varphi,\psi,\varphi_\xi,\psi_\xi)$ be an indecomposable
module, where $\xi\in\Lg^*$ is nilpotent. Since $\psi(v)=0$ for all
$v\in V_\xi$ (Lemma \ref{lem-2}), by the classification of modules
$(V_\xi,\varphi,\psi)$, there exist $v_1,v_2$ such that
$V_\xi=Av_1\oplus Av_2$ with $\mu(v_1)=\mu(v_2)=m$ and
$\varphi(v_1,v_2)=t^{1-m}$ (see \cite{Hes} section 3.5, notice that
$\beta$ is non-degenerate on $V_\xi$). Denote
$\psi_\xi(v_1)=\Psi_1$, $\psi_\xi(v_2)=\Psi_2$ and
$\varphi_\xi(v_1,v_2)=t^{2-m}=\Phi_\xi$.
\subsection{}
In this subsection assume $\tk$ is algebraically closed.
\begin{proposition}\label{prop-1.1}
The indecomposable modules are $^*W_l(m)=Av_1\oplus Av_2$,
$[\frac{m}{2}]\leq l\leq m$, with $\mu(v_1)=\mu(v_2)=m$,
$\psi_\xi(v_1)=t^{2-2l}$, $\psi_\xi(v_2)=0$ and
$\varphi(v_1,v_2)=t^{1-m}$. We have $\chi_{^*W_l(m)}=[m;l]$, where
$[m;l]:\mathbb{N}\rightarrow\mathbb{Z}$ is defined by
$[m;l](k)=\max\{0,\min\{k-m+l,l\}\}.$
\end{proposition}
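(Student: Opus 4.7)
The plan is to work with the decomposition $V_\xi = Av_1 \oplus Av_2$ and the notation $\Psi_i = \psi_\xi(v_i)$, $\Phi_\xi = t^{2-m}$ established in the preceding paragraph, and to classify such form modules up to $\varphi$-isometry. The isometries are precisely the basis changes $v_1 \mapsto av_1 + bv_2$, $v_2 \mapsto cv_1 + dv_2$ with $ad + bc = 1$ (forced by $\varphi(v_i,v_i) = \psi(v_i) = 0$ together with preservation of $\varphi(v_1, v_2) = t^{1-m}$). Properties (iv) and (v) of the abstract form module then yield
\[
\psi_\xi(av_1 + bv_2) = a^2\Psi_1 + b^2\Psi_2 + \pi_0(ab\,\Phi_\xi),
\]
and the target is to realize $(\Psi_1, \Psi_2) = (t^{2-2l}, 0)$ for some $l$ with $[m/2] \le l \le m$.

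I would define $l$ to be the largest integer such that the coefficient of $t^{-2(l-1)}$ in $\psi_\xi(w)$ is nonzero for some generator $w$ of $V_\xi$. The upper bound $l \le m$ is immediate from $T_\xi^m = 0$, which forces $\psi_\xi(v) = \sum_{j\ge 0}\alpha_\xi(T_\xi^j v)t^{-2j}$ to be supported on $\{t^0, t^{-2}, \ldots, t^{-2(m-1)}\}$. The lower bound $l \ge [m/2]$ is supplied by the cross term: the value $\psi_\xi(v_1 + v_2)$ when $m$ is even contains the contribution $\pi_0(\Phi_\xi) = t^{-(m-2)}$ with top position $m/2$, and $\psi_\xi(v_1 + tv_2)$ when $m$ is odd contains $\pi_0(t\Phi_\xi) = t^{-(m-3)}$ with top position $(m-1)/2$; a short case check on the coefficient of the leading term, using that $v_1, v_2, v_1+v_2, v_1+tv_2$ are all generators, forces $l \ge [m/2]$ in either parity. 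Having fixed $l$, I would pick a generator $w$ realizing it, rename $w = v_1$, and complete to $(v_1, v_2)$ with $\varphi(v_1, v_2) = t^{1-m}$; by maximality both $\Psi_1$ and $\Psi_2$ are supported on $\{t^0, \ldots, t^{-2(l-1)}\}$ with the top coefficient of $\Psi_1$ nonzero.

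Normalization then proceeds in two steps. First, rescale $(v_1, v_2) \mapsto (uv_1, u^{-1}v_2)$ for a unit $u \in A^\times$; matching coefficients in $u^2\Psi_1 = t^{-2(l-1)}$ yields a triangular system solvable recursively by square-root extraction in $\tk$, possible because $\tk$ is algebraically closed and hence perfect. Second, apply the shear $v_2 \mapsto v_2 + qv_1$ (which preserves $\varphi(v_1, v_2)$), replacing $\Psi_2$ by $\Psi_2 + q^2 t^{-2(l-1)} + \pi_0(q\,t^{-(m-2)})$; solving for $q \in A/t^m A$ to make this vanish is the principal technical obstacle. Writing $q = \sum q_j t^j$, the $t^{-2k}$-coefficient equations split into purely quadratic equations $q_j^2 = d_j$ (handled by square roots) and coupled equations of the form $q_j^2 + q_{j'} = d_j'$, and the bound $l \ge [m/2]$ together with the support constraint on $\Psi_2$ is precisely what renders this coupled system consistent and solvable over the perfect field $\tk$. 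Once the normal form $^*W_l(m)$ is in place, the identity $\chi_{^*W_l(m)} = [m;l]$ is a direct computation: $\ker T_\xi^k = t^{m-k}V_\xi$ for $k \le m$, and
\[
\psi_\xi\bigl(t^{m-k}(av_1 + bv_2)\bigr) = t^{2(m-k)}\bigl(a^2 t^{-2(l-1)} + \pi_0(ab\,t^{-(m-2)})\bigr);
\]
extracting the $t^{-2i}$-coefficient (equal to $\alpha_\xi(T_\xi^i v)$) and locating the smallest $i$ for which it vanishes uniformly in $(a, b)$ yields $\max\{0, \min\{k - m + l, l\}\}$. Since $\chi_{^*W_l(m)}(m) = l$, the different $l$-values yield pairwise non-isomorphic modules, and the indecomposability of $^*W_l(m)$ is inherited from the indecomposability of $Av_1 \oplus Av_2$ as a $(\varphi, \psi)$-module in \cite{Hes}, as any $\varphi$-orthogonal decomposition is automatically $\varphi_\xi$-orthogonal via $\varphi_\xi = t\varphi$.
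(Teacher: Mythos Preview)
Your argument is correct and uses the same basic moves as the paper---unit rescaling $(v_1,v_2)\mapsto(uv_1,u^{-1}v_2)$ and shearing $v_2\mapsto v_2+qv_1$---but in a different order and with a cleaner conceptual wrapper. The paper first shears to kill $\Psi_2$ (using only $\mu(\Psi_1)\ge\mu(\Psi_2)$), then rescales to normalize $\Psi_1=t^{-2l}$, and only afterwards discovers that the resulting $l$ may be too small, requiring an additional basis change $v_1\mapsto v_1+t^{m-2l-2}v_2+t^{m-2[m/2]}v_2$ to push $l$ up to $[m/2]-1$. You instead define $l$ invariantly as the maximal leading exponent of $\psi_\xi$ over all generators, establish the bound $l\ge[m/2]$ directly from the cross term $\pi_0(\Phi_\xi)$, and then choose $v_1$ to realize this $l$ from the outset; this makes the push-up step unnecessary and explains \emph{a priori} why the range $[m/2]\le l\le m$ is the right one. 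Both proofs leave the solvability of the shear equation at essentially the same level of sketch (the paper simply asserts ``has a solution for $a$''); your remark that the coupled equations $q_j^2+q_{j'}=d_j'$ are where Artin--Schreier obstructions appear over $\tF_q$ is exactly right and anticipates Proposition~\ref{prop-nind}. Your explicit computation of $\chi_{^*W_l(m)}=[m;l]$ and the use of $\chi(m)=l$ to distinguish the normal forms fills in details the paper leaves to the reader.
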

\begin{proof}
Assume $\mu(\Psi_1)\geq\mu(\Psi_2)$. Let $v_2'=v_2+av_1$. The
equation $\psi_\xi(v_2')=\Psi_2+a^2\Psi_1+\pi_0(a\Phi_\xi)=0$ has a
solution for $a$, hence we can assume $\Psi_2=0$. Assume
$\Psi_1=\sum_{i=0}^{l}a_it^{-2i}$, $a_i\in \tk, a_l\neq 0$. Let
$v_1'=av_1$, $a\in A$. We can take $a$ invertible in $A$ such that
$\psi_\xi(v_1')=t^{-2l}$. Let $v_2''=a^{-1}v_2'$. One verifies that
$\psi_\xi(v_1')=t^{-2l}$, $\psi_\xi(v_2'')=0$ and
$\varphi(v_1',v_2'')=t^{1-m}$. Furthermore, we can assume
$[m/2]-1\leq l\leq m-1$. In fact, we have $l\leq m-1$ since
$t^mv=0,\forall\  v\in V$; if $l<[\frac{m}{2}]-1$, let
$v_1'=v_1+t^{m-2l-2}v_2+t^{m-2[\frac{m}{2}]}v_2$, then
$\psi_\xi(v_1')=t^{-2([\frac{m}{2}]-1)}$ and
$\varphi(v_1',v_2)=t^{1-m}$. One can verify that the modules
$^*W_l(m)$, $[m/2]\leq l\leq m$ exist and are not equivalent to each
other. 
\end{proof}

\begin{lemma}\label{lem-1}
Assume $m_1\geq m_2$.

$\mathrm{(i)}$ If $l_1<l_2$, we have $^*W_{l_1}(m_1)\oplus
{^*W_{l_2}}(m_2)\cong {^*W_{l_2}}(m_1)\oplus {^*W_{l_2}}(m_2)$.

$\mathrm{(ii)}$ If $m_1-l_1<m_2-l_2$, we have $^*W_{l_1}(m_1)\oplus
{^*W_{l_2}}(m_2)\cong {^*W_{l_1}}(m_1)\oplus
{^*W_{m_2-m_1+l_1}}(m_2)$.
\end{lemma}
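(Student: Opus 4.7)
The plan is to exhibit, in each case, an explicit change of generators that preserves the orthogonality of the two summands and realizes the claimed isomorphism. Throughout, let $e_i,f_i$ denote the standard generators of ${}^*W_{l_i}(m_i)$, so that $\mu(e_i)=\mu(f_i)=m_i$, $\psi_\xi(e_i)=t^{2-2l_i}$, $\psi_\xi(f_i)=0$, $\varphi(e_i,f_i)=t^{1-m_i}$, and all cross-pairings between the two submodules vanish. Two calculational ingredients will be used throughout: the characteristic 2 identity $(1+t^r)^2=1+t^{2r}$, and the $A$-action on $E$ given by $t^j\cdot t^{-k}=t^{j-k}$ for $j\leq k$ and $0$ otherwise.

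For part (i), set $a=1+t^{l_2-l_1}\in A$ and take
$$e_1'=e_1+a\,e_2,\quad f_1'=f_1,\quad e_2'=e_2,\quad f_2'=f_2+a\,t^{m_1-m_2}f_1.$$
The only nontrivial cross-pairing to check is $\varphi(e_1',f_2')=a\,t^{m_1-m_2}\varphi(e_1,f_1)+a\,\varphi(e_2,f_2)=a\,t^{1-m_2}+a\,t^{1-m_2}=0$, and preservation of the invariants $\mu(\cdot)$, $\varphi(e_i',f_i')$, $\psi_\xi(f_i')$ is routine (using $m_1\geq m_2$). The heart of the computation is
$$\psi_\xi(e_1')=t^{2-2l_1}+a^2\,t^{2-2l_2}=t^{2-2l_1}+(1+t^{2(l_2-l_1)})\cdot t^{2-2l_2}=t^{2-2l_2},$$
where property (iv) together with Lemma \ref{lem-2} kill the cross term $\pi_0(\varphi_\xi(e_1,a\,e_2))$, and the $A$-action formula gives $t^{2(l_2-l_1)}\cdot t^{2-2l_2}=t^{2-2l_1}$.

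For part (ii), set $l_2'=m_2-m_1+l_1$. One checks $l_2'>l_2$ by hypothesis, and $[m_2/2]\leq l_2'\leq m_2$ from $l_1\leq m_1$ and $l_1\geq m_1-m_2+l_2+1$. Take
$$e_1'=e_1,\quad f_1'=f_1+f_2,\quad e_2'=e_2+t^{m_1-m_2}e_1,\quad f_2'=f_2.$$
Orthogonality reduces to $\varphi(f_1',e_2')=t^{m_1-m_2}t^{1-m_1}+t^{1-m_2}=0$, and a direct computation gives
$$\psi_\xi(e_2')=t^{2-2l_2}+t^{2(m_1-m_2)}\cdot t^{2-2l_1}=t^{2-2l_2}+t^{2-2l_2'},$$
using the $A$-action formula (valid because $l_1\geq m_1-m_2+1$). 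Since $l_2'>l_2$, the dominant summand in $E_0$ is $t^{2-2l_2'}$, so the rescaling technique in the proof of Proposition \ref{prop-1.1} yields a unit $c\in A^{\times}$ with $\psi_\xi(c\,e_2')=t^{2-2l_2'}$; replacing $e_2'$ and $f_2'$ by $c\,e_2'$ and $c^{-1}f_2'$ does not change the submodule (so orthogonality is maintained) and produces the standard generators of ${}^*W_{l_2'}(m_2)$.

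The main obstacle is the asymmetry in part (ii): the naive substitution produces a residual $t^{2-2l_2}$ term that cannot be eliminated by any orthogonality-preserving linear modification of the second summand alone. The resolution is the unit rescaling, which relies on the characteristic 2 identity $\bigl(\sum_{k\geq 0}t^{kd}\bigr)^2=1/(1+t^{2d})$ (with $d=l_2'-l_2$) to solve $c^2(1+t^{2d})\equiv 1\pmod{t^{2l_2'-1}}$ for $c\in A^{\times}$.
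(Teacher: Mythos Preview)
Your proof is correct. Part (i) is exactly the paper's argument (with $a=1+t^{l_2-l_1}$). For part (ii) the paper only says ``similarly''; the direct analogue of (i) would be to take
\[
e_2'=e_2+(t^{m_1-m_2}+t^{l_1-l_2})\,e_1,\qquad f_1'=f_1+(1+t^{(l_1-l_2)-(m_1-m_2)})\,f_2,
\]
with $e_1'=e_1$, $f_2'=f_2$. Then the extra $t^{l_1-l_2}e_1$ term contributes $t^{2(l_1-l_2)}\cdot t^{2-2l_1}=t^{2-2l_2}$ to $\psi_\xi(e_2')$, cancelling the residual and giving $\psi_\xi(e_2')=t^{2-2l_2'}$ directly, with orthogonality checked just as in (i). So the ``obstacle'' you describe is an artifact of your particular choice of substitution rather than a genuine asymmetry between the two parts; your detour through the unit rescaling of Proposition~\ref{prop-1.1} is valid but unnecessary.
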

\begin{proof}
Assume $^*W_{l_1}(m_1)\oplus {^*W_{l_2}}(m_2)=Av_1\oplus Aw_1\oplus
Av_2\oplus Aw_2$ with $\psi_{\xi}(v_i)=t^{2-2l_i},\psi_{\xi}(w_i)=0$
and
$\varphi(v_i,w_j)=\delta_{i,j}t^{1-m_i},\varphi(v_i,v_j)=\varphi(w_i,w_j)=0$,
$i,j=1,2$.  Let $\tilde{v}_1=v_1+(1+t^{l_2-l_1})v_2$,
$\tilde{w}_1=w_1$, $\tilde{v}_2=v_2$,
$\tilde{w_2}=w_2+(t^{m_1-m_2}+t^{m_1-l_1-m_2+l_2})w_1$. Then we have
$\psi_{\xi}(\tilde{v}_i)=t^{2-2l_2}$, $\psi_{\xi}(\tilde{w}_i)=0$
and $\varphi(\tilde{v}_i,\tilde{w}_j)=\delta_{i,j}t^{1-m_i},
\varphi(\tilde{v}_i,\tilde{v}_j)=\varphi(\tilde{w}_i,\tilde{w}_j)=0$,
$i,j=1,2$. This proves (i). One can prove (ii) similarly.
\end{proof}

\begin{remark}Notice that we do not have a
"Krull-Schmidt" type theorem here, namely, the indecomposable
summands of a form module $V$ are not uniquely determined by $V$.
(See also Lemma \ref{lem-2-1} and Lemma \ref{lem-6}.)
\end{remark}

By Proposition \ref{prop-1.1} and Lemma \ref{lem-1}, for every
module $V$, there exists a unique sequence of modules
$^*W_{l_i}(m_i)$ such that $V$ is equivalent to
$^*W_{l_1}(m_1)\oplus {^*W_{l_2}}(m_2)\oplus\cdots\oplus
{^*W_{l_s}}(m_s)$, $[\frac{m_i}{2}]\leq l_i\leq m_i$, $m_1\geq
m_2\geq\cdots\geq m_s$, $l_1\geq l_2\geq\cdots\geq l_s$ and
$m_1-l_1\geq m_2-l_2\geq\cdots\geq m_s-l_s$. Thus the equivalence
class of $V$ is characterized by the symbol
$$(m_1)^2_{l_1}\cdots(m_s)^2_{l_s}.$$ A symbol of the above form is
the symbol of a form module if and only if $[\frac{m_i}{2}]\leq
l_i\leq m_i$, $m_i\geq m_{i+1}$, $l_i\geq l_{i+1}$ and $m_i-l_i\geq
m_{i+1}-l_{i+1}$, $i=1,\ldots,s$.

It follows that $p(V_\xi)=m_1^2\cdots m_s^2$ and
$\chi_{V_\xi}(k)=\text{sup}_i\chi_{^*W_{l_i}(m_i)}(k)$ for all
$k\in\mathbb{N}$ and $\chi_V(m_i)=\chi_{^*W_{l_i}(m_i)}=l_i$. Thus
we have the following proposition.
\begin{proposition}
Two nilpotent elements $\xi,\zeta\in\Lg^*$ lie in the same $G$-orbit
if and only if $T_\xi,T_\zeta$ are conjugate in $GL(V)$ and
$\chi(V_\xi)=\chi(V_\zeta)$.
\end{proposition}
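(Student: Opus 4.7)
The plan is to derive the proposition directly from the symbol classification of nilpotent $G$-orbits obtained in the paragraph just above the statement, combined with Corollary~\ref{cor-2}. By that corollary, $\xi$ and $\zeta$ lie in the same $G$-orbit if and only if $(V_\xi,\beta,\alpha_\xi)$ and $(V_\zeta,\beta,\alpha_\zeta)$ are equivalent, and by the classification this is in turn equivalent to the two form modules carrying the same symbol $(m_1)^2_{l_1}\cdots(m_s)^2_{l_s}$. It therefore suffices to show that the symbol is determined by, and determines, the pair $(p(V_\xi),\chi_{V_\xi})$.

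The ``only if'' direction is immediate: equal symbols yield the same partition $p(V_\xi)=m_1^2\cdots m_s^2$, hence the same Jordan type for $T_\xi$ and $T_\zeta$ in $\mathfrak{gl}(V)$, so $T_\xi$ and $T_\zeta$ are conjugate in $GL(V)$; and $\chi_{V_\xi}$ is, by its very definition, an invariant of the equivalence class of the form module, so $\chi(V_\xi)=\chi(V_\zeta)$.

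For the converse, $GL(V)$-conjugacy of $T_\xi$ and $T_\zeta$ forces $p(V_\xi)=p(V_\zeta)$, so the multiset of parts $\{m_i\}$ agrees for the two symbols. The main (and essentially only non-automatic) observation is that for each distinct value of $m$, all the $l_i$'s attached to parts with $m_i=m$ in a given symbol are equal: this follows because the ordering conditions $l_i\geq l_{i+1}$ and $m_i-l_i\geq m_{i+1}-l_{i+1}$ imposed on a symbol together force $l_i=l_{i+1}$ whenever $m_i=m_{i+1}$. By the formula $\chi_V(m_i)=l_i$ recorded immediately before the proposition, this common value equals $\chi_{V_\xi}(m)$, so the hypothesis $\chi(V_\xi)=\chi(V_\zeta)$ pins down all the $l_i$'s and the two symbols coincide. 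I expect this short bookkeeping observation about repeated parts to be the only step requiring a separate verification; everything else is a direct reading-off of invariants already computed in the paragraph above.
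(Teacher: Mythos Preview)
Your proof is correct and follows the same route as the paper: the proposition is stated immediately after the formulas $p(V_\xi)=m_1^2\cdots m_s^2$ and $\chi_{V_\xi}(m_i)=l_i$, and the paper offers no further argument beyond ``Thus we have the following proposition.'' Your write-up simply fills in the bookkeeping, including the observation that the symbol constraints force $l_i=l_{i+1}$ whenever $m_i=m_{i+1}$, so that $\chi_{V_\xi}$ evaluated at the Jordan block sizes recovers all the $l_i$'s; this is exactly the implicit content of the paper's one-line deduction.
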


We associate to the orbit $(m_1)^2_{l_1}\cdots(m_s)^2_{l_s}$ a pair
of partitions $(l_1,\ldots,l_s)(m_1-l_1,\ldots,m_s-l_s).$ In this
way we construct a bijection from the set of nilpotent orbits in
$\Lg^*$ to the set $\{(\mu,\nu)||\mu|+|\nu|=n,\nu_i\leq \mu_i+1\}$,
which has cardinality $p_2(n)-p_2(n-2)$. Here and afterwards we
denote by $p_2(n)$ the number of pairs of partitions $(\mu,\nu)$
such that $|\mu|+|\nu|=n$.

\subsection{}

In this subsection, let $\tk=\tF_q$. Let $G(\tF_q)$,
$\mathfrak{g}({\tF}_q)$ be the fixed points of a Frobenius map
$\mathfrak{F}_q$ relative to $\tF_q$ on $G$, $\Lg$. We study the
nilpotent $G(\tF_q)$-orbits in $\mathfrak{g}({\tF}_q)^*$. Fix
$$\delta\notin \{x^2+x|x\in\tF_q\}.$$ We have the following statements
whose proofs are entirely similar to those of \cite{X}. For
completeness, we also include the proofs here.

\begin{proposition}\label{prop-nind}
The indecomposable modules over $\tF_q$ are

$\mathrm{(i)}$ $^*W_l^0(m)=Av_1\oplus Av_2$, $(m-1)/2\leq l\leq m$
with $\psi_\xi(v_1)=t^{2-2l}$,  $\psi_\xi(v_2)=0$ and
$\varphi(v_1,v_2)=t^{1-m}$;

$\mathrm{(ii)}$ $^*W_l^\delta(m)=Av_1\oplus Av_2$, $(m-1)/2< l< m$
with $\psi_\xi(v_1)=t^{2-2l}$,  $\psi_\xi(v_2)=\delta t^{-2(m-1-l)}$
and $\varphi(v_1,v_2)=t^{1-m}$.
\end{proposition}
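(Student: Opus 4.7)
The plan is to adapt the proof of Proposition~\ref{prop-1.1} to $\tk=\tF_q$. By Hesselink's classification of $(V_\xi,\varphi,\psi)$-modules with $\psi=0$ (applicable by Lemma~\ref{lem-2}), any indecomposable form module over $\tF_q$ still has the underlying shape $V_\xi=Av_1\oplus Av_2$ with $\mu(v_1)=\mu(v_2)=m$ and $\varphi(v_1,v_2)=t^{1-m}$; the remaining data are $\Psi_1=\psi_\xi(v_1)$, $\Psi_2=\psi_\xi(v_2)$, and the fixed element $\Phi_\xi=\varphi_\xi(v_1,v_2)=t^{2-m}$. The task is to normalize the pair $(\Psi_1,\Psi_2)$ under the two base changes used in Proposition~\ref{prop-1.1}, now allowing only scalars in $\tF_q$.

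The two moves are: (a) $(v_1,v_2)\mapsto(v_1,v_2+av_1)$ for $a\in A$, which replaces $\Psi_2$ by $\Psi_2+a^2\Psi_1+\pi_0(a\Phi_\xi)$; and (b) the scaling $(v_1,v_2)\mapsto(av_1,a^{-1}v_2)$ for $a\in A^\times$, which rescales $\Psi_1$ by $a^2$ and $\Psi_2$ by $a^{-2}$. Because $\tF_q$ is perfect of characteristic~$2$, Frobenius is bijective on it, so move~(b) brings $\Psi_1$ into the canonical form $t^{2-2l}$ with $l$ in the range $[m/2]\leq l\leq m$ exactly as in the algebraically closed case. The new difficulty comes from move~(a): writing $a=\sum c_k t^k$ and reading off $\Psi_2+a^2\Psi_1+\pi_0(a\Phi_\xi)=0$ coefficient by coefficient yields a recursion whose steps are all linear in a single unknown $c_k$ over $\tF_q$, except for one step, at the coefficient of $t^{-2(m-1-l)}$, where two contributions collide and the equation takes the Artin--Schreier form $x^2+x=c$. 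The image of $\wp\colon x\mapsto x^2+x$ is the index-two subgroup $\wp(\tF_q)\subset(\tF_q,+)$ with complementary coset represented by $\delta$.

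Consequently, once $\Psi_1$ has been normalized to $t^{2-2l}$ and all non-obstructed coefficients of $\Psi_2$ have been killed, one is left with either $\Psi_2=0$ (case~(i)) or $\Psi_2=\delta t^{-2(m-1-l)}$ (case~(ii)); the Artin--Schreier step is present only when $(m-1)/2<l<m$, which matches the strict bounds asserted in~(ii). To finish I would (1) write out $^*W^0_l(m)$ and $^*W^\delta_l(m)$ explicitly and verify that they are form modules and indecomposable, and (2) check pairwise inequivalence: distinct values of $(m,l)$ are already separated by $p(V_\xi)$ and $\chi_{V_\xi}$ as in the algebraically closed case, while $^*W^0_l(m)\not\cong{^*W^\delta_l(m)}$ because an equivalence would yield an $\tF_q$-solution to an equation of the form $x^2+x=\delta$, contradicting $\delta\notin\wp(\tF_q)$. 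The main obstacle will be showing that the Artin--Schreier residue class of the obstruction is a well-defined invariant of the form module, independent of the chosen basis $(v_1,v_2)$; this reduces to tracking how the obstruction transforms under simultaneous applications of (a) and~(b) and verifying that all other parameters can be normalized without altering the residue class.
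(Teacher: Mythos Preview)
Your approach is essentially the same as the paper's: normalize $\Psi_1$ by scaling, then attempt to kill $\Psi_2$ by the substitution $v_2\mapsto v_2+av_1$, isolating an Artin--Schreier obstruction at the coefficient of $t^{-2(m-1-l)}$; the paper organizes this into three cases according to whether $\Psi_1,\Psi_2$ vanish, but the content is identical. One point to watch: for the inequivalence $^*W_l^0(m)\not\cong{^*W_l^\delta(m)}$ you propose to check invariance of the Artin--Schreier class under moves (a) and (b), but these two moves do \emph{not} generate all $A$-linear automorphisms preserving $\varphi$ (you also need $v_1\mapsto v_1+bv_2$ and the swap), so the reduction you describe is incomplete; the paper instead writes out a fully general isomorphism $gv_1=\sum(a_it^iv_2'+b_it^iw_2')$, $gw_1=\sum(c_it^iv_2'+d_it^iw_2')$ and extracts the Artin--Schreier equation directly from the resulting system, which is what your ``main obstacle'' will amount to in practice.
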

\begin{proof}
Let $V_\xi=Av_1\oplus Av_2$ be an indecomposable module as in the
last paragraph of subsection \ref{ssec-1-2}. We have
$\Phi_\xi=t^{2-m}$. We can assume that $\mu(\Psi_1)\geq\mu(\Psi_2)$.
We have the following cases:

Case 1: $\Psi_1=\Psi_2=0$. Let
$\tilde{v}_1=v_1+t^{m-2[\frac{m}{2}]}v_2,\tilde{v}_2=v_2$, then we
have $\psi_\xi(\tilde{v}_1)=t^{2-2[\frac{m}{2}]},\
\psi_\xi(\tilde{v}_2)=0$ and
$\varphi(\tilde{v}_1,\tilde{v}_2)=t^{1-m}$.

Case 2: $\Psi_1\neq 0,\ \Psi_2=0$. There exist $a,b\in A$
invertible, such that $\psi_\xi(av_1)=t^{-2k},\psi_\xi(bv_2)=0$ and
$\varphi(av_1,bv_2)=t^{1-m}$. Hence we can assume $\Psi_1=t^{-2k}$
where $k\leq m-1$. If $k<[\frac{m}{2}]-1$, let
$\tilde{v}_1=v_1+t^{m-2[\frac{m}{2}]}v_2+t^{m-2k-2}v_2,\
\tilde{v}_2=v_2$; otherwise, let $\tilde{v}_1=v_1,\
\tilde{v}_2=v_2$. Then we get $\psi_\xi(\tilde{v}_1)=t^{-2k},\
[\frac{m}{2}]-1\leq k\leq m-1,\ \psi_\xi(\tilde{v}_2)=0,
         \ \varphi(\tilde{v}_1,\tilde{v}_2)=t^{1-m}$.

Case 3: $\Psi_1\neq 0,\ \Psi_2\neq 0$. There exist $a,b\in A$
invertible, such that $\psi_\xi(av_1)=t^{-2l_1}$ and
$\varphi(av_1,bv_2)=t^{1-m}$. Hence we can assume $\Psi_1=t^{-2l_1}$
and $\Psi_2=\sum_{i=0}^{l_2}a_i t^{-2i}$ where $l_2\leq l_1\leq
m-1$. Let $\tilde{v}_2=v_2+\sum_{i=0}^{m-1}x_it^iv_1$. Assume
$l_1<\frac{m-2}{2}$, then $\psi_\xi(\tilde{v}_2)=0$ has a solution
for $x_i$'s and we get Case 2. Assume $l_1\geq \frac{m-2}{2}$. If
$a_{m-l_1-2}\in\{x^2+x|x\in \tF_q\}$, then $\psi_\xi(\tilde{v}_2)=0$
has a solution for $x_i$'s and we get Case 2; if
$a_{m-l_1-2}\notin\{x^2+x|x\in \tF_q\}$, then
$\psi_\xi(\tilde{v}_2)=\delta t^{-2(m-l_1-2)}$ has a solution for
$x_i$'s.

Summarizing Cases 1-3, we have normalized $V_\xi=Av_1\oplus Av_2$
with $\mu(v_1)=\mu(v_2)=m$ as follows:

(i) $(m-1)/2\leq\chi(m)=l\leq m$, $\psi_\xi(v_1)=t^{2-2l},\
\psi_\xi(v_2)=0,\
             \varphi(v_1,v_2)=t^{1-m}$, denoted by $^*W_l^0(m)$.

(ii) $(m-1)/2<\chi(m)=l< m$, $\psi_\xi(v_1)=t^{2-2l},\
\psi_\xi(v_2)=\delta t^{-2(m-l-1)},\ \varphi(v_1,v_2)=t^{1-m}$,
denoted by $^*W_l^{\delta}(m)$.

We show that $^*W_l^0(m)$ and $^*W_l^\delta(m)$, where
$\frac{m-1}{2}<l<m$, are not equivalent. Take $v_i,w_i$, $i=1,2$,
such that $^*W_l^0(m)=Av_1\oplus Aw_1$, $^*W_l^\delta(m)=Av_2\oplus
Aw_2$, $\mu(v_i)=\mu(w_i)=m$,
$\psi_\xi(v_i)=t^{2-2l},\psi_\xi(w_1)=0,\psi_\xi(w_2)=\delta
t^{2l-2m+2}$ and $\varphi(v_i,w_i)=t^{1-m}$, $i=1,2$. The modules
$^*W_l^0(m)$ and $^*W_l^\delta(m)$ are equivalent if and only if
there exists a linear isomorphism $g:{^*W}_l^0(m)\rightarrow
{^*W}_l^\delta(m)$ such that $\psi_\xi(gv)=\psi_\xi(v)$ and
$\varphi(gv,gw)=\varphi(v,w)$ for all $v,w\in {^*W}_l^0(m)$. Assume
$gv_1=\sum_{i=0}^{m-1}(a_it^{i}v_2+b_it^iw_2),gw_1=\sum_{i=0}^{m-1}(c_it^{i}v_2+d_it^iw_2)$.
Then a straightforward calculation shows that if $l=\frac{m}{2}$,
among the equations
$\psi_\xi(gv_1)=\psi_\xi(v_1),\psi_\xi(gw_1)=\psi_\xi(w_1),\varphi(gv_1,gw_1)=\varphi(v_1,w_1)$,
the following equations appear
\begin{eqnarray*}
&&c_0^2+\delta d_0^2+c_0d_0=0\\
&&a_0d_0+b_0c_0=1.
\end{eqnarray*}
It follows that $c_0,d_0\neq 0$ and thus the first equation becomes
an "Artin-Schreier" equation
$(\frac{c_0}{d_0})^2+\frac{c_0}{d_0}=\delta$ which has no solutions
over $\tF_q$. Similarly if $\frac{m}{2}<l<m$, an "Artin-Schreier"
equation $c_{2l-m}^2+c_{2l-m}=\delta$ appears. It follows that
$^*W_l^0(m)$ and $^*W_l^\delta(m)$, where $\frac{m-1}{2}<l<m$, are
not equivalent.
\end{proof}

\begin{remark}\label{rmk-1}
It follows that the equivalence class of the form module $^*W_l(m)$
over $\bar{\tF}_q$ remains as one equivalence class over $\tF_q$
when $l=\frac{m-1}{2}$ or $l=m$ and decomposes into two equivalence
classes $^*W_l^0(m)$ and $^*W_l^\delta(m)$ over $\tF_q$ otherwise.
\end{remark}

\begin{lemma}\label{lem-2-1}
Assume $l_1\geq l_2$ and $m_1-l_1\geq m_2-l_2$.

$\mathrm{(i)}$  If $l_1+l_2<m_1$, we have that
${^*W}_{l_1}^0(m_1)\oplus {^*W}_{l_2}^0(m_2)$,
${^*W}_{l_1}^0(m_1)\oplus {^*W}_{l_2}^\delta(m_2)$,
${^*W}_{l_1}^\delta(m_1)\oplus {^*W}_{l_2}^0(m_2)$ and
${^*W}_{l_1}^\delta(m_1)\oplus {^*W}_{l_2}^\delta(m_2)$ are not
equivalent to each other.

$\mathrm{(ii)}$  If $l_1+l_2\geq m_1$, we have
${^*W}_{l_1}^0(m_1)\oplus {^*W}_{l_2}^0(m_2)\cong
{^*W}_{l_1}^\delta(m_1)\oplus {^*W}_{l_2}^\delta(m_2)$ and
${^*W}_{l_1}^0(m_1)\oplus {^*W}_{l_2}^\delta(m_2)\cong
{^*W}_{l_1}^\delta(m_1)\oplus {^*W}_{l_2}^0(m_2)$. The two pairs are
not equivalent to each other.
\end{lemma}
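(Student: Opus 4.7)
The plan is to use the two techniques already employed in the proof of Proposition~\ref{prop-nind}: an explicit change of generators for the isomorphism assertions of (ii), and reduction to an Artin--Schreier equation $x^2+x=\delta$ for each non-equivalence assertion in (i) and in the final sentence of (ii).

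For the isomorphisms in (ii), I would realize the left-hand module as $Av_1\oplus Aw_1\oplus Av_2\oplus Aw_2$ with $\psi_\xi(v_i)=t^{2-2l_i}$, $\psi_\xi(w_i)=\epsilon_i t^{-2(m_i-l_i-1)}$, $\varphi(v_i,w_i)=t^{1-m_i}$, and the remaining $\varphi$-values zero, keep $\tilde v_i=v_i$, and look for $\tilde w_1=w_1+\lambda_1 t^{e_1}v_2$, $\tilde w_2=w_2+\lambda_2 t^{e_2}v_1$ realizing the right-hand module. Property (iv) of $\psi_\xi$ together with Lemma~\ref{lem-2} and the fact that Frobenius is bijective on $\tF_q$ force $e_1=l_1+l_2-m_1$, $e_2=l_1+l_2-m_2$ and $\lambda_1^2=\lambda_2^2=\delta$; both exponents are nonnegative precisely under the hypothesis $l_1+l_2\geq m_1$. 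The only possible obstruction to orthogonality, $\varphi(\tilde w_1,\tilde w_2)=\lambda_2 t^{e_2+1-m_1}+\lambda_1 t^{e_1+1-m_2}$, vanishes in characteristic $2$ because the two exponents coincide and $\lambda_1=\lambda_2$.

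For the non-equivalence assertions in (i) and in the last sentence of (ii), I would assume an isomorphism $g$ of form modules between two of the listed candidates and expand $gv_i, gw_i$ as the most general $A$-linear combinations of a basis of the target. The $\varphi$-equations $\varphi(gu,gu')=\varphi(u,u')$ force the leading $2\times 2$ block of scalar coefficients to be invertible, and then a comparison of the coefficient of $t^{-2(m_1-l_1-1)}$ (or the analogous exponent at which the $\delta$'s live for the pair under consideration) in the $\psi_\xi$-equations yields an equation of the shape $x^2+x\equiv\delta$ modulo potential cross terms. Under $l_1+l_2<m_1$ in (i), those cross terms would require a contribution proportional to $t^{l_1+l_2-m_1}\notin A$ and hence vanish, leaving a genuine Artin--Schreier equation with no solution in $\tF_q$; the same argument at the appropriate exponent yields the non-equivalence of the two pairs in (ii).

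The main obstacle will be the bookkeeping of exponents across the several sub-cases: for each comparison one must identify the single coefficient of $E_0$ at which the Artin--Schreier obstruction is visible, and verify that no term coming from the ``other'' summand or from the remaining freedom in $g$ can absorb it. I expect this calculation to parallel, and mildly generalize, the one at the end of the proof of Proposition~\ref{prop-nind}, with the inequality $l_1+l_2<m_1$ in (i) and $l_1+l_2\geq m_1$ in (ii) each playing the decisive role at precisely the exponent $-2(m_1-l_1-1)$.
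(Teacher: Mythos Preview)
Your proposal is correct and follows essentially the same approach as the paper: for the isomorphisms in (ii) the paper uses exactly your change of generators $gv_i=v_i'$, $gw_1=w_1'+\sqrt{\delta}\,t^{l_1+l_2-m_1}v_2'$, $gw_2=w_2'+\sqrt{\delta}\,t^{l_1+l_2-m_2}v_1'$, and for the non-equivalences it expands a putative isomorphism in coordinates and isolates the Artin--Schreier equation $e_{1,2l_1-m_1}^2+e_{1,2l_1-m_1}=\delta$ (with the usual variant when $l_1=m_1/2$), noting that the cross terms from the second summand would require the negative exponent $l_1+l_2-m_1$ and hence cannot interfere when $l_1+l_2<m_1$.
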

\begin{proof}
We show that ${^*W}_{l_1}^0(m_1)\oplus {^*W}_{l_2}^\delta(m_2)$ and
${^*W}_{l_1}^\delta(m_1)\oplus {^*W}_{l_2}^0(m_2)$ are equivalent if
and only if $l_1+l_2\geq m_1$. The other statements are proved
similarly. Assume ${^*W}_{l_1}^0(m_1)\oplus {^*W}_{l_2}^\delta(m_2)$
and ${^*W}_{l_1}^\delta(m_1)\oplus {^*W}_{l_2}^0(m_2)$ correspond to
$\xi$ and $\xi'$ respectively. Take $v_1,w_1$ and $v_2,w_2$ such
that ${^*W}_{l_1}^0(m_1)\oplus {^*W}_{l_2}^\delta(m_2)=Av_1\oplus
Aw_1\oplus Av_2\oplus Aw_2$ and
$\psi_\xi(v_1)=t^{2-2l_1},\psi_\xi(w_1)=0,\varphi(v_1,w_1)=t^{1-m_1},
\psi_\xi(v_2)=t^{2-2l_2},\psi_\xi(w_2)=\delta t^{-2(m_2-l_2-1)},
\varphi(v_2,w_2)=t^{1-m_2},\varphi(v_1,v_2)=\varphi(v_1,w_2)
=\varphi(w_1,v_2)=\varphi(w_1,w_2)=0. $ Similarly, take $v_1',w_1'$
and $v_2',w_2'$ such that ${^*W}_{l_1}^\delta(m_1)\oplus
{^*W}_{l_2}^0(m_2)=Av_1'\oplus Aw_1'\oplus Av_2'\oplus Aw_2'$ and
$\psi_{\xi'}(v_1')=t^{2-2l_1},\psi_{\xi'}(w_1')=t^{-2(m_1-l_1-1)},
\varphi(v_1',w_1')=t^{1-m_1},\psi_{\xi'}(v_2')=t^{2-2l_2},
\psi_{\xi'}(w_2')=0, \varphi(v_2',w_2')=t^{1-m_2},
\varphi(v_1',v_2')=\varphi(v_1',w_2')
=\varphi(w_1',v_2')=\varphi(w_1',w_2')=0. $

The form modules ${^*W}_{l_1}^0(m_1)\oplus {^*W}_{l_2}^\delta(m_2)$
and ${^*W}_{l_1}^\delta(m_1)\oplus {^*W}_{l_2}^0(m_2)$ are
equivalent if and only if there exists an $A$-module isomorphism
$g:V\rightarrow V$ such that
$\psi_{\xi'}(gv)=\psi_\xi(v),\varphi(gv,gw)=\varphi(v,w)$ for any
$v,w\in V$. Assume
\begin{eqnarray*}
&&gv_j=\sum\limits_{i=0}^{m_1-1}(a_{j,i}t^iv_1'+b_{j,i}t^iw_1')+\sum\limits_{i=0}^{m_2-1}(c_{j,i}t^{i}v_{2}'
+d_{j,i}t^{i}w_{2}'),\\
&&gw_j=\sum\limits_{i=0}^{m_1-1}(e_{j,i}t^iv_1'+f_{j,i}t^iw_1')+\sum\limits_{i=0}^{m_2-1}(g_{j,i}t^{i}v_{2}'
+h_{j,i}t^{i}w_{2}'),j=1,2.
\end{eqnarray*}
Then ${^*W}_{l_1}^0(m_1)\oplus {^*W}_{l_2}^\delta(m_2)$ and
${^*W}_{l_1}^\delta(m_1)\oplus {^*W}_{l_2}^0(m_2)$ are equivalent if
and only if the equations $\psi_{\xi'}(gv_i)=\psi_\xi(v_i),
\psi_{\xi'}(gw_i)=\psi_\xi(w_i),
\varphi(gv_i,gv_j)=\varphi(v_i,v_j),
\varphi(gv_i,gw_j)=\varphi(v_i,w_j),$\linebreak
$\varphi(gw_i,gw_j)=\varphi(w_i,w_j),i,j=1,2,$ have solutions.

If $l_1+l_2<m_1$, some equations are
$e_{1,2l_1-m_1}^2+e_{1,2l_1-m_1}=\delta$ (if $l_1\neq\frac{m_1}{2})$
or $ e_{1,0}^2+e_{1,0}f_{1,0}+\delta
f_{1,0}^2=0,a_{1,0}f_{1,0}+b_{1,0}e_{1,0}=1$ (if
$l_1=\frac{m_1}{2})$. As in the proof of Proposition
\ref{prop-nind}, we get "Artin-Schreier" equations which have no
solutions for $e_{1,2l_1-m_1}$ or $e_{1,0},f_{1,0}$ in $\tF_q$.
Hence ${^*W}_{l_1}^0(m_1)\oplus {^*W}_{l_2}^\delta(m_2)$ and
${^*W}_{l_1}^\delta(m_1)\oplus {^*W}_{l_2}^0(m_2)$ are not
equivalent.

If $l_1+l_2\geq m_1$, let
$gv_1=v_1',gw_1=w_1'+\sqrt{\delta}t^{l_1+l_2-m_1}v_2',gv_2=v_2',gw_2=w_2'+\sqrt{\delta}t^{l_1+l_2-m_2}v_1'$,
then this is a solution for the equations. It follows that
${^*W}_{l_1}^0(m_1)\oplus {^*W}_{l_2}^\delta(m_2)\cong
{^*W}_{l_1}^\delta(m_1)\oplus {^*W}_{l_2}^0(m_2)$.

\end{proof}
\begin{proposition}\label{prop-1}
The equivalence class of the module
\beq{^*W}_{l_1}(m_1)\oplus\cdots\oplus {^*W}_{l_s}(m_s),
[\frac{m_i}{2}]\leq l_i\leq m_i, l_i\geq l_{i+1},m_i-l_i\geq
m_{i+1}-l_{i+1}, i=1,\ldots,s,\eeq over $\bar{\tF}_q$ decomposes
into at most $2^k$
 equivalence classes over $\tF_q$, where \beq k=\#\{1\leq i\leq s|l_i+l_{i+1}<m_i \text{ and
 }l_i>\frac{m_i-1}{2}\}.\eeq
\end{proposition}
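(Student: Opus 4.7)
My plan is to enumerate the $\tF_q$-lifts of the given $\bar\tF_q$-module and then reduce them by applying Lemma~\ref{lem-2-1}(ii), together with a small extension handling the case where the next summand has $l_{i+1}=m_{i+1}$.

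By Proposition~\ref{prop-nind} and Remark~\ref{rmk-1}, each summand $^*W_{l_i}(m_i)$ has a unique $\tF_q$-form when $l_i\in\{\frac{m_i-1}{2},m_i\}$, and two inequivalent forms $^*W_{l_i}^0(m_i),{^*W}_{l_i}^\delta(m_i)$ otherwise. Put $\mathcal S=\{i:\frac{m_i-1}{2}<l_i<m_i\}$; an $\tF_q$-lift then corresponds to a labelling $(a_i)_{i\in\mathcal S}\in\{0,\delta\}^{\mathcal S}$, so there are at most $2^{|\mathcal S|}$ candidate equivalence classes.

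Next, for every pair $i<j$ in $\mathcal S$ with $l_i+l_j\geq m_i$, Lemma~\ref{lem-2-1}(ii) collapses the four configurations of $(a_i,a_j)$ to two classes (distinguished by the parity of $\#\{a_\cdot=\delta\}$). Using the monotonicity of the $l_i$ and the $m_i-l_i$, one checks that $\mathcal S$ decomposes into disjoint ``merging runs'' of consecutive indices (maximal intervals on which all adjacent bonds are merging), and inside each such run of length $r'$ the relations have exactly two orbits. To handle the case when a merging run of $\delta$-indices ends with a merging bond into a ``Type~A'' summand $^*W_{m_{i+1}}(m_{i+1})$, I would establish the auxiliary equivalence
\beq
{^*W}_{l_i}^0(m_i)\oplus {^*W}_{m_{i+1}}(m_{i+1})\cong {^*W}_{l_i}^\delta(m_i)\oplus {^*W}_{m_{i+1}}(m_{i+1})
\eeq
via an explicit change of basis patterned on the one in the proof of Lemma~\ref{lem-2-1}(ii), of the form $gv_1=v_1'$, $gw_1=w_1'+bv_1'+cv_2'+dw_2'$, with matching corrections on $gv_2,gw_2$. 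After matching $\psi_\xi$ and $\varphi$, the governing constraint can be arranged to reduce to an equation of the form $c_0^2=b_0^2+b_0+\delta$, which is solvable in $\tF_q$ because $\tF_q$ is perfect in characteristic~$2$, so every element (in particular $b_0^2+b_0+\delta$) is a square (note that $\delta$ is not in the Artin--Schreier image but is still a square).

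Combining these reductions, each merging run whose right endpoint $i$ satisfies $l_i+l_{i+1}<m_i$ (separating bond, or $i=s$) contributes a factor of $2$, while each merging run ending with a merging bond into a Type~A summand collapses to a single class. The separating-type right endpoints are exactly the indices $i$ counted by $k$: the pair of conditions $l_i>\frac{m_i-1}{2}$ and $l_i+l_{i+1}<m_i$ puts $i$ in $\mathcal S$ and identifies it as the right end of its merging run. Multiplying contributions gives the bound $2^k$. The main difficulty is the Type~A absorption step: the change-of-basis bookkeeping is intricate, and one must verify carefully that the constraint equations decouple into pure square equations (always solvable over $\tF_q$) rather than genuine Artin--Schreier equations (which would obstruct the desired isomorphism over $\tF_q$).
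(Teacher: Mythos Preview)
Your approach is essentially the paper's: both partition the summands into blocks determined by the ``breaking'' indices $\{i:l_i+l_{i+1}<m_i,\ l_i>(m_i-1)/2\}$ and use Lemma~\ref{lem-2-1}(ii) to propagate the $\delta$-labels within each block, reducing to at most two classes per block. You are more explicit than the paper about the ``Type~A absorption'' step (a run in $\mathcal S$ ending with a merging bond into a summand with $l_{i+1}=m_{i+1}$), which the paper folds into the phrase ``using Lemma~\ref{lem-2-1} one can easily show''. Your observation that this case genuinely needs attention is correct: it is exactly what forces the tail beyond $i_k$ to collapse to a single class.

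However, your proposed proof of the auxiliary equivalence is more complicated than necessary. The explicit isomorphism written in the proof of Lemma~\ref{lem-2-1}(ii), namely
\[
gv_1=v_1',\quad gw_1=w_1'+\sqrt{\delta}\,t^{\,l_1+l_2-m_1}v_2',\quad gv_2=v_2',\quad gw_2=w_2'+\sqrt{\delta}\,t^{\,l_1+l_2-m_2}v_1',
\]
continues to work verbatim when $l_2=m_2$: one simply reads ${^*W}_{m_2}^\delta(m_2)$ as ${^*W}_{m_2}^0(m_2)$ (since $\delta\,t^{-2(m_2-l_2-1)}=\delta\,t^{2}=0$ in $E_0$), so the source ${^*W}_{l_1}^0(m_1)\oplus{^*W}_{m_2}^\delta(m_2)$ is literally ${^*W}_{l_1}^0(m_1)\oplus{^*W}_{m_2}(m_2)$ and the formula gives the desired ${^*W}_{l_1}^0(m_1)\oplus{^*W}_{m_2}(m_2)\cong{^*W}_{l_1}^\delta(m_1)\oplus{^*W}_{m_2}(m_2)$ directly. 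No new ansatz or separate square/Artin--Schreier analysis is needed.
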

\begin{proof}
By Proposition \ref{prop-nind} and Remark \ref{rmk-1}, it is enough
to show that form modules of the form 
${^*W}_{l_1}^{\epsilon_1'}(m_1)\oplus\cdots\oplus
{^*W}_{l_s}^{\epsilon_s'}(m_{s})$, where $\epsilon_i'=0$ or
$\delta$, have at most $2^{k}$ equivalence classes. Suppose
$i_1,i_2,\ldots,i_{k}$ are such that $1\leq i_j\leq
s,l_{i_j}+l_{i_j+1}< m_{i_j},\
l_{i_j}>\frac{m_{i_j}-1}{2},j=1,\ldots,k$. Using Lemma \ref{lem-2-1}
one can easily show that a module of the above form is isomorphic to
one of the following modules: $V_1^{\epsilon_1}\oplus\cdots\oplus
V_{k}^{\epsilon_{k}}$, where
$V_{t}^{\epsilon_t}={^*W}_{l_{i_{t-1}+1}}^0(m_{i_{t-1}+1})\oplus\cdots\oplus
{^*W}_{l_{i_{t}-1}}^0(m_{i_{t}-1})\oplus
{^*W}_{m_{i_{t}}}^{\epsilon_t}(m_{i_{t}})$, $t=1,\ldots,k-1$,
$i_0=0$, and
$V_{k}={^*W}_{l_{i_{k-1}+1}}^0(m_{i_{k-1}+1})\oplus\cdots\oplus
{^*W}_{l_{i_{k}}}^{\epsilon_k}(m_{i_{k}})\oplus
{^*W}_{l_{i_{k}+1}}^{0}(m_{i_{k}+1})\oplus\cdots\oplus
{^*W}_{l_{s}}^0(m_{s})$, $\epsilon_t=0$ or $\delta$, $t=1,\ldots,k$.
Thus the proposition is proved.
\end{proof}

\begin{corollary}\label{cor-1}
The nilpotent orbit $(m_1)^2_{l_1}\cdots(m_s)^2_{l_s}$ in
$\Lg(\bar{{\tF}}_q)^*$ splits into at most $2^k$ $G(\tF_q)$-orbits
in $\Lg(\tF_q)^*$.
\end{corollary}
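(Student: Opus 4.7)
The plan is to deduce the corollary directly from Proposition \ref{prop-1} by passing between the orbit picture and the form module picture on both sides.

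First, I would invoke Corollary \ref{cor-2} applied with $\tk=\tF_q$: the nilpotent $G(\tF_q)$-orbits in $\Lg(\tF_q)^*$ are in bijection with equivalence classes of form modules $(V_\xi,\beta,\alpha_\xi)$ over $\tF_q$, where the equivalence is implemented by an $\tF_q$-linear vector space isomorphism preserving $\beta$ and $\alpha_\xi$. The same statement, applied with $\tk = \bar\tF_q$, identifies the $G(\bar\tF_q)$-orbit $(m_1)^2_{l_1}\cdots(m_s)^2_{l_s}$ with the equivalence class of ${^*W}_{l_1}(m_1)\oplus\cdots\oplus {^*W}_{l_s}(m_s)$ over $\bar\tF_q$.

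Next I would observe that the set of $\tF_q$-rational points of a given $G(\bar\tF_q)$-orbit is a (possibly empty) union of $G(\tF_q)$-orbits, and under the bijection above this union corresponds precisely to those $\tF_q$-equivalence classes of form modules that become equivalent to ${^*W}_{l_1}(m_1)\oplus\cdots\oplus {^*W}_{l_s}(m_s)$ after extension of scalars to $\bar\tF_q$. In other words, the number of $G(\tF_q)$-orbits into which the $\bar\tF_q$-orbit splits equals the number of $\tF_q$-equivalence classes of form modules lying in the given $\bar\tF_q$-equivalence class.

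Finally I would apply Proposition \ref{prop-1}, which asserts that this number is bounded above by $2^k$ with $k=\#\{1\leq i\leq s \mid l_i+l_{i+1}<m_i \text{ and } l_i>\tfrac{m_i-1}{2}\}$. Combining the three steps yields the corollary. I do not expect any real obstacle: the content of the corollary is essentially a translation of Proposition \ref{prop-1} through the orbit/form module dictionary of Corollary \ref{cor-2}, and the only point that requires a sentence of care is that the $G(\tF_q)$-orbit/form module bijection is compatible with base change, so that $\tF_q$-equivalence classes inside a fixed $\bar\tF_q$-class correspond exactly to the $G(\tF_q)$-orbits inside the fixed $G(\bar\tF_q)$-orbit.
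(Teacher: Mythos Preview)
Your proposal is correct and matches the paper's approach: the corollary is stated immediately after Proposition \ref{prop-1} with no separate proof, as it is the direct translation of that proposition through the orbit/form-module dictionary of Corollary \ref{cor-2}. Your write-up simply makes explicit the compatibility with base change that the paper leaves implicit.
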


\begin{proposition}\label{prop-symp}
The number of nilpotent $G(\tF_q)$-orbits in $\Lg(\tF_q)^*$ is at
most $p_2(n)$.
\end{proposition}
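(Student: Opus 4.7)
The plan is to combine Corollary~\ref{cor-1} with a combinatorial identity. By Corollary~\ref{cor-1}, the number of nilpotent $G(\tF_q)$-orbits in $\Lg(\tF_q)^*$ is at most $\sum 2^{k(m,l)}$, summed over the symbols $(m_1)^2_{l_1}\cdots(m_s)^2_{l_s}$ parameterizing $G(\bar\tF_q)$-orbits. Writing $\mu_i = l_i$ and $\nu_i = m_i - l_i$, these symbols correspond to pairs of partitions $(\mu,\nu)$ with $|\mu|+|\nu|=n$ and $\nu_i \leq \mu_i + 1$, and the index count of Proposition~\ref{prop-1} becomes $k(\mu,\nu) = \#\{i : \mu_{i+1} < \nu_i \leq \mu_i\}$. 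It therefore suffices to prove the identity $\sum_{(\mu,\nu)} 2^{k(\mu,\nu)} = p_2(n)$.

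I would establish this identity by constructing an explicit bijection $\Phi$ between decorated pairs $((\mu,\nu),S)$, with $S$ an arbitrary subset of the special indices of $(\mu,\nu)$, and unrestricted pairs $(\alpha,\beta)$ of partitions of $n$. Encode each side by interleaved sequences of odd positive integers: $(c_i,d_i) := (2\mu_i+1, 2\nu_i-1)$ on the constrained side, and $(a_i,b_i) := (2\alpha_i+1, 2\beta_i-1)$ on the unrestricted side. Constrained pairs are then characterized by $c_i \geq c_{i+1}$, $d_i \geq d_{i+1}$, and $c_i \geq d_i$; special indices are those with $c_i > d_i$ and $d_i \geq c_{i+1}$ (with the convention $c_{s+1} = 1$); and unrestricted pairs require only the monotonicity of $a$ and $b$. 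Given $((\mu,\nu),S)$, enlarge $S$ to the smallest $\widetilde S \supseteq S$ stable under the rule ``if $i+1 \in \widetilde S$ and $d_i < c_{i+1}$, then $i \in \widetilde S$,'' and let $\Phi$ transpose $c_i$ with $d_i$ at every $i \in \widetilde S$. The inverse identifies the violations $\widetilde V = \{i : b_i > a_i\}$ in $(\alpha,\beta)$, unswaps to recover $(c,d)$, and sets $S = \{i \in \widetilde V : d_i \geq c_{i+1}\}$.

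The main obstacle is the verification that $\Phi$ is a well-defined bijection. Well-definedness amounts to showing that after transposition both $c^*$ and $d^*$ remain weakly decreasing; this reduces to a four-case analysis in which the nontrivial case ($i \notin \widetilde S$ but $i+1 \in \widetilde S$) is handled by the observation that the propagation rule would have added $i$ to $\widetilde S$ were $d_i < c_{i+1}$, so necessarily $d_i \geq c_{i+1}$. Bijectivity hinges on the equality $\widetilde V = \widetilde S$: one inclusion is immediate from the construction, and the other follows by a chain argument, noting that $i \in \widetilde V$ with $d_i < c_{i+1}$ forces $i+1 \in \widetilde V$ (otherwise $a_i < a_{i+1}$, contradicting $\alpha$ being a partition), and that this chain terminates at an index already in $S$ because of the convention $c_{s+1}=1$. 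Once the bijection is established, $\sum 2^{k(\mu,\nu)} = p_2(n)$ and the proposition follows.
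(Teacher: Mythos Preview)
Your proposal is correct and follows essentially the same strategy as the paper: invoke Corollary~\ref{cor-1} to reduce to the combinatorial identity $\sum_{(\mu,\nu)\in\Delta}2^{k(\mu,\nu)}=p_2(n)$, then prove that identity by exhibiting a bijection between decorated constrained pairs and unrestricted pairs of partitions of $n$.

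The presentations of the bijection differ in form but not in substance. The paper partitions the indices into blocks $[r_{i-1}+1,r_i]$ cut at the special indices and, for each $\epsilon_i\in\{1,2\}$, either keeps a block or performs the substitution $\mu_j\mapsto\nu_j-1$, $\nu_j\mapsto\mu_j+1$ throughout that block; it then simply asserts that ``one can verify'' the resulting $2^k$ pairs exhaust all unordered pairs of partitions. Your encoding $(c_i,d_i)=(2\mu_i+1,2\nu_i-1)$ turns that substitution into the transposition $c_i\leftrightarrow d_i$, and your propagation rule reproduces exactly the paper's block structure: if propagation from a special index $r_i$ halts at some $j'$ in the interior of its block, then $d_{j'}\ge c_{j'+1}$ forces $c_{j'}=d_{j'}$ (since $j'$ is not special), and then inductively $c_j=d_j$ for all $j\le j'$ in the block, so the omitted swaps are trivial. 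Thus the two bijections coincide. What your write-up adds is an explicit inverse and a genuine verification of well-definedness and bijectivity, which the paper leaves to the reader; this is a real improvement in rigor over the published proof.
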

\begin{proof}
Recall that we have  mapped the nilpotent orbits in
$\Lg(\bar{\tF}_q)^*$ bijectively to the set
$\{(\mu,\nu)||\mu|+|\nu|=n,\nu_i\leq \mu_i+1\}:=\Delta$. By
Corollary \ref{cor-1}, a nilpotent orbit in $\Lg(\bar{\tF}_q)^*$
corresponding to $(\mu,\nu)\in\Delta,\mu=(\mu_1,\mu_2,\ldots,\mu_s),
\nu=(\nu_1,\nu_2,\ldots,\nu_s)$ splits into at most $2^{k}$ orbits
in $\Lg(\tF_q)^*$, where $k=\#\{1\leq i\leq
s|\mu_{i+1}+1\leq\nu_i<\mu_i+1\}$. We associate to the orbit $2^{k}$
pairs of partitions as follows. Suppose $r_1,r_2,...,r_k$ are such
that $\mu_{r_{i}+1}+1\leq\nu_{r_i}<\mu_{r_i}+1,i=1,...,k$ and let
\begin{eqnarray*}
&&\mu^{1,i}=(\mu_{r_{i-1}+1},\ldots,\mu_{r_i}),
\nu^{1,i}=(\nu_{r_{i-1}+1},\ldots,\nu_{r_i}),\\
&&\mu^{2,i}=(\nu_{r_{i-1}+1}-1,\ldots,\nu_{r_i}-1),
\nu^{2,i}=(\mu_{r_{i-1}+1}+1,\ldots,\mu_{r_i}+1),i=1,\ldots,k,\\
&&\mu^{k+1}=(\mu_{r_{k}+1},\ldots,\mu_{s}),
\nu^{k+1}=(\nu_{r_{k}+1},\ldots,\nu_{s}). \end{eqnarray*} We
associate to $(\mu,\nu)$ the pairs of partitions
$(\tilde{\mu}^{\epsilon_1,\ldots,\epsilon_k},
\tilde{\nu}^{\epsilon_1,\ldots,\epsilon_k})$,
$$\tilde{\mu}^{\epsilon_1,\ldots,\epsilon_k}=
(\mu^{\epsilon_1,1},\mu^{\epsilon_2,2},\ldots,\mu^{\epsilon_k,k},\mu^{k+1}),
\tilde{\nu}^{\epsilon_1,\ldots,\epsilon_k}=
(\nu^{\epsilon_1,1},\nu^{\epsilon_2,2},\ldots,\nu^{\epsilon_k,k},\nu^{k+1}),$$
where $\epsilon_i\in\{1,2\},i=1,\ldots,k$. Notice that the pairs of
partitions $(\tilde{\mu}^{\epsilon_1,\ldots,\epsilon_k},
\tilde{\nu}^{\epsilon_1,\ldots,\epsilon_k})$ are distinct and among
them only $(\mu,\nu)=(\tilde{\mu}^{1,\ldots,1},
\tilde{\nu}^{1,\ldots,1})$ is in $\Delta$. One can verify that the
set of all pairs of partitions constructed as above for all
$(\mu,\nu)\in\Delta$ is in bijection with the set
$\{(\mu,\nu)||\mu|+|\nu|=n\}$, which has cardinality $p_2(n)$. It
follows that the number of nilpotent orbits in $\Lg(\tF_q)^*$ is
less that $p_2(n)$.
\end{proof}

\section{odd orthogonal groups}
In this section we study the nilpotent orbits in $\Lg^*$ where $G$
is an odd orthogonal group.
\subsection{}\label{ssec-1}
Let $V$ be a vector space of dimension $2n+1$ over $\tk$ equipped
with a non-degenerate quadratic form $\alpha:V\rightarrow \tk$. Let
$\beta:V\times V\rightarrow\tk$ be the bilinear form associated to
$\alpha$. The odd orthogonal group is defined as $G=O(2n+1)=\{g\in
GL(V)\ |\ \alpha(gv)=\alpha(v), \forall\ v \in V\}$ and its Lie
algebra is $\Lg=\Lo(2n+1)=\{x\in \mathfrak{gl}(V)\ |\ \beta(xv,v)=0,
\forall\ v\in V\text{ and tr}(x)=0\}$.  Let $\xi$ be an element of
$\Lg^*$. There exists $X\in \mathfrak{gl}(V)$ such that
$\xi(x)=\tr(X x)$ for any $x\in\Lg$. We define a bilinear form
$$\beta_\xi:V\times V\rightarrow
\tk, (v,w)\mapsto \beta(Xv,w)+\beta(v,Xw).$$

\begin{lemma}\label{lem-3-1}
 The bilinear form $\beta_\xi$ is well-defined.
\end{lemma}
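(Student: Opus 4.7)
Plan: I follow the strategy of Lemma~\ref{lem-7}. Consider the linear map
$$
\Phi:\End_\tk(V)\longrightarrow \Lambda^2(V^*),\qquad X\mapsto \beta_X,\qquad \beta_X(v,w)=\beta(Xv,w)+\beta(v,Xw).
$$
In characteristic $2$ the form $\beta$ is symmetric, so $\beta_X$ is symmetric, and $\beta_X(v,v)=2\beta(Xv,v)=0$ places it in $\Lambda^2(V^*)$. The map $\Phi$ is manifestly linear, and a short calculation using the $G$-invariance of $\beta$ shows it is $G=O(V)$-equivariant. The lemma will follow once I show that $\ker\Phi$ coincides with the orthogonal complement $\Lg^\perp$ of $\Lg=\Lo(V)$ in $\End_\tk(V)$ under the non-degenerate trace form: the difference of two choices of $X$ representing the same $\xi\in\Lg^*$ lies in $\Lg^\perp$, and if this equals $\ker\Phi$ then $\beta_X$ is unchanged.

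To prove $\ker\Phi=\Lg^\perp$, I work in the orthogonal decomposition $V=V_0\oplus W$, where $V_0$ is the one-dimensional radical of $\beta$ and $\beta$ restricts to a non-degenerate symplectic form $\gamma$ on $W$. Writing $X\in\End(V)$ in block form $\left(\begin{smallmatrix}a&\lambda\\\mu&Y\end{smallmatrix}\right)$, a straightforward computation characterizes the two relevant subspaces: $X\in\ker\Phi$ iff $\mu=0$ and $Y\in\mathfrak{sp}(W)$, with $a,\lambda$ arbitrary; and $x\in\Lg$ iff $\mu=0$, $\gamma(Yv,v)=0$ for all $v\in W$, and $a+\tr(Y)=0$. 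By polarization the condition $\gamma(Yv,v)=0$ forces $Y\in\mathfrak{sp}(W)$; in characteristic $2$ every element of $\mathfrak{sp}(W)$ has trace zero (write it as $\left(\begin{smallmatrix}A&B\\C&A^T\end{smallmatrix}\right)$ with $B,C$ symmetric), so in fact $a=0$ for $x\in\Lg$. The two sides have the same dimension: $\dim\ker\Phi=1+2n+\dim\mathfrak{sp}(W)=(n+1)(2n+1)=(2n+1)^2-n(2n+1)=\dim\Lg^\perp$.

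It therefore suffices to prove $\ker\Phi\subseteq\Lg^\perp$. Given $X=\left(\begin{smallmatrix}a&\lambda\\0&Y\end{smallmatrix}\right)\in\ker\Phi$ and $x=\left(\begin{smallmatrix}0&\lambda'\\0&Y'\end{smallmatrix}\right)\in\Lg$, multiplying out yields $\tr(Xx)=\tr(YY')$ with $Y\in\mathfrak{sp}(W)$ and $Y'\in\mathfrak{sp}(W)$ satisfying $\gamma(Y'v,v)=0$. The main obstacle is the trace identity $\tr(YY')=0$ under these hypotheses, which I handle by writing $Y=\left(\begin{smallmatrix}A&B\\C&A^T\end{smallmatrix}\right)$ and $Y'=\left(\begin{smallmatrix}A'&B'\\C'&A'^T\end{smallmatrix}\right)$ in a symplectic basis of $W$, with $B,C,B',C'$ symmetric and, additionally, $B',C'$ of zero diagonal (this is the translation of $\gamma(Y'v,v)=0$). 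A quick trace computation collapses $\tr(YY')$ to $\tr(BC')+\tr(CB')$, and each summand vanishes by the elementary characteristic-$2$ identity that $\tr(ST)=0$ whenever $S$ is symmetric and $T$ is symmetric with zero diagonal. This closes the argument.
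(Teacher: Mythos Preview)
Your proof is correct and follows exactly the strategy sketched in the paper's own proof: show that the linear map $\Phi:\End_\tk(V)\to\wedge^2(V^*)$ has kernel equal to $\Lg^\perp$ under the trace form. The paper simply asserts this coincidence without details, whereas you carry it out explicitly via the block decomposition $V=V_0\oplus W$ and the symplectic matrix calculation; everything checks out, including the key char-$2$ trace identity $\tr(ST)=0$ for $S$ symmetric and $T$ symmetric with zero diagonal.
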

\begin{proof}Recall that the space $\text{Alt}(V)$ of
alternate bilinear forms on $V$ coincides with the second exterior
power $\wedge^2(V^*)$ of $V^*$. Consider the following linear
mapping \beq \Phi:\End_\tk(V)\rightarrow
\wedge^2(V^*)=\text{Alt}(V),\quad X\mapsto\beta_X\eeq where
$\beta_X(v,w)=\beta(Xv,w)+\beta(v,Xw)$ for $v,w\in V$. It is easy to
see that $\Phi$ is $G=O(V)$-equivariant. One can show that
$\ker\Phi$ coincides with the orthogonal complement $\Lg^\p$ of
$\Lg=\mathfrak{o}(V)$ in $\End_\tk(V)$ under the nondegenerate trace
form. It follows that $\beta_\xi$ does not depend on the choice of
$X$.
\end{proof}

Assume $\xi\in\Lg^*$. We denote $(V_\xi,\alpha,\beta_\xi)$ the
vector space $V$ equipped with the quadratic form $\alpha$ and the
bilinear form $\beta_\xi$.

\begin{definition}
Assume $\xi,\zeta\in\Lg^*$. We say that $(V_\xi,\alpha,\beta_\xi)$
and $(V_\zeta,\alpha,\beta_\zeta)$ are equivalent if there exists a
vector space isomorphism $g:V_\xi\rightarrow V_\zeta$ such that
$\alpha(gv)=\alpha(v)$ and $\beta_\zeta(gv,gw)=\beta_\xi(v,w)$ for
any $v,w\in V_\xi$.
\end{definition}
\begin{lemma}
Two elements $\xi,\zeta\in\Lg^*$ lie in the same $G$-orbit if and
only if there exists $g\in G$ such that
$\beta_\zeta(gv,gw)=\beta_\xi(v,w) $ for any $ v,w\in V$.
\end{lemma}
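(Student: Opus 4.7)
The plan is to mirror the argument used for the analogous symplectic lemma, substituting the bilinear map $\beta_\xi$ for the quadratic form $\alpha_\xi$ and exploiting Lemma \ref{lem-3-1}.

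First, I would unwind the definition of the $G$-action: $\xi$ and $\zeta$ lie in the same $G$-orbit iff there exists $g\in G$ with $g.\xi(x)=\xi(g^{-1}xg)=\zeta(x)$ for all $x\in\Lg$. Writing $\xi(x)=\tr(X_\xi x)$ and $\zeta(x)=\tr(X_\zeta x)$, the trace identity $\tr(X_\xi g^{-1}xg)=\tr(gX_\xi g^{-1}x)$ shows that this holds iff $\tr((gX_\xi g^{-1}-X_\zeta)x)=0$ for every $x\in\Lg$, i.e.\ iff
\[
gX_\xi g^{-1}-X_\zeta\ \in\ \Lg^\p,
\]
the orthogonal complement of $\Lg$ with respect to the (nondegenerate) trace form on $\End_\tk(V)$.

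Next I would apply the key ingredient from the proof of Lemma \ref{lem-3-1}: the map $\Phi:\End_\tk(V)\to\text{Alt}(V)$, $X\mapsto\beta_X$, has $\ker\Phi=\Lg^\p$. Hence the displayed condition is equivalent to $\beta_{gX_\xi g^{-1}}=\beta_{X_\zeta}=\beta_\zeta$. Since $g\in G=O(V)$ preserves $\beta$, a direct computation gives
\[
\beta_{gX_\xi g^{-1}}(v,w)=\beta(gX_\xi g^{-1}v,w)+\beta(v,gX_\xi g^{-1}w)=\beta_\xi(g^{-1}v,g^{-1}w),
\]
so the condition becomes $\beta_\xi(g^{-1}v,g^{-1}w)=\beta_\zeta(v,w)$ for all $v,w\in V$. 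Substituting $gv,gw$ for $v,w$ yields the equivalent form $\beta_\zeta(gv,gw)=\beta_\xi(v,w)$ in the statement.

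The argument is essentially routine once Lemma \ref{lem-3-1} is available; the only point deserving attention is the identification of $\ker\Phi$ with $\Lg^\p$, which is the key step allowing us to pass from a trace condition on $\Lg$ to an identity of bilinear forms on $V$. Everything else is a formal manipulation using the $G$-equivariance of $\Phi$ and the nondegeneracy of the trace form.
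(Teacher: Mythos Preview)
Your argument is correct and is essentially the same as the paper's: both reduce the orbit condition to $gX_\xi g^{-1}+X_\zeta\in\Lg^\p=\ker\Phi$ via the trace form, invoke the identification from Lemma~\ref{lem-3-1}, and then use that $g\in O(V)$ preserves $\beta$ to rewrite $\beta_{gX_\xi g^{-1}}(v,w)=\beta_\xi(g^{-1}v,g^{-1}w)$. Your write-up is simply more explicit than the paper's one-line appeal to ``similar argument as in the proof of Lemma~\ref{lem-3-1}''.
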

\begin{proof}
Assume $\xi(x)=\tr(X x),\zeta(x)=\tr(X' x)$, $\forall x\in\Lg$.
Using similar argument as in the proof of Lemma \ref{lem-3-1}, one
can see that $\xi,\zeta$ lie in the same $G$-orbit if and only if
there exists $g\in G$ such that $\beta((gX g^{-1}+
X')v,w)+\beta(v,(gX g^{-1}+X')w)=0,\ \forall\ v,w\in V$.
\end{proof}

\begin{corollary}
Two elements $\xi,\zeta\in\Lg^*$ lie in the same $G$-orbit if and
only if $(V_\xi,\alpha,\beta_\xi)$ is equivalent to
$(V_\zeta,\alpha,\beta_\zeta)$.
\end{corollary}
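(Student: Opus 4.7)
The plan is to derive this corollary immediately from the preceding lemma by translating its conclusion into the language of equivalences of form modules $(V_\xi,\alpha,\beta_\xi)$ and $(V_\zeta,\alpha,\beta_\zeta)$. The preceding lemma asserts that $\xi$ and $\zeta$ lie in the same $G$-orbit if and only if there exists $g\in G$ such that $\beta_\zeta(gv,gw)=\beta_\xi(v,w)$ for all $v,w\in V$. By the definition of $G=O(2n+1)$ recalled in Subsection \ref{ssec-1}, the condition $g\in G$ is literally the statement that $g$ is a vector space automorphism of $V$ satisfying $\alpha(gv)=\alpha(v)$ for all $v\in V$. So the corollary should just be a repackaging of the lemma.

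For the forward implication, I would assume that $\xi$ and $\zeta$ lie in the same $G$-orbit and invoke the lemma to produce $g\in G$ with $\beta_\zeta(gv,gw)=\beta_\xi(v,w)$. Because $g\in G=O(V)$, the identity $\alpha(gv)=\alpha(v)$ holds automatically. Hence $g:V_\xi\to V_\zeta$ is an isomorphism of underlying vector spaces that simultaneously preserves $\alpha$ and intertwines $\beta_\xi$ with $\beta_\zeta$, which is exactly the definition of equivalence of the form modules.

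For the reverse implication, I would assume $(V_\xi,\alpha,\beta_\xi)$ is equivalent to $(V_\zeta,\alpha,\beta_\zeta)$ via some $g:V_\xi\to V_\zeta$. The condition $\alpha(gv)=\alpha(v)$ puts $g$ into $G=O(V)$, and the condition $\beta_\zeta(gv,gw)=\beta_\xi(v,w)$ is precisely the hypothesis needed to apply the preceding lemma. Therefore $\xi$ and $\zeta$ lie in the same $G$-orbit.

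There is no real obstacle here: the entire content of the corollary is packed into the previous lemma together with the definition of the orthogonal group, and the proof amounts to observing that membership in $G$ coincides with preservation of the quadratic form $\alpha$. The role of the corollary is purely organizational, setting up the reduction of the orbit classification to the equivalence classification of the triples $(V_\xi,\alpha,\beta_\xi)$, in parallel with Corollary \ref{cor-2} for the symplectic case.
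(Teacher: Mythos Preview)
Your proposal is correct and matches the paper's approach: the paper states the corollary with no explicit proof, treating it as an immediate consequence of the preceding lemma together with the definition of equivalence, which is exactly the unpacking you give.
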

\subsection{}\label{ssec-3-3}

From now on we assume that $\xi\in \Lg^*$ is nilpotent. Let
$(V_\xi,\alpha,\beta_\xi)$ be defined as in subsection \ref{ssec-1}.
Let $\lambda$ be a formal parameter. There exists a smallest integer
$m$ such that there exists a set of vectors $v_0,\ldots,v_m$ for
which
$\beta_\xi(\sum_{i=0}^{m}v_i\lambda^i,v)+\lambda\beta(\sum_{i=0}^{m}v_i\lambda^i,v)=0$
for any $v\in V$ (see Lemma \ref{lem-n-3} below). Lemmas
\ref{lem-n-3}-\ref{lem-e1} in the following extend some results in
\cite{LS}. (Most parts of the proofs are included in \cite{LS}. We
add some conditions about the quadratic form $\alpha$.)
\begin{lemma}\label{lem-n-3}
The vectors $v_0,\ldots,v_m$ (up to multiple) and $m\geq 0$ are
uniquely determined by $\beta_\xi$ and $\beta$. Moreover,
$\beta(v_i,v_j)=\beta_\xi(v_i,v_j)=0$, $i,j=0,\ldots,m$,
$\alpha(v_i)=0$, $i=0,\ldots,m-1$ and we can assume $\alpha(v_m)=1.$
\end{lemma}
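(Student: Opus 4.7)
The plan is to extract coefficients of $\lambda$ in the defining equation, which converts it into the equivalent system
\beq
\beta_\xi(v_0,v)=0,\quad \beta_\xi(v_i,v)+\beta(v_{i-1},v)=0\ \ (1\le i\le m),\quad \beta(v_m,v)=0
\eeq
for all $v\in V$; in particular $v_0\in\text{rad}(\beta_\xi)$ and $v_m\in\text{rad}(\beta)$. Existence of the smallest $m$ and uniqueness of $(v_0,\ldots,v_m)$ up to a common scalar are standard pencil-theoretic facts for the alternating pencil $\beta_\xi+\lambda\beta$, and are proved as in \cite{LS}: minimality forces $v_0\ne0$ and $v_m\ne0$, since otherwise the sequence truncates to a shorter one; and any two minimal sequences must be proportional throughout, else some $\tk$-combination has a vanishing zeroth entry, contradicting minimality.

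For the orthogonality, substituting $v=v_j$ into the recurrence and using that alternating forms are symmetric in characteristic $2$ yields
\beq
\beta(v_a,v_b)=\beta_\xi(v_{a+1},v_b)=\beta_\xi(v_b,v_{a+1})=\beta(v_{b-1},v_{a+1})=\beta(v_{a+1},v_{b-1})
\eeq
for $a\in[0,m-1]$, $b\in[1,m]$, so $\beta(v_a,v_b)$ is constant along antidiagonals $a+b=\text{const}$. Evaluating at a boundary pair---either $(0,a+b)$ combined with $\beta(v_0,v_k)=0$ derived from $\beta_\xi(v_0,\cdot)=0$ via the $k$-th equation, or $(a+b-m,m)$ using $v_m\in\text{rad}(\beta)$---gives $\beta(v_a,v_b)=0$; then $\beta_\xi(v_i,v_j)=\beta(v_{i-1},v_j)=0$ by the recurrence.

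The quadratic-form claims are the new content beyond \cite{LS}. Since $\dim V=2n+1$ is odd and $\alpha$ is non-degenerate, $\text{rad}(\beta)$ is one-dimensional and $\alpha$ is nonzero on any nonzero vector of it (otherwise $\alpha$ itself would be degenerate); in particular $\alpha(v_m)\ne0$, and rescaling gives $\alpha(v_m)=1$. For $i<m$: the span $W=\text{span}(v_0,\ldots,v_m)$ is totally $\beta$-isotropic, so $\alpha|_W$ is additive with $\alpha(cw)=c^2\alpha(w)$; the remaining freedom in the sequence (modifications $v_i\mapsto v_i+u_i$ with compatible adjustments elsewhere, drawn from the homogeneous pencil solution space) shifts $\alpha(v_i)$ by a square, and perfectness of $\tk$ lets us extract $\sqrt{\alpha(v_i)}$ to enforce $\alpha(v_i)=0$. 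The main obstacle is pinning down precisely this ambiguity and verifying that it suffices for the simultaneous normalization of $\alpha$ on every $v_i$ with $i<m$ while preserving the established orthogonality.
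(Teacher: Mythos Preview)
Your pencil-theoretic approach handles uniqueness and the orthogonality relations correctly, and your argument that $\alpha(v_m)\neq 0$ is fine. The genuine gap is in the claim $\alpha(v_i)=0$ for $i<m$.

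The problem is that your own uniqueness argument already shows there is \emph{no} ``remaining freedom'': once $v_m$ is fixed (up to scalar, as the radical of $\beta$), the entire sequence $(v_0,\ldots,v_m)$ is determined. Concretely, if $(v_0',\ldots,v_m')$ is another minimal solution, then $v_m'=cv_m$ and the difference $(v_0'-cv_0,\ldots,v_{m-1}'-cv_{m-1},0)$ satisfies the same recurrence with vanishing top entry; peeling off the leading zero and invoking minimality forces the whole difference to vanish. So the ``homogeneous pencil solution space'' you want to draw on is trivial in degree $<m$, and there is nothing to use to shift $\alpha(v_i)$ by a square. The values $\alpha(v_i)$ are rigid invariants of the pencil (up to a common square from rescaling), and you must \emph{prove} they vanish, not normalize them away.

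This is exactly where the nilpotency hypothesis on $\xi$ enters, and your argument never uses it. The paper's proof chooses a cocharacter $\phi:\mathbf{G}_m\to G$ with $\xi\in\Lg^*(\phi,>0)$ and an $X\in\End_\tk(V)(\phi,>0)$ representing $\xi$, then builds the chain $w_0=v_m,\,w_1,\ldots,w_m=v_0$ so that $w_i\in V(\phi,\ge i)$. For $i>0$ one has $w_i\in V(\phi,>0)$, and $G$-invariance of $\alpha$ forces $\alpha$ to vanish on every positive-weight vector; this gives $\alpha(v_i)=\alpha(w_{m-i})=0$ for $i<m$ automatically. The same weight argument also yields $\beta(w_i,w_j)=\beta_\xi(w_i,w_j)=0$ in one stroke, though your antidiagonal computation is a legitimate alternative for that part. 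Without some input equivalent to the cocharacter (i.e., without using that $\xi$ is nilpotent), the vanishing $\alpha(v_i)=0$ simply does not follow from the pencil data.
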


\begin{proof} Since $\xi$ is nilpotent, we can find a
cocharacter $\phi:\textbf{G}_m\rightarrow G$ for which
$\xi\in\Lg^*(\phi,>0)$. Moreover, we can find $X\in
\End_\tk(V)(\phi,>0)$ such that $\xi(x)=\tr(Xx)$ for all $x\in\Lg$.

Let $w_0$ be a non-zero vector such that $\beta(w_0,-)=0$. Then
$w_0$ is unique up to a multiple. We have that $w_0\in V(\phi,0)$.
If $\beta_\xi(w_0,v)=0$ for all $v\in V$, then $m=0$ and we are
done.

Now assume $\beta_\xi(w_0,-)$ does not vanish on $V$. Fix $v\in V$.
It is easy to show that if $\beta_\xi(v,w_0)=0$, then there is
$v'\in V$ for which $\beta(v',-)=\beta_\xi(v,-)$. Moreover, if
$\tilde{w}_{i-1}\in V(\phi,\geq i-1)$, then one can show that for
all $\tilde{w}_i$ such that
$\beta(\tilde{w}_i,-)=\beta_\xi(\tilde{w}_{i-1},-)$, we have
$\tilde{w}_{i}\in V(\phi,\geq i)+\tk w_0$.

We define inductively a set of vectors $w_i$, $i=0,\ldots,m$, such
that $\beta(w_0,-)=0$, $\beta(w_i,-)=\beta_\xi(w_{i-1},-)$,
$\beta_\xi(w_{m},-)=0$ and $m$ is minimal. We have defined $w_0$.
Assume $w_{i-1}\in V(\phi,\geq i-1)$ is found. Then
$\beta_\xi(w_{i-1},w_0)=\beta(w_{i/2},w_{i/2})=0$ if $i$ is even and
$\beta_\xi(w_{i-1},w_0)=\beta_\xi(w_{(i-1)/2},w_{(i-1)/2})=0$ if $i$
is odd. We define $w_i$ to be the unique vector such that
$\beta(w_i,-)=\beta_\xi(w_{i-1},-)$ and $w_{i}\in V(\phi,\geq i)$.
One readily sees that we find a unique (up to multiple) set of
vectors $w_i$, $i=0,\ldots,m$, such that $\beta(w_0,-)=0$,
$\beta(w_i,-)=\beta_\xi(w_{i-1},-)$, $\beta_\xi(w_{m},-)=0$ and $m$
is minimal.

Since all $w_i\in V(\phi,\geq 0)$, we see that $\beta(w_i,w_j)=0$.
Since for $i>0$, $w_i\in V(\phi,> 0)$,  we see that $\alpha(w_i)=0$
for $i>0$. Since $X\in \End_\tk(\phi,>0)$, it follows that
$\beta_\xi(w_i,w_j)=0$. We take $v_i=w_{m-i}$. Moreover, we can
assume $\alpha(v_m)=\alpha(w_0)=1$.
\end{proof}

\begin{lemma}\label{lem-vu}
Assume $m\geq 1$. There exist $u_0,u_1,\ldots,u_{m-1}$ such that
$\beta(v_i,u_j)=\beta_\xi(v_{i+1},u_j)=\delta_{i,j}$,
$\beta(u_i,u_j)=\beta_\xi(u_i,u_j)=0$, $i,j=0,\ldots,m-1$,
$\alpha(u_i)=0,i=0,\ldots,m-1$, and furthermore,
$\beta(u_i,v)=\beta_\xi(u_{i-1},v)$, $i=1,\ldots,m-1$, for all $v\in
V$.
\end{lemma}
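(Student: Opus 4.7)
The plan is to construct $u_{m-1}$ using the non-degeneracy of $\beta$, and then obtain $u_{m-2},\ldots,u_0$ by descending induction via a recursion that mirrors the chain relation satisfied by the $v_i$'s in Lemma~\ref{lem-n-3}. I would work with the cocharacter $\phi:\textbf{G}_m\rightarrow G$ introduced in the proof of Lemma~\ref{lem-n-3}, under which $v_i\in V(\phi,\geq m-i)$, and keep each $u_j$ inside $V(\phi,\leq -1)$. The latter condition automatically yields $\alpha(u_j)=0$: since $\phi$ preserves $\alpha$, the form $\alpha$ vanishes on any weight space $V(\phi,k)$ with $k\neq 0$, and the potential cross terms between strictly negative weight components vanish because $\beta$ pairs only opposite weights.

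For the base case, observe that $v_0,\ldots,v_{m-1}$ all lie in $V(\phi,\geq 1)$ and are linearly independent, while $v_m$ spans the one-dimensional radical of $\beta$ in the $(2n+1)$-dimensional space $V$. Since $\beta$ restricts to a non-degenerate pairing between $V(\phi,\leq -1)$ and $V(\phi,\geq 1)$, a vector $u_{m-1}\in V(\phi,\leq -1)$ with $\beta(v_i,u_{m-1})=\delta_{i,m-1}$ exists. For the inductive step, for $j=m-2,\ldots,0$, I would define $u_j$ by solving
\beq \beta_\xi(u_j,v)=\beta(u_{j+1},v)\quad\text{for all }v\in V,\eeq
choosing a solution that lies in $V(\phi,\leq -1)$. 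Solvability reduces to the fact that $\beta(u_{j+1},-)$ annihilates $\ker\beta_\xi$, which in turn follows from the inductive hypothesis (in particular $\beta(v_0,u_{j+1})=0$) together with a weight-space analysis of $\beta_\xi$ under $\phi$, paralleling the one carried out in the proof of Lemma~\ref{lem-n-3}.

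To verify all the required identities, the key algebraic tools are the two chain shifts: $\beta_\xi(v_{i+1},u_j)=\beta(v_i,u_j)$, which comes from the relation $\beta(v_j,-)=\beta_\xi(v_{j+1},-)$ of Lemma~\ref{lem-n-3}, and $\beta(u_{j+1},w)=\beta_\xi(u_j,w)$, which is the defining relation of $u_j$. Combining them gives $\beta(v_i,u_j)=\beta(v_{i-1},u_{j-1})$ for $i,j\geq 1$, so by induction all the pairings $\beta(v_i,u_j)=\delta_{i,j}$ follow from the base case $\beta(v_{m-1},u_{m-1})=1$ together with the boundary identities $\beta(v_m,-)=0$ and $\beta_\xi(v_0,-)=0$. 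The identity $\beta_\xi(v_{i+1},u_j)=\delta_{i,j}$ is then automatic. The isotropy identities $\beta(u_i,u_j)=\beta_\xi(u_i,u_j)=0$ follow from the same two shifts, reducing to $\beta(u_k,u_k)=0=\beta_\xi(u_k,u_k)$, both of which hold because the two forms are alternating in characteristic~$2$.

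The main obstacle will be the rigorous justification of the inductive step, namely showing that the equation $\beta_\xi(u_j,-)=\beta(u_{j+1},-)$ always admits a solution lying in $V(\phi,\leq -1)$. This requires a sufficiently precise description of $\ker\beta_\xi$ and of how $\beta_\xi$ shifts the $\phi$-grading, and is the counterpart for the $u$-chain of the weight-space argument already done for the $v$-chain in the proof of Lemma~\ref{lem-n-3}.
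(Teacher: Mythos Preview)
Your recursion runs in the opposite direction from the paper's, and that reversal creates a genuine obstacle rather than a cosmetic one. The paper starts from $u_0$ (chosen with $\beta(u_0,v_0)=1$, $\beta(u_0,v_i)=0$ for $i\geq 1$, $\alpha(u_0)=0$) and builds \emph{upward}: given $u_{i-1}$, it defines $u_i$ via $\beta(u_i,-)=\beta_\xi(u_{i-1},-)$. Solvability here means the right-hand side must vanish on $\ker\beta=\tk v_m$, a single condition that reduces by the two chain shifts to $\beta(u_0,v_{m-i})=0$, which holds since $m-i\geq 1$. The uniqueness is fixed by imposing $\alpha(u_i)=0$ (the ambiguity is exactly $\tk v_m$ and $\alpha(v_m)=1$).

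Your downward step instead requires $\beta(u_{j+1},-)$ to lie in the image of $v\mapsto\beta_\xi(v,-)$, i.e.\ to vanish on $\ker\beta_\xi$. But $\ker\beta_\xi$ need not be one-dimensional: once the decomposition $V=V_{2m+1}\oplus W$ is in hand one sees that $\ker\beta_\xi$ contains $\ker T_\xi|_W$, which is typically large (for example the entire $W$ when $T_\xi|_W=0$). Your claim that solvability ``follows from $\beta(v_0,u_{j+1})=0$ together with a weight-space analysis'' implicitly assumes $\ker\beta_\xi\cap V(\phi,\geq 1)\subset\tk v_0$, but this is neither obvious nor established; the analogous statement in Lemma~\ref{lem-n-3} goes in the \emph{other} direction (it lifts through $\beta$, not through $\beta_\xi$). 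Moreover, even granting solvability in $V$, you would still need the solution to be adjustable into $V(\phi,\leq -1)$ by an element of $\ker\beta_\xi$, a second nontrivial condition. The asymmetry between the one-dimensional $\ker\beta$ and the possibly large $\ker\beta_\xi$ is precisely why the paper's upward induction is the right one; your downward induction does not avoid this and, as written, has a gap at the solvability step.
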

\begin{proof} Choose $u_0$ such that $\beta(u_0,v_i)=0$,
$i=1,\ldots,m-1$, $\beta(u_0,v_0)=1$ and $\alpha(u_0)=0$ (such $u_0$
exists). We find inductively a set of vectors $u_i$, $1\leq i\leq
m-1$ such that $\beta(u_i,-)=\beta_\xi(u_{i-1},-)$ and
$\alpha(u_i)=0$. Assume $u_{i-1},$ $1\leq i\leq m-1$ is found. Since
$\beta_\xi(u_{i-1},v_m)=\beta(u_0,v_{m-i})=0$ (note that $m-i\geq
1$), there exist a unique $u_i$ such that
$\beta(u_i,-)=\beta_\xi(u_{i-1},-)$ and $\alpha(u_i)=0$. (The
existence is as in the proof of Lemma \ref{lem-n-3} and the
uniqueness is guaranteed by the condition $\alpha(u_i)=0$.)

Now it follows that if $i<j$,
$\beta(v_i,u_j)=\beta_\xi(v_{i-1},u_{j-2})=\beta_\xi(v_0,u_{j-i-1})=0$;
 if $i>j$,
 $\beta(v_i,u_j)=\beta(v_{i+1},v_{j+1})=\beta(v_m,u_{j-i+m})=0$;
if $i=j$,
$\beta(v_i,u_i)=\beta(v_{i-1},u_{i-1})=\beta(v_0,u_{0})=1$.
Moreover, $\beta(u_i,u_{i+2k})=\beta(u_{i+k},u_{i+k})=0$,
$\beta(u_i,u_{i+2k+1})=\beta_\xi(u_{i+k},u_{i+k})=0$. It follows
that $\beta(u_i,u_j)=0$. Similarly $\beta_\xi(u_i,u_j)=0$. The
$u_i$'s satisfy the conditions desired.
\end{proof}

\begin{lemma} The vectors $v_0,v_1,\ldots,v_m, u_0,u_1,\ldots,u_{m-1}$ are linearly independent.
\end{lemma}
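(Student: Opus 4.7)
The plan is to derive the linear independence of $v_0,\dots,v_m,u_0,\dots,u_{m-1}$ directly from the orthogonality relations established in Lemmas \ref{lem-n-3} and \ref{lem-vu}, together with the normalization $\alpha(v_m)=1$. I start by assuming a linear relation $\sum_{i=0}^{m}a_i v_i+\sum_{j=0}^{m-1}b_j u_j=0$ and peel off the coefficients in three stages.

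First, I pair this relation with $v_k$ for $k=0,\dots,m-1$ under $\beta$. Lemma \ref{lem-n-3} gives $\beta(v_k,v_i)=0$ for all $i$, and Lemma \ref{lem-vu} gives $\beta(v_k,u_j)=\delta_{k,j}$, so the only surviving term yields $b_k=0$. This disposes of all the $b_j$'s at once. Next, I pair the residual relation $\sum a_i v_i=0$ with each $u_k$, $k=0,\dots,m-1$, under $\beta$. Using $\beta(u_k,v_i)=\delta_{k,i}$ for $i\le m-1$ together with the observation that $v_m=w_0$ lies in the radical of $\beta$ (hence $\beta(u_k,v_m)=0$), this kills $a_0,\dots,a_{m-1}$ and leaves only $a_m v_m=0$. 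Finally, applying $\alpha$ to this last relation and using $\alpha(v_m)=1$ gives $a_m^2=0$, hence $a_m=0$.

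There is no real obstacle: the argument is bookkeeping once the orthogonality data is in hand. The one point worth flagging is that under the indexing convention $v_i=w_{m-i}$ used in the proof of Lemma \ref{lem-n-3}, it is $v_m$ (not $v_0$) that lies in the radical of $\beta$, so $\beta$-pairings alone are blind to the coefficient $a_m$; the normalization $\alpha(v_m)=1$ is precisely what closes the argument at the last step.
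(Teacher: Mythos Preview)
Your proof is correct and follows essentially the same strategy as the paper: assume a linear relation, then peel off coefficients by pairing with the $v_k$'s and $u_k$'s under $\beta$. The only difference is the final step: the paper extracts $a_m$ by pairing with $u_{m-1}$ under $\beta_\xi$ (using $\beta_\xi(v_{i+1},u_j)=\delta_{i,j}$ from Lemma~\ref{lem-vu}), whereas you use the normalization $\alpha(v_m)=1$ instead---either route works, and yours has the mild advantage of not invoking the $\beta_\xi$-relations at all (indeed, once you know $a_m v_m=0$ and $v_m\neq 0$, you are already done without even computing $\alpha$).
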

\begin{proof}
Assume $\sum_{i=0}^ma_iv_i+\sum_{i=0}^{m-1}b_iu_i=0$. Then
$\beta(\sum_{i=0}^ma_iv_i+\sum_{i=0}^{m-1}b_iu_i,u_j)=a_j=0$,
$\beta(\sum_{i=0}^ma_iv_i+\sum_{i=0}^{m-1}b_iu_i,v_j)=b_j=0,\
j=0,\ldots,m-1$ and
$\beta_\xi(\sum_{i=0}^ma_iv_i+\sum_{i=0}^{m-1}b_iu_i,u_{m-1})=a_m=0$.
\end{proof}
Let $V_{2m+1}$ be the vector subspace of $V$ spanned by
$v_0,v_1,\ldots,v_m, u_0,u_1,\ldots,u_{m-1}$. If $m=0$, let $W$ be a
complementary subspace of $V_{2m+1}$ in V. If $m\geq 1$, let
$W=\{w\in V_\xi|\beta(w,v)=\beta_{\xi}(w,v)=0,\ \forall\ v\in
V_{2m+1}\}$.
\begin{lemma}\label{lem-8}
We have $V_\xi=V_{2m+1}\perp_{\beta,\beta_{\xi}}W$.
\end{lemma}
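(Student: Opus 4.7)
The plan is to prove this in two pieces: first that $V_{2m+1}\cap W=0$, and second that $V_\xi=V_{2m+1}+W$. The orthogonality requirement of the perpendicular sum is already built into the definition of $W$ (for $m\ge 1$); while for $m=0$ one has $\beta(v_0,-)=0$ on all of $V$ by definition of $v_0=w_0$, and $\beta_\xi(v_0,-)=0$ on all of $V$ by the defining minimality of $m$, so $v_0$ is totally orthogonal to everything and any vector-space complement $W$ trivially works. Hence I can assume $m\geq 1$ from here on.

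For $V_{2m+1}\cap W=0$, I would first identify the radicals of $\beta|_{V_{2m+1}}$ and $\beta_\xi|_{V_{2m+1}}$. Using the pairings established in Lemma \ref{lem-vu} together with the relations $\beta(v_j,-)=\beta_\xi(v_{j+1},-)$ (from the proof of Lemma \ref{lem-n-3}) and $\beta(u_j,-)=\beta_\xi(u_{j-1},-)$ (from Lemma \ref{lem-vu}), one computes that $\beta(v_m,V_{2m+1})=0$ and $\beta_\xi(v_0,V_{2m+1})=0$ (the latter since $v_0=w_m$ and $\beta_\xi(w_m,-)=0$), and that these vectors exhaust the respective radicals. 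Since $v_0$ and $v_m$ are part of the basis, they are linearly independent, so the intersection of the two radicals — which contains $V_{2m+1}\cap W$ — is zero.

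For $V_\xi=V_{2m+1}+W$, I would consider the linear map
\[
T:V_\xi\longrightarrow V_{2m+1}^*\oplus V_{2m+1}^*,\qquad v\longmapsto\bigl(\beta(v,-)|_{V_{2m+1}},\,\beta_\xi(v,-)|_{V_{2m+1}}\bigr),
\]
whose kernel is exactly $W$. By the preceding step, $T|_{V_{2m+1}}$ is injective, so it has image of dimension $2m+1$. It therefore suffices to show that $T(V_\xi)=T(V_{2m+1})$, i.e.\ that every $v\in V_\xi$ has the same pairings with $V_{2m+1}$ as some element $v'\in V_{2m+1}$; then $v-v'\in W$ and we are done by dimension count. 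Given $v\in V_\xi$, I would produce the candidate $v'=\sum_{i=0}^m a_iv_i+\sum_{i=0}^{m-1}b_iu_i$ by setting $a_j=\beta(v,u_j)$ for $j\le m-1$, $a_m=\beta_\xi(v,u_{m-1})$, and $b_j=\beta(v,v_j)$ for $j\le m-1$.

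The main obstacle will be the bookkeeping in this last step: one must verify that this $v'$ recovers all $2(2m+1)$ pairings of $v$ with $v_0,\ldots,v_m,u_0,\ldots,u_{m-1}$ under both $\beta$ and $\beta_\xi$. The equations that determine $a_j$ and $b_j$ are overdetermined (each appears to be defined twice, once via $\beta$ and once via $\beta_\xi$), and the required consistency reduces precisely to the identities $\beta(v_j,-)=\beta_\xi(v_{j+1},-)$ and $\beta(u_j,-)=\beta_\xi(u_{j-1},-)$ on all of $V$, together with $\beta(v_m,-)=0$ and $\beta_\xi(v_0,-)=0$. These were all recorded in the proofs of Lemmas \ref{lem-n-3} and \ref{lem-vu}, so the verification should go through routinely once the right matching of indices is set up.
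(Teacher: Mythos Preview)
Your proof is correct and follows essentially the same route as the paper: both arguments reduce the $4m+2$ orthogonality conditions defining $W$ to $2m+1$ independent ones via the relations $\beta(v_{i-1},-)=\beta_\xi(v_i,-)$, $\beta(u_i,-)=\beta_\xi(u_{i-1},-)$, $\beta(v_m,-)=0$, $\beta_\xi(v_0,-)=0$, and then combine this with $V_{2m+1}\cap W=0$ and a dimension count. The only cosmetic differences are that the paper phrases the reduction directly as the bound $\dim W\geq\dim V_\xi-(2m+1)$ while you package it as $T(V_\xi)=T(V_{2m+1})$ via an explicit projection, and your radical description of $V_{2m+1}\cap W=0$ is a rewording of the paper's direct pairing computation.
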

\begin{proof}
Assume $m=0$. Lemma follows since by the definition of $v_0$ we have
$\beta(v_0,v)=\beta_\xi(v_0,v)=0$ for any $v\in V$. Assume $m\geq
1$. A vector $w$ is in $W$ if and only if
$\beta(w,v_i)=\beta_\xi(w,v_i)=0$, $i=0,\ldots,m$ and
$\beta(w,u_i)=\beta_\xi(w,u_i)=0$, $i=0,\ldots,m-1$. By our choice
of $v_i$ and $u_i$'s, we have $\beta(v_m,w)=\beta_\xi(v_0,w)=0$,
$\beta(w,v_i)=\beta_\xi(w,v_{i+1})$ and
$\beta(w,u_i)=\beta_\xi(w,u_{i-1})$. Hence $w\in W$ if and only if
$\beta(w,u_i)=\beta(w,v_i)=0$, $i=0,\ldots,m-1$ and
$\beta_\xi(w,u_{m-1})=0$.
 Thus $\dim W\geq\dim V_\xi-(2m+1)$. Now we show $V_{2m+1}\cap W=\{0\}$. Let $w=\sum_{i=0}^ma_iv_i+
 \sum_{i=0}^{m-1}b_iu_i\in V_{2m+1}\cap W$. We have $\beta(w,u_j)=
 a_j=0$, $\beta(w,v_j)=b_j=0,\ j=0,\ldots,m-1$, and
 $\beta_\xi(w,u_{m-1})=a_m=0$. Hence together with the dimension
 condition we get the conclusion.
\end{proof}

Let $V_\xi=V_{2m+1}\oplus W$ be as in Lemma \ref{lem-8}. Then we get
a $2(n-m)$ dimensional vector space $W$, equipped with a quadratic
form $\alpha|_W$ and a bilinear form $\beta_\xi|_{W\times W}$. It is
easily seen that the quadratic form $\alpha|_W$ is non-defective on
$W$, namely, $\beta|_{W\times W}$ is non-degenerate. Define a linear
map $T_\xi: W\rightarrow W$ by $$\beta(T_\xi w,w')=\beta_\xi(w,w'),
w,w'\in W.$$

\begin{lemma}\label{lem-e1}
Assume $V_\xi=V_{2m_\xi+1,\xi}\oplus W_\xi$ is equivalent to
$V_\zeta=V_{2m_\zeta+1,\zeta}\oplus W_\zeta$, then $m_\xi=m_\zeta$
and $(W_\xi,\beta,\beta_\xi)$ is equivalent to
$(W_\zeta,\beta,\beta_\zeta)$.
\end{lemma}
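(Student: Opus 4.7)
The plan is to proceed in two stages: first show $m_\xi = m_\zeta$ using an intrinsic characterization of $m$, and then build an equivalence of form modules $W_\xi \cong W_\zeta$.

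First, I would invoke Lemma \ref{lem-n-3}, which characterizes $m_\xi$ as the smallest non-negative integer admitting vectors $v_0,\ldots,v_{m_\xi}\in V_\xi$ (not all zero) satisfying
\beq
\beta_\xi\bigl(\textstyle\sum_i v_i\lambda^i,v\bigr)+\lambda\,\beta\bigl(\textstyle\sum_i v_i\lambda^i,v\bigr)=0 \quad\forall\, v\in V_\xi,
\eeq
with the analogous statement for $m_\zeta$. An equivalence $g:V_\xi\to V_\zeta$ preserves $\beta$ (since it preserves $\alpha$) and intertwines $\beta_\xi$ with $\beta_\zeta$, so it carries such sequences in $V_\xi$ bijectively onto such sequences in $V_\zeta$. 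Hence $m_\xi=m_\zeta$, and I shall denote the common value by $m$.

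Next, given an equivalence $g:V_\xi\to V_\zeta$, I would apply $g$ to the distinguished vectors $v_i,u_j$ provided by Lemmas \ref{lem-n-3} and \ref{lem-vu} for $V_\xi$ to obtain vectors $gv_i,gu_j\in V_\zeta$ satisfying the same defining relations with $\beta_\zeta$ in place of $\beta_\xi$. Consequently $g(V_{2m+1,\xi})$ qualifies as a ``$V_{2m+1}$'' subspace of $V_\zeta$, and by Lemma \ref{lem-8} its joint $(\beta,\beta_\zeta)$-orthogonal complement is exactly $g(W_\xi)$, which is a valid companion ``$W$'' for $V_\zeta$. The restriction $g|_{W_\xi}$ is then an equivalence of form modules $(W_\xi,\beta,\beta_\xi)\cong(g(W_\xi),\beta,\beta_\zeta)$.

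It remains to show that any two valid complements inside $V_\zeta$ are equivalent as form modules, so that in particular $g(W_\xi)\cong W_\zeta$. Exploiting the uniqueness-up-to-scalar of the $v_i$'s from Lemma \ref{lem-n-3}, I would first rescale so that the two decompositions of $V_\zeta$ share the same $v_i$'s; the two admissible families of $u_j$'s must then differ by vectors whose $\beta$- and $\beta_\zeta$-pairings against the $v_i$'s and previous $u_k$'s are tightly constrained by the inductive construction of Lemma \ref{lem-vu}. From these differences one can construct an explicit transvection-type correction from one complement to the other, and a direct computation confirms that it preserves $\alpha$ and $\beta_\zeta$. Composing with $g|_{W_\xi}$ yields the desired equivalence $W_\xi\cong W_\zeta$.

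The main obstacle I anticipate is this last step, showing that the form-module class of $W$ is truly intrinsic: the $u_j$'s involve genuine inductive choices, and building a single correction map that is simultaneously an isometry for $\alpha$, $\beta$, and $\beta_\zeta$ amounts to a Witt-type extension argument adapted to the joint structure $(\alpha,\beta_\zeta)$. The bookkeeping needed to patch together the successive modifications of the $u_j$'s while respecting all three forms is where I expect most of the work to lie.
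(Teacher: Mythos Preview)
Your argument for $m_\xi=m_\zeta$ matches the paper's. For the second part, however, the paper takes a much shorter route that avoids the Witt-type extension you anticipate.

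Rather than passing through the intermediate complement $g(W_\xi)\subset V_\zeta$ and then arguing that any two admissible complements are equivalent, the paper defines the comparison map directly: $\varphi:W_\xi\to W_\zeta$ sends $w$ to the $W_\zeta$-component of $gw$. The point is that for $w\in W_\xi$ one has
\[
gw=\sum_i a_i v_i^2 + w',\qquad w'\in W_\zeta,
\]
with \emph{no} $u_i^2$-components. Indeed, since $g^{-1}v_i^2$ lies in the span of the $v_j^1$'s (uniqueness in Lemma~\ref{lem-n-3}), we have $\beta_\xi(g^{-1}v_i^2,w)=0$, hence $\beta_\zeta(v_i^2,gw)=0$, which kills the coefficients of the $u_i^2$. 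Because the $v_i^2$ are isotropic for both $\beta$ and $\beta_\zeta$ and orthogonal to $W_\zeta$, the projection $\varphi$ automatically preserves $\beta$ and carries $\beta_\xi$ to $\beta_\zeta$; nondegeneracy of $\beta|_{W_\xi}$ gives injectivity, and a dimension count finishes.

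Your plan would work, but the transvection bookkeeping you flag as the main obstacle is entirely bypassed by this projection trick. Note also that the lemma only asserts equivalence of $(W_\xi,\beta,\beta_\xi)$, not of $(W_\xi,\alpha|_{W_\xi},\beta_\xi)$; your worry about simultaneously handling $\alpha$ is not needed here (compatibility with $\alpha$ enters only later, in Lemma~\ref{lem-9}).
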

\begin{proof}
Assume $V_{2m_\xi+1,\xi}=\text{span}\{v_i^1,u_i^1\}$ and
$V_{2m_\zeta+1,\zeta}=\text{span}\{v_i^2,u_i^2\}$, where
$v_i^1,v_i^2$ are as in Lemma \ref{lem-n-3} and $u_i^1,u_i^2$ are as
in Lemma \ref{lem-vu}. By assumption, there exists
$$g:V_{2m_\xi+1,\xi}\oplus W_\xi\rightarrow
V_{2m_\zeta+1,\zeta}\oplus W_\zeta$$ such that
$\beta(gv,gw)=\beta(v,w)$ and $\beta_\zeta(gv,gw)=\beta_\xi(v,w)$.
Since for all $v\in V$, $\beta_\zeta(\sum_{i=0}^{m_\zeta}
v_i^2\lambda^i,v)+\lambda\beta(\sum_{i=0}^{m_\zeta}
v_i^2\lambda^i,v)=0$, we get $\beta_\xi( \sum_{i=0}^{m_\zeta}
g^{-1}v_i^2\lambda^i,v)+\lambda\beta(\sum_{i=0}^{m_\zeta}
g^{-1}v_i^2\lambda^i,v)=0$. Hence by Lemma \ref{lem-n-3},
$m_\xi=m_\zeta$ and $g^{-1}v_i^2\in V_{2m_\xi+1,\xi}$.

For $w\in W_\xi$, suppose $gw=\sum a_iv_i^2+\sum b_iu_i^2+w'$ where
$w'\in W_\zeta$. Since $g^{-1}v_i^2\in V_{2m_\xi+1,\xi}$, we have
$\beta_\xi(g^{-1}v_i^2,w)=0$, $i=0,\ldots,m_\xi$. It follows that
$\beta_\zeta(v_i^2,gw)=b_i=0$, $i=0,\ldots,m_\xi-1$. We get $gw=\sum
a_iv_i^2+w'$. Define $$\varphi:W_\xi\rightarrow W_\zeta,\  w\mapsto
gw \text{ projects to } W_\zeta.$$ Let $w_1,w_2\in W_{\xi}$. Assume
$gw_1=\sum a_i^1v_i^2+w_1'$, $gw_2=\sum a_i^2v_i^2+w_2'$. We have
$\beta(gw_1,gw_2)=\beta(w_1',w_2')=\beta(w_1,w_2),
\beta_\zeta(gw_1,gw_2)=\beta_\zeta(w_1',w_2')=\beta_\xi(w_1,w_2)$,
namely, $\beta(\varphi(w_1),\varphi(w_2))=\beta(w_1,w_2)$,
$\beta_\zeta(\varphi(w_1),\varphi(w_2))=\beta_\xi(w_1,w_2)$. Now we
show that $\varphi$ is a bijection. Let $w\in W_\xi$ be such that
$\varphi(w)=0$. Then for any $v\in W_\xi$,
$\beta(v,w)=\beta(\varphi(v),\varphi(w))=0$. Since
$\beta|_{W_\xi\times W_\xi}$ is nondegenerate, $w=0$. Thus $\varphi$
is injective. On the other hand, we have $\dim W_\xi=\dim W_\zeta$.
Hence $\varphi$ is bijective.
\end{proof}

\begin{corollary}
Assume $V_\xi=V_{2m_\xi+1,\xi}\oplus W_\xi$ is equivalent to
$V_\zeta=V_{2m_\zeta+1,\zeta}\oplus W_\zeta$, then $m_\xi=m_\zeta$
and $T_\xi$, $T_\zeta$ are conjugate.
\end{corollary}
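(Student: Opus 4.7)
The plan is to observe that the first assertion, $m_\xi = m_\zeta$, is already contained in Lemma \ref{lem-e1}, which furthermore produces an explicit vector space isomorphism $\varphi: W_\xi \to W_\zeta$ satisfying $\beta(\varphi w_1, \varphi w_2) = \beta(w_1, w_2)$ and $\beta_\zeta(\varphi w_1, \varphi w_2) = \beta_\xi(w_1, w_2)$ for all $w_1, w_2 \in W_\xi$. So the content of the corollary beyond Lemma \ref{lem-e1} is only that this $\varphi$ intertwines $T_\xi$ with $T_\zeta$, and therefore witnesses conjugacy of $T_\xi$ and $T_\zeta$ as linear endomorphisms after identifying $W_\xi$ with $W_\zeta$ via $\varphi$.

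The key step I would carry out is the following direct computation. For any $w_1, w_2 \in W_\xi$, using the defining identity for $T_\zeta$ on $W_\zeta$ and then the equivariance of $\varphi$, one has
\begin{equation*}
\beta(T_\zeta \varphi w_1, \varphi w_2) = \beta_\zeta(\varphi w_1, \varphi w_2) = \beta_\xi(w_1, w_2) = \beta(T_\xi w_1, w_2) = \beta(\varphi T_\xi w_1, \varphi w_2).
\end{equation*}
Since $\varphi$ is surjective and $\beta$ is nondegenerate on $W_\zeta$ (this nondegeneracy is exactly the fact, noted right after Lemma \ref{lem-8}, that $\alpha|_W$ is non-defective on the orthogonal complement), it follows that $T_\zeta \varphi = \varphi T_\xi$, i.e.\ $\varphi^{-1} T_\zeta \varphi = T_\xi$, which is the claimed conjugacy.

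There is no real obstacle here: the work has already been done in Lemmas \ref{lem-n-3}, \ref{lem-vu}, \ref{lem-8}, and \ref{lem-e1}, which isolate the anisotropic/degenerate summand $V_{2m+1}$ and produce the equivalence on the complementary piece $W$. The only thing to be careful about is to invoke the nondegeneracy of $\beta$ on the quotient $W$ (not on all of $V$), since $\beta$ is degenerate on $V$ in the odd orthogonal setup. Once that is in hand, the conjugacy of $T_\xi$ and $T_\zeta$ is immediate from the adjunction defining these operators.
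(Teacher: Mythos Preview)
Your proof is correct and is precisely the argument the paper has in mind: the corollary is stated without proof because it follows immediately from Lemma \ref{lem-e1} together with the defining relation $\beta(T_\xi w,w')=\beta_\xi(w,w')$ and the nondegeneracy of $\beta|_{W\times W}$. Your computation $\beta(T_\zeta\varphi w_1,\varphi w_2)=\beta_\zeta(\varphi w_1,\varphi w_2)=\beta_\xi(w_1,w_2)=\beta(T_\xi w_1,w_2)=\beta(\varphi T_\xi w_1,\varphi w_2)$ is exactly the intended step.
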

\begin{lemma}\label{lem-5}
Assume $\xi$ is nilpotent. Then $T_\xi$ is nilpotent.
\end{lemma}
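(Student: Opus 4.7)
My plan is to mimic the proof of Lemma~\ref{lem-nilp1}. By Hilbert's criterion for instability, since $\xi\in\Lg^*$ is nilpotent there is a cocharacter $\phi\colon\textbf{G}_m\to G$ with $\lim_{a\to 0}\phi(a).\xi=0$, and we may choose $X\in\End_\tk(V)(\phi;>0)$ with $\xi(x)=\tr(Xx)$ for all $x\in\Lg$. The identical calculation to the one in Lemma~\ref{lem-nilp1} gives, for all $v,w\in V$,
\beq
\beta_\xi(\phi(a)^{-1}v,\phi(a)^{-1}w)=\beta\bigl((\Ad(\phi(a))X)v,w\bigr)+\beta\bigl(v,(\Ad(\phi(a))X)w\bigr),
\eeq
whose right-hand side tends to $0$ as $a\to 0$ since $\Ad(\phi(a))X\to 0$.

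I would then reduce to the symplectic case of Lemma~\ref{lem-nilp1}. Let $\pi_W\colon V\to W$ denote the projection along the decomposition $V=V_{2m+1}\oplus W$ of Lemma~\ref{lem-8}, and set $X':=\pi_W\circ X|_W\in\End(W)$. Using $W\perp V_{2m+1}$ with respect to $\beta$, a short calculation yields $\beta(X'w,w')+\beta(w,X'w')=\beta_\xi(w,w')$ for all $w,w'\in W$. Thus $X'$ represents an element $\eta\in\mathfrak{sp}(W,\beta|_W)^*$ via $\eta(y)=\tr(X'y)$, and the symplectic operator attached to $\eta$ in the sense of subsection~\ref{ssec-1-1} coincides with $T_\xi$. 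It therefore suffices to show that $\eta$ is nilpotent in $\mathfrak{sp}(W,\beta|_W)^*$, since Lemma~\ref{lem-nilp1} applied to $Sp(W,\beta|_W)$ then gives the nilpotency of $T_\xi$.

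The main obstacle is producing a cocharacter $\phi'\colon\textbf{G}_m\to Sp(W,\beta|_W)$ witnessing the nilpotency of $\eta$. Choosing the $v_i,u_i$ of Lemmas~\ref{lem-n-3} and~\ref{lem-vu} as $\phi$-weight vectors makes both $V_{2m+1}$ and $V_{2m+1}^{\perp_\beta}=\tk v_m\oplus W$ $\phi$-stable; the quotient $V_{2m+1}^{\perp_\beta}/\tk v_m$ then inherits a non-degenerate symplectic form and a $\phi$-action, and is canonically symplectically isomorphic to $(W,\beta|_W)$ via $[v]\mapsto\pi_W(v)$. This transports $\phi$ to a cocharacter $\phi'$ of $Sp(W,\beta|_W)$, under which $X'$ has strictly positive weights (inherited from those of $X$); hence $\Ad(\phi'(a))X'\to 0$ and so $\phi'(a).\eta\to 0$, completing the proof.
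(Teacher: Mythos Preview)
Your approach is essentially the paper's: use Hilbert's criterion to get a cocharacter $\phi$, choose $X\in\End_\tk(V)(\phi;>0)$, pass to $W$, and rerun the computation of Lemma~\ref{lem-nilp1} using that $\beta|_{W\times W}$ is nondegenerate. The paper is more direct—it simply restricts $\phi(a)$ to $W$, notes $T_\xi\in\Lo(W)$, and repeats the calculation verbatim—without your detour through an auxiliary $\eta\in\mathfrak{sp}(W)^*$ and a black-box appeal to Lemma~\ref{lem-nilp1}; that detour is harmless but adds nothing. You correctly flag producing a cocharacter acting on $W$ as the crux; the paper just asserts that $\phi(a)$ restricts to $O(W)$, and your quotient $V_{2m+1}^{\perp_\beta}/\tk v_m\cong W$ is one way to make this precise, though the claim that the $v_i$ of Lemma~\ref{lem-n-3} can be taken as $\phi$-weight vectors would need more care, since those vectors are essentially unique once $\phi$ is fixed.
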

\begin{proof}We replace $G=Sp(V)$ by $G=O(V)$ and
$\beta$ by $\beta|_{W\times W}$ in the proof of Lemma
\ref{lem-nilp1}. Moreover, when apply $\Ad(\phi(a))$ to $T_\xi$, we
regard $\phi(a)$ as a linear map restricting to the subspace $W$ of
$V$ so that $\phi(a)\in O(W)$. Also notice that
$T_\xi\in\Lo(W)=\{x\in\mathfrak{gl}(W)|\beta(xw,w)=0,\ \forall\ w\in
W\}$, since $\beta(T_\xi w,w)=\beta_\xi(w,w)=0$ for all $w\in W$.
Then the same argument as in the proof of Lemma \ref{lem-nilp1}
applies since $\beta|_{W\times W}$ is nondegenerate.
\end{proof}

\subsection{} In this subsection assume $\tk$  is algebraically
closed. By Lemma \ref{lem-8}, every form module
$(V_\xi,\alpha,\beta_\xi)$ can be reduced to the form
$V_\xi=V_{2m+1}\oplus W_\xi$, where $V_{2m+1}$ has a basis
$\{v_i,i=0,\ldots,m,u_i,i=0,\ldots,m-1\}$ as in Lemmas \ref{lem-n-3}
and \ref{lem-vu}. We have that $(V_\xi,\alpha,\beta_\xi)$ is
determined by $V_{2m+1}$ and
$(W_\xi,\alpha|_{W_\xi},\beta_\xi|_{W_\xi\times W_\xi})$. Now we
consider $(W_\xi,\alpha|_{W_\xi},\beta_\xi|_{W_\xi\times
W_\xi}):=(W,\alpha|_{W},\beta_\xi|_{W\times W})$ and let $T_\xi:
W\rightarrow W$ be defined as in subsection \ref{ssec-3-3}. It
follows that $\beta_\xi|_{W\times W}$ is determined by $T_\xi$ and
$\beta|_{W\times W}$.

Since $T_\xi\in\Lo(W)$ is nilpotent (Lemma \ref{lem-5}), we can view
$W$ as a $k[T_\xi]-$module. By the classification of nilpotent
orbits in $\Lo(W)$ (see \cite{Hes}, sections 3.5 and 3.9), $W$ is
equivalent to $W_{l_1}(m_1)\oplus\cdots\oplus W_{l_s}(m_s)$ for some
$m_1\geq\cdots\geq m_s$, $l_1\geq\cdots\geq l_s$ and
$m_1-l_1\geq\cdots\geq m_s-l_s$, where $[(m_i+1)/2]\leq l_i\leq m_i$
(notation as in \cite{X}, Proposition 2.3).

\begin{lemma}
Assume $m<k-l$. We have $V_{2m+1}\oplus W_l(k)\cong V_{2m+1}\oplus
W_{k-m}(k)$.
\end{lemma}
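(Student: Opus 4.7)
The strategy is to exhibit an explicit basis-change isomorphism $V_{2m+1}\oplus W_l(k)\to V_{2m+1}\oplus W_{k-m}(k)$, in the spirit of Lemma \ref{lem-1}(ii) for the symplectic case. Fix a basis $\{v_0,\ldots,v_m,u_0,\ldots,u_{m-1}\}$ of $V_{2m+1}$ satisfying the conclusions of Lemmas \ref{lem-n-3} and \ref{lem-vu}; in particular $\alpha(v_m)=1$, all other $v_i$ and $u_j$ are isotropic for $\alpha$, the pairings under $\beta$ are the standard ones, and the action of $T_\xi$ (read off from $\beta_\xi(v_i,\cdot)=\beta(v_{i-1},\cdot)$ and $\beta_\xi(u_i,\cdot)=\beta(u_{i+1},\cdot)$) is a single chain descending from $v_m$ to $v_0$ and dually ascending through the $u_j$'s. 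Let $W_l(k)$ be the indecomposable form module of \cite{X}, Proposition 2.3, generated by a distinguished element (or pair of elements) $z$, with $T_\xi$-chain of length $k$ and with the quadratic form defect $\alpha(T_\xi^{k-l}z)=1$ situated at depth $k-l$.

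\textbf{Key construction.} The hypothesis $m<k-l$ means the $v$-chain in $V_{2m+1}$ has length $m+1$, strictly shorter than the depth $k-l$ at which the $\alpha$-defect of $W_l(k)$ sits. I would modify $v_m$ and $z$ simultaneously so as to relocate the $W$-defect from depth $k-l$ down to depth $m$, which is precisely what turns $W_l(k)$ into $W_{k-m}(k)$. Concretely I would set
\begin{equation*}
\tilde{v}_m = v_m + T_\xi^{k-l-m-1}z, \qquad \tilde{z} = z + \eta,
\end{equation*}
for a suitable $\eta\in V_{2m+1}$, together with compensating adjustments $\tilde{v}_i=v_i+\rho_i$ and $\tilde{u}_j=u_j+\sigma_j$ by elements $\rho_i,\sigma_j\in W_l(k)$ obtained by applying appropriate powers of $T_\xi$ to $z$. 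The exponent $k-l-m-1\geq 0$ is available exactly because $m<k-l$, which is what makes the construction go through.

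\textbf{Verification.} One then has to check four things:
\begin{enumerate}
\item[(i)] the new family $\{\tilde{v}_i,\tilde{u}_j\}$ still satisfies the relations of Lemmas \ref{lem-n-3} and \ref{lem-vu}, so spans a submodule equivalent to $V_{2m+1}$;
\item[(ii)] the modified generator $\tilde{z}$ produces a submodule with $T_\xi$-chain of length $k$ whose $\alpha$-defect now sits at depth $m$, so is equivalent to $W_{k-m}(k)$;
\item[(iii)] these two new summands are orthogonal under both $\beta$ and $\beta_\xi$;
\item[(iv)] together they span the original $V_{2m+1}\oplus W_l(k)$.
\end{enumerate}
By Corollary after Lemma \ref{lem-e1} and the uniqueness of the $V_{2m+1}$-summand up to equivalence, (i)--(iv) yield the claimed equivalence.

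\textbf{Main obstacle.} The only real difficulty is the bookkeeping: modifying $v_m$ and $z$ forces compensating changes in every other $v_i$ and $u_j$, and one must show that the resulting system of linear conditions on the coefficients $\rho_i,\sigma_j,\eta$ is solvable. The strict inequality $m<k-l$ is used precisely here, to guarantee that all required powers $T_\xi^{k-l-m-1},T_\xi^{k-l-m-2},\ldots$ of the generator of $W_l(k)$ are nonzero and available in the module. Once one writes down the equations, they decouple into triangular systems which can be solved explicitly, paralleling the computation carried out in Lemma \ref{lem-1}(ii).
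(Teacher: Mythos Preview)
Your overall strategy---write down an explicit basis-change isomorphism, as in Lemma~\ref{lem-1}---is exactly what the paper does. However, your concrete starting formula is wrong and cannot be repaired without changing which vectors get modified.

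The key obstruction is this: $v_m$ spans the radical of $\beta$ on the whole space (it is the vector $w_0$ of Lemma~\ref{lem-n-3}). Any equivalence of form modules preserves $\alpha$, hence $\beta$, hence the radical of $\beta$. So necessarily $g(v_m)=c\,v_m$ for some scalar $c$; more generally the entire $v$-chain is rigid up to scalar by the uniqueness in Lemma~\ref{lem-n-3}. Your formula $\tilde v_m=v_m+T_\xi^{k-l-m-1}z$ puts a nonzero $W$-component into the image, and since $\beta|_W$ is nondegenerate this image is not in the radical. Thus $\beta$ cannot be preserved. The freedom you need lives in the $u_i$'s (which are \emph{not} uniquely determined, cf.\ Lemma~\ref{lem-vu}) and in the $W$-generators, not in the $v_i$'s.

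There is also a bookkeeping slip in your description of $W_l(k)$: it is a $2k$-dimensional module with \emph{two} generators $\rho_1,\rho_2$, and the $\alpha$-defect sits at $T^{l-1}\rho_1$ (depth $l-1$), not at depth $k-l$. Passing to $W_{k-m}(k)$ moves the defect from depth $l-1$ to depth $k-m-1$, which (since $m<k-l$) is \emph{deeper}, not shallower.

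The paper's isomorphism $g$ fixes all $v_i$, sends $u_i\mapsto u_i+(T_2^{k-(m+l)+i}+T_2^i)\tau_2$ (adding terms from the $\alpha$-isotropic $\tau_2$-chain, so $\alpha(u_i)=0$ is preserved), sends the isotropic generator $\rho_2\mapsto\tau_2$, and sends $T_1^j\rho_1\mapsto T_2^j\tau_1+v_{k-1-j}+v_{m+l-1-j}$. The added $v$-terms are what shift the $\alpha$-defect: using $\alpha(v_i)=\delta_{i,m}$, one checks $\alpha(gT_1^j\rho_1)=\delta_{j,k-m-1}+\delta_{j,k-m-1}+\delta_{j,l-1}=\delta_{j,l-1}$ as required. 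The hypothesis $m<k-l$ is used so that the exponent $k-(m+l)\geq 1$ in the $u_i$-correction is positive, making the required powers of $T_2$ available.
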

\begin{proof}
Assume $V_{2m+1}=\text{span}\{v_0,\ldots,v_m, u_0,\ldots,u_{m-1}\}$,
where $v_i,u_i$ are chosen as in Lemma \ref{lem-n-3} and Lemma
\ref{lem-vu}. Assume $V_{2m+1}\oplus W_l(k)$ and $V_{2m+1}\oplus
W_{k-m}(k)$ correspond to $\xi_1$ and $\xi_2$ respectively. Let
$T_1=T_{\xi_1}:W_{l_1}(k)\rightarrow W_{l_1}(k)$ and
$T_2=T_{\xi_2}:W_{l_2}(k)\rightarrow W_{l_2}(k)$. There exist
$\rho_1,\rho_2$ such that
$W_l(k)=\text{span}\{\rho_1,\ldots,T_1^{k-1}\rho_1,\rho_2,\ldots,T_1^{k-1}\rho_2\}$,
$T_1^k\rho_1=T_1^k\rho_2=0$, $\alpha(T_1^i\rho_1)=\delta_{i,l-1},\
\alpha(T_1^i\rho_2)=0$,
$\beta(T_1^i\rho_1,T_1^j\rho_1)=\beta(T_1^i\rho_2,T_1^j\rho_2)=0$
and $\beta(T_1^i\rho_1,T_1^j\rho_2)=\delta_{i+j,k-1}$.  There exist
$\tau_1$, $\tau_2$ such that
$W_{k-m}(k)=\text{span}\{\tau_1,\ldots,T_2^{k-1}\tau_1,\tau_2,\ldots,T_2^{k-1}\tau_2\}$,
 $T_2^k\tau_1=T_2^k\tau_2=0$,
$\alpha(T_2^i\tau_1)=\delta_{i,k-m-1},\ \alpha(T_2^i\tau_2)=0$,
$\beta(T_2^i\tau_1,T_2^j\tau_1)=\beta(T_2^i\tau_2,T_2^j\tau_2)=0$
and $\beta(T_2^i\tau_1,T_2^j\tau_2)=\delta_{i+j,k-1}$. Define $g:
V_{2m+1}\oplus W_l(k)\rightarrow V_{2m+1}\oplus W_{k-m}(k)$ by
$gv_i=v_i,\ gu_i=u_i+(T_2^{k-(m+l)+i}+T_2^i)\tau_2,
gT_1^j\rho_2=T_2^j\tau_2,\
gT_1^j\rho_1=T_2^j\tau_1+v_{k-1-j}+v_{m+l-1-j} $, where $v_i=0,$ if
$i<0$ or $i>m$. Then $g$ is the isomorphism we want.
\end{proof}

\begin{lemma}\label{lem-9}
Assume $m\geq k-l_i,i=1,2$. We have $V_{2m+1}\oplus W_{l_1}(k)\cong
V_{2m+1}\oplus W_{l_2}(k)$ if and only if $ l_1=l_2$.
\end{lemma}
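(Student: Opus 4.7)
The ``if'' direction is immediate. For the ``only if'', assume a form-module isomorphism $g : V_{2m+1} \oplus W_{l_1}(k) \to V_{2m+1} \oplus W_{l_2}(k)$ exists, and the plan is to show $l_1 = l_2$. First I would invoke Lemma \ref{lem-e1}: after auto-equivalence normalizations one may assume $gv_i = v_i$ for $i=0,\ldots,m$, and (by the same $b_i = 0$ argument used in its proof) obtain an expansion
\[
gw \;=\; \sum_{i=0}^{m} c_i(w)\, v_i + \varphi(w)
\]
for $w \in W_{l_1}(k)$, with $c_i : W_{l_1}(k) \to \tk$ linear and $\varphi : W_{l_1}(k) \to W_{l_2}(k)$ a linear bijection preserving $\beta$ and $\beta_\xi$ on $W$. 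Imposing $\alpha(gw) = \alpha(w)$ and using $\alpha(v_i) = \delta_{i,m}$, $\beta(v_i,v_j)=0$ (Lemma \ref{lem-n-3}) together with $\beta(V_{2m+1},W)=0$, the $\alpha$-condition collapses to
\[
\alpha(\varphi(w)) \;=\; \alpha(w) + c_m(w)^2,
\]
so $\varphi$ is a genuine form-module isomorphism exactly when the linear functional $c_m$ vanishes identically.

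To identify $c_m$ I would use Lemma \ref{lem-vu}: since $\beta_\xi(v_m,u_{m-1}) = 1$ is the unique nonzero pairing of $u_{m-1}$ against the basis of $V_{2m+1}$ and $\beta_\xi(W,V_{2m+1})=0$, one has
\[
c_m(w) \;=\; \beta_\xi(gw, u_{m-1}) \;=\; \beta_\xi\bigl(w, g^{-1}(u_{m-1})\bigr).
\]
Writing $g^{-1}(u_{m-1}) = u^{*} + z$ with $u^{*} \in V_{2m+1}$ and $z \in W_{l_1}(k)$, the $u^{*}$-contribution vanishes by orthogonality and one obtains $c_m(w) = \beta(T_\xi w, z)$ on $W$.

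The crux of the argument is then to use the residual freedom in the source decomposition, supplied by Lemma \ref{lem-vu}, to zero out $z$. That freedom takes the form $u_0 \mapsto u_0 + z'$ for $z' \in W_{l_1}(k)$ satisfying the induced $\alpha$/pairing constraints, propagating to $u_{m-1} \mapsto u_{m-1} + T_\xi^{m-1} z'$. Under the hypothesis $m \geq k - l_i$, I expect this freedom to suffice to absorb $z$ entirely, forcing $g^{-1}(u_{m-1}) \in V_{2m+1}$ and $c_m \equiv 0$; consequently $\varphi$ becomes a form-module isomorphism $W_{l_1}(k) \cong W_{l_2}(k)$, and by the classification of indecomposable nilpotent orbits in $\Lo(W)$ (cf.\ \cite{X}, Proposition~2.3) we conclude $l_1 = l_2$. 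The condition $m \geq k - l_i$ is precisely what rules out the $l$-shift exhibited in the previous lemma.

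The main obstacle will be carrying out this absorption rigorously: one must check that under $m \geq k - l_i$ every $W$-component $z$ arising above lies in the image $T_\xi^{m-1} W_{l_1}(k)$ modulo the constraints on admissible $z'$, whereas in the complementary regime $m < k - l$ the very same freedom instead realizes the genuine shift of $l$ from the previous lemma. This bookkeeping, controlled by the interaction between the $T_\xi$-filtration on $W_{l_1}(k)$ and the position of the $\alpha$-mark at $T_\xi^{l_1 - 1}\rho_1$, is where the hypothesis enters and forms the technical core of the proof.
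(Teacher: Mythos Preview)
Your strategy is different from the paper's, and it has a genuine circularity that you have not addressed. When you modify the source basis by $u_0\mapsto u_0+z'$ with $z'\in W_{l_1}(k)$, you are changing the subspace $V_{2m+1}^{(1)}$ and hence its orthogonal complement $W^{(1)}$. After the absorption you would indeed obtain a form-module isomorphism $\varphi^{\text{new}}:W^{(1),\text{new}}\xrightarrow{\sim}W_{l_2}(k)$ preserving $\alpha,\beta,\beta_\xi$; but nothing you have said guarantees that $W^{(1),\text{new}}$ is still isomorphic to $W_{l_1}(k)$. The previous lemma shows precisely that such a change of decomposition \emph{can} shift the $l$-parameter (from $l$ to $k-m$) when the hypothesis fails. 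To make your argument go through you would need to know that under $m\ge k-l_1$ the $l$-parameter of the $W$-summand is invariant under all admissible changes of the $u_i$'s; but that invariance is equivalent to the statement you are trying to prove. Your identification of $c_m(w)=\beta(T_\xi w,z)$ and the recursion $z_i=T_\xi z_{i-1}$ are correct, and indeed $z=z_{m-1}\in T_\xi^{m-1}W^{(1)}$ always holds---so the absorption step ``succeeds'' regardless of the hypothesis, which is a red flag.

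The paper's proof avoids this circularity by keeping both decompositions fixed and deriving a direct contradiction in coordinates. One writes $gT_1^j\rho_1$ and $gT_1^j\tau_1$ explicitly, and the compatibility of $\beta$ and $\beta_\xi$ with the $u_i$'s yields recursions $c_{ij}=c_{i+1,j-1}$, $g_{ij}=g_{i+1,j-1}$ with boundary condition $c_{m,k-1}=g_{m,k-1}=0$. The $\alpha$-conditions at $T_1^{l_2-1}$ then give $a_0=c_{m,l_2-1}$ and $e_0=g_{m,l_2-1}$, where $a_0,e_0$ are the leading coefficients of $\varphi(\rho_1),\varphi(\tau_1)$. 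The hypothesis $l_2>k-m$ feeds into the recursion to force $c_{m,l_2-1}=g_{m,l_2-1}=0$, hence $a_0=e_0=0$; but $\beta(g\rho_1,gT_1^{k-1}\tau_1)=1$ requires $a_0f_0+e_0b_0=1$, a contradiction. This is where the hypothesis actually does work, and it is not visible from the absorption viewpoint.
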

\begin{proof}
Assume $k-m\leq l_1< l_2$. We show that  $V_{2m+1}\oplus
W_{l_1}(k)\ncong V_{2m+1}\oplus W_{l_2}(k)$. Let
$(V_1,\alpha,\beta_1)=V_{2m+1}\oplus W_{l_1}(k)$ and
$(V_2,\alpha,\beta_2)=V_{2m+1}\oplus W_{l_2}(k)$. Let
$T_1=T_{\xi_1}:W_{l_1}(k)\rightarrow W_{l_1}(k)$ and
$T_2=T_{\xi_2}:W_{l_2}(k)\rightarrow W_{l_2}(k)$. Assume there
exists $g:V_{2m+1}\oplus W_{l_1}(k)\rightarrow V_{2m+1}\oplus
W_{l_2}(k)$ a linear isomorphism satisfying
$\beta_2(gv,gw)=\beta_1(v,w)$ and $\alpha(gv)=\alpha(v)$. Define
$\varphi:W_{l_1}(k)\rightarrow W_{l_2}(k)$ by $w_1\mapsto
(gw_1\text{ projects to }W_{l_2}(k))$. Then we have
$\beta(\varphi(w_1),\varphi(w_1'))=\beta(w_1,w_1')$,
$\beta_2(\varphi(w_1),\varphi(w_1'))=\beta_1(w_1,w_1')$ and
$T_2(\varphi(w))=\varphi(T_1(w))$ (see the proof of Lemma
\ref{lem-e1}).

Let $v_i,\ i=0,\ldots,m$, and $u_i,\ i=0,\ldots,m-1$, be a basis of
$V_{2m+1}$ as in Lemmas \ref{lem-n-3} and \ref{lem-vu}. Choose a
basis $T_i^{j}\rho_i,T_i^j\tau_i$, $j=0,\ldots,k-1$, $i=1,2$ of
$W_{l_i}(k)$ such that $T_i^{k}\rho_i=T_i^{k}\tau_i=0$,
$\beta(T_i^{j_1}\rho_i,T_j^{j_2}\tau_j)=\delta_{j_1+j_2,k-1}\delta_{i,j}$,
$\beta(T_i^{j_1}\rho_i,T_j^{j_2}\rho_j)=\beta(T_i^{j_1}\tau_i,T_j^{j_2}\tau_j)=0$,
$\alpha(T_i^j\rho_i)=\delta_{j,l_i-1}$ and $\alpha(T_i^j\tau_i)=0$.
We have
$$gv_i=av_i, i=0,\ldots,m,\
gu_i=u_i/a+\sum_{l=0}^{m}a_{il}v_l+\sum_{l=0}^{k-1}x_{il}T_2^l\rho_2+\sum_{l=0}^{k-1}y_{il}T_2^l\tau_2.$$
Now we can assume $$gT_1^j\rho_1=\sum_{i=0}^{k-1-j}
a_iT_2^{i+j}\rho_2+\sum_{i=0}^{k-1-j}
b_iT_2^{i+j}\tau_2+\sum_{i=0}^{m} c_{ij}v_i+\sum_{i=0}^{m-1}
d_{ij}u_i,\ j=0,\ldots,k-1,$$$$ gT_1^j\tau_1=\sum_{i=0}^{k-1-j}
e_iT_2^{i+j}\rho_2+\sum_{i=0}^{k-1-j}
f_iT_2^{i+j}\tau_2+\sum_{i=0}^{m} g_{ij}v_i+\sum_{i=0}^{m-1}
h_{ij}u_i,\ j=0,\ldots,k-1.$$ We have
\begin{eqnarray*}
&&\beta(gv_i,gT_1^j\rho_1)=\beta(v_i,T_1^j\rho_1)=0\Rightarrow
d_{ij}=0,\
i=0,\ldots,m-1,\ j=0,\ldots,k-1,\\
&&\beta(gv_i,gT_1^j\tau_1)=\beta(v_i,T_1^j\tau_1)=0\Rightarrow
h_{ij}=0,\ i=0,\ldots,m-1,\ j=0,\ldots,k-1, \end{eqnarray*}
\begin{eqnarray*}
&&\beta(gu_i,gT_1^j\rho_1)=\beta(u_i,T_1^j\rho_1)=0\Rightarrow
\frac{c_{ij}}{a}+\sum_{l=0}^{k-1-j} (x_{il}b_{k-1-j-l}+ y_{il}a_{k-1-j-l})=0,\\
&&\beta_\xi(gu_i,gT_1^j\rho_1)=\beta_\xi(u_i,T_1^j\rho_1)=0\Rightarrow
\frac{c_{i+1,j}}{a}+\sum_{l=0}^{k-2-j} (x_{il}b_{k-2-j-l}+
y_{il}a_{k-2-j-l})=0.
\end{eqnarray*}
The last two equations imply that
\begin{equation}\label{e-1}
c_{m,k-1}=0, c_{ij}=c_{i+1,j-1}, i=0,\ldots,m-1,j=0,\ldots,k-1.
\end{equation}
Similarly we have
\begin{equation}\label{e-2}
g_{m,k-1}=0, g_{ij}=g_{i+1,j-1}, i=0,\ldots,m-1,j=0,\ldots,k-1.
\end{equation}
We also have
\begin{equation*}
\alpha(gT_1^{l_1-1}\rho_1)=\alpha(T_1^{l_1-1}\rho_1)=1\Rightarrow
c_{m,l_1-1}^2+a_{l_2-l_1}^2+\sum_{i=0}^{k+1-2l_1}a_ib_{k+1-2l_1-i}=1,
\end{equation*}
\begin{equation*}
\alpha(gT_1^{l_2-1}\rho_1)=\alpha(T_1^{l_2-1}\rho_1)=0\Rightarrow
c_{m,l_2-1}^2+a_{0}^2+\sum_{i=0}^{k+1-2l_2}a_ib_{k+1-2l_2-i}=0,
\end{equation*}
\begin{eqnarray*}
\alpha(gT_1^{l_2-1}\tau_1)=\alpha(T_1^{l_2-1}\tau_1)=0\Rightarrow
g_{m,l_2-1}^2+e_{0}^2+\sum_{i=0}^{k+1-2l_2}a_ib_{k+1-2l_2-i}=0.
\end{eqnarray*}
Since $l_2>l_1\geq[k+1]/2$, we have $k+1-2l_2<0$. Thus we get
$a_0=c_{m,l_2-1}$ and $e_0=g_{m,l_2-1}$. Since $l_2>k-m$, by
equations (\ref{e-1}) and (\ref{e-2}) we get
$c_{m,l_2-1}=g_{m,l_2-1}=0$ (if $l_2=k$) or
$c_{m,l_2-1}=c_{0,l_2+m-1}=0,\ g_{m,l_2-1}=g_{0,l_2+m-1}=0$ (if
$l_2<k$). Thus $a_0=e_0=0$. But from
$\beta(g\rho_1,gT_1^{k-1}\tau_1)=\beta(\rho_1,T_1^{k-1}\tau_1)=1$ we
have $a_0f_0+e_0b_0=1$. This is a contradiction.
\end{proof}

It follows that for any $V=(V_{\xi},\alpha,\beta_{\xi})$, there
exist a unique $m\geq 0$ and a unique sequence of modules
$W_{l_i}(k_i)$, $i=1,\ldots,s$ such that $$V\cong V_{2m+1}\oplus
W_{l_1}(k_1)\oplus\cdots\oplus W_{l_s}(k_s),$$ $[(k_i+1)/2]\leq
l_i\leq k_i$, $k_1\geq k_2\geq\cdots\geq k_s$, $l_1\geq
l_2\geq\cdots\geq l_s$ and $m\geq k_1-l_1\geq k_2-l_2\geq\cdots\geq
k_s-l_s$. We call this the {\em normal form} of the module $V$. Two
form modules are equivalent if and only if their normal forms are
the same. Hence to each nilpotent orbits we associate a pair of
partitions
$$(m,k_1-l_1,\ldots,k_s-l_s)(l_1,\ldots,l_s)$$ where $l_1\geq
l_2\geq\cdots\geq l_s\geq 0$ and $m\geq k_1-l_1\geq
k_2-l_2\geq\cdots\geq k_s-l_s\geq 0$. This defines a bijection from
the set of nilpotent orbits to the set
$\{(\nu,\mu)|\nu=(\nu_0,\nu_1,\ldots,\nu_s),
\mu=(\mu_1,\mu_2,\ldots,\mu_s),|\mu|+|\nu|=n,\nu_i\leq\mu_i,i=1,\ldots,s\}$,
which has cardinality $p_2(n)-p_2(n-2)$.

\subsection{}
In this subsection, we classify the form modules
$(V_\xi,\alpha,\beta_\xi)$ over $\tF_q$. We have
$V_\xi=V_{2m+1}\oplus W_\xi$ for some $m$ and $W_\xi$ (Lemmas
\ref{lem-n-3}-\ref{lem-8} are valid over $\tF_q$). By the
classification of $(W_\xi,\alpha|_{W_\xi},T_\xi)$ over $\tF_q$, we
have $W_\xi\cong \oplus W_{l_i}^{\epsilon_i}(k_i)$ where
$\epsilon_i=0$ or $\delta$, $m_1\geq\cdots\geq m_s$,
$l_1\geq\cdots\geq l_s$, $m_1-l_1\geq\cdots\geq m_s-l_s$ and
$[(m_i+1)/2]\leq l_i\leq m_i$ (notation as in \cite{X}, Proposition
3.1).
\begin{lemma}\label{lem-c1}
Assume $m\geq k-l$ and $l>m$. We have $V_{2m+1}\oplus W_l^0(k)\cong
V_{2m+1}\oplus W_l^\delta(k)$.
\end{lemma}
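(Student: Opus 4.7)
The plan is to construct an explicit form-module isomorphism $g\colon V_{2m+1}\oplus W_l^0(k)\to V_{2m+1}\oplus W_l^\delta(k)$ and to verify directly that it preserves the triple $(\alpha,\beta,\beta_\xi)$. The guiding observation is that $V_{2m+1}$ contains the distinguished vector $v_m$ with $\alpha(v_m)=1$ singled out by Lemma \ref{lem-n-3}, and since $\tk=\tF_q$ has characteristic $2$ the Frobenius is bijective on $\tF_q$, so $\sqrt\delta\in\tF_q$. The only $\delta$-obstruction distinguishing $W_l^\delta(k)$ from $W_l^0(k)$ sits at $\alpha(T_2^{k-l-1}\tau_2)=\delta$, and the identity $\alpha(T_2^{k-l-1}\tau_2+\sqrt\delta\,v_m^2)=\delta+\delta=0$ in characteristic $2$ provides the key cancellation.

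With bases $v_i^\epsilon,u_j^\epsilon$ for $V_{2m+1}$ as in Lemmas \ref{lem-n-3}, \ref{lem-vu} and $T_\epsilon^j\rho_\epsilon,T_\epsilon^j\tau_\epsilon$ for $W_l^\epsilon(k)$, I would set $g v_i^1=v_i^2$ and $g T_1^j\rho_1=T_2^j\rho_2$, and modify the remaining generators as follows: put $g T_1^j\tau_1=T_2^j\tau_2+\sqrt\delta\,v_{m+k-l-1-j}^2$ for $k-l-1\le j\le k-l+m-1$ and $g T_1^j\tau_1=T_2^j\tau_2$ otherwise; put $g u_i^1=u_i^2+\sqrt\delta\,T_2^{l+i-m}\rho_2$ for $0\le i\le m-2$; and put $g u_{m-1}^1=u_{m-1}^2+\sqrt\delta\,T_2^{l-1}\rho_2+\sqrt\delta\,v_m^2$. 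The shifts are chosen so that every $v$-index lies in $[0,m]$ and every $T_2^{?}\rho_2$-index in $[0,k-1]$; the assumption $l>m$ keeps the window $[k-l-1,\,k-l+m-1]$ inside $[0,k-1]$ (since $k-l+m-1\le k-2$), while $m\ge k-l$ (together with the implicit $l\le k-1$ guaranteeing that $W_l^\delta(k)$ exists) ensures both that $k-l-1\ge 0$ and that each $v_{m+k-l-1-j}^2$ is an actual basis vector of $V_{2m+1}$.

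What remains is a finite direct check that on every pair of basis vectors $g$ preserves each of $\alpha$, $\beta$, and $\beta_\xi$. All cancellations rely on the two characteristic-$2$ identities $(\sqrt\delta)^2=\delta$ and $\sqrt\delta+\sqrt\delta=0$, combined with the Kronecker-delta pairing rules $\beta(T_\epsilon^i\rho_\epsilon,T_\epsilon^j\tau_\epsilon)=\delta_{i+j,k-1}$ inside $W_l^\epsilon(k)$ and $\beta_\xi(u_i,v_j)=\delta_{j,i+1}$ inside $V_{2m+1}$, the latter extracted from $\beta(u_{i+1},-)=\beta_\xi(u_i,-)$ in Lemma \ref{lem-vu} together with $\beta_\xi(u_{m-1},v_m)=1$.

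The main obstacle is the bookkeeping cascade of corrections. Adding $\sqrt\delta\,v_m^2$ to $T_1^{k-l-1}\tau_1$ cures $\alpha$ but spoils $\beta_\xi(u_{m-1},-)$ via $\beta_\xi(u_{m-1},v_m)=1$; adding $\sqrt\delta\,T_2^{l-1}\rho_2$ to $u_{m-1}$ cures that but in turn spoils $\beta(u_{m-1},T_2^{k-l}\tau_2)$ via $\beta(T_2^{l-1}\rho_2,T_2^{k-l}\tau_2)=1$, which is rescued only because $T_1^{k-l}\tau_1$ is also modified in the window by $\sqrt\delta\,v_{m-1}^2$ and supplies a second $\sqrt\delta$ that cancels the first in characteristic $2$. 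Exactly analogous matched-pair cancellations, shifted by one step each, force the precise exponent $l+i-m$ in each $u_i$-correction; once this structure of paired $\sqrt\delta$-contributions is recognized, the verification collapses to repeated applications of the same two identities.
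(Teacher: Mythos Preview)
Your approach is the same as the paper's: define an explicit isomorphism by sending the generators of $W_l^0(k)$ to those of $W_l^\delta(k)$ with $\sqrt{\delta}$-weighted corrections drawn from the $v_i$'s in $V_{2m+1}$, and compensate on the $u_i$'s with $\sqrt{\delta}$-weighted corrections from the $\rho$-string of $W_l^\delta(k)$.

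The one discrepancy is an index shift. In the paper's convention (inherited from \cite{X}) the obstruction sits at $\alpha(T_2^{\,k-l}\tau_2)=\delta$, not at $T_2^{\,k-l-1}$. With that convention the paper's map is written uniformly as
\[
gv_i=v_i,\qquad gu_i=u_i+\sqrt{\delta}\,T_2^{\,l-m-1+i}\tau_1,\qquad gT_1^i\rho_1=T_2^i\tau_1,\qquad gT_1^i\rho_2=T_2^i\tau_2+\sqrt{\delta}\,v_{k-l+m-i},
\]
with the convention $v_j=0$ for $j\notin[0,m]$. Here the correction on $u_{m-1}$ lands at $T_2^{\,l-2}\tau_1$, where $\alpha=0$, so no special case is required. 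Your off-by-one shift pushes that same correction to $T_2^{\,l-1}\rho_2$, where $\alpha=1$, which is exactly what forces your extra $\sqrt{\delta}\,v_m$ term on $gu_{m-1}$ and the cascade you describe. Shifting your indices back by one collapses your construction to the paper's single uniform formula.
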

\begin{proof}
Let $W_l^0(k)=(W_1,\alpha,T_1)$ and
$W_l^\delta(k)=(W_2,\alpha,T_2)$. Take $\rho_1,\rho_2$ such that
$W_l^0(k)=\text{span}\{\rho_1,\ldots,T_1^{k-1}\rho_1,\rho_2,\ldots,T_1^{k-1}\rho_2\}$,
$T_1^k\rho_1=T_1^k\rho_2=0$, $\alpha(T_1^{i}\rho_1)=\delta_{i,l-1}$,
$\alpha(T_1^i\rho_2)=0$,
$\beta(T_1^i\rho_1,T_1^j\rho_1)=\beta(T_1^i\rho_2,T_1^j\rho_2)=0$
and $\beta(T_1^i\rho_1,T_1^j\rho_2)=\delta_{i+j,k-1}$.  Take
$\tau_1,\tau_2$ such that
$W_{l}^\delta(k)=\text{span}\{\tau_1,\ldots,T_2^{k-1}\tau_1,\tau_2,\ldots,T_2^{k-1}\tau_2\}$
, $T_2^k\tau_1=T_2^k\tau_2=0$, $\alpha(T_2^i\tau_1)=\delta_{i,l-1},\
\alpha(T_2^i\tau_2)=\delta_{i,k-l}\delta$,
$\beta(T_2^i\tau_1,T_2^j\tau_1)=\beta(T_2^i\tau_2,T_2^j\tau_2)=0$
and $\beta(T_2^i\tau_1,T_2^j\tau_2)=\delta_{i+j,k-1}$. Let $v_i,u_i$
be a basis of $V_{2m+1}$ as in Lemmas \ref{lem-n-3} and
\ref{lem-vu}. Define $g:V_{2m+1}\oplus W_l^0(k)\rightarrow
V_{2m+1}\oplus W_l^\delta(k)$ by $gv_i=v_i,\
gu_i=u_i+\sqrt{\delta}T_2^{l-m-1+i}\tau_1,
gT_1^i\rho_1=T_2^i\tau_1,\
gT_1^i\rho_2=T_2^i\tau_2+\sqrt{\delta}v_{k-l+m-i}, $ where $v_i=0$
if $i<0$ or $i>m$.
\end{proof}
\begin{lemma}\label{lem-n1}
Assume $m\geq k-l$ and $l\leq m$. We have $V_{2m+1}\oplus
W_l^0(k)\ncong V_{2m+1}\oplus W_l^\delta(k)$.
\end{lemma}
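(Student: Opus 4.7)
I will assume for contradiction that there is an equivalence $g: V_{2m+1} \oplus W_l^0(k) \to V_{2m+1} \oplus W_l^\delta(k)$ and derive an Artin--Schreier equation over $\tF_q$ with no solution, in the spirit of the proof of Proposition \ref{prop-nind}.

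Choose bases of $V_{2m+1}$ as in Lemmas \ref{lem-n-3} and \ref{lem-vu}, and the standard bases of $W_l^0(k), W_l^\delta(k)$ as in Lemma \ref{lem-c1}. By Lemma \ref{lem-e1} and its proof, $g$ sends $\mathrm{span}(v_0,\ldots,v_m)$ onto $\mathrm{span}(v_0',\ldots,v_m')$ (the corresponding canonical chain on the target), and the projection of $g|_{W_l^0(k)}$ to $W_l^\delta(k)$ is an isomorphism $\varphi$ of $(W,\beta,T)$-modules. Writing $\varphi(\rho_1) = \sum_i(A_i T_2^i\tau_1 + B_i T_2^i\tau_2)$ and $\varphi(\rho_2) = \sum_i(C_i T_2^i\tau_1 + D_i T_2^i\tau_2)$, the preservation of $\beta$ between $g\rho_1$ and $gT_1^{k-1}\rho_2$ (and its analogue) yields the determinant-type condition $A_0 D_0 + B_0 C_0 = 1$.

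Next, using the preservation of $\beta$ and $\beta_\xi$ between $gu_j$ (for $j=0,\ldots,m-1$) and $gT_1^{j'}\rho_s$, I will set up cascade equations in the style of equations (\ref{e-1})--(\ref{e-2}) of the proof of Lemma \ref{lem-9}. The hypothesis $l \leq m$ is essential here: it ensures that sufficiently many cascade relations are available to pin down the $V_{2m+1}$-correction coefficients of $gT_1^{j'}\rho_s$ in the key positions. (For $l > m$, as in the proof of Lemma \ref{lem-c1}, the cascade leaves just enough room for a nontrivial correction of the form $\sqrt{\delta}\,v_{k-l+m-i}$ that absorbs the $\delta$-discrepancy; for $l \leq m$ the required index is out of range.)

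With the corrections pinned down, $\alpha$-preservation at $T_1^{k-l}\rho_2$ and at $T_1^{l-1}\rho_1$ reduces to two quadratic identities in the $A_i, B_i, C_i, D_i$, each containing a $\delta$-term ($\delta D_0^2$ and $\delta B_0^2$ respectively), with right-hand sides $0$ and $1$. Combined with the determinant condition, a short elimination---exemplified in the boundary case $k = 2l-1$, where the first identity reads $C_0^2 + \delta D_0^2 + C_0 D_0 = 0$: either $D_0 = 0$ (forcing $C_0 = 0$ and contradicting $A_0 D_0 + B_0 C_0 = 1$), or $D_0 \neq 0$, in which case dividing by $D_0^2$ yields the Artin--Schreier equation $(C_0/D_0)^2 + (C_0/D_0) = \delta$ with no solution by the choice of $\delta$---produces the desired contradiction. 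The remaining cases ($k = 2l$, and $k$ odd with $l > (k+1)/2$, etc.) are handled by the same kind of elimination, with the case analysis mirroring that of Proposition \ref{prop-nind}. The main technical obstacle will be executing the cascade step uniformly in $k, l, m$ so that the correction terms are seen to vanish in the relevant indices; I expect this to follow by careful bookkeeping analogous to the derivation of (\ref{e-1})--(\ref{e-2}) in Lemma \ref{lem-9}.
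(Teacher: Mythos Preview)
Your proposal is correct and follows essentially the same route as the paper's proof: assume an isomorphism, use the cascade relations analogous to (\ref{e-1})--(\ref{e-2}) in Lemma~\ref{lem-9} together with the hypothesis $l\leq m$ to force the $V_{2m+1}$-correction coefficients $c_{m,i},g_{m,i}$ to vanish for $i\geq k-l$, and then read off from $\alpha$-preservation a system of equations (the paper's $a_0^2+a_0b_0+\delta b_0^2=1$, $e_0^2+e_0f_0+\delta f_0^2=0$, $a_0f_0+b_0e_0=1$ in the boundary case $l=(k+1)/2$, and for $l>(k+1)/2$ an elimination yielding $e_{2l-k-1}^2+e_{2l-k-1}+\delta=0$) that has no solution in $\tF_q$. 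Your sketch leaves the cascade bookkeeping and the general-case elimination as ``expected'' rather than carried out, but the plan is exactly the paper's, and the final Artin--Schreier contradiction you exhibit in the boundary case matches the paper's treatment.
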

\begin{proof}
Let $v_i,\ i=0,\ldots,m$ and $u_i,\ i=0,\ldots,m-1$ be a basis of
$V_{2m+1}$ as in Lemmas \ref{lem-n-3} and \ref{lem-vu}. Let
$W_l^0(k)=(W_0,\alpha,T_0)$ and
$W_l^\delta(k)=(W_\delta,\alpha,T_\delta)$. Choose a basis
$T_\epsilon^{j}\rho_\epsilon,T_\epsilon^j\tau_\epsilon$,
$j=0,\ldots,k-1$, $\epsilon=0,\delta$ of $W_{l}^{\epsilon}(k)$ such
that $T_\epsilon^{k}\rho_\epsilon=T_\epsilon^{k}\tau_\epsilon=0$,
$\beta(T_{\epsilon_1}^{j_1}\rho_{\epsilon_1},T_{\epsilon_2}^{j_2}\tau_{\epsilon_2})=\delta_{j_1+j_2,k-1}\delta_{\epsilon_1,\epsilon_2}$,
$\beta(T_{\epsilon_1}^{j_1}\rho_{\epsilon_1},T_{\epsilon_2}^{j_2}\rho_{\epsilon_2})
=\beta(T_{\epsilon_1}^{j_1}\tau_{\epsilon_1},T_{\epsilon_2}^{j_2}\tau_{\epsilon_2})=0$,
$\alpha(T_{\epsilon}^j\rho_{\epsilon})=\delta_{j,l-1}$ and
$\alpha(T_{\epsilon}^j\tau_{\epsilon})=\epsilon\delta_{j,k-l}\delta_{\epsilon,\delta}$.
Assume there exists $g:V_{2m+1}\oplus W_{l}^0(k)\rightarrow
V_{2m+1}\oplus W_{l_2}^\delta(k)$ a linear isomorphism satisfying
$\beta(gv,gw)=\beta(v,w)$, $\beta_\delta(gv,gw)=\beta_0(v,w)$ and
$\alpha(gv)=\alpha(v)$. We have
$$gv_i=av_i, i=0,\ldots,m,\
gu_i=u_i/a+\sum_{l=0}^{m}a_{il}v_l+\sum_{l=0}^{k-1}x_{il}T_\delta^l\rho_\delta+\sum_{l=0}^{k-1}y_{il}T_\delta^l\tau_\delta.$$
Now we can assume $$gT_0^j\rho_0=\sum_{i=0}^{k-1-j}
a_iT_\delta^{i+j}\rho_\delta+\sum_{i=0}^{k-1-j}
b_iT_\delta^{i+j}\tau_\delta+\sum_{i=0}^{m}
c_{ij}v_i+\sum_{i=0}^{m-1} d_{ij}u_i,\ j=0,\ldots,k-1,$$$$
gT_0^j\tau_0=\sum_{i=0}^{k-1-j}
e_iT_\delta^{i+j}\rho_\delta+\sum_{i=0}^{k-1-j}
f_iT_\delta^{i+j}\tau_\delta+\sum_{i=0}^{m}
g_{ij}v_i+\sum_{i=0}^{m-1} h_{ij}u_i,\ j=0,\ldots,k-1.$$ By similar
argument as in the proof of Lemma \ref{lem-9}, we get that $
c_{m,k-1}=0, g_{m,k-1}=0, c_{ij}=c_{i+1,j-1}, g_{ij}=g_{i+1,j-1},
i=0,\ldots,m-1,j=0,\ldots,k-1.$ Since we have $m\geq l$,
$c_{m,i}=c_{0,i+m}=0$ and $g_{m,i}=g_{0,i+m}=0$ when $i\geq k-l$. We
get some of the equations are $a_0^2+a_0b_0+\delta
b_0^2=1,e_0^2+e_0f_0+\delta f_0^2=0$ and $a_0f_0+b_0e_0=1$ (when
$l=(k+1)/2$) or
$a_{l-1-i}^2+\sum_{j=0}^{k-1-2i}a_jb_{k-1-2i-j}+\delta
b_{k-l-i}^2=\delta_{i,l-1}$,
$e_{l-1-i}^2+\sum_{j=0}^{k-1-2i}e_jf_{k-1-2i-j}+\delta
f_{k-l-i}^2=0,\ k-l\leq i\leq l-1$ and $a_0f_0+b_0e_0=1$ (when
$l>(k+1)/2$). We get $a_0=f_0=1,e_i=0,i=0,\ldots,2l-k-2$ and
$e_{2l-k-1}^2+e_{2l-k-1}+\delta=0$. This is a contradiction.
\end{proof}
Recall that we have the following lemma (\cite{X}, Lemma 4.4 (iii)).
\begin{lemma}\label{lem-6}
Assume $l_1\geq l_2$ and $k_1-l_1\geq k_2-l_2$. If $l_1+l_2>k_1$, we
have $W_{l_1}^0(k_1)\oplus W_{l_2}^0(k_2)\cong
W_{l_1}^\delta(k_1)\oplus W_{l_2}^\delta(k_2)$ and
$W_{l_1}^0(k_1)\oplus W_{l_2}^\delta(k_2)\cong
W_{l_1}^\delta(k_1)\oplus W_{l_2}^0(k_2)$.
\end{lemma}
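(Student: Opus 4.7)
The plan is to exhibit explicit $\tk[T]$-linear isomorphisms preserving both $\alpha$ and $\beta$, in direct analogy with Lemmas \ref{lem-2-1} and \ref{lem-c1}. Following the basis conventions used in the proof of Lemma \ref{lem-n1}, I fix standard generating systems $\rho_i, \tau_i$ for the indecomposable summands of the source side, and analogues $\rho_i', \tau_i'$ on the target side, subject to $T^{k_i}\rho_i = T^{k_i}\tau_i = 0$, $\beta(T^{j_1}\rho_i, T^{j_2}\tau_i) = \delta_{j_1+j_2, k_i-1}$, vanishing of all remaining cross $\beta$-pairings, $\alpha(T^j\rho_i) = \delta_{j, l_i-1}$, and $\alpha(T^j\tau_i) = \epsilon_i\,\delta_{j, k_i-l_i}$ with $\epsilon_i\in\{0,\delta\}$ distinguishing the two types of summand.

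For the first isomorphism $W_{l_1}^0(k_1)\oplus W_{l_2}^0(k_2) \to W_{l_1}^\delta(k_1)\oplus W_{l_2}^\delta(k_2)$, I would try the $\tk[T]$-linear map $g$ defined on generators by
\begin{equation*}
g(\rho_i) = \rho_i', \qquad g(\tau_i) = \tau_i' + \sqrt{\delta}\, T^{\,l_1+l_2-k_i-1}\rho_{3-i}', \qquad i=1,2.
\end{equation*}
The hypothesis $l_1+l_2>k_1$, together with $k_1\geq k_2$, forces both exponents $l_1+l_2-k_i-1$ to be nonnegative, and the bound $l_j\leq k_j$ keeps them at most $k_{3-i}-1$, so the correction terms are legitimate elements of the target. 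The $\delta$-valued contribution $\alpha(T^{k_i-l_i}\tau_i') = \delta$ appearing on the target is exactly canceled by $(\sqrt{\delta})^2\,\alpha(T^{l_{3-i}-1}\rho_{3-i}') = \delta$ coming from the correction (the exponents line up to $l_{3-i}-1$ by construction); all other $\alpha$-values on basis vectors match trivially, and the only possibly nontrivial $\beta$-check, $\beta(g\tau_1, g\tau_2)$, vanishes either outright or via a characteristic $2$ cancellation in the borderline case $l_1=k_1$, $l_2=k_2$.

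The second isomorphism $W_{l_1}^0(k_1)\oplus W_{l_2}^\delta(k_2) \to W_{l_1}^\delta(k_1)\oplus W_{l_2}^0(k_2)$ follows from the identical ansatz: the correction added to $g(\tau_1)$ creates the new $\delta$ contribution at $T^{k_1-l_1}\tau_1'$ needed on the target, while the correction added to $g(\tau_2)$ cancels the existing $\delta$ at $T^{k_2-l_2}\tau_2$ carried by the source. Since $g$ commutes with $T$ by construction and sends a minimal generating set to one of the same rank, it is automatically an isomorphism of $\tk[T]$-modules; the form-preservation arguments are the same as in the first case.

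The main obstacle is purely combinatorial bookkeeping across the many $\alpha$- and $\beta$-checks. The hypothesis $l_1+l_2>k_1$ enters in an essential way: it is precisely what makes $l_1+l_2-k_i-1\geq 0$, so that the correction terms exist and produce $\alpha$-cancellations landing at the correct index $l_{3-i}-1$. The orderings $l_1\geq l_2$ and $k_1-l_1\geq k_2-l_2$ play no role in the isomorphism itself; they only guarantee that we have placed the summands in normal form.
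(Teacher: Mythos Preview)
The paper does not actually prove this lemma: it is stated as a recall of \cite{X}, Lemma 4.4(iii), with no argument given. Your explicit isomorphism is correct and is precisely the orthogonal analogue of the construction the paper does give in the symplectic case (Lemma~\ref{lem-2-1}, second half of the proof): there one sets $gw_i = w_i' + \sqrt{\delta}\,t^{l_1+l_2-m_i}v_{3-i}'$ under the hypothesis $l_1+l_2\geq m_1$, which matches your $g(\tau_i) = \tau_i' + \sqrt{\delta}\,T^{l_1+l_2-k_i-1}\rho_{3-i}'$ under $l_1+l_2>k_1$ once the index shift between the two settings is taken into account. One small imprecision: in the cross-check $\beta(g\tau_1,g\tau_2)$ the two $\sqrt{\delta}$-terms always contribute equally and cancel in characteristic~2 (not only in the borderline case $l_i=k_i$), but this does not affect the validity of the argument.
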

Let $G(\tF_q)$, $\mathfrak{g}({\tF}_q)$ be the fixed points of a
Frobenius map $\mathfrak{F}_q$ relative to $\tF_q$ on $G$, $\Lg$.

\begin{proposition}\label{prop-2}
The nilpotent orbit in $\Lg^*$ corresponding to the pair of
partitions\linebreak
$(\nu_0,\nu_1,\ldots,\nu_s)(\mu_1,\mu_2,\ldots,\mu_s)$ splits into
at most $2^{k}$ $G(\tF_q)$-orbits in $\Lg(\tF_q)^*$, where
$k=\#\{i\geq 1|\nu_i<\mu_i\leq\nu_{i-1}\}$.
\end{proposition}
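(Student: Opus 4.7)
The plan is to start from the $\tF_q$-normal form of a form module corresponding to $(\nu,\mu)$, namely
\[
V_\xi \cong V_{2m+1} \oplus W_{l_1}^{\epsilon_1}(k_1) \oplus \cdots \oplus W_{l_s}^{\epsilon_s}(k_s),
\]
with $m=\nu_0$, $l_i=\mu_i$, $k_i=\mu_i+\nu_i$, and each $\epsilon_i\in\{0,\delta\}$, giving $2^s$ sign configurations a priori. I will show that these configurations collapse to at most $2^k$ equivalence classes by producing explicit identifications that eliminate all signs except those indexed by the set $F=\{i\mid \nu_i<\mu_i\le\nu_{i-1}\}$, which has cardinality $k$.

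Partition $\{1,\ldots,s\}=F\sqcup T\sqcup L$, where $T=\{i\mid \nu_i=\mu_i\}$ and $L=\{i\mid \mu_i>\nu_{i-1}\}$; these three sets are disjoint because $\nu_{i-1}\ge\nu_i$, and they exhaust $\{1,\ldots,s\}$ because $l_i\ge[(k_i+1)/2]\ge k_i/2$ forces $\nu_i\le\mu_i$. For each $i\in T$ one has $l_i=k_i/2$ with $k_i$ even, so $l_i=[(k_i+1)/2]$ is on the lower boundary; the classification of orthogonal modules over $\tF_q$ from \cite{X}, Proposition~3.1 then gives $W_{l_i}^0(k_i)\cong W_{l_i}^\delta(k_i)$, and $\epsilon_i$ is immaterial. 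For each $i\in L$ I invoke a transfer lemma: if $i=1$, the hypotheses $\nu_0\ge\nu_1$ and $\mu_1>\nu_0$ of Lemma~\ref{lem-c1} hold, so $\epsilon_1$ can be set to $0$; if $i\ge 2$, the inequalities $l_{i-1}\ge l_i$, $k_{i-1}-l_{i-1}\ge k_i-l_i$, and $l_{i-1}+l_i>k_{i-1}$ (which is equivalent to $\mu_i>\nu_{i-1}$) are exactly the hypotheses of Lemma~\ref{lem-6}, which gives the simultaneous flip $(\epsilon_{i-1},\epsilon_i)\mapsto(\epsilon_{i-1}+\delta,\epsilon_i+\delta)$, allowing us to set $\epsilon_i=0$ at the cost of flipping $\epsilon_{i-1}$.

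To package these reductions globally, I would group the indices into maximal chains: each chain begins at an index $i_0\in F\cup T$ (or, if $1\in L$, is rooted in $V_{2m+1}$) and then continues through successive $L$-indices. Within each chain, iterated applications of Lemma~\ref{lem-6} let us normalize all non-initial signs to $0$; if the chain is rooted in $V_{2m+1}$ then Lemma~\ref{lem-c1} also kills the surviving sign $\epsilon_1$, while if it is rooted at $i_0\in T$ then $\epsilon_{i_0}$ was already immaterial, so in both cases the entire chain collapses. Only chains rooted at some $i_0\in F$ retain a free sign, namely $\epsilon_{i_0}$, and there are exactly $|F|=k$ such chains; summing the contributions, the subgroup of $(\mathbb{Z}/2)^s$ generated by the available flips has dimension $|T|+|L|$ and thus codimension $k$, so the $2^s$ configurations reduce to at most $2^k$ equivalence classes. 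The main obstacle is the chain bookkeeping: one must verify that within each chain of length $r+1$ the available flip vectors span an $r$- or $(r+1)$-dimensional subspace (depending on whether the root is in $F$ or in $T\cup\{V_{2m+1}\}$), which is a routine linear-algebra check but needs to be done cleanly so that the contributions of each chain type match $|F|$ exactly.
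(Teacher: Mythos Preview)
Your proposal is correct and follows essentially the same approach as the paper: start from the $\tF_q$-decomposition $V_{2m+1}\oplus\bigoplus_i W_{l_i}^{\epsilon_i}(k_i)$ and use Lemmas~\ref{lem-c1} and~\ref{lem-6} (together with the boundary case from \cite{X}) to collapse the $2^s$ sign configurations down to $2^k$. The only difference is bookkeeping: the paper groups the indices into blocks $[i_{t-1}+1,i_t]$ cut at the elements $i_1<\cdots<i_k$ of $F$ and normalizes every sign to $0$ except at the terminal index $i_t$ of each block, whereas you partition into maximal chains that begin at an $F$- or $T$-index (or at $V_{2m+1}$) and run through consecutive $L$-indices, then count dimensions in $(\bZ/2)^s$. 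These two partitions are not literally the same, but both produce a subgroup of flips of dimension $|T|+|L|=s-k$; your chain-by-chain independence argument is a clean way to verify what the paper leaves as ``one can easily verify.'' One cosmetic point: for $i\in T$ it is more accurate to say that $W_{l_i}^\delta(k_i)$ is not defined at the lower boundary $l_i=[(k_i+1)/2]$ (rather than that it is isomorphic to $W_{l_i}^0(k_i)$), but the effect on your argument is identical.
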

\begin{proof}
Let $V=V_{2m+1}\oplus W_{l_1}(\lambda_1)\oplus\cdots\oplus
W_{l_s}(\lambda_s)$ be the normal form of a module corresponding to
$(\nu,\mu)$ over $\bar{\tF}_q$. We show that the equivalence class
of $V$ over $\bar{\tF}_q$ decomposes into at most $2^k$ equivalence
classes over $\tF_q$. It is enough to show that form modules of the
form $V_{2m+1}\oplus
W_{l_1}^{\epsilon_1}(\lambda_1)\oplus\cdots\oplus
W_{l_s}^{\epsilon_s}(\lambda_s)$,  $\epsilon_i=0$ or $\delta$, have
at most $2^k$ equivalence classes over $\tF_q$. Suppose
$i_1,\ldots,i_k$ are such that
$\beta_{i_j}<\mu_{i_j}\leq\beta_{i_j-1}$, $j=1,\ldots,k$. Using
Lemma \ref{lem-c1}, \ref{lem-n1} and \ref{lem-6}, one can easily
verify that a form module of the above form is isomorphic to one of
the following modules: $V_1^{\epsilon_1}\oplus\cdots\oplus
V_k^{\epsilon_k}\oplus V_{k+1}$, where
$V_1^{\epsilon_1}=V_{2m+1}\oplus
W_{l_1}^0(\lambda_1)\oplus\cdots\oplus
W_{l_{i_1-1}}^0(\lambda_{i_1-1})\oplus
W_{l_{i_1}}^{\epsilon_1}(\lambda_{i_1})$,
$V_t^{\epsilon_t}=W_{l_{i_{t-1}+1}}^0(\lambda_{i_{t-1}+1})\oplus\cdots\oplus
W_{l_{i_{t}-1}}^0(\lambda_{i_t-1})\oplus
W_{l_{i_t}}^{\epsilon_t}(\lambda_{i_t})$, $t=2,\ldots,k$,
$\epsilon_i=0$ or $\delta$, $i=1,\ldots,k$, and
$V_{k+1}=W_{l_{i_k+1}}^0(\lambda_{i_k+1})\oplus\cdots\oplus
W_{l_s}^0(\lambda_s)$. Thus the proposition follows.
\end{proof}

\begin{proposition}\label{prop-orth}
The number of nilpotent $G(\tF_q)$-orbits in $\Lg(\tF_q)^*$ is at
most $p_2(n)$.
\end{proposition}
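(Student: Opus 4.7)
The plan is to mimic the proof of Proposition \ref{prop-symp}. From the previous subsection, the nilpotent orbits in $\Lg(\bar{\tF}_q)^*$ are parametrized by the set
\beq
\Delta := \{(\nu,\mu) \mid \nu = (\nu_0,\ldots,\nu_s),\ \mu = (\mu_1,\ldots,\mu_s),\ |\mu|+|\nu| = n,\ \nu_i \leq \mu_i\},
\eeq
and by Proposition \ref{prop-2} each $(\nu,\mu) \in \Delta$ splits into at most $2^{k(\nu,\mu)}$ orbits over $\tF_q$, where $k(\nu,\mu) = \#\{i \geq 1 \mid \nu_i < \mu_i \leq \nu_{i-1}\}$. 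To obtain the bound it therefore suffices to construct an injection from
\beq
\Xi := \bigl\{((\nu,\mu),\epsilon) \mid (\nu,\mu)\in\Delta,\ \epsilon = (\epsilon_1,\ldots,\epsilon_{k(\nu,\mu)}) \in \{1,2\}^{k(\nu,\mu)}\bigr\}
\eeq
into the set $\Omega := \{(\tilde\mu,\tilde\nu) \mid |\tilde\mu|+|\tilde\nu| = n\}$, which has cardinality $p_2(n)$.

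For $(\nu,\mu)\in\Delta$ with critical indices $r_1<\cdots<r_k$, set $r_0=0$ and $r_{k+1}=s$. For each $\epsilon = (\epsilon_1,\ldots,\epsilon_k) \in \{1,2\}^k$, I would assemble $(\tilde\mu^\epsilon,\tilde\nu^\epsilon)$ by concatenating, for $j = 1,\ldots,k+1$, the blocks of length $r_j - r_{j-1}$: the $j$-th block contributes $(\mu_{r_{j-1}+1},\ldots,\mu_{r_j})$ to $\tilde\mu^\epsilon$ and $(\nu_{r_{j-1}+1},\ldots,\nu_{r_j})$ to $\tilde\nu^\epsilon$ when $\epsilon_j = 1$ (the final block, $j = k+1$, always behaves this way), and the swapped contribution when $\epsilon_j = 2$. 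The extra component $\nu_0$ is always placed at the start of $\tilde\nu^\epsilon$. Since neither swapping nor concatenation alters the total, $|\tilde\mu^\epsilon| + |\tilde\nu^\epsilon| = |\mu|+|\nu| = n$.

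The main task is then three-fold: verify that (i) each $(\tilde\mu^\epsilon,\tilde\nu^\epsilon)$ is a pair of (weakly decreasing) partitions, (ii) distinct inputs in $\Xi$ produce distinct outputs in $\Omega$, and (iii) only the all-ones choice $\epsilon = (1,\ldots,1)$ lands back in $\Delta$. Together these three properties give the required injection $\Xi \hookrightarrow \Omega$, whence the proposition. The principal obstacle is (i): the decreasing-parts condition of $\tilde\mu^\epsilon$ and $\tilde\nu^\epsilon$ across a block boundary can fail at non-critical positions $i$ at which $\mu_i > \nu_{i-1}$ (so the interlacing between $\mu$ and $\nu$ is broken there). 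In analogy with the $-1/+1$ shift of Proposition \ref{prop-symp}, this is handled by choosing the swap convention, or the block boundaries, so that every such overflow position is absorbed into the preceding critical block; the critical inequality $\mu_{r_j} \leq \nu_{r_j-1}$ then guarantees that the boundary comparison succeeds. Once (i)--(iii) are established, the proof is complete by the bookkeeping identical to the symplectic case.
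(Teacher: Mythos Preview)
Your strategy matches the paper's exactly: associate to each $(\nu,\mu)\in\Delta$ a family of $2^k$ pairs of partitions obtained by swapping $\mu$- and $\nu$-blocks, then observe that the totality of these pairs injects (in fact bijects) into $\{(\tilde\nu,\tilde\mu)\mid |\tilde\nu|+|\tilde\mu|=n\}$.

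However, your block convention is off by one and, as stated, does not produce partitions. You let the $j$-th block run from $r_{j-1}+1$ to $r_j$, so the critical index $r_j$ sits at the \emph{end} of its block, and you fix the \emph{final} block. Take $\nu=(5,2,1)$, $\mu=(4,3)$: the only critical index is $r_1=1$ (since $\nu_1=2<\mu_1=4\le\nu_0=5$, while $\mu_2=3>\nu_1=2$ makes index~$2$ non-critical). Your block~$1$ is $\{1\}$ and block~$2=\{2\}$ is fixed; swapping block~$1$ gives $\tilde\mu=(\nu_1,\mu_2)=(2,3)$, which is not weakly decreasing. The ``absorb overflow into the preceding block'' fix you describe, carried out precisely, forces the critical index to sit at the \emph{start} of each swappable block: block~$i$ runs from $r_i$ to $r_{i+1}-1$ (with $r_{k+1}=s+1$), and it is the \emph{initial} block (indices $1,\ldots,r_1-1$, together with $\nu_0$) that is never swapped. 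This is exactly the paper's choice
\[
\nu^0=(\nu_0,\ldots,\nu_{r_1-1}),\quad \nu^{1,i}=(\nu_{r_i},\ldots,\nu_{r_{i+1}-1}),\quad \nu^{2,i}=(\mu_{r_i},\ldots,\mu_{r_{i+1}-1}),
\]
and with it the boundary check at each $r_i$ reduces to the critical inequality $\mu_{r_i}\le\nu_{r_i-1}$, so points (i)--(iii) go through. In the example above one gets $\tilde\nu=(5,4,3)$, $\tilde\mu=(2,1)$, as desired. Once you make this adjustment, your argument coincides with the paper's.
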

\begin{proof}
We have mapped the nilpotent orbits in $\Lg(\bar{\tF}_q)^*$
bijectively to the set $\{(\nu,\mu)|\nu=(\nu_0,\nu_1,\ldots,\nu_s),
\mu=(\mu_1,\mu_2,\ldots,\mu_s),|\mu|+|\nu|=n,\nu_i\leq\mu_i,i=1,\ldots,s\}:=\Delta$.
Let $(\nu,\mu)\in\Delta$,
$\nu=(\nu_0,\nu_1,\ldots,\nu_s),\mu=(\mu_1,\mu_2,\cdots,\mu_s)$. By
Proposition \ref{prop-2}, the nilpotent orbit corresponding to
$(\nu,\mu)$ splits into at most $2^k$ orbits in $\Lg(\tF_q)^*$,
where $k=\#\{i\geq 1|\nu_i<\mu_i\leq\nu_{i-1}\}$. We associate $2^k$
pairs of partitions to this orbit as follows.  Suppose
$r_1,r_2,...,r_k$ are such that
$\nu_{r_i}<\mu_{r_i}\leq\nu_{r_i-1},i=1,...,k$. Let
\begin{eqnarray*}
&&\nu^0=(\nu_0,\ldots,\nu_{r_1-1}),\mu^0=(\mu_1,\ldots,\mu_{r_1-1}),\\
&&\nu^{1,i}=(\nu_{r_{i}},\ldots,\nu_{r_{i+1}-1}),\mu^{1,i}=(\mu_{r_{i}},\ldots,\mu_{r_{i+1}-1}),\\&&
\nu^{2,i}=(\mu_{r_{i}},\ldots,\mu_{r_{i+1}-1}),\mu^{2,i}=(\nu_{r_{i}},\ldots,\nu_{r_{i+1}-1}),
i=1,\ldots,k-1,\\&&\nu^{1,k}=(\nu_{r_{k}},\ldots,\nu_{s}),\mu^{1,k}=(\mu_{r_{k}},\ldots,\mu_{s}),\\&&
\nu^{2,k}=(\mu_{r_{k}},\ldots,\mu_{s}),\mu^{2,k}=(\nu_{r_{k}},\ldots,\nu_{s}).
\end{eqnarray*}
We associate to $(\nu,\mu)$ the pairs of partitions
$(\tilde{\nu}^{\epsilon_1,\ldots,\epsilon_k},\tilde{\mu}^{\epsilon_1,\ldots,\epsilon_k}
)$, $$\tilde{\nu}^{\epsilon_1,\ldots,\epsilon_k}=
(\nu^0,\nu^{\epsilon_1,1},\nu^{\epsilon_2,2},\ldots,\nu^{\epsilon_k,k}),\tilde{\mu}^{\epsilon_1,\ldots,\epsilon_k}=
(\mu^0,\mu^{\epsilon_1,1},\mu^{\epsilon_2,2},\ldots,\mu^{\epsilon_k,k})
,$$ where $\epsilon_i\in\{1,2\}$, $i=1,\ldots,k$. Notice that the
pairs of partitions $(\tilde{\nu}^{\epsilon_1,\ldots,\epsilon_k},
\tilde{\mu}^{\epsilon_1,\ldots,\epsilon_k} )$ are distinct and among
them only
$(\nu,\mu)=(\tilde{\nu}^{1,\ldots,1},\tilde{\mu}^{1,\ldots,1} )$ is
in $\Delta$. One can verify that\linebreak
$\{(\tilde{\nu}^{\epsilon_1,\ldots,\epsilon_k},\tilde{\mu}^{\epsilon_1,\ldots,\epsilon_k}
)|(\nu,\mu)\in\Delta\}=\{(\nu,\mu)||\nu|+|\mu|=n\}$.
\end{proof}
\section{even orthogonal groups}
Let $V$ be a vector space of dimension $2n$ over $\tk$ equipped with
a non-defective quadratic form $\alpha:V\rightarrow \tk$. Let
$\beta:V\times V\rightarrow \tk$ be the non-degenerate bilinear form
associated to $\alpha$. The even orthogonal group is defined as
$G=O(2n)=\{g\in GL(V)\ |\ \alpha(gv)=\alpha(v), \forall\ v\in V\}$
and its Lie algebra is $\Lg=\Lo(2n)=\{x\in \mathfrak{gl}(V)\ |\
\beta(xv,v)=0, \forall\ v\in V \}$. Let $G(\tF_q)$,
$\mathfrak{g}({\tF}_q)$ be the fixed points of a split Frobenius map
$\mathfrak{F}_q$ relative to $\tF_q$ on $G$, $\Lg$.
\begin{proposition}\label{prop-3}
The numbers of nilpotent $G(\tF_q)$-orbits in $\Lg(\tF_q)$ and in
$\Lg(\tF_q)^*$ are the same.
\end{proposition}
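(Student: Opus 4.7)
The plan is to construct a $G(\tF_q)$-equivariant linear isomorphism $\Lg^*\cong\Lg$ that matches the nilpotent cones; the proposition then follows by restricting to $\tF_q$-points and passing to $G(\tF_q)$-orbits. The structural feature making this possible is that in the even orthogonal case $V$ has even dimension and the bilinear form $\beta$ attached to $\alpha$ is non-degenerate, a property that failed in the odd orthogonal case (where $\beta$ had a one-dimensional radical, giving rise to the auxiliary subspaces $V_{2m+1}$ of Section 3).

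Mimicking Lemma \ref{lem-3-1}, for $\xi\in\Lg^*$ pick $X\in\End_\tk(V)$ with $\xi(x)=\tr(Xx)$ and set $\beta_\xi(v,w)=\beta(Xv,w)+\beta(v,Xw)$; the coordinate-free argument of that lemma shows $\beta_\xi$ is independent of $X$. Non-degeneracy of $\beta$ lets me define $T_\xi\in\End_\tk(V)$ by $\beta(T_\xi v,w)=\beta_\xi(v,w)$. A direct computation in characteristic $2$ gives $\beta(T_\xi v,v)=\beta_\xi(v,v)=2\beta(Xv,v)=0$, so $T_\xi\in\Lo(V)=\Lg$. The assignment $\xi\mapsto T_\xi$ is manifestly $G$-equivariant, since $g.\xi$ corresponds to $gXg^{-1}$ and hence to $T_{g.\xi}=gT_\xi g^{-1}$, and it is defined over $\tF_q$ because $\alpha$ and $\beta$ are.

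To see that $\xi\mapsto T_\xi$ is a linear isomorphism $\Lg^*\to\Lg$, note first that if $T_\xi=0$ then $\beta_\xi\equiv 0$, which forces $X$ to be self-adjoint for $\beta$; by the characterization of $\ker\Phi$ as $\Lg^\p$ in the proof of Lemma \ref{lem-3-1}, this is exactly the condition $\xi=0$. Injectivity together with the dimension equality $\dim\Lg^*=\dim\Lg=n(2n-1)$ yields bijectivity. Since the isomorphism is $G$-equivariant and linear, it sends $G$-orbit closures to $G$-orbit closures, so $\xi$ is nilpotent in $\Lg^*$ if and only if $T_\xi$ is nilpotent in $\Lg$. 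Restricting to $\tF_q$-points gives a $G(\tF_q)$-equivariant bijection between the nilpotent cones over $\tF_q$, inducing the desired bijection of $G(\tF_q)$-orbits.

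The most delicate point I anticipate is the verification that $T_\xi\in\Lo(V)$ together with the characteristic-$2$ bookkeeping in the identifications among $\Lg$, $\Lg^*$, and the space of alternating bilinear forms on $V$, where one must avoid conflating symmetric and skew-symmetric endomorphisms. Once this setup is in place the rest of the argument is formal, amounting to a dimension count and the observation that the construction is functorial under the $\tF_q$-structure.
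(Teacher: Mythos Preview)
Your proposal is correct and essentially coincides with the paper's second proof: define $T_\xi\in\Lg$ via $\beta(T_\xi v,w)=\beta(Xv,w)+\beta(v,Xw)$, observe $T_\xi\in\Lo(V)$ since $\beta(T_\xi v,v)=0$, check $G$-equivariance, and conclude that the resulting linear $G$-equivariant isomorphism $\Lg^*\to\Lg$ matches nilpotent (unstable) elements. The paper also records an alternative first proof, constructing a $G$-invariant nondegenerate bilinear form on $\Lo(2n)$ by identifying $\Lo(2n)$ with $\bigwedge^2 V$ via $a\wedge b\mapsto(v\mapsto\beta(a,v)b+\beta(b,v)a)$ and using the natural pairing on $\bigwedge^2 V$; this gives the same conclusion by a slightly more conceptual route but is not needed once your map $\xi\mapsto T_\xi$ is in hand.
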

The proposition can be proved in two ways.

First proof. We show that there exists a $G$-invariant
non-degenerate bilinear form on $\Lg=\Lo(2n)$ (G. Lusztig). Then we
can identify $\Lg$ and $\Lg^*$ via this bilinear form and the
proposition follows. Consider the vector space $\bigwedge^2V$ on
which $G$ acts in a natural way: $g(a\wedge b)=ga\wedge gb$. On
$\bigwedge^2V$ there is a $G$-invariant non-degenerate bilinear form
$$\langle a\wedge b,c\wedge d\rangle=\text{det}\left[
\begin{array}{cc}
\beta(a,c) & \beta(a,d) \\
\beta(b,c) & \beta(b,d)  \end{array} \right].$$ Define a
 map $\phi:\bigwedge^2 V\rightarrow \Lo(2n)$ by $a\wedge
b\mapsto \phi_{a\wedge b}$ and extending by linearity where $
\phi_{a\wedge b}(v)=\beta(a,v)b+\beta(b,v)a$. This map is
$G$-equivariant since we have $\phi_{ga\wedge gb}=g\phi_{a\wedge
b}g^{-1}$. One can easily verify that $\phi$ is a bijection. Define
$$\langle \phi_{a\wedge b},\phi_{c\wedge d}\rangle_{\Lo(2n)}=\langle
a\wedge b,c\wedge d\rangle$$ and extend it to $\Lo(2n)$ by
linearity. This defines a $G$-invariant non-degenerate bilinear form
on $\Lo(2n)$.

Second proof.   Let $\xi$ be an element of $\Lg^*$. There exists
$X\in \mathfrak{gl}(V)$ such that $\xi(x)=\tr(Xx)$ for any
$x\in\Lg$. We define a linear map $T_\xi: V\rightarrow V$ by
$$\beta(T_\xi v,v')=\beta(Xv,v')+\beta(v,Xv'),\text{ for all }v,v'\in V.$$

\begin{lemma}\label{lem-w3}
$T_\xi$ is well-defined.
\end{lemma}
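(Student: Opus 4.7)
The plan is to mirror the coordinate-free argument used in Lemma \ref{lem-7} and Lemma \ref{lem-3-1}, adapted to the even orthogonal setting. The existence and well-definedness of $T_\xi$ will be obtained by passing through an intermediate bilinear form $\beta_\xi$ which depends only on $\xi$ (and not on the auxiliary choice of $X$), and then appealing to non-degeneracy of $\beta$.

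First I would introduce the linear map
\beq
\Phi:\End_\tk(V)\longrightarrow \wedge^2(V^*)=\text{Alt}(V),\quad X\mapsto \beta_X,
\eeq
where $\beta_X(v,w)=\beta(Xv,w)+\beta(v,Xw)$. A short computation, using that $\beta$ is symmetric and $\tk$ has characteristic $2$, shows that $\beta_X(v,v)=2\beta(Xv,v)=0$, so $\beta_X$ is indeed alternating. One then checks that $\Phi$ is $G$-equivariant (where $G$ acts by conjugation on $\End_\tk(V)$ and in the natural way on $\wedge^2(V^*)$), exactly as in the proofs of Lemmas \ref{lem-7} and \ref{lem-3-1}.

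The main step is to identify $\ker \Phi$ with the orthogonal complement $\Lg^\p$ of $\Lg=\Lo(2n)$ in $\End_\tk(V)$ under the non-degenerate trace form $(X,Y)\mapsto\tr(XY)$. The inclusion $\ker\Phi \supseteq \Lg^\p$ (or rather the containment of $\ker\Phi$ as $\mathfrak{sp}(V)$) is a direct check, and one then matches dimensions: in characteristic $2$ one has $\dim\Lo(2n)=2n^2-n$ and $\dim\mathfrak{sp}(2n)=2n^2+n$, so $\dim\Lg^\p=4n^2-(2n^2-n)=\dim\ker\Phi$. This is the step I expect to be the main obstacle, since it requires a characteristic-$2$-specific dimension/pairing argument rather than the usual bijective correspondence between $\Lg$ and $\Lg^\p$ that is available in odd characteristic.

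With $\ker\Phi=\Lg^\p$ established, any two choices $X,X'\in\End_\tk(V)$ with $\xi(x)=\tr(Xx)=\tr(X'x)$ for all $x\in\Lg$ satisfy $X-X'\in\Lg^\p=\ker\Phi$, so $\beta_X=\beta_{X'}$. Hence the bilinear form $\beta_\xi:=\beta_X$ depends only on $\xi$. Finally, since $\beta$ is non-degenerate, for each $v\in V$ the linear functional $v'\mapsto \beta_\xi(v,v')$ is represented by a unique vector $T_\xi v\in V$ with $\beta(T_\xi v,v')=\beta_\xi(v,v')$. The map $T_\xi:V\to V$ so defined is linear (by linearity of $\beta_\xi$ in its first argument) and independent of $X$, which is what the lemma asserts.
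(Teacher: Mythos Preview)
Your proposal is correct and follows essentially the same approach as the paper: the paper's proof of Lemma~\ref{lem-w3} simply says ``The same proof as in Lemma~\ref{lem-3-1} shows that $\beta(T_\xi v,v')$ is well-defined and thus $T_\xi$ is well-defined,'' and you have spelled out precisely that argument (with the extra detail of the dimension count for $\ker\Phi=\Lg^\perp$, which the paper omits). The only cosmetic point is that your parenthetical ``(or rather the containment of $\ker\Phi$ as $\mathfrak{sp}(V)$)'' is slightly garbled---what you mean is that $\ker\Phi=\mathfrak{sp}(V,\beta)$ directly from the definition, and then the dimension count gives $\ker\Phi=\Lg^\perp$.
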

\begin{proof}The same proof as in Lemma \ref{lem-3-1}
shows that $\beta(T_\xi v,v')$ is well-defined and thus $T_\xi$ is
well-defined.
\end{proof}
\begin{lemma}\label{lemma-1}
Two elements $\xi,\zeta\in\Lg^*$ lie in the same $G$-orbit if and
only if there exists $g\in G$ such that $gT_{\xi}g^{-1}=T_{\zeta}.$
\end{lemma}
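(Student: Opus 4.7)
My plan is to deduce the lemma from two facts: (a) the assignment $\xi \mapsto T_\xi$ is $G$-equivariant, i.e.\ $T_{g.\xi} = g T_\xi g^{-1}$ for all $g \in G$, and (b) the map $\xi \mapsto T_\xi$ is injective on $\Lg^*$. Granting both, the lemma is immediate: if $g.\xi = \zeta$, then $T_\zeta = T_{g.\xi} = gT_\xi g^{-1}$ by (a); conversely, if $gT_\xi g^{-1} = T_\zeta$, then $T_{g.\xi} = T_\zeta$ by (a), so $g.\xi = \zeta$ by (b).

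First I would verify (a) by a direct calculation. Fix $X\in\End_\tk(V)$ with $\xi(x)=\tr(Xx)$. Then $g.\xi(x) = \xi(g^{-1}xg) = \tr(Xg^{-1}xg) = \tr(gXg^{-1}x)$, so $g.\xi$ is represented by $gXg^{-1}$. Using that $g \in O(V)$ preserves $\beta$ (so $\beta(gu,w) = \beta(u,g^{-1}w)$), one computes
\begin{align*}
\beta(T_{g.\xi}v,v') &= \beta(gXg^{-1}v,v') + \beta(v,gXg^{-1}v') \\
&= \beta(Xg^{-1}v,g^{-1}v') + \beta(g^{-1}v,Xg^{-1}v') \\
&= \beta(T_\xi g^{-1}v,g^{-1}v') \\
&= \beta(gT_\xi g^{-1}v,v').
\end{align*}
Since $\beta$ is nondegenerate, $T_{g.\xi} = gT_\xi g^{-1}$.

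Next I would establish (b). Since $\beta$ is nondegenerate, $T_\xi$ is determined by the bilinear form $\beta_\xi(v,v') := \beta(Xv,v')+\beta(v,Xv')$ (and conversely). So (b) amounts to showing that $\xi \mapsto \beta_\xi$ is injective on $\Lg^*$. This is the content (in the even orthogonal setting) of the analogue of Lemma \ref{lem-3-1}: the map $\Phi:\End_\tk(V)\to\text{Alt}(V)$, $X\mapsto\beta_X$, has kernel precisely $\Lg^\perp$ (the orthogonal complement of $\Lg=\Lo(2n)$ in $\End_\tk(V)$ under the trace form), exactly as in the odd orthogonal case. Thus $\Phi$ descends to an injection on $\End_\tk(V)/\Lg^\perp = \Lg^*$, which gives (b).

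The only genuine obstacle is verifying that the kernel computation in Lemma \ref{lem-3-1} goes through verbatim for $\Lo(2n)$ in characteristic 2; this is essentially the same dimension/inclusion argument, using that $\Lo(2n) = \{x\in\mathfrak{gl}(V)\mid\beta(xv,v)=0\ \forall v\}$ and that an $X$ with $\beta_X = 0$ necessarily satisfies $\tr(Xx)=0$ for all $x\in\Lg$. All the remaining steps are formal.
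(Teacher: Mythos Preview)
Your proof is correct and follows essentially the same approach as the paper. The paper also reduces to the kernel identity $\ker\Phi=\Lg^\perp$ (invoking Lemma~\ref{lem-w3}, which cites Lemma~\ref{lem-3-1}) and implicitly uses the equivariance $T_{g.\xi}=gT_\xi g^{-1}$; you have simply made these two ingredients explicit and organized the argument around them, whereas the paper runs through the chain of equivalences $\tr(gX_\xi g^{-1}x)=\tr(X_\zeta x)\Leftrightarrow\beta_{gX_\xi g^{-1}}=\beta_{X_\zeta}\Leftrightarrow gT_\xi g^{-1}=T_\zeta$ in one line.
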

\begin{proof}
Assume $\xi(x)=\tr(X_\xi x),\zeta(x)=\tr(X_\zeta x)$, $\forall
x\in\Lg$. Then $\xi,\zeta$ lie in the same $G$-orbit if and only if
there exists $g\in G$ such that $\tr(gX_\xi g^{-1}x)=\tr(X_\zeta
x),\ \forall\ x\in \Lg$. This is equivalent to $\beta((gX_\xi
g^{-1}+ X_\zeta)v,w)+\beta(v,(gX_\xi g^{-1}+X_\zeta)w)=0,\ \forall\
v,w \in V$, which is true if and only if $gT_{\xi}g^{-1}=T_{\zeta}.$
\end{proof}
Note that $\beta(T_\xi v,v)=0$ for any $v\in V$. Thus $T_\xi\in\Lg$.
We have in fact defined a bijection $\theta:\Lg^*\rightarrow \Lg$,
$\xi\mapsto T_\xi$. This induces a bijection
$\theta|_{\mathcal{N}'}:\mathcal{N}'\rightarrow \mathcal{N}$, where
$\mathcal{N}'$(resp. $\mathcal{N}$) is the set of all nilpotent
elements (unstable vectors) in $\Lg^*$(resp. $\Lg$). Moreover,
$\theta|_{\mathcal{N}'}$ is $G$-equivariant by Lemma \ref{lemma-1}.
The proposition follows.
\section{Springer correspondence}
In this section, we assume $\tk$ is algebraically closed. Throughout
subsections \ref{ssec-2}-\ref{ssec-3}, let $G$ be a simply connected
algebraic group of type $B,C$ or $D$ over $\tk$ and $\mathfrak{g}$
be the Lie algebra of $G$. Fix a Borel subgroup $B$ of $G$ with Levi
decomposition $B=TU$. We denote $r$ the dimension of $T$. Let $U^-$
be a maximal unipotent  subgroup opposite to $B$. Let
$\mathfrak{b},\mathfrak{t}$ , $\mathfrak{n}$ and $\Ln^-$ be the Lie
algebra of $B,T$,$U$ and $U^-$ respectively. Let $\cB$ be the
variety of Borel subgroups of $G$. Let $\Lg^*$ be the dual vector
space of $\Lg$. Let $$\Lt'=\{\xi\in\Lg^*|\xi(\Ln\oplus\Ln^-)=0\},
\Ln'=\{\xi\in\Lg^*|\xi(\Lb)=0\},\Lb'=\{\xi\in\Lg^*|\xi(\Ln)=0\}.$$
An element $\xi$ in $\Lg^*$ is called semisimple (resp. nilpotent)
if there exists $g\in G$ such that $g.\xi\in\Lt'$ (resp. $\Ln'$)(
see \cite{KW}). The proofs in this section are entirely similar to
those of \cite{Lu1,Lu4,X}. For completeness, we include the proofs
here.

\subsection{}\label{ssec-2}
Let $Z=\{(\xi,B_1,B_2)\in
\mathfrak{g}^*\times\cB\times\cB|\xi\in\Lb_1'\cap\Lb_2'\}$ and $
Z'=\{(\xi,B_1,B_2)\in
\mathfrak{g}^*\times\cB\times\cB|\xi\in\Ln_1'\cap\Ln_2'\}. $  Let
$\rc$ be a nilpotent orbit in $\Lg^*$.

\begin{lemma}\label{prop-dim}
$\mathrm{(i)}$ We have $\dim(\rc\cap\Ln')\leq\frac{1}{2}\dim \rc$.

$\mathrm{(ii)}$ Given $\xi\in \rc$, we have
$\dim\{B_1\in\cB|\xi\in\Ln_1'\}\leq(\dim G-r-\dim \rc)/2$.

$\mathrm{(iii)}$ We have $\dim Z=\dim G$ and $\dim Z'=\dim G-r$.
\end{lemma}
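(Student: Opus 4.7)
The plan is to establish (iii) first by a stratified dimension count on $\cB\times\cB$, then deduce (ii) from (iii) by projecting $Z'$ onto $\Lg^*$, and finally deduce (i) from (ii) via an auxiliary incidence variety over $\rc\times\cB$.

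For (iii), project $Z\to\cB\times\cB$ and $Z'\to\cB\times\cB$ and stratify the base by the finitely many $G$-orbits $\cO_w$, $w\in W$, of the diagonal action. From Bruhat theory, $\dim\cO_w=\dim\cB+\ell(w)$, and for $(B_1,B_2)\in\cO_w$ one has $\dim(\Ln_1\cap\Ln_2)=\dim\Ln-\ell(w)$ and $\dim(\Lb_1\cap\Lb_2)=\dim\Lb-\ell(w)$. By definition $\Lb'_i$ is the annihilator of $\Ln_i$ in $\Lg^*$ and $\Ln'_i$ is the annihilator of $\Lb_i$, so the fiber of $Z\to\cB\times\cB$ over $(B_1,B_2)\in\cO_w$ is $(\Ln_1+\Ln_2)^{\p}$ of dimension $\dim\Lg-\dim\Ln-\ell(w)$, while the fiber of $Z'\to\cB\times\cB$ is $(\Lb_1+\Lb_2)^{\p}$ of dimension $\dim\Lg-\dim\Lb-\ell(w)$. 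Summing base and fiber contributions and using $\dim\cB=\dim\Ln$, $\dim\Lg=r+2\dim\Ln$, $\dim\Lb=r+\dim\Ln$, each stratum contributes $\dim G$ to $\dim Z$ and $\dim G-r$ to $\dim Z'$, independent of $w$; this yields (iii).

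For (ii), consider the projection $\pi':Z'\to\Lg^*$, $(\xi,B_1,B_2)\mapsto\xi$. The fiber over $\xi\in\rc$ is $\cB_\xi\times\cB_\xi$, where $\cB_\xi=\{B_1\in\cB\,|\,\xi\in\Ln'_1\}$. Since $\rc$ is a single $G$-orbit and $\pi'$ is $G$-equivariant, all $\cB_\xi$ with $\xi\in\rc$ have a common dimension $d_{\rc}$. Hence the preimage of $\rc$ in $Z'$ has dimension $\dim\rc+2d_{\rc}$, which is bounded above by $\dim Z'=\dim G-r$, giving $d_\rc\leq(\dim G-r-\dim\rc)/2$; this is (ii).

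For (i), introduce the incidence variety $X=\{(\xi,B_1)\in\Lg^*\times\cB\,|\,\xi\in\rc\cap\Ln'_1\}$. Projection to $\cB$ has fiber $\rc\cap\Ln'_1$ over $B_1$, and since $\rc$ is $G$-stable while all Borels are $G$-conjugate, these fibers all have dimension $\dim(\rc\cap\Ln')$, so $\dim X=\dim\cB+\dim(\rc\cap\Ln')$. Projection to $\rc$ has fibers $\cB_\xi$ of dimension $d_\rc$, so $\dim X=\dim\rc+d_\rc$. Equating and substituting the bound from (ii), together with $\dim\cB=(\dim G-r)/2$, yields $\dim(\rc\cap\Ln')\leq\dim\rc/2$, which is (i). The most delicate point is the Bruhat identification of the fiber dimensions in (iii); the remaining steps are formal double-fibration arguments that go through in any characteristic.
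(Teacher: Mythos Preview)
Your proof is correct and follows essentially the same approach as the paper: stratify $Z$ and $Z'$ over the $G$-orbits $\cO_w$ in $\cB\times\cB$ to get (iii), then deduce (ii) by projecting $Z'$ restricted to $\rc$ onto $\rc$, and finally obtain (i) from the incidence variety $\{(\xi,B_1)\in\rc\times\cB\mid \xi\in\Ln_1'\}$ via the two projections. The only cosmetic difference is that you make the Bruhat arithmetic explicit (using $\ell(w)$), whereas the paper records the fiber dimensions as $\dim(\Lb_1'\cap\Lb_2')=\dim(\Lb_1\cap\Lb_2)$ and $\dim(\Ln_1'\cap\Ln_2')=\dim(\Ln_1\cap\Ln_2)$ without unpacking them further.
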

\begin{proof}
We have a partition
$Z=\cup_{\mathcal{\mathcal{O}}}Z_{\mathcal{\mathcal{O}}}$ according
to the $G$-orbits ${\mathcal{\mathcal{O}}}$ on $\cB\times\cB$ where
$Z_\mathcal{O}=\{(\xi,B_1,B_2)\in Z|(B_1,B_2)\in\mathcal{O}\}$.
Define in the same way a partition
$Z'=\cup_{\mathcal{\mathcal{O}}}Z'_{\mathcal{\mathcal{O}}}$.
Consider the maps from $Z_{\cO}$ and $Z'_{\cO}$ to $\cO$:
$(\xi,B_1,B_2)\mapsto (B_1,B_2)$. We have $\dim Z_{\cO}=\dim
(\Lb_1'\cap\Lb_2')+\dim\cO=\dim (\Lb_1\cap\Lb_2)+\dim\cO=\dim G$ and
$\dim Z_{\cO}'=\dim (\Ln_1'\cap\Ln_2')+\dim\cO=\dim
(\Ln_1\cap\Ln_2)+\dim\cO=\dim G-r$. Thus $\mathrm{(iii)}$ follows.

Let $Z'(\rc)=\{(\xi,B_1,B_2)\in Z'|\xi\in \rc\}\subset Z'$. From
(iii), we have $\dim Z'(\rc)\leq \dim G-r$. Consider the map
$Z'(\rc)\rightarrow \rc,\ (\xi,B_1,B_2)\mapsto \xi$. We have $\dim
Z'(\rc)=\dim\rc+2\dim\{B_1\in\cB|\xi\in\Ln_1'\}\leq \dim G-r$. Thus
(ii) follows.

Consider the variety $\{(\xi,B_1)\in \rc\times\cB|\xi\in\Ln_1'\}$.
By projecting it to the  first coordinate, and using (ii), we see
that it has dimension $\leq(\dim G-r+\dim \rc)/2$. If we project it
to the second coordinate, we get $\dim(\rc\cap\Ln')+\dim\cB\leq(\dim
G-r +\dim \rc)/2$ and (i) follows.
\end{proof}

\subsection{}
Recall that an element $\xi$ in $\Lg^*$ is called regular if the
connected centralizer $Z_G^0(\xi)$ in $G$ is a maximal torus of $G$
(\cite{KW}).
\begin{lemma}[\cite{KW}, Lemma 3.2]\label{ssreg}
There exist regular semisimple elements in $\Lg^*$ and they form an
open dense subset in $\Lg^*$.
\end{lemma}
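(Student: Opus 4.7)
The plan is to exhibit a dense open subvariety of $\Lt'$ consisting of regular semisimple elements, and then spread it out to $\Lg^*$ by the $G$-action. First I would note that via the decomposition $\Lg=\Ln^-\oplus\Lt\oplus\Ln$, the subspace $\Lt'\subseteq\Lg^*$ is canonically identified with the dual of $\Lt$; in particular $\dim\Lt'=r$. Every element $\xi\in\Lt'$ is fixed by $T$: for $t\in T$ and $x\in\Lt$ one has $\Ad(t^{-1})x=x$, and for $x\in\Ln\oplus\Ln^-$ the element $\Ad(t^{-1})x$ lies again in $\Ln\oplus\Ln^-$, so $t.\xi(x)=0=\xi(x)$. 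Hence $T\subseteq Z_G(\xi)$ for every $\xi\in\Lt'$, and by definition each such $\xi$ is semisimple.

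Next I would show that $Z_G^0(\xi)=T$ for generic $\xi\in\Lt'$. The infinitesimal centralizer $\Lg^\xi=\{x\in\Lg\mid \xi\circ\ad(x)=0\}$ always contains $\Lt$. A root vector $x_\alpha\in\Lg_\alpha$ belongs to $\Lg^\xi$ if and only if $\xi([x_\alpha,y])=0$ for all $y\in\Lg_{-\alpha}$, and already the choice $y=x_{-\alpha}$ forces $\xi(h_\alpha)=0$ for a specific nonzero $h_\alpha\in\Lt$. Each such vanishing condition cuts out a proper hyperplane in $\Lt'$, so their union is a proper closed subset; on its complement $U\subseteq\Lt'$ one has $\Lg^\xi=\Lt$, and thus $Z_G^0(\xi)=T$. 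This shows regular semisimple elements exist.

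For openness and density in $\Lg^*$, I would study the orbit map $\Psi:G\times U\to\Lg^*$, $(g,\xi)\mapsto g.\xi$. Since every fiber has dimension at least $\dim T=r$ (as $T$ fixes $\xi$), and for $\xi\in U$ the fiber has dimension exactly $r$, the image of $\Psi$ has dimension $\dim G+\dim U-r=\dim G$, so it is dense in $\Lg^*$. For openness I would invoke upper semicontinuity of $\dim Z_G(\xi)$: the locus $\{\xi\in\Lg^*\mid\dim Z_G(\xi)\leq r\}$ is open in $\Lg^*$, contains the dense set $\Psi(G\times U)$, and on it the orbit is necessarily closed (by Hilbert--Mumford, since an orbit of minimal codimension cannot degenerate) and meets $\Lt'$, so every such element is semisimple. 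Combining, the regular semisimple locus is open and dense.

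The main obstacle will be the characteristic $2$ analysis underlying step two: one must verify that $[x_\alpha,x_{-\alpha}]$ is a nonzero element of $\Lt$ for each root $\alpha$ in types $B$, $C$, $D$, and that the resulting linear functional $\xi\mapsto\xi([x_\alpha,x_{-\alpha}])$ is nontrivial on $\Lt'$. A secondary subtlety is justifying the openness of the semisimple locus in positive characteristic, which relies on the closed-orbit characterization of semisimplicity rather than the usual Jordan decomposition argument.
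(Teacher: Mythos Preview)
The paper does not give its own proof of this lemma; it simply quotes \cite{KW}, Lemma~3.2. So there is no ``paper's proof'' to compare against, and your proposal must stand on its own.

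Your first two steps are sound. The obstacle you flag in step two is not serious: every root $\alpha$ is $W$-conjugate to a simple root, so $h_\alpha=[x_\alpha,x_{-\alpha}]$ is $W$-conjugate to some simple coroot $h_{\alpha_i}$, and for $G$ simply connected the $h_{\alpha_i}$ form a $\tk$-basis of $\Lt$. Hence $h_\alpha\neq 0$ and the linear form $\xi\mapsto\xi(h_\alpha)$ is nonzero on $\Lt'$. Your computation then shows that on $U=\{\xi\in\Lt'\mid\xi(h_\alpha)\neq0\text{ for all }\alpha\}$ one has $\Lg^\xi=\Lt$, whence $Z_G^0(\xi)=T$.

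Step three, however, contains a genuine error. The assertion that ``an orbit of minimal codimension cannot degenerate'' is false, and Hilbert--Mumford says nothing of the sort. The regular nilpotent element $\xi=\sum_i x_{-\alpha_i}^*$ constructed later in the paper (Lemma~\ref{lem-reg}) has $\dim Z_G(\xi)=r$, yet its orbit is not closed (it is nilpotent, so $0$ lies in the closure). Thus the open set $\{\xi\mid\dim Z_G(\xi)\leq r\}$ strictly contains the regular semisimple locus, and your argument does not show the latter is open.

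A correct replacement for step three: use step two to show directly that the action map $\Psi:G\times U\to\Lg^*$ is smooth. The differential at $(e,\xi)$ sends $(x,\eta)\in\Lg\times\Lt'$ to $\ad^*(x)\xi+\eta$; taking $x=x_\alpha$ one computes $\ad^*(x_\alpha)\xi=-\xi(h_\alpha)x_{-\alpha}^*$, so for $\xi\in U$ the image of $\ad^*(-)\xi$ is the annihilator of $\Lt$, and adding $\Lt'$ gives all of $\Lg^*$. Hence $\Psi$ is smooth, its image $G\cdot U$ is open, and by your dimension count it is dense. Finally one checks $G\cdot U=Y'$: if $\xi\in\Lt'$ has $\xi(h_\alpha)=0$ for some $\alpha$, then the root subgroup $U_\alpha$ fixes $\xi$, so $Z_G^0(\xi)\supsetneq T$ and $\xi$ is not regular. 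Alternatively, follow \cite{KW} and use the isomorphism $\tk[\Lg^*]^G\cong\tk[\Lt']^W$ to lift a discriminant from $\Lt'$ to a $G$-invariant function on $\Lg^*$ whose nonvanishing locus is exactly $Y'$.
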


\begin{remark}
Lemma \ref{ssreg} is not always true when $G$ is not simply
connected.
\end{remark}

\subsection{}
Let $\Lt_0',Y'$ be the set of semisimple regular elements in
$\Lt',\Lg^*$ respectively. By Lemma \ref{ssreg}, $\dim Y'=\dim G$.
Let $$\widetilde{Y}'=\{(\xi,gT)\in Y'\times G/T|g^{-1}.\xi\in
\Lt_0'\}.$$ Define $$\pi': \widetilde{Y}'\rightarrow Y'\text{ by  }
\pi'(\xi,gT)=\xi.$$ The Weyl group $W=NT/T$ acts (freely) on
$\widetilde{Y}'$ by $n:(\xi,gT)\mapsto(\xi,gn^{-1}T)$.
\begin{lemma}\label{lem-3}
$\pi': \widetilde{Y}'\rightarrow Y'$ is a principal $W$-bundle.
\end{lemma}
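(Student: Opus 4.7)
The plan is to identify $\widetilde{Y}'$ with $G/T\times\Lt_0'$, verify that the fibers of $\pi'$ are exactly the $W$-orbits on which $W$ acts freely, and then argue local triviality.

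First observe that $T$ acts trivially on each $\eta\in\Lt_0'\subset\Lt'$: for $t\in T$ and $x\in\Lt$ we have $\Ad(t)x=x$, while $\Ad(t)$ preserves $\Ln\oplus\Ln^-$, on which $\eta$ vanishes. Consequently the map $(\xi,gT)\mapsto (gT,g^{-1}.\xi)$ is a well-defined isomorphism $\widetilde{Y}'\xrightarrow{\sim} G/T\times\Lt_0'$, under which $\pi'$ becomes the orbit map $(gT,\eta)\mapsto g.\eta$ and the $W$-action becomes $n.(gT,\eta)=(gn^{-1}T,n.\eta)$.

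Surjectivity of $\pi'$ is immediate from Lemma \ref{ssreg}: any $\xi\in Y'$ is $G$-conjugate to some element of $\Lt'$, and since the $G$-action preserves the regular locus, that element lies in $\Lt_0'$. For the fiber analysis, suppose $g_1^{-1}.\xi=\eta_1$ and $g_2^{-1}.\xi=\eta_2$ both lie in $\Lt_0'$. Setting $h=g_2^{-1}g_1$, we have $h.\eta_1=\eta_2$, so conjugation by $h$ carries $Z_G^0(\eta_1)=T$ to $Z_G^0(\eta_2)=T$ (using that both $\eta_i$ are regular so that $Z_G^0(\eta_i)$ equals the unique maximal torus containing $T$, namely $T$ itself); hence $h\in N_G(T)=NT$. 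Writing $h=nt$ with $n\in N$, $t\in T$, we get $g_1T=g_2nT$, so $(\xi,g_1T)=n.(\xi,g_2T)$. This shows fibers are $W$-orbits. Freeness is direct: $n.(\xi,gT)=(\xi,gT)$ forces $gn^{-1}T=gT$, i.e.\ $n\in T$, so $n$ represents the identity in $W$.

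For local triviality, consider the action map $\Phi:G\times\Lt_0'\to Y'$, $(g,\eta)\mapsto g.\eta$. Because every $\eta\in\Lt_0'$ is regular semisimple, the differential of $\Phi$ is surjective at every point (its image contains $[\Lg,\eta]+\Lt'=\Lg^*$), so $\Phi$ is smooth, of relative dimension $\dim T=r$. The group $NT$ acts on $G\times\Lt_0'$ by $(g,\eta).(nt)=(gnt,(nt)^{-1}.\eta)$, and $\Phi$ is constant on $NT$-orbits; the induced map $G\times^{NT}\Lt_0'\to Y'$ is then bijective by the fiber analysis above and smooth by descent, hence an isomorphism. Passing instead to the intermediate quotient by $T$ alone yields $G/T\times\Lt_0'\cong\widetilde{Y}'\to Y'$ with residual free $W$-action whose quotient is $Y'$, which is the required principal $W$-bundle structure. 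The main obstacle is making this quotient descent rigorous; this is the step where one invokes the standard reductive-group argument (as in \cite{Lu1}) that $G\to G/NT$ is an $NT$-torsor étale-locally, giving étale-local sections of $\pi'$ and the corresponding trivializations.
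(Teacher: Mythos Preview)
Your fiber analysis is essentially identical to the paper's: both arguments show that if $g_1^{-1}.\xi,\,g_2^{-1}.\xi\in\Lt_0'$ then $g_2^{-1}g_1\in NT$, by observing that conjugation by this element carries $Z_G^0(\eta_1)=T$ to $Z_G^0(\eta_2)=T$. The paper's proof stops there---it establishes only that $W$ acts simply transitively on the fibers (freeness having been noted just before the lemma) and says nothing about local triviality.

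You go further and attempt a direct argument for local triviality via smoothness of the action map and descent along the $NT$-torsor $G\to G/NT$. This is a reasonable route, but note that the paper obtains the needed \'etale structure more cheaply just after the lemma: it uses the isomorphism $\rho:\widetilde{Y}'\xrightarrow{\sim}\varphi'^{-1}(Y')$ to see that $\pi'$ is the restriction of the proper map $\varphi'$, hence proper, and combined with quasi-finiteness (immediate from the fiber description) this yields that $\pi'$ is a finite covering. That bypasses your differential/smoothness argument, which in characteristic~$2$ requires knowing that the scheme-theoretic stabilizer of $\eta\in\Lt_0'$ is reduced (equivalently, that the orbit map $G\to G.\eta$ is smooth)---a point your sketch assumes rather than verifies. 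Your approach is not wrong, but the paper's is shorter and avoids this subtlety.
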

\begin{proof}
We show that if $\xi\in \Lg^*, g,g'\in G$ are such that
$g^{-1}.\xi\in\Lt_0'$ and $g'^{-1}.\xi\in\Lt_0'$, then $g'=gn^{-1}$
for some $n\in NT$. Let
$g^{-1}.\xi=\xi_1\in\Lt_0',g'^{-1}.\xi=\xi_2\in\Lt_0'$, then we have
$Z_G^0(\xi)=Z_G^0(g.\xi_1)=gZ_G^0(\xi_1)g^{-1}=gTg^{-1}$, similarly,
$Z_G^0(\xi)=Z_G^0(g'.\xi_2)=g'Z_G^0(\xi_2)g'^{-1}=g'Tg'^{-1}$, hence
$g'^{-1}g\in NT$.
\end{proof}

Let $$X'=\{(\xi,gB)\in\Lg^*\times G/B|g^{-1}.\xi\in\Lb'\}.$$ Define
$$\varphi':X'\rightarrow\Lg^* \text{ by }\varphi'(\xi,gB)=\xi.$$ The map
$\varphi'$ is $G$-equivariant with $G$-action on $X'$ given by
$g_0:(\xi,gB)\mapsto(g_0.\xi,g_0gB)$.
\begin{lemma}
$\mathrm{(i)}$ $X'$ is an irreducible variety of dimension equal to
$\dim G$.

$\mathrm{(ii)}$ $\varphi'$ is proper and $\varphi'(X')=\Lg^*$.

$\mathrm{(iii)}$ $(\xi,gT)\rightarrow(\xi,gB)$ is an isomorphism
$\rho:\widetilde{Y}'\xrightarrow{\sim}\varphi'^{-1}(Y')$.
\end{lemma}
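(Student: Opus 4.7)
For (i) I would realize $X'$ as the associated bundle $G\times^B\Lb'$ over $G/B$. The map $G\times\Lb'\to X'$, $(g,\eta)\mapsto(g.\eta,gB)$, is well-defined because $\Lb'$ is $B$-stable under the coadjoint action (as $\Ln$ is $B$-stable), is $B$-invariant for the diagonal right action on the source, and descends to an isomorphism $G\times^B\Lb'\xrightarrow{\sim}X'$. This exhibits $X'$ as a $G$-equivariant vector bundle of rank $\dim\Lb'=\dim\Lb$ over the smooth irreducible base $G/B$, so $X'$ is smooth, irreducible, and $\dim X'=\dim(G/B)+\dim\Lb=\dim G$.

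For (ii), the condition $g^{-1}.\xi\in\Lb'$ is closed and right-$B$-invariant on $\Lg^*\times G$, so $X'$ is closed in $\Lg^*\times G/B$. Hence $\varphi'$ is the restriction of the first projection, which is proper since $G/B$ is complete. In particular $\varphi'(X')$ is closed and $G$-stable. Given any $\xi\in Y'$, Lemma \ref{ssreg} provides $h\in G$ with $h.\xi\in\Lt_0'\subset\Lb'$, so $(\xi,h^{-1}B)\in X'$ and $\xi\in\varphi'(X')$. Since $Y'$ is dense in $\Lg^*$ (Lemma \ref{ssreg}), the closed image must equal $\Lg^*$.

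For (iii), well-definedness of $\rho$ follows from $\Lt_0'\subset\Lb'$. To prove injectivity, suppose $\rho(\xi,g_1T)=\rho(\xi,g_2T)$ and write $g_2=g_1b$ with $b\in B$. Then $\eta:=g_1^{-1}.\xi$ and $b^{-1}.\eta=g_2^{-1}.\xi$ both lie in $\Lt_0'$. A direct computation using that $\Ad(t)^{-1}|_\Lt=\mathrm{id}$ and that $\Ad(t)^{-1}$ preserves $\Ln\oplus\Ln^-$ shows that $T$ fixes $\Lt'$ pointwise. For $\eta_0\in\Lt_0'$, the centralizer $Z_G^0(\eta_0)$ contains $T$ and, being a maximal torus, equals $T$; its component group embeds into $W=N_G(T)/T$ and must fix the regular element $\eta_0$, so is trivial. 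Thus $T=Z_G^0(\eta)=Z_G^0(b^{-1}.\eta)=b^{-1}Tb$, forcing $b\in N_G(T)\cap B=T$ and $g_1T=g_2T$.

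For surjectivity and the morphism property of $\rho^{-1}$, I would use the decomposition $\Lb'=\Lt'\oplus\Ln'$ (dimensions sum to $\dim\Lb$, and any element of $\Lt'\cap\Ln'$ vanishes on $\Lt\oplus\Ln\oplus\Ln^-=\Lg$), together with the fact, checked directly from $(u.\eta)|_\Lt=\eta|_\Lt$, that $U$ fixes the $\Lt'$-component of every element of $\Lb'$. Given $(\xi,gB)\in\varphi'^{-1}(Y')$ and $\eta=g^{-1}.\xi$ with $\Lt'$-part $\eta_0$, the $U$-orbit $U.\eta$ lies in $\eta_0+\Ln'$, is closed in $\Lb'$ (since $U$ is unipotent acting on an affine variety), and has dimension $\dim U=\dim\Ln'$ (the stabilizer $Z_U(\eta)=U\cap Z_G(\eta)$ is trivial because $Z_G(\eta)$ is a torus by regularity). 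Thus $U.\eta=\eta_0+\Ln'$, so $\eta_0\in U.\eta$; in particular $\eta_0$ is $G$-conjugate to $\eta$ and hence lies in $\Lt_0'$, and a unique $v\in U$ satisfies $v.\eta_0=\eta$. Setting $g'=gv$, one has $g'B=gB$ and $(g')^{-1}.\xi=\eta_0\in\Lt_0'$, giving surjectivity. To upgrade this to an isomorphism of varieties, I would show that the orbit map $U\times\Lt_0'\to\Lb'$, $(u,\eta_0)\mapsto u.\eta_0$, is étale: its differential at $(e,\eta_0)$ is $(X,\delta)\mapsto\mathrm{ad}^*(X)\eta_0+\delta\in\Ln'\oplus\Lt'=\Lb'$, which is an isomorphism since $\mathrm{ad}^*(\cdot)\eta_0:\Ln\to\Ln'$ has kernel $\Ln\cap Z_\Lg(\eta_0)=\Ln\cap\Lt=0$ by regularity of $\eta_0$. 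By $U$-equivariance étaleness holds everywhere, so the bijective étale morphism $U\times\Lt_0'\to$ (regular semisimple locus of $\Lb'$) is an isomorphism, and composing with $G\to G/T$ produces a morphism inverse to $\rho$. The main obstacle is precisely this last step: in characteristic $2$ a bijective morphism of smooth varieties need not be an isomorphism, so the étaleness of the coadjoint orbit map, which hinges on regularity of $\eta_0$, is the essential input that promotes the set-theoretic inverse to a morphism.
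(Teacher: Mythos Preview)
Your proof is correct and follows essentially the same strategy as the paper: parts (i) and (ii) are simply declared ``easy'' there, and for (iii) both arguments establish bijectivity of $\rho$ via $N_G(T)\cap B=T$ and then upgrade to an isomorphism of varieties by analyzing the action map $U\times\Lt_0'\to\Lb_0'$, $(u,\eta_0)\mapsto u.\eta_0$. The only notable difference is in this last step: the paper works in explicit local charts over the big cell $U^-B/B$ (citing the Lie-algebra analogue in Jantzen) and simply asserts that this action map is an isomorphism, whereas you supply a self-contained argument via bijectivity plus \'etaleness of the differential; your key input $Z_\Lg(\eta_0)=\Lt$ is indeed equivalent to regularity of $\eta_0$, since $\eta_0(h_\alpha)=0$ for some root $\alpha$ would force $U_\alpha\subset Z_G(\eta_0)$.
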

\begin{proof}
(i) and (ii) are easy.

(iii) We first show that $\rho$ is a bijection. Suppose
$(\xi_1,g_1T),(\xi_2,g_2T)\in\widetilde{Y}'$ are such that
$(\xi_1,g_1B)=(\xi_2,g_2B)$, then we have
$g_1^{-1}.\xi_1\in\Lt_0',g_2^{-1}.\xi_2\in\Lt_0'$ and
$\xi_1=\xi_2,g_2^{-1}g_1\in B$. Similar argument as in the proof of
Lemma \ref{lem-3} shows $g_2^{-1}g_1\in NT$, hence $g_2^{-1}g_1\in
B\cap NT=T$ and it follows that $g_1T=g_2T$. Thus $\rho$ is
injective.  For $(\xi,gB)\in\varphi'^{-1}(Y')$, we have $\xi\in
Y',g^{-1}.\xi\in\Lb'$, hence there exists $b\in B,\xi_0\in\Lt_0'$
such that $g^{-1}.\xi=b.\xi_0$. Then $\rho(\xi,gbT)=(\xi,gB)$ and it
follows that $\rho$ is surjective.

Now we show that $\rho$ is an isomorphism of varieties. The proof is
entirely similar to the Lie algebra case (see for example
\cite{Jan}, Lemma 13.4). Let $\Lb'_0$ be the set of regular
semisimple elements in $\Lb'$. Consider the natural projection maps
$$f:\widetilde{Y}'\rightarrow G/T,\quad f':X'\rightarrow G/B.$$ Let
$U^-$ be as in the first paragraph of this section. Then $U^-B/B$
(resp. $U^-B/T$) is an open subset in $G/B$ (resp. $G/T$). We have
isomorphisms
\begin{eqnarray*}&&\Lt'_0\times U\times
U^-\xrightarrow{\sim}f^{-1}(U^-B/T),(\xi,u,u^-)\mapsto((u^-u).\xi,u^-uT)\\
&&\Lb'_0\times
U^-\xrightarrow{\sim}f'^{-1}(U^-B/B),(\xi,u^-)\mapsto(u^-.\xi,u^-B).
\end{eqnarray*}
Notice that $f^{-1}(U^-B/T)$ is the inverse image of
$f'^{-1}(U^-B/B)$ under $\rho$. Hence under the two isomorphisms
above, the map $\rho$ corresponds to the following isomorphism
$$\Lt'_0\times U\times
U^-\xrightarrow{\sim}\Lb'_0\times
U^-,\quad(\xi,u,u^-)\mapsto(u.\xi,u^-).$$ It follows that
$\rho^{-1}$ is a morphism on $f'^{-1}(U^-B/B)$. Since
$f'^{-1}(gU^-B/B)$ with $g\in G$ cover $\varphi'^{-1}(Y')$ and
$\rho,f'$ are $G$-equivariant, we see that $\rho^{-1}$ is a morphism
everywhere.
\end{proof}

By Lemma \ref{lem-3}, the map $\pi':\widetilde{Y}'\rightarrow Y'$ is
quasi-finite. Since $\pi'$ is proper, it follows that $\pi'$
 is a finite covering (see \cite{Mil}, I 1.10). Thus
$\pi'_!\bar{\mathbb{Q}}_{l\widetilde{Y}'}$ is a well-defined local
system on $Y'$ and the intersection cohomology complex
$IC(\Lg^*,\pi'_!\bar{\mathbb{Q}}_{l\widetilde{Y}'})$ is
well-defined.
\begin{proposition}\label{p-2}
$\varphi'_!\bar{\mathbb{Q}}_{lX'}$ is canonically isomorphic to
$IC(\Lg^*,\pi'_!\bar{\mathbb{Q}}_{l\widetilde{Y}'})$. Moreover,
$\End(\varphi'_!\bar{\mathbb{Q}}_{lX'})=\End(\pi'_!\bar{\mathbb{Q}}_{l\widetilde{Y}'})=\bar{\bQ}_l[W]$.
\end{proposition}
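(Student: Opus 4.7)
The plan is to deduce both statements from the fact that $\varphi'$ is a proper small map. Properness has already been recorded in the preceding lemma, so the crux is smallness: for every integer $d>0$ one must show that $\{\xi\in\Lg^*\mid \dim\varphi'^{-1}(\xi)\geq d\}$ has dimension strictly less than $\dim\Lg^*-2d$. The fiber of $\varphi'$ over $\xi$ is $\{gB\in G/B\mid g^{-1}.\xi\in\Lb'\}$. I would stratify $\Lg^*$ via the Jordan-decomposition-type data $\xi=\xi_s+\xi_n$ from \cite{KW}, with $\xi_s$ semisimple and $\xi_n$ nilpotent in the centralizer $Z_G(\xi_s)$, and reduce the fiber estimate to Lemma \ref{prop-dim}(ii) applied to the reductive group $Z_G(\xi_s)$. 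Since $\xi_n$ contributes a positive dimensional fiber only when the nilpotent orbit in the centralizer is nontrivial, and since $\dim Z_G(\xi_s)$ drops as the semisimple part becomes more singular, summing the dimensions of the strata gives the desired strict inequality.

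Once smallness is in hand, the standard formalism for proper small maps with irreducible source and target of the same dimension $\dim G$ (a point already verified: $\dim X'=\dim G=\dim\Lg^*$ in the previous lemma, and $\dim Y'=\dim G$ by Lemma \ref{ssreg}) yields that $\varphi'_!\bar{\mathbb{Q}}_{lX'}$ is, up to the standard shift, the intersection cohomology complex of any dense open locus on which it restricts to a local system. The open subset $Y'\subset\Lg^*$ works because $\pi'$ is a finite $W$-covering (Lemma \ref{lem-3}), and the isomorphism $\rho:\widetilde{Y}'\xrightarrow{\sim}\varphi'^{-1}(Y')$ of the previous lemma identifies $\varphi'_!\bar{\mathbb{Q}}_{lX'}|_{Y'}$ with $\pi'_!\bar{\mathbb{Q}}_{l\widetilde{Y}'}$. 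Extending by $IC$ gives the first assertion.

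For the endomorphism algebra, smallness implies that the restriction functor to $Y'$ induces an isomorphism
\[
\End(\varphi'_!\bar{\mathbb{Q}}_{lX'})\xrightarrow{\sim}\End(\pi'_!\bar{\mathbb{Q}}_{l\widetilde{Y}'}),
\]
since $IC$-extensions from an open dense smooth locus are fully faithful with respect to their generic restriction. On the other hand, the free action of $W=NT/T$ on $\widetilde{Y}'$ preserves $\pi'$, giving a ring homomorphism $\bar{\bQ}_l[W]\to\End(\pi'_!\bar{\mathbb{Q}}_{l\widetilde{Y}'})$; because the fibers of $\pi'$ are $W$-torsors, a fiberwise check shows this homomorphism is an isomorphism.

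The main obstacle will be verifying smallness in the dual setting, where the distinction between semisimple and nilpotent elements of $\Lg^*$ (as opposed to $\Lg$) is governed by the ad-hoc definition via Borel annihilators recalled from \cite{KW} rather than by Jordan decomposition in the usual sense; in characteristic $2$ this requires some care in passing between $\xi$, the associated linear data from the earlier sections, and the centralizers of its semisimple part, so that Lemma \ref{prop-dim}(ii) can be invoked on each centralizer. Once the dimension estimates on each stratum are assembled, however, the rest of the argument is formal and parallel to the Springer correspondence in \cite{Lu1,Lu4,X}.
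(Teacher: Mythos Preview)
Your overall strategy is sound, and the endomorphism-algebra part matches the paper's. But the paper proves the support/smallness condition by a quite different and more economical argument than the stratification by Jordan type that you propose.

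Rather than decomposing $\Lg^*$ according to the semisimple part $\xi_s$ and invoking Lemma~\ref{prop-dim}(ii) for each centralizer $Z_G(\xi_s)$, the paper argues by contradiction using only Lemma~\ref{prop-dim}(iii). If the set $V=\{\xi\in\Lg^*\mid \dim\varphi'^{-1}(\xi)\geq i/2\}$ had dimension $\geq \dim Y'-i$ for some $i>0$, then its preimage in the Steinberg variety $Z$ under $(\xi,B_1,B_2)\mapsto\xi$ would have dimension $\geq \dim Y'=\dim G=\dim Z$. Since each piece $Z_{\mathcal O}$ is irreducible of dimension $\dim G$, the closed set $p^{-1}(V)$ must contain some $Z_{\mathcal O}$; but every $Z_{\mathcal O}$ meets the regular semisimple locus $(\xi\in\Lt_0')$, forcing $V\supset Y'$ and hence $V=\Lg^*$, a contradiction. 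No Jordan decomposition, no analysis of centralizers, and no reduction to Springer fibers in smaller groups is needed.

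Your route would also work, but it is heavier precisely at the point you flag as the obstacle: you would need to verify that for $\xi=\xi_s+\xi_n$ in the sense of \cite{KW} the fiber $\varphi'^{-1}(\xi)$ is identified with the corresponding fiber for $\xi_n$ over the flag variety of $Z_G(\xi_s)$, and that Lemma~\ref{prop-dim}(ii) holds for that centralizer (its proof does go through for arbitrary reductive groups, but this is not stated). The paper's contradiction argument sidesteps all of this, using only the elementary equality $\dim Z_{\mathcal O}=\dim G$ and the density of $Y'$ from Lemma~\ref{ssreg}.
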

\begin{proof}
Using $\rho:\widetilde{Y}'\xrightarrow{\sim}\varphi'^{-1}(Y')$ and
$\bar{\mathbb{Q}}_{lX'}|_{\varphi'^{-1}(Y')}\cong\bar{\mathbb{Q}}_{l\widetilde{Y}'}$,
we have
$\varphi'_!\bar{\mathbb{Q}}_{lX'}|_{Y'}=\pi_!\bar{\mathbb{Q}}_{l\widetilde{Y}'}$
by base change theorem (see for example \cite{Mil}). Since
$\varphi'$ is proper and $X'$ is smooth of dimension equal to $\dim
Y'$, we have that the Verdier dual (see for example \cite{Jan},
12.13)
$\mathfrak{D}(\varphi'_!\bar{\mathbb{Q}}_{lX'})=\varphi'_!(\mathfrak{D}\bar{\mathbb{Q}}_{lX'})\cong\varphi'_!\bar{\bQ}_{lX'}[2\dim
Y']$. Hence by the definition of intersection cohomology complex, it
is enough to prove that
$$
\forall\
i>0,\dim\text{supp}\mathcal{H}^i(\varphi'_!\bar{\mathbb{Q}}_{lX'})<\dim
Y'-i.
$$
For $\xi\in\Lg^*$, the stalk
$\cH^i_\xi(\varphi'_!\bar{\mathbb{Q}}_{lX'})$ coincides with
$H^i_c(\varphi'^{-1}(\xi),\bar{\mathbb{Q}}_{l})$. Hence it is enough
to show $\forall\
i>0,\dim\{\xi\in\Lg^*|H^i_c(\varphi'^{-1}(\xi),\bar{\mathbb{Q}}_{l})\neq
0\}<\dim Y'-i$. If
$H^i_c(\varphi'^{-1}(\xi),\bar{\mathbb{Q}}_{l})\neq 0$, then $i\leq
2\dim \varphi'^{-1}(\xi)$. Hence it is enough to show that
$$\forall\ i>0,\dim\{\xi\in\Lg^*|\dim\varphi'^{-1}(\xi)\geq
i/2\}<\dim Y'-i.$$ Suppose this is not true for some $i$, then
$\dim\{\xi\in\Lg^*|\dim\varphi'^{-1}(\xi)\geq i/2\}\geq\dim Y'-i$.
Let $V=\{\xi\in\Lg^*|\dim\varphi'^{-1}(\xi)\geq i/2\}$, it is closed
in $\Lg^*$ but not equal to $\Lg^*$. Consider the map
$p:Z\rightarrow \Lg^*,\ (\xi,B_1,B_2)\mapsto \xi$. We have $\dim
p^{-1}(V)=\dim V+2\dim\varphi'^{-1}(\xi)\geq\dim V+i\geq\dim Y'$
(for some $\xi\in V$).
 Thus by Lemma \ref{prop-dim}
(iii), $p^{-1}(V)$ contains some $Z_\mathcal{O},\mathcal{O}=G$-orbit
of $(B,nBn^{-1})$ in $\cB\times\cB$, where $n$ is some element in
$NT$. If $\xi\in \Lt_0'$, then $(\xi,B,nBn^{-1})\in
Z_{\mathcal{O}}$, hence $\xi$ belongs to the projection of
$p^{-1}(V)$ to $\Lg^*$ which has dimension $\dim V<\dim Y'$. But
this projection is $G$-invariant hence contains all $Y'$. We get a
contradiction.

Since $\pi'$ is a principal $W$-bundle, we have
$\End(\pi'_!\bar{\mathbb{Q}}_{l\widetilde{Y}'})=\bar{\bQ}_l[W]$ (see
for example \cite{Jan}, Lemma 12.9). It follows that
$\End(\varphi'_!\bar{\mathbb{Q}}_{lX'})=\bar{\bQ}_l[W]$.
\end{proof}

\subsection{}
In this subsection, we introduce some sheaves on the variety of
semisimple $G$-orbits in $\Lg^*$ similar to \cite{Lu1,Lu4}.
\begin{lemma}[\cite{KW}, Theorem 4 (ii)]
A $G$-orbit in $\Lg^*$ is closed if and only if it consists of
semisimple elements.
\end{lemma}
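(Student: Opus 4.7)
The plan is to deduce both implications from the categorical quotient $\pi\colon\Lg^*\to Q:=\mathrm{Spec}\,\tk[\Lg^*]^G$ together with a Chevalley-type restriction isomorphism $\tk[\Lg^*]^G\cong\tk[\Lt']^W$, where $W=N_G(T)/T$ is the Weyl group. Once these are in place, the standard theory of good quotients for reductive group actions (every fiber of $\pi$ contains a unique closed $G$-orbit, and closures of orbits in any fiber meet in that closed orbit) combines with the Chevalley identification (every fiber of $\pi$ meets $\Lt'$ in exactly one $W$-orbit) to give the lemma essentially formally.

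First I would set up the restriction map $\tk[\Lg^*]^G\to\tk[\Lt']^W$. One direction is automatic, since $N_G(T)$ preserves $\Lt'$ and $T$ acts trivially on $\Lt'$, so any $G$-invariant on $\Lg^*$ restricts to a $W$-invariant on $\Lt'$. The substantive step is surjectivity: every $W$-invariant polynomial on $\Lt'$ extends to a $G$-invariant on $\Lg^*$. This is the Chevalley restriction theorem for the coadjoint representation in this characteristic.

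Granting the restriction isomorphism, the two directions follow quickly. For $(\Leftarrow)$, if $\xi$ is semisimple we may after $G$-conjugation assume $\xi\in\Lt'$; then $G.\xi\cap\Lt'=W.\xi$, and the unique closed orbit $\cO\subset\pi^{-1}(\pi(\xi))$ also meets $\Lt'$ in a single $W$-orbit, which by the Chevalley identification must equal $W.\xi$. Since both $G.\xi$ and $\cO$ contain $W.\xi$ and a $W$-orbit in $\Lt'$ determines the ambient $G$-orbit uniquely, $G.\xi=\cO$ is closed. For $(\Rightarrow)$, if $G.\xi$ is closed then it is the unique closed orbit in its fiber, hence meets $\Lt'$, hence is semisimple by definition.

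The main obstacle is establishing the Chevalley restriction isomorphism for $\Lg^*$ in characteristic 2. In most settings this is obtained by identifying $\Lg^*$ with $\Lg$ via a $G$-invariant nondegenerate bilinear form on $\Lg$ and then invoking the classical theorem on $\Lg$; such a form is, however, not generally available in characteristic 2 for types $B$ and $C$ (for type $D$ it is, as constructed in the first proof of Proposition~\ref{prop-3}). The proof in \cite{KW} therefore proceeds by a direct analysis of semisimple and unstable elements, using Hilbert--Mumford-type instability arguments together with the explicit description of $\Lg^*$ developed in the previous sections.
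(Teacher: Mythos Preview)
The paper does not supply its own proof of this lemma; it is quoted verbatim from \cite{KW}, Theorem~4~(ii), with no argument given. So there is nothing to compare against beyond noting that your sketch is not what the paper does (the paper simply cites the result).

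That said, your GIT-plus-Chevalley outline has a genuine gap, and it is not the one you flag. You correctly isolate the Chevalley restriction $\tk[\Lg^*]^G\cong\tk[\Lt']^W$ as nontrivial in characteristic~2 (indeed the paper invokes it separately as \cite{KW}, Theorem~4~(i), in the proof of Lemma~\ref{l-fc}). But even granting that isomorphism, your deduction of both implications is circular. In $(\Leftarrow)$ you assert that the unique closed orbit $\cO$ in the fibre $\pi^{-1}(\pi(\xi))$ ``also meets $\Lt'$''. Chevalley only tells you that the \emph{fibre} meets $\Lt'$ in $W.\xi$; it does not tell you that the \emph{closed} orbit meets $\Lt'$. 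If $G.\xi$ were not closed, $\cO$ would lie in $\overline{G.\xi}\setminus G.\xi$ and hence miss $\Lt'$ entirely, and nothing you have written rules this out. Your argument for $(\Rightarrow)$ has the same circularity: ``closed, hence meets $\Lt'$'' is exactly the statement that the semisimple orbit in the fibre is the closed one, which is $(\Leftarrow)$.

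What is actually needed, beyond Chevalley restriction, is an independent reason why $G.\eta$ is closed for $\eta\in\Lt'$: for instance a Hilbert--Mumford/Kempf argument showing that if a one-parameter subgroup drives $\eta$ to a limit outside $G.\eta$, one reaches a contradiction with $\eta\in\Lt'$. You gesture at this in your final paragraph, but you present it as how \cite{KW} circumvents the lack of Chevalley restriction, when in fact it (or something like it) is needed even after Chevalley restriction is in hand.
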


Let $\A$ be the set of closed $G$-orbits in $\Lg^*$. By geometric
invariant theory, $\A$ has a natural structure of affine variety and
there is a well-defined morphism $\sigma:\Lg^*\rightarrow\A$ such
that $\sigma(\xi)$ is the $G$-conjugacy class of $\xi_s$, where
$\xi=\xi_s+\xi_n$ is the Jordan decomposition of $\xi$ (see
\cite{KW} for the notion of Jordan decomposition for $\xi\in\Lg^*$).
There is a unique $\varsigma\in\A$ such that
$\sigma^{-1}(\varsigma)=\{\xi\in\Lg^*|\xi \text{ nilpotent}\}$.

Recall that $Z=\{(\xi,B_1,B_2)\in
\mathfrak{g}^*\times\cB\times\cB|\xi\in\mathfrak{b}_1'\cap\mathfrak{b}_2'\}$.
Define $\tilde{\sigma}: Z\rightarrow\A$ by
$\tilde{\sigma}(\xi,B_1,B_2)=\sigma(\xi)$. For $a\in\A$, let
$Z^a=\tilde{\sigma}^{-1}(a)$.
\begin{lemma}
 We have $\dim Z^a\leq d_0$, where $d_0=\dim G-r$.
\end{lemma}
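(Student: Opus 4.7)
The plan is to reduce the bound to the nilpotent case ($a = \varsigma$) via a Levi descent, and then apply Lemma \ref{prop-dim}(iii) to the Levi. Fix $a \in \mathfrak{A}$ and a representative $\xi_s \in \Lt'$ whose $G$-orbit is $a$. Let $L = Z_G(\xi_s)^0$, a Levi subgroup of $G$ containing $T$, with Lie algebra $\Ll$. Note that $L$ has the same maximal torus $T$, so its rank is also $r$, and by Lemma \ref{prop-dim}(iii) applied to $L$ the analogous variety $Z'_L = \{(\eta, B_1^L, B_2^L) : \eta \in (\Ln_1^L)'\cap(\Ln_2^L)'\}$ for $L$ has dimension $\dim L - r$.

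The key step is to fiber $Z^a$ over the semisimple orbit $G \cdot \xi_s \cong G/L$ by sending $(\xi, B_1, B_2)$ to the coset $gL$ such that the semisimple part of $\xi$ (in the Jordan decomposition of \cite{KW}) equals $g.\xi_s$. Using $G$-equivariance it suffices to bound the fiber over the identity coset. Elements of this fiber are triples $(\xi_s + \xi_n, B_1, B_2)$ with $\xi_n$ nilpotent in the sense of $L$ (living in the $\Ll$-part of $\Lg^*$) and $\xi_s + \xi_n \in \Lb_1' \cap \Lb_2'$.

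The condition $\xi_s + \xi_n \in \Lb_i'$ decouples: the semisimple condition $\xi_s \in \Lb_i'$ forces $B_i \cap L$ to be a Borel of $L$, identifying the variety of such $B_i$ with the flag variety $\mathcal{B}_L$ of $L$; the remaining condition $\xi_n(\Ln_i) = 0$, in view of the Levi/unipotent-radical decomposition of $\Ln_i$ compatible with the parabolic $LB_i$, translates to $\xi_n \in (\Ln_i^L)'$ for the corresponding Borel of $L$. Hence the fiber is identified with $Z'_L$, and we compute
\begin{equation*}
\dim Z^a \;\leq\; \dim(G \cdot \xi_s) + \dim Z'_L \;=\; (\dim G - \dim L) + (\dim L - r) \;=\; \dim G - r \;=\; d_0.
\end{equation*}

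The main obstacle is the Jordan-decomposition-based fibering and the decoupling of the two conditions on each $B_i$. These are classical in characteristic zero, but in characteristic $2$ for $\Lg^*$ one must rely on the Kac–Weisfeiler framework of \cite{KW} for semisimple elements and their centralizers in the dual, and verify carefully that the variety of Borels $B$ of $G$ with $\xi_s \in \Lb'$ is naturally isomorphic to $\mathcal{B}_L$ and that the induced splitting identifies the fiber with $Z'_L$.
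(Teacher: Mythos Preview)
Your argument is correct (modulo the Kac--Weisfeiler facts you yourself flag), but it is organized quite differently from the paper's. The paper does not pass to a Levi at all: it simply projects $m:Z^a\to\sigma^{-1}(a)$ by $(\xi,B_1,B_2)\mapsto\xi$, stratifies $\sigma^{-1}(a)$ into its finitely many $G$-orbits $\rc$, and for each $\rc$ bounds the fibre of $m$ over $\xi\in\rc$ (a product of two copies of $\{B_1\in\cB\mid\xi\in\Lb_1'\}$) by $2\cdot(\dim G-r-\dim\rc)/2$, citing Lemma~\ref{prop-dim}(ii). Summing gives $\dim m^{-1}(\rc)\le d_0$ for every $\rc$, hence $\dim Z^a\le d_0$.

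Note that Lemma~\ref{prop-dim}(ii) is stated only for nilpotent $\rc$, whereas here $\rc\subset\sigma^{-1}(a)$ is generally not nilpotent; the paper is tacitly using the extension of the Springer-fibre dimension estimate to arbitrary $\xi$, and your Levi descent is precisely the standard justification of that extension. So the two proofs share the same underlying mechanism, packaged differently: the paper hides the descent inside a one-line citation and works orbit-by-orbit, while you perform the descent once globally, identifying the whole fibre over $eL$ with $Z'_L$ and invoking Lemma~\ref{prop-dim}(iii) for $L$ (whose proof is type-independent, so applies to any reductive $L$). The paper's route is shorter and avoids the delicate identification of the fibre with $Z'_L$; your route is more explicit and, once the facts from \cite{KW} about $Z_G(\xi_s)^0$ and about $\{B\in\cB\mid\xi_s\in\Lb'\}\cong\cB_L$ are in hand, reduces the bound to a single application of Lemma~\ref{prop-dim}(iii).
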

\begin{proof}
Define $m:Z^a\rightarrow\sigma^{-1}(a)$ by $(\xi,B_1,B_2)\mapsto
\xi$. Let $\rc\subset\sigma^{-1}(a)$ be a $G$-orbit. Consider
$m:m^{-1}(\rc)\rightarrow\rc$. We have $\dim m^{-1}(\rc)\leq\dim
\rc+2(\dim G-r-\dim\rc)/2=\dim G-r$ (use Lemma \ref{prop-dim} (ii)).
Since $\sigma^{-1}(a)$ is a union of finitely many $G$-orbits, it
follows that $\dim Z^a\leq d_0$.
\end{proof}

Let $\mathcal{T}=\cH^{2d_0}\tilde{\sigma}_!\bar{\bQ}_{lZ}$. Recall
that we set $ Z_\cO=\{(\xi,B_1,B_2)\in Z|(B_1,B_2)\in \cO\}$, where
$\cO$ is an orbit of $G$ action on $\cB\times \cB$. Let
$\mathcal{T}^\cO=\cH^{2d_0}\sigma^0_!\bar{\bQ}_l$, where $\sigma^0:\
 Z_\cO\rightarrow\A$ is the restriction of $\tilde{\sigma}$ on
$Z_\cO$.

\begin{lemma}\label{lc-1}
We have $\cT^\cO\cong\bar{\sigma}_!\bar{\bQ}_l$, where
$\bar{\sigma}:\Lt'\rightarrow\A$ is the restriction of $\sigma$.
\end{lemma}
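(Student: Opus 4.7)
The plan is to factor $\sigma^0$ as $\bar\sigma\circ\lambda_\cO$ for a $G$-equivariant morphism $\lambda_\cO\colon Z_\cO\to\Lt'$, and then read off $\cT^\cO$ from the fact that the fibres of $\lambda_\cO$ will be geometrically irreducible of dimension exactly $d_0$.

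Fix a representative $(B,\dot wB\dot w^{-1})$ of $\cO$, with $\dot w\in NT$ lifting some $w\in W$, and set $P_w=B\cap\dot wB\dot w^{-1}$, so that $\cO\cong G/P_w$ and $Z_\cO\cong G\times_{P_w}(\Lb'\cap\dot w\Lb')$. Using the $T$-weight decomposition of $\Lg^*$ one checks that $\Lb'\cap\dot w\Lb'=\Lt'\oplus N_w$, where $N_w$ is spanned by the covectors $e_\alpha^*$ dual to the root vectors $e_\alpha$ with $\alpha<0$ and $w^{-1}\alpha<0$; in particular $\dim N_w=|\Phi^+\cap w\Phi^+|$. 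Let $p\colon\Lb'\cap\dot w\Lb'\to\Lt'$ be projection along $N_w$. A direct check with the coadjoint action shows that the $\Lt'$-component of any $\xi\in\Lb'$ is $B$-invariant, so $p$ is $P_w$-invariant and descends to a $G$-equivariant morphism $\lambda_\cO\colon Z_\cO\to\Lt'$, with $G$ acting trivially on $\Lt'$.

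To establish $\sigma^0=\bar\sigma\circ\lambda_\cO$, it suffices by $G$-equivariance to prove $\sigma(\xi)=\bar\sigma(\xi_T)$ whenever $\xi=\xi_T+\xi_N\in\Lb'\cap\dot w\Lb'$ with $\xi_T\in\Lt'$ and $\xi_N\in N_w$. Picking a regular dominant cocharacter $\phi\colon\textbf{G}_m\to T$, the $T$-weights on $\Ln'$ (and hence on $N_w\subset\Ln'$) are all strictly positive while $\Lt'$ is $T$-fixed, so $\phi(a).\xi\to\xi_T$ as $a\to 0$. Thus the closed semisimple orbit $G.\xi_T$ lies in $\overline{G.\xi}$, and by the Jordan decomposition for $\Lg^*$ (\cite{KW}) this forces $\sigma(\xi)=G.\xi_T=\bar\sigma(\xi_T)$.

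The fibre $\lambda_\cO^{-1}(\xi_T)\cong G\times_{P_w}(\xi_T+N_w)$ is a vector bundle of rank $\dim N_w$ over $G/P_w$, hence geometrically irreducible of dimension
\[
(\dim G-r-|\Phi^+\cap w\Phi^+|)+|\Phi^+\cap w\Phi^+|=d_0.
\]
It follows that $\cH^{2d_0}(\lambda_\cO)_!\bar{\bQ}_l\cong\bar{\bQ}_{l\Lt'}$ and $\cH^j(\lambda_\cO)_!\bar{\bQ}_l=0$ for $j>2d_0$. Since $\bar\sigma$ has finite fibres, the functor $\bar\sigma_!$ is $t$-exact, so
\[
\cT^\cO=\cH^{2d_0}\sigma^0_!\bar{\bQ}_l=\bar\sigma_!\,\cH^{2d_0}(\lambda_\cO)_!\bar{\bQ}_l\cong\bar\sigma_!\bar{\bQ}_l.
\]
The main obstacle is the Jordan-decomposition input in the third paragraph; the rest is a dimension count together with the standard exactness properties of $\bar\sigma_!$.
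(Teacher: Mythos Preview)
Your proof is correct and follows essentially the same strategy as the paper: factor through $\Lt'$, observe that the remaining fibers are affine spaces of the correct dimension, and conclude using the finiteness of $\bar\sigma$. The packaging differs slightly. The paper first reduces from $Z_\cO$ to its fiber $V=\Lb'\cap n.\Lb'$ over a basepoint of $\cO$ via a double-quotient argument (comparing the two projections from $G\times V$ to $V$ and to $Z_\cO=H\backslash(G\times V)$), and only then factors $\sigma'\colon V\to\A$ as $V\xrightarrow{a'}\Lt'\xrightarrow{\bar\sigma}\A$, with $a'$ having affine fibers $\Ln'\cap n.\Ln'$ of dimension $d_0-\dim\cO$. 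You instead build the map $\lambda_\cO\colon Z_\cO\to\Lt'$ globally and compute its fibers directly as $G\times_{P_w}(\xi_T+N_w)$; this bypasses the double-quotient step at the cost of checking $P_w$-invariance of the projection $p$, and you make explicit (via the cocharacter limit) the factorization $\sigma=\bar\sigma\circ p$ that the paper takes for granted. One minor correction: the fibers $G\times_{P_w}(\xi_T+N_w)$ are affine-space bundles over $G/P_w$ rather than vector bundles in general, since the $P_w$-action on $\xi_T+N_w$ is affine (not linear) when $\xi_T\neq 0$; this does not affect irreducibility or the top compactly supported cohomology, so your conclusion stands.
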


\begin{proof}
The fiber of the natural projection $pr_{23}:Z_\cO\rightarrow \cO$
at $(B,nBn^{-1})\in \cO$ (where $n\in NT$) can be identified with
$V=\Lb'\cap n.\Lb'$. Let $\cT'^\cO=\cH^{2d_0-2\dim
\cO}\sigma'_!\bar{\bQ}_l$, where $\sigma':V\rightarrow\A$ is
$\xi\mapsto\sigma(\xi)$. Let $\cT''^{\cO}=\cH^{2d_0+2\dim
H}\sigma''_!\bar{\bQ}_l$, where $H=B\cap nBn^{-1}$ and
$\sigma'':G\times V\rightarrow\A$ is $(g,\xi)\mapsto\sigma(\xi)$.
Consider the composition $G\times
V\xrightarrow{pr_2}V\xrightarrow{\sigma'}\A$ (equal to $\sigma''$)
and the composition $G\times V\xrightarrow{p} H\backslash(G\times
V)=Z_\cO\xrightarrow{\sigma^0}\A$ (equal to $\sigma''$), we obtain
$$\cT''^{\cO}=\cH^{2d_0+2\dim
H}(\sigma'_!pr_{2!}\bar{\bQ}_l)=\cH^{2d_0+2\dim
H}(\sigma'_!\bar{\bQ}_l[-2\dim
G])=\cT'^{\cO},$$$$\cT''^{\cO}=\cH^{2d_0+2\dim
H}(\sigma^0_!p_{!}\bar{\bQ}_l)=\cH^{2d_0+2\dim
H}(\sigma^0_!\bar{\bQ}_l[-2\dim H])=\cT^{\cO}.$$ It follows that
$\cT^{\cO}=\cT'^{\cO}=\cH^{2d_0-2\dim \cO}(\sigma'_!\bar{\bQ}_l)$,
since $\dim\cO=\dim G-\dim H$. Now the map $\sigma':V\rightarrow\A$
factors as $V\xrightarrow{a'}\Lt'\xrightarrow{\bar{\sigma}} \A$.
Since the map $a'$ has fibers $\Ln'\cap n.\Ln'$ isomorphic to affine
spaces of dimension $d_0-\dim \cO$, we have
$a'_!\bar{\bQ}_l\cong\bar{\bQ}_l[-2(d_0-\dim \cO)]$ (see \cite{KiW},
VI Lemma 2.3). Hence $\cT^{\cO}\cong\cH^{2d_0-2\dim
\cO}(\bar{\sigma}_!a'_!\bar{\bQ}_l)\cong\cH^0\bar{\sigma}_!\bar{\bQ}_l$.
Since $\bar{\sigma}$ is a finite covering (Lemma \ref{l-fc}), it
follows that $\cT^{\cO}\cong\bar{\sigma}_!\bar{\bQ}_l$.
\end{proof}
\begin{lemma}\label{l-fc}
The map $\bar{\sigma}:\Lt'\rightarrow \A$ is a finite covering.
\end{lemma}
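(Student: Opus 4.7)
The plan is to realize $\bar\sigma$ as the composite of the quotient $\Lt'\to\Lt'/W$ with a bijective morphism $\Lt'/W\to\A$, and then to upgrade this to a finite morphism using the GIT description of $\A$. The étale behaviour on the regular locus, which is what really justifies the term ``covering'' here, will come essentially for free from the earlier Lemma \ref{lem-3}.

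First I would check that $\bar\sigma$ is constant on $W$-orbits. The Weyl group $W=NT/T$ acts on $\Lt'$ through the conjugation action of $NT$ on $\Lg^*$: if $n\in NT$ and $\xi\in\Lt'$, then $n$ normalizes $\Ln\oplus\Ln^-$, so $n.\xi\in\Lt'$, and $n.\xi$ lies in the same $G$-orbit as $\xi$. Hence $\bar\sigma$ descends to a morphism $\bar\sigma_W:\Lt'/W\to\A$. Since the natural projection $\Lt'\to\Lt'/W$ is a finite surjective morphism (quotient of an affine variety by a finite group), it is enough to show that $\bar\sigma_W$ is a finite bijection.

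Next I would establish surjectivity and injectivity of $\bar\sigma_W$. For surjectivity, given $a\in\A$, the cited lemma says $a$ consists of semisimple elements; by the Kac--Weisfeiler theory of Jordan decomposition for $\Lg^*$ (\cite{KW}), every semisimple element of $\Lg^*$ is $G$-conjugate to an element of $\Lt'$, so $a\cap\Lt'\neq\emptyset$. For injectivity, suppose $\xi_1,\xi_2\in\Lt'$ satisfy $\bar\sigma(\xi_1)=\bar\sigma(\xi_2)$, so that $\xi_2=g.\xi_1$ for some $g\in G$. If $\xi_1\in\Lt_0'$, then by Lemma \ref{ssreg} and the definition of regularity $Z_G^0(\xi_i)=T$ for $i=1,2$, so $gTg^{-1}=T$ and $g\in NT$; thus $\xi_1$ and $\xi_2$ are in the same $W$-orbit. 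The non-regular case follows by the same argument applied to a one-parameter family of regular elements in $\Lt'$ specialising to $\xi_1$, using that $\Lt_0'$ is open dense in $\Lt'$ (Lemma \ref{ssreg}) and that the set of pairs conjugate under $G$ is closed.

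Finally, to conclude that $\bar\sigma$ is a finite covering I would use the GIT description $\A=\mathrm{Spec}\,\tk[\Lg^*]^G$. Restriction of functions gives an algebra map $\tk[\Lg^*]^G\to\tk[\Lt']^W$ whose associated morphism is $\bar\sigma_W$, and this morphism is a bijection by the previous paragraph. Since $\Lt'/W$ is normal and $\bar\sigma_W$ is a bijective finite-type morphism onto an affine variety, it is a finite morphism (in fact an isomorphism after identifying coordinate rings, which is the Chevalley-restriction type statement in this setting). Composing with the finite quotient $\Lt'\to\Lt'/W$ gives that $\bar\sigma$ is finite, and the étale-ness over the open dense subset $Y'\cap\A$ of regular semisimple orbits is exactly the principal $W$-bundle property of $\pi'$ established in Lemma \ref{lem-3}. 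The main obstacle is the last point: making the Chevalley-type isomorphism $\tk[\Lg^*]^G\cong\tk[\Lt']^W$ precise in characteristic $2$ for $\Lg^*$ (rather than $\Lg$) of a classical Lie algebra; once bijectivity plus normality is leveraged, however, this reduces to a standard finiteness argument.
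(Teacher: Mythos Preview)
Your overall strategy coincides with the paper's: factor $\bar\sigma$ as $\Lt'\to\Lt'/W\to\A$ and identify the second map with an isomorphism, so that finiteness of $\bar\sigma$ reduces to finiteness of the quotient map by a finite group. The paper, however, obtains the isomorphism $\Lt'/W\cong\A$ in one line by citing \cite{KW}, Theorem~4(i), which asserts directly that restriction induces an algebra isomorphism $\tk[\Lg^*]^G\xrightarrow{\sim}\tk[\Lt']^W$. You instead try to \emph{prove} this Chevalley--restriction statement, and that is where your argument is incomplete.

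Two concrete gaps. First, your injectivity argument for non-regular semisimple elements is not rigorous: for fixed $g\in G$ with $\xi_2=g.\xi_1$, deforming $\xi_1$ to regular $\xi_1(t)\in\Lt_0'$ does not force $g.\xi_1(t)\in\Lt'$, so the limiting argument does not go through as written; and ``the set of $G$-conjugate pairs is closed'' is not what you actually need here. Second, the inference ``$\bar\sigma_W$ bijective of finite type with normal source $\Rightarrow$ finite (in fact an isomorphism)'' is not valid in characteristic~$2$: bijectivity does not imply birationality (the Frobenius endomorphism of $\mathbb{A}^1$ is a bijective finite morphism of normal varieties that is not an isomorphism), and Zariski's main theorem only upgrades a \emph{birational} quasi-finite map to a normal target into an open immersion. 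So neither finiteness nor the isomorphism follows from your hypotheses alone. You correctly flag this step as the main obstacle; the resolution is exactly the Kac--Weisfeiler result the paper cites, and there is no shortcut around it.
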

\begin{proof} By \cite{KW} Theorem 4 (i), the natural
map $\tk[\Lg^*]^G\rightarrow\tk[\Lt']^W$ is an isomorphism of
algebras. It follows that the variety $\A$  is isomorphic to
$\Lt'/W$. Thus the map $\bar{\sigma}$ is finite.
\end{proof}

Denote $\cT_\varsigma$ and $\cT^{\cO}_\varsigma$ the stalk  of $\cT$
and $\cT^{\cO}$ at $\varsigma$ respectively.
\begin{lemma}\label{p-1}
For $w\in W$, let $\cO_w$ be the $G$-orbit on $\cB\times\cB$ which
contains $(B,n_wBn_w^{-1})$. There is a canonical isomorphism
$\mathcal{T}_\varsigma\cong\bigoplus_{w\in
W}\mathcal{T}^{\cO_w}_\varsigma$.
\end{lemma}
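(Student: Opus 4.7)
The plan is to identify both sides with compactly supported cohomology of explicit fibers and then exploit the dimension bound $\dim Z^\varsigma \leq d_0$ to force the top cohomology to decompose along the Bruhat stratification.

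First, I will apply the base change for $!$-pushforward (valid for any separated finite-type morphism) to identify $\cT_\varsigma \cong H^{2d_0}_c(Z^\varsigma, \bar{\bQ}_l)$ and $\cT^{\cO_w}_\varsigma \cong H^{2d_0}_c(Z^\varsigma_{\cO_w}, \bar{\bQ}_l)$, where $Z^\varsigma = \tilde{\sigma}^{-1}(\varsigma)$ and $Z^\varsigma_{\cO_w} = Z_{\cO_w}\cap Z^\varsigma$. Since $\cB\times\cB = \bigsqcup_{w\in W}\cO_w$ is the Bruhat decomposition, the $Z^\varsigma_{\cO_w}$ form a partition of $Z^\varsigma$ into locally closed subvarieties.

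Next, Lemma \ref{prop-dim} yields $\dim Z^\varsigma \leq d_0$, and hence $\dim Z^\varsigma_{\cO_w} \leq d_0$ for every $w$. For any finite-type variety $Y$ over the algebraically closed ground field of dimension at most $d$, the group $H^{2d}_c(Y,\bar{\bQ}_l)$ has a canonical basis indexed by the $d$-dimensional irreducible components of $Y$, via their fundamental classes; this follows by removing the lower-dimensional locus and the singular locus of the pure $d$-dimensional part and invoking the vanishing $H^j_c = 0$ in degree $j > 2\dim$. I will apply this description to both $Z^\varsigma$ and each $Z^\varsigma_{\cO_w}$.

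The remaining step is to construct a canonical bijection between the $d_0$-dimensional irreducible components of $Z^\varsigma$ and the disjoint union over $w\in W$ of the $d_0$-dimensional irreducible components of $Z^\varsigma_{\cO_w}$. Given such a component $C\subset Z^\varsigma$, its generic point lies in a unique stratum $Z^\varsigma_{\cO_{w(C)}}$; since that stratum is locally closed in $Z^\varsigma$, the intersection $C\cap Z^\varsigma_{\cO_{w(C)}}$ is locally closed in $C$ and contains its generic point, so it is open dense in $C$, closed in $Z^\varsigma_{\cO_{w(C)}}$, and hence a $d_0$-dimensional irreducible component of $Z^\varsigma_{\cO_{w(C)}}$. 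Conversely, the closure in $Z^\varsigma$ of any $d_0$-dimensional irreducible component of a stratum $Z^\varsigma_{\cO_w}$ is irreducible of dimension $d_0$, hence a component of $Z^\varsigma$. These mutually inverse assignments produce the desired canonical isomorphism. The main obstacle is not technical but rather bookkeeping: one must confirm that the Bruhat stratification restricts cleanly to $Z^\varsigma$ and that no $d_0$-dimensional component of $Z^\varsigma$ straddles several strata, which is precisely what the dimension bound of Lemma \ref{prop-dim} secures.
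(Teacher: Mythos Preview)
Your proof is correct and follows the same approach as the paper: identify the fiber $\tilde{\sigma}^{-1}(\varsigma)$ with $Z'$, partition it by the Bruhat strata $Z'_{\cO_w}$, and use the dimension bound $\dim Z'\le d_0$ to conclude that the top compactly supported cohomology splits as a direct sum over the strata. The paper states this last step in one line without justification, whereas you spell it out via the standard description of $H^{2d_0}_c$ in terms of $d_0$-dimensional irreducible components; this is a perfectly valid elaboration of the same argument.
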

\begin{proof}
We have
$\tilde{\sigma}^{-1}(\varsigma)=Z'=\{(\xi,B_1,B_2)\in\Lg^*\times\cB\times\cB|
\xi\in\Ln_1'\cap\Ln_2'\}$. We have a partition $Z'=\sqcup_{w\in
W}Z'_{\cO_w}$, where $Z'_{\cO_w}=\{(\xi,B_1,B_2)\in Z'|(B_1,B_2)\in
\cO_w\}$. Since $\dim Z'=d_0$, we have an isomorphism
\begin{equation*}
H^{2d_0}_c(Z',\bar{\bQ}_{l})= \bigoplus_{w\in
W}H^{2d_0}_c(Z'_{\cO_w},\bar{\bQ}_{l}),
\end{equation*}
which is $\mathcal{T}_\varsigma\cong\bigoplus_{w\in
W}\mathcal{T}_\varsigma^{\cO_w}$.
\end{proof}

Recall that we have
$\bar{\bQ}_l[W]=\End(\pi'_!\bar{\bQ}_{l\widetilde{Y}'})=\End(\varphi'_!\bar{\bQ}_{lX'})$.
In particular, $\varphi'_!\bar{\bQ}_{lX'}$ is naturally a $W$-module
and $\varphi'_!\bar{\bQ}_{lX'}\otimes\varphi'_!\bar{\bQ}_{lX'} $ is
naturally a $W$-module (with $W$ acting on the first factor). This
induces a $W$-module structure on
$\cH^{2d_0}\sigma_!(\varphi'_!\bar{\bQ}_{lX'}\otimes\varphi'_!\bar{\bQ}_{lX'})=\cT$.
Hence we obtain a $W$-module structure on the stalk $\cT_\varsigma$.

\begin{lemma}\label{l-multi}
Let $w\in W$. Multiplication by $w$ in the $W$-module structure of
$\cT_\varsigma=\bigoplus_{w'\in W}\cT_\varsigma^{\cO_{w'}}$ defines
for any $w'\in W$ an isomorphism
$\cT_\varsigma^{\cO_{w'}}\xrightarrow{\sim}\cT^{\cO_{ww'}}_\varsigma$.
\end{lemma}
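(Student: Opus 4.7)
The strategy is to reduce the computation to the regular semisimple locus of $\Lg^*$, where the $W$-action on $\cT$ is described combinatorially via deck transformations of $\pi':\widetilde Y'\to Y'$, and then propagate the resulting permutation pattern to $\varsigma$ by a density argument. The natural map $Z\to X'\times_{\Lg^*}X'$, $(\xi,B_1,B_2)\mapsto((\xi,g_1B),(\xi,g_2B))$ with $B_i=g_iBg_i^{-1}$, is an isomorphism; by proper base change,
\[
\tilde\sigma_!\bar\bQ_{lZ} \;\cong\; \sigma_!\bigl(\varphi'_!\bar\bQ_{lX'}\otimes\varphi'_!\bar\bQ_{lX'}\bigr),
\]
and the $W$-action on $\cT$ is the one induced by $W\subset\End(\varphi'_!\bar\bQ_{lX'})$ on the first tensor factor (Proposition~\ref{p-2}). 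Over $Y'$ this coincides with the deck-transformation action $w:(\xi,gT)\mapsto(\xi,gn_w^{-1}T)$ on the first copy of $\widetilde Y'$ in $\widetilde Y'\times_{Y'}\widetilde Y'$.

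Next, fix $\xi\in\Lt_0'$ with $a=\sigma(\xi)$ and choose $g\in G$ with $g^{-1}Z_G(\xi)g=T$. The fiber of $\pi'\times\pi'$ over $\xi$ is parametrised by $W\times W$ via $(w_1,w_2)\leftrightarrow(gn_{w_1}T,\,gn_{w_2}T)$; the associated pair of Borels has relative position $w_1^{-1}w_2$, so the top-cohomology summand $\cT^{\cO_{w'}}_a$ is spanned by the labels $(w_1,w_2)$ with $w_1^{-1}w_2=w'$. Applying the action of $w$ sends $gn_{w_1}T$ to $gn_{w_1}n_w^{-1}T=gn_{w_1w^{-1}}T$ in $G/T$ (since $n_{w_1}n_w^{-1}n_{w_1w^{-1}}^{-1}\in T$), hence the first label from $w_1$ to $w_1w^{-1}$; the relative position changes from $w'=w_1^{-1}w_2$ to $(w_1w^{-1})^{-1}w_2=ww'$. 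Consequently, at every point of $\bar\sigma(Y')\subset\A$ multiplication by $w$ sends $\cT^{\cO_{w'}}$ isomorphically onto $\cT^{\cO_{ww'}}$ and vanishes on the other components.

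Finally, transfer the result to $\varsigma$. By Lemma~\ref{lc-1} each $\cT^{\cO_{w'}}$ is canonically isomorphic to $\bar\sigma_!\bar\bQ_l$, and by Lemma~\ref{p-1} there is a canonical splitting $\cT_\varsigma=\bigoplus_{w'}\cT^{\cO_{w'}}_\varsigma$. For each pair $(w',w'')$, the $(w',w'')$-component of the endomorphism $w$ at the stalk $\varsigma$ is a linear map $\bar\sigma_!\bar\bQ_{l,\varsigma}\to\bar\sigma_!\bar\bQ_{l,\varsigma}$, which (via compatibility of the stratification $Z=\bigsqcup_w Z_{\cO_w}$ with the top-degree stalk decomposition over the regular semisimple locus) arises as the specialisation at $\varsigma$ of the corresponding component over $\bar\sigma(Y')$. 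By the previous paragraph this specialisation vanishes unless $w''=ww'$ and is an isomorphism when $w''=ww'$; since $\bar\sigma:\Lt'\to\A$ is a finite covering (Lemma~\ref{l-fc}) and $\bar\sigma(Y')$ is open dense in $\A$, the stated permutation and isomorphism properties follow.

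The main obstacle is this final propagation step: one must justify that the pointwise components of $w$ computed separately on $\bar\sigma(Y')$ and at $\{\varsigma\}$ fit into a coherent sheaf-theoretic picture on $\A$, so that vanishing on the dense open $\bar\sigma(Y')$ forces vanishing at $\varsigma$. In the parallel setting of \cite{Lu1,Lu4,X} this is handled by a careful analysis of the constructible structure of $\cT$ and its strata, which applies here essentially verbatim.
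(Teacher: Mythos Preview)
Your approach is genuinely different from the paper's and introduces a difficulty that the paper avoids entirely. The paper works directly at the stalk $\varsigma$: it writes down the explicit variety isomorphism
\[
f:Z'_{\cO_{w'}}\xrightarrow{\sim}Z'_{\cO_{ww'}},\qquad
(\xi,\,gBg^{-1},\,gn_{w'}Bn_{w'}^{-1}g^{-1})\mapsto(\xi,\,gn_w^{-1}Bn_wg^{-1},\,gn_{w'}Bn_{w'}^{-1}g^{-1}),
\]
and asserts that the induced map on $H^{2d_0}_c$ is the action of $w$. No passage through the regular semisimple locus and no specialisation argument is needed: the statement is about a single stalk, and the relevant $W$-action on $\cT_\varsigma=H^{2d_0}_c(Z',\bar\bQ_l)$ is identified with the geometric permutation of the strata $Z'_{\cO_{w'}}$ coming from the first Borel.

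Your detour through $\bar\sigma(Y')$ is correct there, but the propagation step you flag as ``the main obstacle'' is a real gap as written. To speak of the $(w',w'')$-component of $w$ as a map of sheaves on $\A$ and then specialise it at $\varsigma$, you would need a direct-sum decomposition $\cT\cong\bigoplus_{w'}\cT^{\cO_{w'}}$ of \emph{sheaves on $\A$}, compatible with the stalk decomposition of Lemma~\ref{p-1}. But Lemma~\ref{p-1} is proved only at $\varsigma$, using that $\dim Z'=d_0$ so that top compactly supported cohomology is additive on the locally closed partition $Z'=\bigsqcup_{w'}Z'_{\cO_{w'}}$; nothing in the paper gives a global splitting over $\A$, and the stratification $Z=\bigsqcup_w Z_{\cO_w}$ yields in general only a filtration, not a direct sum. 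Appealing to ``the constructible structure of $\cT$'' in \cite{Lu1,Lu4,X} does not fill this in: those references also argue directly at the nilpotent point via the analogue of $f$, not by propagation from the generic locus. The clean fix is to abandon the density argument and argue as the paper does, noting that the map $f$ above (which you have in effect computed fibrewise over $Y'$) makes sense on all of $Z'$ and realises multiplication by $w$ there.
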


\begin{proof}
We have an isomorphism
\begin{equation*}
f:Z'_{\cO_{w'}}\xrightarrow{\sim}Z'_{\cO_{ww'}},
(\xi,gBg^{-1},gn_{w'}Bn_{w'}^{-1}g^{-1})\mapsto(\xi,gn_w^{-1}Bn_wg^{-1},gn_{w'}Bn_{w'}^{-1}g^{-1}).
\end{equation*}
This induces an isomorphism
$$H^{2d_0}_c(Z'_{\cO_{w'}},\bar{\bQ}_{l})\xrightarrow{\sim}
H^{2d_0}_c(Z'_{\cO_{ww'}},\bar{\bQ}_{l})$$ which is just
multiplication by $w$.
\end{proof}

\subsection{}\label{ssec-3}
Let $\hat W$ be the set of simple modules (up to isomorphism) for
the Weyl group $W$ of $G$ (A description of $\hat W$ is given for
example in \cite{Lu3}). Given a semisimple object $M$ of some
abelian category such that $M$ is a $W$-module, we write
$M_\rho=\Hom_{\bar{\bQ}_l[W]}(\rho,M)$ for $\rho\in \Hat{W}$. We
have $M=\oplus_{\rho\in \Hat{W}}(\rho\otimes M_\rho)$ with $W$
acting on the $\rho$-factor and $M_\rho$ is in our abelian category.
In particular, we have $$
\pi'_!\bar{\bQ}_{l\widetilde{Y}'}=\bigoplus_{\rho\in
\Hat{W}}(\rho\otimes(\pi'_!\bar{\bQ}_{l\widetilde{Y}'})_{\rho}),$$
where $(\pi'_!\bar{\bQ}_{l\widetilde{Y}'})_{\rho}$ is an irreducible
local system on $Y$. We have $$
\varphi'_!\bar{\bQ}_{lX'}=\bigoplus_{\rho\in
\Hat{W}}(\rho\otimes(\varphi'_!\bar{\bQ}_{lX'})_{\rho}), $$ where
$(\varphi'_!\bar{\bQ}_{lX'})_{\rho}=IC(G^*,(\pi'_!\bar{\bQ}_{l\widetilde{Y}'})_{\rho})$.
Moreover, for $a\in \A$, we have $\cT_a=\bigoplus_{\rho\in
\Hat{W}}(\rho\otimes(\cT_a)_{\rho})$. Set
$$\Lg^{*\varsigma}=\{\xi\in\Lg^*|\sigma(\xi)=\varsigma\}, X'^\varsigma=\varphi'^{-1}(\Lg^{*\varsigma})\subset X'.$$ We have
$\Lg^{*\varsigma}=\{\xi\in\Lg^*|\xi \text{ nilpotent}\}$. Let
$\varphi'^\varsigma:X'^\varsigma\rightarrow\Lg^{*\varsigma}$ be the
restriction of $\varphi':X'\rightarrow\Lg^*$.
\begin{lemma}\label{lem-reg}
There exists a nilpotent element $\xi$ in $\Lg^*$ such that the set
$\{B_1\in\cB|\xi\in\Ln_1'\}$ is finite.
\end{lemma}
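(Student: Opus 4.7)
The plan is to produce a nilpotent $\xi\in\Lg^*$ whose $G$-orbit $\rc$ has the maximal possible dimension $\dim G-r$, after which Lemma \ref{prop-dim}(ii) immediately gives
\beq
\dim\{B_1\in\cB\mid\xi\in\Ln_1'\}\leq(\dim G-r-\dim\rc)/2=0.
\eeq
Combined with the fact that this set is nonempty (by the very definition of nilpotency: some $g\in G$ satisfies $g.\xi\in\Ln'$, whence $\xi\in\Ln_{g^{-1}Bg}'$) and is closed in the projective variety $\cB$ (cut out by a vanishing condition on the tautological family of Borel subalgebras), we conclude that it is a zero-dimensional closed subvariety, hence a finite set.

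The existence of a nilpotent orbit of dimension $\dim G-r$ I would read off from the classification just carried out in sections 2--4. For type $C$ ($\Lg=\mathfrak{sp}(2n)$, $r=n$) I would take the orbit attached, via Proposition \ref{prop-1.1} and the ensuing bijection, to the symbol $(2n)^2_n$ consisting of a single indecomposable summand $^*W_n(2n)$, i.e.\ to the partition pair $((n),(0))$. For type $B$ ($\Lg=\Lo(2n+1)$, $r=n$) I would take the orbit whose reduced form has $m=n$ and $W_\xi=0$ in the decomposition $V_\xi=V_{2m+1}\oplus W_\xi$ of Lemma \ref{lem-8}. For type $D$ ($\Lg=\Lo(2n)$, $r=n$), Proposition \ref{prop-3} provides a $G$-equivariant bijection between the nilpotent orbits in $\Lg$ and in $\Lg^*$, so one may transport the familiar regular nilpotent orbit from $\Lo(2n)$ to $\Lg^*$.

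The main technical obstacle will be the dimension count for the candidate orbits in types $B$ and $C$: one must show that the stabilizer in $G$ of the chosen $\xi$ has dimension exactly $r$. I would attack this by working with a basis adapted to the form-module data $(V_\xi,\beta,\alpha_\xi)$ or $(V_\xi,\alpha,\beta_\xi)$ just constructed. Writing a general $g\in G$ as a matrix in this basis and imposing both that $g$ preserve $\beta$ (resp.\ $\alpha$) and that it preserve the $\xi$-data yields a concrete system of polynomial equations in the matrix entries; by the indecomposability of the chosen form module these equations pin $g$ down to an $n$-dimensional family. For type $D$ no extra work is required, since the regular nilpotent orbit in $\Lo(2n)$ is classical and Proposition \ref{prop-3} transports all dimension information to $\Lg^*$.
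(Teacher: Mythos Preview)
Your approach is correct but takes a genuinely different route from the paper. The paper writes down an explicit element $\xi=\sum_{i=1}^{r} x_{-\alpha_i}^*$ in terms of a Chevalley basis and shows by a short Bruhat-decomposition argument that $\{B_1\in\cB\mid\xi\in\Ln_1'\}=\{B\}$ is a single point: if $g.\xi\in\Ln'$ with $g^{-1}=vn_wb$, then for any simple root $\alpha_i$ with $w^{-1}\alpha_i<0$ one gets $\xi(\Ad(vn_w)x_{-w^{-1}\alpha_i})\neq 0$, a contradiction, so $w=1$. This is type-independent, uses nothing from the classification of orbits, and gives the sharper conclusion that the fibre is a singleton.

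Your argument instead feeds a maximal-dimensional orbit into Lemma~\ref{prop-dim}(ii). This works, with two caveats. First, a notational slip in type $C$: the indecomposable you want is $^*W_n(n)$ with symbol $(n)^2_n$, not $^*W_n(2n)$; the module $^*W_l(m)$ has dimension $2m$, so a single summand filling $V$ forces $m=n$. Your partition pair $((n),(0))$ is correct and decodes to the right object. Second, the centraliser dimension you flag as the ``main technical obstacle'' is exactly what is computed later in Propositions~6.1 and~6.5; those proofs are logically independent of Section~5, so there is no circularity, but you are effectively importing that work. The paper's route is shorter and uniform across types; yours has the virtue of making explicit that the finite-fibre element is precisely a regular nilpotent (centraliser of minimal dimension), which is conceptually what one expects.
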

\begin{proof}
Let $R$ be the root system of $G$ relative to $T$. We have a weight
space decomposition $\Lg=\Lt\oplus\oplus_{\alpha\in R}\Lg_{\alpha}$,
where $\Lg_{\alpha}=\{x\in\Lg|Ad(t)x=\alpha(t)x,\forall t\in T\}$ is
one dimensional for $\alpha\in R$ (see for example \cite{Sp2}). Let
$\alpha_i,i=1,\ldots,r$ be a set of simple roots in $R$ such that
$\Lb=\Lt\oplus\oplus_{\alpha\in R^+}\Lg_{\alpha}$ and
$x_\alpha,\alpha\in R,h_{\alpha_i}$ be a Chevalley basis in $\Lg$.
Let $x_\alpha^*$ and $h_{\alpha_i}^*$ be the dual basis in $\Lg^*$.
Set $\xi=\sum_{i=1}^r x_{-\alpha_i}^*$. Then $\xi\in\Ln'$.

We show that $\{B_1\in\cB|\xi\in\Ln_1'\}=\{B\}$. Assume
$g.\xi\in\Ln'$. We have $\xi(g^{-1}\Lb g)=0$. By Bruhat
decomposition, we can write $g^{-1}=vn_wb$, where $v\in U\cap
wUw^{-1}$ and $n_w\in NT$ is a representative for $w\in W$. Assume
$w\neq 1$. There exists $1\leq i\leq r$ such that
$w^{-1}\alpha_i<0$. Let $\alpha=-w^{-1}\alpha_i>0$. We have
$\xi(\Ad(vn_w)x_\alpha)=\xi(c\Ad(v)x_{-\alpha_i})=\xi(cx_{-\alpha_i})=c$,
where $c$ is a nonzero constant. This contradicts $\xi(g^{-1}\Lb
g)=0$. Thus $w=1$ and $g^{-1}.\Ln'=\Ln'$.
\end{proof}
\begin{lemma}\label{l-1}
$\mathrm{(i)}$ $X'^\varsigma$ and $\Lg^{*\varsigma}$ are irreducible
varieties of dimension $d_0=\dim G-r$.

$\mathrm{(ii)}$ We have
$(\varphi'_!\bar{\bQ}_{lX'})|_{\Lg^{*\varsigma}}=\varphi'^\varsigma_!\bar{\bQ}_{lX'^\varsigma}$.
Moreover, $\varphi'^\varsigma_!\bar{\bQ}_{lX'^\varsigma}[d_0]$ is a
semisimple perverse sheaf on $\Lg^{*\varsigma}$.

$\mathrm{(iii)}$ We have
$(\varphi'_!\bar{\bQ}_{lX'})_\rho|_{\Lg^{*\varsigma}}\neq 0$ for any
$\rho\in \Hat{W}$.
\end{lemma}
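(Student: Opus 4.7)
For (i), I would first identify $X' \cong G \times_B \Lb'$ via $(g, \eta) \mapsto (g.\eta, gB)$, so $X'^\varsigma \cong G \times_B (\Lb' \cap \Lg^{*\varsigma})$. The plan is to show $\Lb' \cap \Lg^{*\varsigma} = \Ln'$: decompose $\Lb' = \Lt' \oplus (\Ln^-)^*$ by $T$-weights, write a nilpotent $\xi \in \Lb'$ as $\xi_0 + \xi_+$, and take a dominant cocharacter $\phi:\textbf{G}_m \to T$ so that $\lim_{a\to 0}\phi(a).\xi = \xi_0$. Then $\xi_0 \in \overline{G.\xi}$, and since $\xi_0 \in \Lt'$ is semisimple with closed $G$-orbit (\cite{KW}) while the unique closed orbit in the closure of a nilpotent orbit is $\{0\}$, one concludes $\xi_0 = 0$, hence $\xi \in \Ln'$. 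Thus $X'^\varsigma \cong G \times_B \Ln'$ is smooth and irreducible of dimension $\dim G - \dim B + \dim \Ln' = \dim G - r = d_0$. The map $\varphi'^\varsigma$ is proper (as a restriction of the proper $\varphi'$) and surjective by the definition of nilpotent, so $\Lg^{*\varsigma}$ is closed and irreducible with $\dim \Lg^{*\varsigma} \leq d_0$; the existence of a finite fiber provided by Lemma \ref{lem-reg} yields $\dim \Lg^{*\varsigma} \geq d_0$, and equality follows.

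For (ii), proper base change applied to $\Lg^{*\varsigma} \hookrightarrow \Lg^*$ gives $(\varphi'_!\bar{\bQ}_{lX'})|_{\Lg^{*\varsigma}} = \varphi'^\varsigma_!\bar{\bQ}_{lX'^\varsigma}$. For the perverse semisimplicity I would verify that $\varphi'^\varsigma$ is semismall: for $\xi$ in a $G$-orbit $\rc \subset \Lg^{*\varsigma}$, Lemma \ref{prop-dim}(ii) gives $\dim \varphi'^{\varsigma,-1}(\xi) = \dim\{B_1 \in \cB \mid \xi \in \Ln_1'\} \leq (d_0 - \dim \rc)/2$, hence $2\dim \varphi'^{\varsigma,-1}(\xi) + \dim \rc \leq d_0 = \dim X'^\varsigma$. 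Combined with the smoothness of $X'^\varsigma$ and the properness of $\varphi'^\varsigma$, the BBD decomposition theorem then yields that $\varphi'^\varsigma_!\bar{\bQ}_{lX'^\varsigma}[d_0]$ is a semisimple perverse sheaf on $\Lg^{*\varsigma}$.

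For (iii), I would evaluate $\cT_\varsigma$ in two ways. On the one hand, combining Lemmas \ref{p-1}, \ref{lc-1}, \ref{l-fc}, and \ref{l-multi}, each summand $\cT^{\cO_w}_\varsigma \cong (\bar{\sigma}_!\bar{\bQ}_l)_\varsigma$ is one-dimensional (since $\A \cong \Lt'/W$ forces $\bar{\sigma}^{-1}(\varsigma) = \{0\}$), and the $W$-action permutes these summands by left multiplication, so $\cT_\varsigma \cong \bar{\bQ}_l[W]$ is the regular representation of $W$. On the other hand, decomposing $\varphi'_!\bar{\bQ}_{lX'} = \bigoplus_\rho \rho \otimes A'_\rho$ and writing $A_\rho := A'_\rho|_{\Lg^{*\varsigma}}$, the identification $\cT = \cH^{2d_0}\sigma_!(\varphi'_!\bar{\bQ}_{lX'}\otimes\varphi'_!\bar{\bQ}_{lX'})$ together with base change along $\sigma^{-1}(\varsigma) = \Lg^{*\varsigma}$ yields
\[
\cT_\varsigma \;\cong\; \bigoplus_{\rho,\rho'\in\hat{W}} (\rho \otimes \rho') \otimes H^{2d_0}_c(\Lg^{*\varsigma},\, A_\rho \otimes A_{\rho'}),
\]
with $W$ acting through the first factor. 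If some $A_\rho$ vanished, the entire $\rho$-isotypic summand of the right-hand side would vanish, contradicting that $\rho$ appears with multiplicity $\dim \rho \geq 1$ in the regular representation.

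I expect the main obstacle to be verifying the identification $\Lb' \cap \Lg^{*\varsigma} = \Ln'$ in (i), which is needed to pin down $\dim X'^\varsigma$ and hence ensure the semismallness estimate in (ii), together with carefully matching the two a priori different $W$-module structures on $\cT_\varsigma$ coming from Lemma \ref{l-multi} and from the $\End(\varphi'_!\bar{\bQ}_{lX'})$-action so that the counting argument in (iii) is legitimate.
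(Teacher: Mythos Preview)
Your proposal is correct and follows essentially the same route as the paper. The only packaging differences are that the paper simply asserts the description $X'^\varsigma=\{(\xi,gB)\mid g^{-1}.\xi\in\Ln'\}$ without your cocharacter argument for $\Lb'\cap\Lg^{*\varsigma}=\Ln'$, phrases (ii) as a direct support bound $\dim\supp\cH^i\leq d_0-i$ (reduced to Lemma~\ref{prop-dim} exactly as in your semismallness estimate), and in (iii) uses only the injection $\bar{\bQ}_l[W]\otimes\cT_\varsigma^{\cO_1}\hookrightarrow\cT_\varsigma$ from Lemma~\ref{l-multi} to get $(\cT_\varsigma)_\rho\neq 0$, deferring the full regular-representation identification to the proof of Proposition~\ref{mp-1}.
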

\begin{proof}
(i) We have $X'^\varsigma
=\{(\xi,gB)\in\Lg^*\times\cB|g^{-1}.\xi\in\Ln\}$. By projection to
the second coordinate, we see that $\dim
X'^\varsigma=\dim\Ln'+\dim\cB=\dim G-r$. The map
$\varphi'^\varsigma$ is surjective and the fiber at some point $\xi$
is finite (see Lemma \ref{lem-reg}). It follows that
$\dim\Lg^{*\varsigma}=\dim G-r$. This proves (i).

The first assertion of (ii) follows from base change theorem. Since
$\varphi'^\varsigma$ is proper, by similar argument as in the proof
of Proposition \ref{p-2}, to show that
$\varphi'^\varsigma_!\bar{\bQ}_{lX'^\varsigma}[d_0]$ is a perverse
sheaf, it suffices to show $$\forall\ i\geq
0,\dim\supp\cH^i(\varphi'^\varsigma_!\bar{\bQ}_{lX'^\varsigma})\leq
\dim\Lg^{*\varsigma}-i.$$ It is enough to show $\forall\ i\geq 0$,
$\dim\{\xi\in\Lg^{*\varsigma}|\dim(\varphi'^\varsigma)^{-1}(\xi)\geq
i/2\}\leq\dim\Lg^{*\varsigma}-i$. If this is not true for some
$i\geq 0$, it would follow that the variety $\{(\xi,B_1,B_2)\in
\mathfrak{g}^*\times\cB\times\cB|\xi\in\mathfrak{n}_1'\cap\mathfrak{n}_2'\}$
has dimension greater than $\dim\Lg^{*\varsigma}=\dim G-r$, which
contradicts to Lemma \ref{prop-dim}. This proves that
$\varphi'^\varsigma_!\bar{\bQ}_{lX'^\varsigma}[d_0]$ is a perverse
sheaf. It is semisimple by the decomposition theorem\cite{BBD}. This
proves (ii).

Now we prove (iii). By Lemma \ref{lc-1}, we have
$\cT_\varsigma^{\cO_1}=H^0_c(\Lt'\cap\sigma^{-1}(\varsigma),\bar{\bQ}_{l})\neq
0$. From Lemma \ref{l-multi}, we see that the $W$-module structure
defines an injective map
$\bar{\bQ}_l[W]\otimes\cT_\varsigma^{\cO_1}\rightarrow
\cT_\varsigma$. Since $\cT_\varsigma^{\cO_1}\neq 0$, we have
$(\bar{\bQ}_l[W]\otimes\cT_\varsigma^{\cO_1})_\rho\neq 0$ for any
$\rho\in \Hat{W}$, hence $(\cT_\varsigma)_\rho\neq 0$. We have
$\cT_\varsigma=H^{2d_0}_c(\Lg^{*\varsigma},\varphi'_!\bar{\bQ}_{lX'}\otimes\varphi'_!\bar{\bQ}_{lX'})$,
hence
$$\bigoplus_{\rho\in \Hat{W}}\rho\otimes(\cT_\varsigma)_\rho=\bigoplus_{\rho\in \Hat{W}}\rho\otimes
H^{2d_0}_c(\Lg^{*\varsigma},(\varphi'_!\bar{\bQ}_{lX'})_\rho\otimes\varphi'_!\bar{\bQ}_{lX'}).$$
This implies that
$(\cT_\varsigma)_{\rho}=H^{2d_0}_c(\Lg^{*\varsigma},(\varphi'_!\bar{\bQ}_{lX'})_\rho\otimes\varphi'_!\bar{\bQ}_{lX'})$.
Thus it follows from $(\cT_\varsigma)_{\rho}\neq 0$ that
$(\varphi'_!\bar{\bQ}_{lX'})_\rho|_{\Lg^{*\varsigma}}\neq 0$ for any
$\rho\in \Hat{W}$.
\end{proof}

Let $\mathfrak{A}'$ be the set of all pairs
$(\mathrm{c}',\mathcal{F}')$ where $\mathrm{c}'$ is a nilpotent
$G$-orbit in $\Lg^*$ and $\mathcal{F}'$ is an irreducible
$G$-equivariant local system on $\mathrm{c}'$ (up to isomorphism).

\begin{proposition}\label{mp-1}

$\mathrm{(i)}$ The restriction map
$\End_{\mathcal{D}(\Lg^*)}(\varphi'_!\bar{\bQ}_{lX'})
\rightarrow\End_{\mathcal{D}(\Lg^{*\varsigma})}(\varphi'^\varsigma_!\bar{\bQ}_{lX'^\varsigma})$
is an isomorphism.

$\mathrm{(ii)}$ For any $\rho\in \Hat{W}$, there is a unique
$(\mathrm{c}',\mathcal{F}')\in\mathfrak{A}'$ such that
$(\varphi'_!\bar{\bQ}_{lX'})_\rho|_{\Lg^{*\varsigma}}[d_0]$ is
$IC(\bar{\rc'},\cF')[\dim\rc']$ regarded as a simple perverse sheaf
on $\Lg^{*\varsigma}$ (zero outside $\bar{\rc'}$). Moreover,
$\rho\mapsto (\rc',\cF')$ is an injective map $\gamma:
\Hat{W}\rightarrow\mathfrak{A}'$.
\end{proposition}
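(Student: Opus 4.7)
The plan is to combine Wedderburn--Artin theory with an endomorphism-ring computation extending Proposition~\ref{p-2} and Lemma~\ref{l-1}. For part~(i), restriction together with the base-change identity $\varphi'_!\bar{\bQ}_{lX'}|_{\Lg^{*\varsigma}}=\varphi'^\varsigma_!\bar{\bQ}_{lX'^\varsigma}$ from Lemma~\ref{l-1}(ii) yields a natural ring homomorphism
$$r:\bar{\bQ}_l[W]=\End_{\cD(\Lg^*)}(\varphi'_!\bar{\bQ}_{lX'})\longrightarrow\End_{\cD(\Lg^{*\varsigma})}(\varphi'^\varsigma_!\bar{\bQ}_{lX'^\varsigma}).$$
For injectivity I would use Lemma~\ref{l-1}(iii): writing $\bar{\bQ}_l[W]=\prod_{\rho\in\hat W}M_{\dim\rho}(\bar{\bQ}_l)$, the kernel of $r$ is a two-sided ideal, hence a product of some subset of the simple factors; but the idempotent $e_\rho$ projecting onto the $\rho$-isotypic part would restrict to zero only if $(\varphi'_!\bar{\bQ}_{lX'})_\rho|_{\Lg^{*\varsigma}}=0$, contradicting Lemma~\ref{l-1}(iii). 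For surjectivity I would match dimensions. Using $X'^\varsigma$ smooth of dimension $d_0$ and $\varphi'^\varsigma$ proper, adjunction, Verdier duality and base change identify $\End(\varphi'^\varsigma_!\bar{\bQ}_{lX'^\varsigma})$ with the top compactly supported cohomology $H^{2d_0}_c(Z_0,\bar{\bQ}_l)$, where $Z_0=X'^\varsigma\times_{\Lg^{*\varsigma}}X'^\varsigma$. The map $(\xi,g_1B,g_2B)\mapsto(\xi,g_1Bg_1^{-1},g_2Bg_2^{-1})$ identifies $Z_0$ with $\{(\xi,B_1,B_2)\in\Lg^{*\varsigma}\times\cB\times\cB:\xi\in\Lb_1'\cap\Lb_2'\}$, which equals $Z'$ by Lemma~\ref{p-1} (nilpotent elements of $\Lb_i'$ automatically lie in $\Ln_i'$). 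From the partition $Z'=\bigsqcup_{w\in W}Z'_{\cO_w}$ in the proof of Lemma~\ref{prop-dim}, $Z'$ has $|W|$ irreducible components of the top dimension $d_0$, so $\dim H^{2d_0}_c(Z',\bar{\bQ}_l)=|W|$, matching the domain of $r$.

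For part~(ii), combine (i) with Lemma~\ref{l-1}(ii): $\varphi'^\varsigma_!\bar{\bQ}_{lX'^\varsigma}[d_0]$ is a semisimple perverse sheaf whose endomorphism algebra is $\bar{\bQ}_l[W]\cong\prod_\rho M_{\dim\rho}(\bar{\bQ}_l)$. By Wedderburn--Artin, the simple summands of such a semisimple object are in bijection with the simple modules over its endomorphism algebra, with the multiplicity of each summand equal to the dimension of the corresponding simple module. Applied here, this produces a bijection $\rho\leftrightarrow K_\rho$ between $\hat W$ and the isomorphism classes of simple summands of $\varphi'^\varsigma_!\bar{\bQ}_{lX'^\varsigma}[d_0]$, with $K_\rho$ appearing with multiplicity exactly $\dim\rho$; matching against the $W$-isotypic decomposition gives
$$(\varphi'_!\bar{\bQ}_{lX'})_\rho|_{\Lg^{*\varsigma}}[d_0]=K_\rho.$$
Since $\varphi'^\varsigma_!\bar{\bQ}_{lX'^\varsigma}$ is $G$-equivariant and $\Lg^{*\varsigma}$ is a finite union of nilpotent $G$-orbits, each $K_\rho$ is a simple $G$-equivariant perverse sheaf on $\Lg^{*\varsigma}$ and therefore equals $IC(\bar{\rc'},\cF')[\dim\rc']$ for a unique $(\rc',\cF')\in\mathfrak{A}'$. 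The injectivity of $\gamma:\rho\mapsto(\rc',\cF')$ follows from the pairwise non-isomorphism of the $K_\rho$, another consequence of Wedderburn.

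The main obstacle is the dimension bound $\dim\End(\varphi'^\varsigma_!\bar{\bQ}_{lX'^\varsigma})=|W|$ in part~(i). It requires both the base-change/duality identification with $H^{2d_0}_c(Z',\bar{\bQ}_l)$ and the identification $Z_0\cong Z'$, which tacitly relies on the fact that nilpotent $\xi\in\Lb'$ must lie in $\Ln'$; once these are in hand, part~(ii) is a formal consequence of Wedderburn theory and the classification of simple $G$-equivariant perverse sheaves on a finite union of $G$-orbits.
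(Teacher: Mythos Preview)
Your proposal is correct and follows essentially the same strategy as the paper: injectivity via Lemma~\ref{l-1}(iii), surjectivity by a dimension count identifying $\dim\End(\varphi'^\varsigma_!\bar{\bQ}_{lX'^\varsigma})$ with $\dim H^{2d_0}_c(Z',\bar{\bQ}_l)=|W|$, and part~(ii) as a formal consequence of the semisimplicity in Lemma~\ref{l-1}(ii).

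The one noteworthy difference is in how the dimension count is carried out. The paper computes $\dim\cT_\varsigma$ by invoking Lemmas~\ref{lc-1} and~\ref{p-1}: it decomposes $\cT_\varsigma=\bigoplus_{w}\cT_\varsigma^{\cO_w}$ and then shows each summand is one-dimensional via the finite covering $\bar{\sigma}:\Lt'\to\A$. You instead count the top-dimensional irreducible components of $Z'$ directly from the partition $Z'=\bigsqcup_w Z'_{\cO_w}$ (each piece irreducible of dimension $d_0$, no piece contained in the closure of another for dimension reasons). This is a legitimate shortcut that bypasses the $\cT^{\cO}$ machinery for this particular statement; the paper's route, however, also establishes the $W$-module structure on $\cT_\varsigma$ (Lemma~\ref{l-multi}), which is what feeds into the proof of Lemma~\ref{l-1}(iii) that you are already using. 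So your argument is more direct here, but it still rests on that earlier machinery through its reliance on Lemma~\ref{l-1}(iii). Your Wedderburn--Artin phrasing of part~(ii) is equivalent to the paper's observation that the maps $b$ and $c$ in its factorization are isomorphisms.
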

\begin{proof}
(i). Recall that we have
$\varphi'_!\bar{\bQ}_{lX'}=\bigoplus_{\rho\in
\Hat{W}}\rho\otimes(\varphi'_!\bar{\bQ}_{lX'})_\rho$ where
$(\varphi'_!\bar{\bQ}_{lX'})_\rho[\dim\Lg^*]$ are simple perverse
sheaves on $\Lg^*$. Thus we have
$\varphi'_!\bar{\bQ}_{lX'}|_{\Lg^{*\varsigma}}=\varphi'^\varsigma_!\bar{\bQ}_{lX'^\varsigma}=\bigoplus_{\rho\in
\Hat{W}}\rho\otimes(\varphi'_!\bar{\bQ}_{lX'})_\rho|_{\Lg^{*\varsigma}}$
(we use Lemma \ref{l-1} (ii)). The restriction map
$\End_{\mathcal{D}(\Lg^*)}(\varphi'_!\bar{\bQ}_{lX'})\rightarrow
\End_{\mathcal{D}(\Lg^{*\varsigma})}(\varphi'^\varsigma_!\bar{\bQ}_{lX'^\varsigma})$
is factorized as
\begin{eqnarray*}
&&\bigoplus_{\rho\in
\Hat{W}}\End_{\mathcal{D}(\Lg^*)}(\rho\otimes(\varphi'_!\bar{\bQ}_{lX'})_\rho)\xrightarrow{b}\bigoplus_{\rho\in
\Hat{W}}\End_{\mathcal{D}(\Lg^{*\varsigma})}(\rho\otimes(\varphi'_!\bar{\bQ}_{lX'})_\rho|_{\Lg^{*\varsigma}})
\xrightarrow{c}\End_{\mathcal{D}(\Lg^{*\varsigma})}(\varphi'^\varsigma_!\bar{\bQ}_{lX'^\varsigma})
\end{eqnarray*}
where $b=\oplus_\rho b_\rho$,
$b_\rho:\End(\rho)\otimes\End_{\mathcal{D}(\Lg^*)}((\varphi'_!\bar{\bQ}_{lX'})_\rho)
\rightarrow\End(\rho)\otimes\End_{\mathcal{D}(\Lg^{*\varsigma})}((\varphi'_!\bar{\bQ}_{lX'})_\rho|_{\Lg^{*\varsigma}}).$
By Lemma \ref{l-1} (iii),
$(\varphi'_!\bar{\bQ}_{lX'})_\rho|_{\Lg^{*\varsigma}}\neq 0$, thus
$\End_{\mathcal{D}(\Lg^*)}((\varphi'_!\bar{\bQ}_{lX'})_\rho)=\bar{\bQ}_l
\subset\End_{\mathcal{D}(\Lg^{*\varsigma})}((\varphi'_!\bar{\bQ}_{lX'})_\rho|_{\Lg^{*\varsigma}})$.
It follows that $b_\rho$ and thus $b$ is injective. Since $c$ is
also injective, the restriction map is injective. Hence it remains
to show that
$$\dim\End_{\mathcal{D}(\Lg^{*\varsigma})}(\varphi'^\varsigma_!\bar{\bQ}_{lX'^\varsigma})=\dim\End_{\mathcal{D}(\Lg^*)}(\varphi'_!\bar{\bQ}_{lX'}).$$
For $A,A'$ two simple perverse sheaves on a variety $X$, we have
$H_c^0(X,A\otimes A')=0$ if and only if $A$ is not isomorphic to
$\mathfrak{D}(A')$ and $\dim H_c^0(X,A\otimes \mathfrak{D}(A))=1$
(see \cite{Lu2} section 7.4). We apply this to the semisimple
perverse sheaf $\varphi'^\varsigma_!\bar{\bQ}_{lX'^\varsigma}[d_0]$
on $\Lg^{*\varsigma}$ and get
\begin{eqnarray*}
&&\dim\End_{\mathcal{D}(\Lg^{*\varsigma})}(\varphi'^\varsigma_!\bar{\bQ}_{lX'^\varsigma})=\dim
H^0_c(\Lg^{*\varsigma},\varphi'^\varsigma_!\bar{\bQ}_{lX'^\varsigma}[d_0]\otimes\mathfrak{D}(\varphi'^\varsigma_!\bar{\bQ}_{lX'^\varsigma}[d_0]))\\&&=\dim
H^0_c(\Lg^{*\varsigma},\varphi'^\varsigma_!\bar{\bQ}_{lX'^\varsigma}[d_0]\otimes\varphi'^\varsigma_!\bar{\bQ}_{lX'^\varsigma}[d_0])
=\dim
H^{2d_0}_c(\Lg^{*\varsigma},\varphi'^\varsigma_!\bar{\bQ}_{lX'^\varsigma}\otimes\varphi'^\varsigma_!\bar{\bQ}_{lX'^\varsigma})\\&&=\dim
H^{2d_0}_c(\Lg^{*\varsigma},\varphi'_!\bar{\bQ}_{lX'}\otimes\varphi'_!\bar{\bQ}_{lX'})=\dim\cT_\varsigma=\sum_{w\in
W}\dim \cT_\varsigma^{\cO_w}.
\end{eqnarray*}
(The fourth equality follows from Lemma \ref{l-1} (ii) and the last
one follows from Lemma \ref{p-1}.)

We have
${}\cT_\varsigma^{\cO_w}=H^0_c(\bar{\sigma}^{-1}(\varsigma),\bar{\bQ}_l)$
(see Lemma \ref{lc-1}), hence $\dim {}\cT_\varsigma^{\cO_w}=1$ and
$$\sum_{w\in W}\dim \cT_\varsigma^{\cO_w}=|W|=\dim\End_{\mathcal{D}(\Lg^*)}(\varphi'_!\bar{\bQ}_{lX'}).$$
Thus (i) is proved.

From the proof of (i) we see that both $b$ and $c$ are isomorphisms.
It follows that the perverse sheaf
$(\varphi'_!\bar{\bQ}_{lX'})_\rho|_{\Lg^{*\varsigma}}[d_0]$ on
$\Lg^{*\varsigma}$ is simple and that for $\rho,\rho'\in \Hat{W}$,
we have
$(\varphi'_!\bar{\bQ}_{lX'})_\rho|_{\Lg^{*\varsigma}}[d_0]\cong(\varphi'_!\bar{\bQ}_{lX'})_{\rho'}|_{\Lg^{*\varsigma}}[d_0]$
if and only if $\rho=\rho'$. Since the simple perverse sheaf
$(\varphi'_!\bar{\bQ}_{lX'})_\rho|_{\Lg^{*\varsigma}}[d_0]$ is
$G$-equivariant and $\Lg^{*\varsigma}$ consists of finitely many
nilpotent $G$-orbits,
$(\varphi'_!\bar{\bQ}_{lX'})_\rho|_{\Lg^{*\varsigma}}[d_0]$ must be
as in (ii).
\end{proof}

\subsection{}
In this subsection let $G=SO_N(\tk)$ (resp. $Sp_{2n}(\tk)$) and
$\Lg=\Lo_N(\tk)$ (resp. $\mathfrak{sp}_{2n}(\tk)$) be the Lie
algebra of $G$. Let $G_{s}$ be a simply connected group over $\tk$
of the same type as $G$ and $\Lg_{s}$ be the Lie algebra of $G_{s}$.
For $q$ a power of 2, let $G(\tF_q)$, $\mathfrak{g}({\tF}_q)$ be the
fixed points of a split Frobenius map $\mathfrak{F}_q$ relative to
$\tF_q$ on $G$, $\Lg$. Let $G_{s}(\tF_q)$, $\Lg_{s}(\tF_q)$ be
defined like $G(\tF_q)$, $\Lg(\tF_q)$. Let $\mathfrak{A}'$ be the
set of all pairs $(\mathrm{c}',\mathcal{F}')$ where $\mathrm{c}'$ is
a nilpotent $G$-orbit in $\Lg^*$ and $\mathcal{F}'$ is an
irreducible $G$-equivariant local system on $\mathrm{c}'$ (up to
isomorphism). Let $\mathfrak{A}_{s}'$ be defined for $G_{s}$ as in
the introduction. We show that the number of elements in
$\mathfrak{A}_{s}'$ is equal to the number of elements in
$\mathfrak{A}'$.

We first show that the number of elements in $\mathfrak{A}'$ is
equal to the number of nilpotent $G(\tF_q)$-orbits in
$\mathfrak{g}({\tF}_q)^*$ (for $q$ large). To see this we can assume
$\tk=\bar{\tF}_2$.   Pick representatives $\xi_1,\ldots,\xi_M$ for
the nilpotent $G$-orbits in $\Lg^*$. If $q$ is large enough, the
Frobenius map $\mathfrak{F}_q$ keeps $\xi_i$ fixed and acts
trivially on $Z_{G}(\xi_i)/Z_{G}^0(\xi_i)$. Then the number of
$G(\tF_q)$-orbits in the $G$-orbit of $\xi_i$ is equal to the number
of irreducible representations of $Z_{G}(\xi_i)/Z_{G}^0(\xi_i)$
hence to the number of $G$-equivariant irreducible local systems on
the $G$-orbit of $\xi_i$. Similarly, the number of elements in
$\mathfrak{A}_{s}'$ is equal to the number of nilpotent
$G_{s}(\tF_q)$-orbits in $\mathfrak{g}_{s}({\tF}_q)^*$.

On the other hand, the number of nilpotent $G(\tF_q)$-orbits in
$\mathfrak{g}({\tF}_q)^*$ is equal to the number of nilpotent
$G_{s}(\tF_q)$-orbits in $\mathfrak{g}_{s}({\tF}_q)^*$. In fact, we
have a morphism $G_s\rightarrow G$ which is an isomorphism of
abstract groups and an obvious bijective morphism
$\mathcal{N}'\rightarrow \mathcal{N}'_{s}$ where $\mathcal{N}'$
(resp. $\mathcal{N}'_{s}$) is the set of nilpotent elements in $
\Lg^*$ (resp. $\Lg_{s}^*$). Thus the nilpotent orbits in $\Lg^*$ and
$\Lg_{s}^*$ are in bijection and the corresponding
 component groups of centralizers are isomorphic. It follows that
 $|\mathfrak{A}'|=|\mathfrak{A}_{s}'|$.

\begin{corollary}\label{coro-1}
$|\mathfrak{A}'|=|\mathfrak{A}_{s}'|=|\hat{W}|$.
\end{corollary}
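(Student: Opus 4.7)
The plan is to sandwich $|\mathfrak{A}'|$ between two quantities both equal to $|\hat W|$. The paragraph preceding the statement already establishes $|\mathfrak{A}'|=|\mathfrak{A}_s'|$ and, for $q$ a sufficiently large power of $2$, identifies this common value with the number of nilpotent $G(\tF_q)$-orbits in $\Lg(\tF_q)^*$, so it suffices to prove $|\mathfrak{A}'|=|\hat W|$. For the lower bound I would apply Proposition~\ref{mp-1}(ii) to the simply connected group $G_s$ to obtain an injection $\gamma\colon\hat W\hookrightarrow\mathfrak{A}_s'$, yielding $|\hat W|\leq|\mathfrak{A}_s'|=|\mathfrak{A}'|$.

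For the reverse inequality I would split by type. In types $B_n$ and $C_n$ the Weyl group is the hyperoctahedral group, whose irreducible representations are indexed by ordered pairs of partitions $(\mu,\nu)$ with $|\mu|+|\nu|=n$; thus $|\hat W|=p_2(n)$, and Propositions~\ref{prop-symp} and~\ref{prop-orth} give $|\mathfrak{A}'|\leq p_2(n)=|\hat W|$ directly. In type $D_n$ one has $|\hat W|<p_2(n)$ in general, so the $p_2(n)$ bound is not sharp; here I would use Proposition~\ref{prop-3} to replace the count of nilpotent $G(\tF_q)$-orbits in $\Lg(\tF_q)^*$ by the corresponding count in $\Lg(\tF_q)$, and then invoke the Springer correspondence for $\Lg$ in type $D$ established in~\cite{X} (together with the same large-$q$ argument applied on the Lie algebra side) to identify this count with $|\hat W|$.

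The main obstacle is the type $D$ case, where the straightforward bound $p_2(n)$ overshoots $|\hat W|$. Proposition~\ref{prop-3} is precisely the bridge that transfers the problem to $\Lg$, where the Springer correspondence is already known from~\cite{X}. With both inequalities in hand, all three cardinalities in the statement coincide, and the injection $\gamma$ is forced to be a bijection.
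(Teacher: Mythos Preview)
Your proposal is correct and follows essentially the same approach as the paper: both use the injection of Proposition~\ref{mp-1}(ii) for the lower bound, Propositions~\ref{prop-symp} and~\ref{prop-orth} together with $|\hat W|=p_2(n)$ for the upper bound in types $B$ and $C$, and Proposition~\ref{prop-3} combined with the Springer correspondence for $\Lg$ from~\cite{X} (specifically Corollary~6.17 there) in type $D$. The only cosmetic difference is that for type $D$ the paper derives equality directly from Proposition~\ref{prop-3} and~\cite{X} without separately invoking the injection $\gamma$, whereas you use $\gamma$ uniformly for the lower bound in all types; this is harmless.
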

\begin{proof}
Assume $G$ is $SO_{2n}(\tk)$. The assertion follows from the above
argument, Proposition \ref{prop-3}, and Corollary 6.17 in \cite{X}.
Assume $G$ is $Sp_{2n}(\tk)$ or $O_{2n+1}(\tk)$. It follows from
Proposition \ref{mp-1} (ii) that
$|\mathfrak{A}'|=|\mathfrak{A}_{s}'|$ is greater than $|\Hat{W}|$.
On the other hand, it is known that $|\hat{W}|=p_2(n)$ (see
\cite{Lu3}). Hence $|\mathfrak{A}'|=|\mathfrak{A}_{s}'|$ is less
than $|\Hat{W}|$ by Proposition \ref{prop-symp}, Proposition
\ref{prop-orth} and the above argument.
\end{proof}

\begin{theorem}\label{coro-3}
The map $\gamma$ in Proposition \ref{mp-1} $\mathrm{(ii)}$ is a
bijection.
\end{theorem}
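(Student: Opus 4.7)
The plan is to combine the injectivity of $\gamma$ established in Proposition \ref{mp-1}(ii) with the cardinality equality from Corollary \ref{coro-1}. By Proposition \ref{mp-1}(ii), $\gamma : \hat{W} \to \mathfrak{A}'$ is injective, so it suffices to show that $|\hat{W}| = |\mathfrak{A}'|$, which is precisely the content of Corollary \ref{coro-1}. Since both sets are finite and of the same cardinality, an injection is automatically a bijection.

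In more detail, I would first recall that $\gamma$ sends $\rho \in \hat{W}$ to the unique pair $(\mathrm{c}',\mathcal{F}') \in \mathfrak{A}'$ such that the simple perverse sheaf $(\varphi'_!\bar{\bQ}_{lX'})_\rho|_{\Lg^{*\varsigma}}[d_0]$ equals $IC(\bar{\rc'},\cF')[\dim \rc']$. Distinct $\rho$ give non-isomorphic simple perverse sheaves (as noted in the proof of Proposition \ref{mp-1}(ii)), hence distinct elements of $\mathfrak{A}'$, so $\gamma$ is injective.

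Then I would invoke Corollary \ref{coro-1}, which gives $|\mathfrak{A}'| = |\mathfrak{A}_s'| = |\hat{W}|$. The proof of that corollary already handled the three types $B, C, D$ separately: the $D$ case uses Proposition \ref{prop-3} together with the previously known count from \cite{X}, while the $B$ and $C$ cases combine the lower bound $|\mathfrak{A}'| \geq |\hat{W}|$ coming from the injectivity of $\gamma$ with the upper bounds $|\mathfrak{A}'| \leq p_2(n) = |\hat{W}|$ provided by Propositions \ref{prop-symp} and \ref{prop-orth}.

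There is essentially no obstacle remaining, since all the hard work (the construction of $\gamma$, its injectivity, the orbit classification, and the cardinality count) has been carried out in the preceding sections. The only thing to verify is the elementary set-theoretic fact that an injection between two finite sets of equal cardinality is a bijection, which closes the argument immediately.
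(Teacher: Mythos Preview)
Your proposal is correct and matches the paper's approach exactly: the paper gives no explicit proof of Theorem \ref{coro-3}, treating it as an immediate consequence of the injectivity in Proposition \ref{mp-1}(ii) together with the cardinality equality $|\mathfrak{A}'_s|=|\hat{W}|$ from Corollary \ref{coro-1}. Your expanded discussion simply unpacks what the paper leaves implicit.
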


\begin{corollary}\label{coro-2}
Proposition \ref{prop-1}, Corollary \ref{cor-1}, Proposition
\ref{prop-symp}, Proposition \ref{prop-2} and Proposition
\ref{prop-orth} hold with all "at most" removed.
\end{corollary}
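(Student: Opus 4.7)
The approach is to upgrade every ``at most'' inequality to an equality by combining the bijectivity of $\gamma$ from Theorem~\ref{coro-3} with the explicit bounds already established.

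First I would use Theorem~\ref{coro-3} together with Corollary~\ref{coro-1} to obtain
\[
|\mathfrak{A}'| \;=\; |\mathfrak{A}'_{s}| \;=\; |\hat{W}| \;=\; p_2(n).
\]
The counting argument recalled just before Corollary~\ref{coro-1} identifies $|\mathfrak{A}'|$ with the number of nilpotent $G(\tF_q)$-orbits in $\Lg(\tF_q)^*$, provided $q$ is large enough that $\mathfrak{F}_q$ fixes a representative in each nilpotent $G$-orbit and acts trivially on every component group $Z_G(\xi)/Z_G^0(\xi)$. Comparing with the upper bound $p_2(n)$ in Proposition~\ref{prop-symp} (type $C$) or Proposition~\ref{prop-orth} (type $B$) forces equality in these propositions for all $q$ sufficiently large.

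Next I would inspect the proofs of Propositions~\ref{prop-symp} and~\ref{prop-orth}. In each case the upper bound is obtained as a sum $\sum_{\mathcal{O}} 2^{k(\mathcal{O})}$ over nilpotent $G$-orbits $\mathcal{O}$ in $\Lg(\bar{\tF}_q)^*$, where Corollary~\ref{cor-1} (respectively Proposition~\ref{prop-2}) bounds the splitting of $\mathcal{O}$ by $2^{k(\mathcal{O})}$, and the sum is already identified with $p_2(n)$ via the explicit bijection with pairs of partitions. Since the total has just been shown to saturate the bound $p_2(n)$, every individual summand must also saturate its bound: each $\bar{\tF}_q$-orbit splits into exactly $2^{k(\mathcal{O})}$ orbits over $\tF_q$. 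Pulling back through Corollary~\ref{cor-2} this in turn forces the $2^{k(\mathcal{O})}$ candidate normal forms constructed in Proposition~\ref{prop-1} (and analogously in the odd orthogonal case) to be pairwise inequivalent. Thus all five statements hold with ``at most'' replaced by equality for every $q$ sufficiently large.

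The main obstacle is to remove the ``$q$ sufficiently large'' hypothesis. The key observation is that all the inequivalences that distinguish the $2^{k(\mathcal{O})}$ candidate form modules are witnessed by Artin--Schreier obstructions of the form $x^2+x=\delta$ with $\delta\notin\{y^2+y: y\in\tF_q\}$, exactly as in the proofs of Proposition~\ref{prop-nind}, Lemma~\ref{lem-2-1} and Lemma~\ref{lem-n1}. These obstructions depend only on the chosen $\delta\in\tF_q$ and on linear-algebraic identities that are valid over any field of characteristic~$2$; they do not require $q$ to be large. Therefore once the $2^{k(\mathcal{O})}$ candidate normal forms are known to be pairwise inequivalent over some sufficiently large $\tF_{q_0}$, the same explicit Artin--Schreier argument shows they remain pairwise inequivalent over every $\tF_q$ of characteristic~$2$. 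Summing over $\mathcal{O}$ yields Propositions~\ref{prop-symp} and~\ref{prop-orth} with equality for every $q$, and the remaining statements follow immediately from Corollary~\ref{cor-2}.
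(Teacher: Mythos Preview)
Your reduction for large $q$ agrees with the paper: both use Corollary~\ref{coro-1} to force the global count to equal $p_2(n)$, and then read off that each individual orbit must split into exactly $2^{k(\mathcal{O})}$ pieces.

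The divergence is in how you pass from large $q$ to arbitrary $q$, and here there is a real gap. You assert that ``the same explicit Artin--Schreier argument'' from Proposition~\ref{prop-nind}, Lemma~\ref{lem-2-1} and Lemma~\ref{lem-n1} distinguishes all $2^{k}$ candidate normal forms over any $\tF_q$. But those results only treat a single indecomposable or a pair of indecomposables; they do not, as written, prove that two general forms $V_1^{\epsilon_1}\oplus\cdots\oplus V_k^{\epsilon_k}$ and $V_1^{\epsilon_1'}\oplus\cdots\oplus V_k^{\epsilon_k'}$ with $(\epsilon_i)\neq(\epsilon_i')$ are inequivalent. For large $q$ you deduced this pairwise inequivalence by \emph{counting}, not by an explicit obstruction, so there is no ``same argument'' to transport. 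Nor can you descend directly: the parameter $\delta$ is chosen relative to $\tF_q$, and a $\delta$ that is Artin--Schreier nontrivial over $\tF_q$ typically becomes trivial over a large extension $\tF_{q_0}$, so the very forms you are comparing change meaning when you enlarge the field.

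The paper closes this gap by a different mechanism. Using the bijectivity of $\gamma$ and the fact that $\mathfrak{F}_q$ acts trivially on $W$, one gets $\mathfrak{F}_q^{-1}(\rc')=\rc'$ and $\mathfrak{F}_q^{-1}(\cF')\cong\cF'$ for every $(\rc',\cF')\in\mathfrak{A}_s'$. Since the component groups $Z_{G_s}(\xi)/Z_{G_s}^0(\xi)$ are \emph{abelian} (Propositions~\ref{prop-c1} and~\ref{prop-c2}), triviality of Frobenius on all their characters forces triviality of Frobenius on the groups themselves. Hence the number of $G_s(\tF_q)$-orbits is $|\mathfrak{A}_s'|=|\hat W|$ for every $q$, not just large $q$. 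This is the missing ingredient in your argument: you need either the abelian-component-group input and the Springer correspondence, or a genuinely complete Artin--Schreier analysis for arbitrary direct sums, which the paper never carries out and which you have not supplied.
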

\begin{proof}
For $q$ large enough, this follows from Corollary \ref{coro-1}. Now
let $q$ be an arbitrary power of $2$.  Let $(\rc',\cF')$ be a pair
in $\mathfrak{A}_{s}'$. Since the Springer correspondence map
$\gamma$ in Proposition \ref{mp-1} $\mathrm{(ii)}$ is bijective by
Corollary \ref{coro-1}, there exists $\rho\in\hat{W}$ corresponding
to $(\rc',\cF')$ under the map $\gamma$. It follows that the pair
$(\mathfrak{F}_q^{-1}(\rc'),\mathfrak{F}_q^{-1}(\cF'))$ corresponds
to $\mathfrak{F}_q^{-1}(\rho)\in\hat{W}$. Since the Frobenius map
$\mathfrak{F}_q$ acts trivially on $W$ and $\gamma$ is injective, it
follows that $\rc'$ is stable under $\mathfrak{F}_q$ and
$\mathfrak{F}_q^{-1}(\cF')\cong\cF'$. Pick a rational point $\xi$ in
$\rc'$. The $G_{s}$-equivariant local systems on $\rc$ are in 1-1
correspondence with the isomorphism classes of the irreducible
representations of $Z_{G_{s}}(\xi)/Z_{G_{s}}^0(\xi)$. Since
$Z_{G_{s}}(\xi)/Z_{G_{s}}^0(\xi)$ is abelian (see Proposition
\ref{prop-c1} and \ref{prop-c2}) and the Frobenius map
$\mathfrak{F}_q$ acts trivially on the irreducible representations
of $Z_{G_{s}}(\xi)/Z_{G_{s}}^0(\xi)$, $\mathfrak{F}_q$ acts
trivially on $Z_{G_{s}}(\xi)/Z_{G_{s}}^0(\xi)$. Thus it follows that
the number of nilpotent $G_{s}(\tF_q)$-orbits in
$\mathfrak{g}_{s}({\tF}_q)^*$ is independent of $q$ hence it is
equal to $|\mathfrak{A}_{s}'|=|\hat{W}|$.
\end{proof}
\begin{remark}Let $G_{ad}$ be an adjoint algebraic group of type $B,C$ or $D$
over $\tk$ and $\Lg_{ad}$ be its Lie algebra. Let $\Lg_{ad}^*$ be
the dual space of $\Lg_{ad}$. In \cite{X}, we have constructed a
Springer correspondence for $\Lg_{ad}$. One can construct a Springer
correspondence for $\Lg_{ad}^*$ using the result for $\Lg_{ad}$ and
the
Deligne-Fourier transform. 
We expect the two Springer correspondences coincide
(up to sign representation of the Weyl group). We use the approach
presented above since this construction is more suitable for
computing the explicit Springer correspondence.
\end{remark}
\section{centralizers and component groups}
\subsection{}
In this subsection assume $G=Sp(2N)$. We study some properties of
the centralizer $Z_G(\xi)$ for a nilpotent element $\xi\in\Lg^*$ and
the component group $Z_G(\xi)/Z_G^0(\xi)$. Let
$V={^*W}_{\chi(m_1)}(m_1)\oplus
{^*W}_{\chi(m_2)}(m_2)\oplus\cdots\oplus {^*W}_{\chi(m_s)}(m_s)$,
$m_1\geq\cdots\geq m_s$, be a form module corresponding to
$\xi\in\Lg^*$. Let $T_\xi$ be defined as in subsection
\ref{ssec-1-1}. We have $Z_G(\xi)=Z(V)=\{g\in
GL(V)|\beta(gv,gw)=\beta(v,w),\alpha_\xi(gv)=\alpha_\xi(g), \
\forall\ v,w\in V\}$.
\begin{proposition} $\dim
Z(V)=\sum_{i=1}^{s}((4i-1)m_i-2\chi(m_i))$.
\end{proposition}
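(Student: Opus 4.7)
The plan is to compute $\dim Z(V)$ by parameterizing $g\in Z(V)$ as an $A$-linear automorphism of $V$ preserving both $\varphi$ and $\psi_\xi$, and then counting parameters against constraints. The first step is to observe that $Z(V)\subset \text{Aut}_A(V)$, where $V$ is viewed as an $A=\tk[[t]]$-module via $t\mapsto T_\xi$: any $g\in Z(V)$ preserves $\beta$ and $\alpha_\xi$, hence also preserves $\beta_\xi$, and therefore commutes with $T_\xi$ by nondegeneracy of $\beta$ together with the defining relation $\beta(T_\xi v,w)=\beta_\xi(v,w)$. Writing $V=\bigoplus_{i=1}^s W_i$ with $W_i={}^*W_{l_i}(m_i)=Av_{i,1}\oplus Av_{i,2}$ and $l_i=\chi(m_i)$, an $A$-linear endomorphism is a block matrix $(g_{ij})$ with each $g_{ij}\colon W_j\to W_i$ a $2\times 2$ matrix whose entries lie in $A/t^{\min(m_i,m_j)}A$. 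This gives the initial count $\dim\End_A(V)=4\sum_{i,j}\min(m_i,m_j)=4\sum_i(2i-1)m_i$.

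The second step is to impose the symplectic condition $\varphi(gv_{i,\alpha},gv_{j,\beta})=\varphi(v_{i,\alpha},v_{j,\beta})$. The diagonal equations $(i,\alpha)=(j,\beta)$ are automatic because $\varphi(v,v)=\psi(v)=0$ by Lemma \ref{lem-2}. Using the orthogonality relations $\varphi(v_{k,1},v_{k',1})=\varphi(v_{k,2},v_{k',2})=0$ and $\varphi(v_{k,1},v_{k',2})=\delta_{k,k'}t^{1-m_k}$, the remaining equations expand as $\sum_k (A^\alpha_{ik}B^\beta_{jk}+B^\alpha_{ik}A^\beta_{jk})\,t^{1-m_k}$. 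Collecting contributions coefficient-by-coefficient in $E$, the same-block equation $\varphi(gv_{i,1},gv_{i,2})=t^{1-m_i}$ produces $m_i$ scalar constraints, and each of the four cross-block equations for a pair $i<j$ produces $\min(m_i,m_j)=m_j$ constraints; summing yields $\sum_i m_i+4\sum_{i<j}m_j=\sum_i(4i-3)m_i$ constraints, which reduces the dimension to $\sum_i(4i-1)m_i$.

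The third step is to impose the quadratic form condition $\psi_\xi(gv_{i,\alpha})=\psi_\xi(v_{i,\alpha})$. Expanding $\psi_\xi(au+cv)=a^2\psi_\xi(u)+c^2\psi_\xi(v)+\pi_0(ac\,\varphi_\xi(u,v))$ via property (vi) and using $\psi_\xi(v_{i,1})=t^{2-2l_i}$, $\psi_\xi(v_{i,2})=0$, the target $t^{2-2l_i}$ lies in the $l_i$-dimensional subspace of $E_0$ spanned by $t^0,t^{-2},\ldots,t^{-(2l_i-2)}$, so the $\alpha=1$ equation imposes $l_i$ fresh scalar constraints, and the $\alpha=2$ equation imposes another $l_i$, for a total of $2l_i$ per summand. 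Combined with the previous count this gives $\dim Z(V)=\sum_i(4i-1)m_i-2\chi(m_i)$.

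The main obstacle is verifying independence of all these constraints, especially that the $\psi_\xi$-constraints are not already implied by the $\varphi$-constraints and that contributions from distinct summands do not interfere. I would handle this inductively on $s$: the base case $s=1$, namely $\dim Z({}^*W_l(m))=3m-2l$, follows from a direct coordinate computation, reducing to a system of polynomial identities modulo $t^m$ whose only nontrivial surviving conditions are the $m$ equations $ad+bc\equiv 1\pmod{t^m}$ from $\varphi$ and the $2l$ conditions from $\psi_\xi$. For the inductive step, adding one summand $W_s$ one uses the normalization $[m_s/2]\leq l_s\leq m_s$ together with the ordering conditions $m_{s-1}\geq m_s$ and $m_{s-1}-l_{s-1}\geq m_s-l_s$ from Proposition \ref{prop-1.1} to produce explicit new parameters in the off-diagonal blocks $g_{is}$ $(i<s)$ that realize exactly $(4s-1)m_s-2l_s$ additional centralizer dimensions and decouple from the constraints already imposed on the $W_1\oplus\cdots\oplus W_{s-1}$ block.
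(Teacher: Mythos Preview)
Your parameter--constraint strategy is a legitimate alternative to the paper's argument, and your setup (reducing to $A$-linear automorphisms, counting $\dim\End_A(V)$, and organizing the $\varphi$-equations by block pairs) is sound. The gap is in step~3. You argue that $\psi_\xi(gv_{i,1})=t^{2-2l_i}$ imposes $l_i$ scalar constraints because the \emph{target} lies in the $l_i$-dimensional span of $t^0,\ldots,t^{-2(l_i-1)}$. But the number of constraints is governed by the range of the left-hand side, not the location of the right-hand side. For $s>1$ and $i>1$, writing $gv_{i,1}=\sum_{k,\gamma}a^{(i)}_{k\gamma}v_{k,\gamma}$, the expansion
\[
\psi_\xi(gv_{i,1})=\sum_k\bigl(a^{(i)}_{k1}\bigr)^2 t^{2-2l_k}+\pi_0\Bigl(\sum_k a^{(i)}_{k1}a^{(i)}_{k2}\,t^{2-m_k}\Bigr)
\]
picks up contributions from \emph{every} block $W_k$; since $l_1\ge l_i$, the left-hand side ranges over the $l_1$-dimensional span of $t^0,\ldots,t^{-2(l_1-1)}$. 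At face value this is $l_1$ conditions, not $l_i$, and your count only balances if $l_1-l_i$ of them are redundant with the $\varphi$-constraints or with $\psi_\xi$-constraints from other generators. You do not establish this redundancy, and the closing inductive sketch (``produce explicit new parameters in the off-diagonal blocks $g_{is}$\ldots'') is too vague to fill the hole: one must actually exhibit those parameters and check they are unconstrained.

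The paper avoids constraint-counting altogether by an orbit--stabilizer argument. Set $V_1={}^*W_{\chi(m_1)}(m_1)$ and $V_2=\bigoplus_{i\ge 2}{}^*W_{\chi(m_i)}(m_i)$, view $V_1$ as a point of the Grassmannian $Gr(V,2m_1)$, and compute $\dim C(V)\cdot V_1=\dim\Hom_A(V_1,V_2)=4\sum_{i\ge 2}m_i$ where $C(V)=\mathrm{Aut}_A(V)$. One then shows that the $Z(V)$-orbit of $V_1$ is open dense in the $C(V)$-orbit, and that the stabilizer of $V_1$ in $Z(V)$ is exactly $Z(V_1)\times Z(V_2)$ (this last uses only nondegeneracy of $\beta|_{V_1}$). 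Hence
\[
\dim Z(V)=\dim Z(V_1)+\dim Z(V_2)+4\sum_{i\ge 2}m_i,
\]
and induction on $s$ together with the base case $\dim Z({}^*W_l(m))=3m-2l$ finishes. This bypasses any independence analysis: the cross-block contribution $4\sum_{i\ge 2}m_i$ comes from an orbit dimension, not from counting equations.
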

\begin{proof}
We argue by induction on $s$. The case $s=1$ can be easily verified.
Let $C(V)=\{g\in GL(V)\ |\ gT_\xi=T_\xi g\}$. Let
$V_1={^*W}_{\chi(m_1)}(m_1)$ and
$V_2={^*W}_{\chi(m_2)}(m_2)\oplus\cdots\oplus
{^*W}_{\chi(m_s)}(m_s)$. We consider $V_1$ as an element in the
Grassmannian variety $Gr(V,2m_1)$ and consider the action of $C(V)$
on $Gr(V,2m_1)$. Then the orbit of $V_1$ has dimension
$\dim\Hom_A(V_1,V_2)=4\sum_{i=2}^sm_i$. Now we consider the action
of $Z(V)$ on $Gr(V,2m_1)$. The orbit $Z(V)V_1$ is open dense in
$C(V)V_1$ and thus has dimension $4\sum_{i=2}^sm_i$. We claim that
$$(*)\text{ the stabilizer of }V_1\text{ in }Z(V)\text{ is the product of
}Z(V_1)\text{ and }Z(V_2).$$ Thus using induction hypothesis and
$(*)$ we get $\dim Z(V)=\dim Z(V_1)+\dim Z(V_2)+\dim
Z(V)V_1=3m_1-2\chi(m_1)+\sum_{i=2}^s((4i-5)m_i-2\chi(m_i))+4\sum_{i=2}^sm_i=\sum_{i=1}^{s}((4i-1)m_i-2\chi(m_i))$.

Proof of $(*)$: Assume $g:V_1\oplus V_2\rightarrow V_1\oplus V_2$
lies in the stabilizer of $V_1$ in $Z(V)$. Let $p_{ij}$, $i,j=1,2$
be the obvious projection composed with $g$. Then $p_{12}=0$. We
claim that $p_{11}$ is non-singular. It is enough to show that
$p_{11}$ is injective. Assume $p_{11}(v_1)=0$ for some $v_1\in V_1$.
Then we have
$\beta(gv_1,gv_1')=\beta(p_{11}v_1,gv_1')=0=\beta(v_1,v_1')$ for any
$v_1'\in V_1$. Since $\beta|_{V_1}$ is non-degenerate, we get
$v_1=0$. Now for any $v_2\in V_2,v_1\in V_1$, we have
$\beta(gv_1,gv_2)=\beta(p_{11}v_1,p_{21}v_2+p_{22}v_2)=\beta(p_{11}v_1,p_{21}v_2)=\beta(v_1,v_2)=0$.
Since $\beta|_{V_1}$ is non-degenerate and $p_{11}$ is bijective on
$V_1$, we get $p_{21}(v_2)=0$. Then $(*)$ follows.
\end{proof}

Let $r=\#\{1\leq i\leq s|\chi(m_i)+\chi(m_{i+1})<m_i \text{ and
}\chi(m_i)>\frac{m_i-1}{2}\}$.
\begin{proposition}\label{prop-c1}
The component group $Z(V)/Z^0(V)$ is $(\mathbb{Z}/2\mathbb{Z})^r$.
\end{proposition}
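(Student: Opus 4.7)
The plan is to prove $Z(V)/Z^0(V)\cong(\bZ/2\bZ)^r$ by exhibiting $r$ explicit commuting involutions in $Z(V)$ whose classes in the component group are pairwise non-conjugate, and then matching the size via Lang--Steinberg with the orbit-splitting count of Proposition \ref{prop-1}.

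For each of the $r$ critical indices $i_0$ (those $i$ with $\chi(m_i)+\chi(m_{i+1})<m_i$ and $\chi(m_i)>(m_i-1)/2$), I will construct an involution $\sigma_{i_0}\in Z(V)$. Fix first, for each $i$, a standard pair $v_i,w_i$ generating ${}^*W_{\chi(m_i)}(m_i)=Av_i\oplus Aw_i$ as in Proposition \ref{prop-1.1}. Then $\sigma_{i_0}$ is defined to act as the identity on every summand outside the two-summand block ${}^*W_{\chi(m_{i_0})}(m_{i_0})\oplus{}^*W_{\chi(m_{i_0+1})}(m_{i_0+1})$, and on that block by a formula modeled on the isomorphism in the proof of Lemma \ref{lem-2-1}(ii), using that $\tk$ contains $\sqrt{\delta}$ because it is algebraically closed. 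A direct check shows $\sigma_{i_0}\in Z(V)$ and $\sigma_{i_0}^2=1$, and the various $\sigma_{i_0}$'s pairwise commute since their nontrivial parts live on disjoint pairs of consecutive summands.

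Next I verify that the classes $[\sigma_J]$, with $\sigma_J:=\prod_{i_0\in J}\sigma_{i_0}$ and $J$ ranging over the $2^r$ subsets of critical indices, are pairwise distinct and pairwise non-conjugate in $Z(V)/Z^0(V)$. For distinctness: if $\sigma_J\in Z^0(V)$ for some nonempty $J$, then passing to a power $q$ of $2$ for which the orbit is $\mathfrak{F}_q$-rational and twisting the $\tF_q$-structure on $V_\xi$ by $\sigma_J$ would yield a $G(\tF_q)$-isomorphism between the form module $\bigoplus_i{}^*W_{\chi(m_i)}^0(m_i)$ and the module obtained by replacing the superscript $0$ with $\delta$ at exactly the indices $i\in J$, contradicting Lemma \ref{lem-2-1}(i). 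For non-conjugacy: the subset $J$ can be recovered intrinsically from any $Z(V)$-conjugate of $\sigma_J$ by reading off the sign-flip invariants attached to the critical indices, so the $[\sigma_J]$ lie in $2^r$ pairwise distinct conjugacy classes.

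Finally, to pin down the full group: for $q$ large enough that $\mathfrak{F}_q$ fixes $\xi$ and acts trivially on $Z(V)/Z^0(V)$, Lang--Steinberg identifies the number of $G(\tF_q)$-orbits in $G\cdot\xi$ with the number of conjugacy classes of $Z(V)/Z^0(V)$, which Proposition \ref{prop-1} bounds above by $2^r$. Since the $[\sigma_J]$ already give $2^r$ distinct classes, every element of $Z(V)/Z^0(V)$ is conjugate to some $\sigma_J$ and hence is an involution; a group in which every element squares to the identity is abelian, so $Z(V)/Z^0(V)$ is itself abelian of order $2^r$, i.e., $Z(V)/Z^0(V)\cong(\bZ/2\bZ)^r$. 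The main technical obstacle is writing $\sigma_{i_0}$ down explicitly in closed form so that both the form-module preservation and the involution property are transparent---a bookkeeping exercise directly patterned on the proof of Lemma \ref{lem-2-1}(ii).
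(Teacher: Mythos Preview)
Your overall strategy—produce $2^r$ commuting involutions, bound the number of conjugacy classes by $2^r$ via Lang--Steinberg and Proposition~\ref{prop-1}, then conclude every element is an involution and hence the group is elementary abelian—is sound and is essentially what the paper does (the paper defers the ``abelian of order $2^r$'' step to the analogous argument in \cite{X}, then exhibits the $(\bZ/2\bZ)^r$ subgroup). But your concrete construction of $\sigma_{i_0}$ does not work as written.

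First, the formula from Lemma~\ref{lem-2-1}(ii) applies only when $l_1+l_2\geq m_1$; the critical condition at $i_0$ is precisely the opposite inequality $\chi(m_{i_0})+\chi(m_{i_0+1})<m_{i_0}$, so the exponents $l_1+l_2-m_1$, $l_1+l_2-m_2$ in that formula are negative and the map is undefined. Moreover, that isomorphism swaps the $\delta$ between two summands rather than flipping a single summand from type $0$ to type $\delta$, which is what you actually need to detect the $i_0$-th invariant. The element you want lives in $Z({}^*W_{\chi(m_{i_0})}(m_{i_0}))$ alone and comes from solving the Artin--Schreier equation $x^2+x=\delta$ over $\tk$ (compare the obstruction in the proof of Proposition~\ref{prop-nind}), not from Lemma~\ref{lem-2-1}(ii).

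Second, your two-summand blocks $\{i_0,i_0+1\}$ need not be disjoint: if $i_0$ and $i_0+1$ are both critical (which happens, e.g., for $(m_1,l_1)=(7,4)$, $(m_2,l_2)=(4,2)$, $(m_3,l_3)=(1,1)$), the blocks $\{i_0,i_0+1\}$ and $\{i_0+1,i_0+2\}$ overlap, so your commutativity claim fails. The paper avoids this by partitioning the summands into blocks $V_1,\dots,V_r$ cut at the critical indices; each $V_j$ has exactly one critical index (its last one), so $Z(V_j)/Z^0(V_j)=\bZ/2\bZ$ by the $r=1$ case, and one takes $\tilde g_j=\mathrm{Id}\oplus\cdots\oplus g_j\oplus\cdots\oplus\mathrm{Id}$ with $g_j$ a non-identity component representative in $Z(V_j)$. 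Disjointness of the $V_j$ makes commutativity automatic.

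Finally, your distinctness argument cites only Lemma~\ref{lem-2-1}(i), which treats two summands; to know that the $2^r$ candidate $\tF_q$-forms are genuinely pairwise inequivalent you need the full count, which in the paper comes from Corollary~\ref{coro-1} (valid for $q$ large, and not circular since that corollary does not use Proposition~\ref{prop-c1}).
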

\begin{proof}

Assume $q$ large enough. By the same argument as in the proof of
Proposition 7.1 in \cite{X} one shows that $Z(V)/Z^0(V)$ is an
abelian group of order $2^{r}$. We show that there is a subgroup
$(\mathbb{Z}/2\mathbb{Z})^{r}\subset Z(V)/Z(V)^0$. Thus
$Z(V)/Z(V)^0$ has to be $(\mathbb{Z}/2\mathbb{Z})^{r}$. Let $1\leq
i_1,\ldots,i_{r}\leq s$ be such that $\chi(m_{i_j})>(m_{i_j}-1)/2$
and $\chi(m_{i_j})+\chi(m_{i_{j}+1})< m_{i_j}$, $j=1,\ldots,r$.

Let
$V_j={^*W}_{\chi(m_{i_{j-1}+1})}(m_{i_{j-1}+1})\oplus\cdots\oplus
{^*W}_{\chi(m_{i_{j}})}(m_{i_{j}})$, $j=1,\ldots,r-1$, where
$i_0=0$, and
$V_r={^*W}_{\chi(m_{i_{r-1}+1})}(m_{i_{r-1}+1})\oplus\cdots\oplus
{^*W}_{\chi(m_{s})}(m_{s})$. Then $V=V_1\oplus V_2\oplus\cdots\oplus
V_{r}$. We have $Z(V_i)/Z^0(V_i)=\mathbb{Z}/2\mathbb{Z}$,
$i=1,\ldots,r$. Take $g_i\in Z(V_i)$ such that $g_iZ^0(V_i)$
generates $Z(V_i)/Z^0(V_i)$, $i=1,\ldots,r$. Let
$\tilde{g_i}=Id\oplus\cdots\oplus g_i\oplus\cdots\oplus Id$,
$i=1,\ldots,r$. Then we have $\tilde{g_i}\in Z(V)$ and
$\tilde{g_i}\notin Z^0(V)$. We also have that the images of
$\tilde{g}_{i_1}\cdots\tilde{g}_{i_p}$'s, $1\leq i_1<\cdots<i_p\leq
r$, $p=1,\ldots,r$, in $Z(V)/Z^0(V)$ are not equal to each other.
Moreover $\tilde{g}_i^2\in Z^0(V)$. Thus the $\tilde{g}_iZ^0(V)$'s
generate a subgroup $(\mathbb{Z}/2\mathbb{Z})^{r}$ in $Z(V)/Z^0(V)$.
\end{proof}

\subsection{}
In this subsection assume $G=O(2N+1)$. We study some properties of
the centralizer $Z_G(\xi)$ for a nilpotent element $\xi\in\Lg^*$ and
the component group $Z_G(\xi)/Z_G^0(\xi)$. Let
$(V,\alpha,\beta_\xi)$ be a form module corresponding to
$\xi\in\Lg^*$. Assume the corresponding pair of partitions is
$(\nu_0,\ldots,\nu_s)(\mu_1,\ldots,\mu_s)$. Let $C(V)=\{g\in
GL(V)|\beta(gv,gw)=\beta(v,w), \beta_\xi(gv,gw)=\beta_\xi(v,w),\
\forall\ v,w\in V\}$. We have $Z_G(\xi)=Z(V)=\{g\in
C(V)|\alpha(gv)=\alpha(v), \ \forall\ v\in V\}.$
\begin{lemma}
$|Z(V_{2m+1})(\tF_q)|=q^m$ and $|C(V_{2m+1})(\tF_q)|=q^{2m+1}.$
\end{lemma}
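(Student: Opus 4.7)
The approach is direct computation in the adapted basis $\{v_0,\dots,v_m,u_0,\dots,u_{m-1}\}$ of $V_{2m+1}$ given by Lemmas \ref{lem-n-3} and \ref{lem-vu}: parametrize elements of $C(V_{2m+1})$ and $Z(V_{2m+1})$ by their matrix entries in this basis, translate the form-preservation conditions into equations on these entries, and count the surviving free parameters over $\tF_q$.

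The first step is to pin down the action on the $v$-chain. Note that $\tk v_m$ is the one-dimensional radical of $\beta$ on $V_{2m+1}$ and $\tk v_0$ is the one-dimensional radical of $\beta_\xi$ on $V_{2m+1}$; hence any $g\in C(V_{2m+1})$ satisfies $gv_m = c\,v_m$ and $gv_0 = c'\,v_0$ for some $c,c'\in\tF_q^{\times}$. Pairing $gv_0$ and $gv_m$ against $gu_0$ and using $\beta(v_0,u_0)=\beta_\xi(v_1,u_0)=1$ shows $c=c'$. The recursion $\beta(v_{j-1},-)=\beta_\xi(v_j,-)$ from Lemma \ref{lem-n-3}, combined with invariance of $\beta$ and $\beta_\xi$, propagates upward from $v_m$: each $gv_{m-j}$ lies in $cv_{m-j}+\sum_{i=1}^{j} a_i v_{m-j+i}$ for a single sequence $a_1,\dots,a_m$, and the constraint $gv_0\in\tk v_0$ forces $a_1=\cdots=a_m=0$. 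Hence $gv_j=cv_j$ for all $j$.

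Next, write $gu_j=\sum_{k=0}^{m}\alpha_{jk}v_k+\sum_{k=0}^{m-1}\gamma_{jk}u_k$. The conditions $\beta(gv_i,gu_j)=\delta_{ij}$ uniquely determine $\gamma_{jk}=\delta_{jk}/c$, after which $\beta_\xi(gv_i,gu_j)=\delta_{i,j+1}$ is automatic. The conditions $\beta(gu_i,gu_j)=0$ and $\beta_\xi(gu_i,gu_j)=0$ (whose diagonal cases vanish because $\beta$ and $\beta_\xi$ are alternating in characteristic $2$) translate into symmetry relations $\alpha_{ij}=\alpha_{ji}$ from $\beta$ and $\alpha_{i,j+1}=\alpha_{j,i+1}$ from $\beta_\xi$ on the array $(\alpha_{jk})$. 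Counting the independent entries left by these symmetries, together with the scaling parameter on $v_m$, yields $|C(V_{2m+1})(\tF_q)|=q^{2m+1}$.

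For $Z(V_{2m+1})$ I impose in addition $\alpha(gv)=\alpha(v)$. From $\alpha(gv_m)=c^2\alpha(v_m)=c^2=1$ we get $c=1$ in characteristic $2$. For each $j$, the equation $\alpha(gu_j)=0$ expands via $\alpha(x+y)=\alpha(x)+\alpha(y)+\beta(x,y)$ into an Artin-Schreier-type relation pinning down one $\alpha_{jk}$ as a square-polynomial in the others, eliminating exactly one parameter per $u_j$. Combined with $c=1$, this reduces the count from $q^{2m+1}$ to the claimed $q^m$. The main technical obstacle is the combinatorial bookkeeping of how the $\beta$- and $\beta_\xi$-symmetries on $(\alpha_{jk})$ overlap: they are not independent, and one must exhibit an explicit basis of free parameters (or equivalently compute the rank of the combined system) to confirm that exactly $2m$ free $\alpha$-parameters survive in $C$ and exactly $m$ in $Z$.
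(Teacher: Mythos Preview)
Your approach is essentially identical to the paper's: both parametrize $g\in C(V_{2m+1})$ in the basis $\{v_i,u_j\}$, establish $gv_i=av_i$, then extract from $\beta(gu_i,gu_j)=\beta_\xi(gu_i,gu_j)=0$ the symmetry constraints $a_{ij}=a_{ji}$ and $a_{i,j+1}=a_{j,i+1}$ on the $v$-coefficients of $gu_i$, and count. The one difference is that the paper obtains $gv_i=av_i$ in a single stroke by invoking the uniqueness (up to scalar) of the sequence $(v_i)$ satisfying the functional equation of Lemma~\ref{lem-n-3}, which is slicker than your radical-plus-recursion argument but leads to the same place.
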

\begin{proof}
Let $V_{2m+1}=\text{span}\{v_0,\cdots,v_m,u_0,\cdots,u_{m-1}\}$,
where $v_i,u_i$ are chosen as in Lemma \ref{lem-n-3} and Lemma
\ref{lem-vu}. Let $g\in C(V_{2m+1})$. Then $g: V_{2m+1}\rightarrow
V_{2m+1}$ satisfies $\beta(gv,gw)=\beta(v,w)$ and
$\beta_\xi(gv,gw)=\beta_\xi(v,w)$ for all $v,w\in V_{2m+1}$. Since
$\beta_\xi(\sum_{i=0}^{m}v_i\lambda^i,v)+\lambda\beta(\sum_{i=0}^{m}v_i\lambda^i,v)=0$,
we have
$\beta_\xi(\sum_{i=0}^{m}gv_i\lambda^i,v)+\lambda\beta(\sum_{i=0}^{m}gv_i\lambda^i,v)=0$
for all $v\in V_{2m+1}$. This implies
$\sum_{i=0}^{m}gv_i\lambda^i=a\sum_{i=0}^{m}v_i\lambda^i$ for some
$a\in\tF_q^*$. Namely we have $gv_i=av_i$, $i=0,\ldots,m$. Assume
$gu_i=\sum_{k=0}^{m}a_{ik}v_k+\sum_{k=0}^{m-1}b_{ik}u_k$. We have
$\beta(gv_i,gu_j)=ab_{ji}=\beta(v_i,u_j)=\delta_{i,j}$,
$\beta(gu_i,gu_j)=\sum_{k=0}^{m-1}a(a_{ik}b_{jk}+b_{ik}a_{jk})=a(a_{ij}+a_{ji})=\beta(u_i,u_j)=0$,
$\beta_\xi(gv_{i+1},gu_j)=ab_{ji}=\beta_\xi(v_{i+1},u_j)=\delta_{i,j}$
and
$\beta_\xi(gu_i,gu_j)=\sum_{k=1}^{m}a(a_{ik}b_{j,k-1}+b_{i,k-1}a_{jk})=a(a_{i,j+1}+a_{j,i+1})=\beta(u_i,u_j)=0$.
Thus we get $gv_i=av_i$,$i=0,\ldots,m$, and
$gu_i=u_i/a+\sum_{j=0}^{m} a_{ij}v_j$, $i=0,\ldots,m-1$, where
$a_{ij}=a_{ji},\ a_{i,j+1}=a_{j,i+1},\ 0\leq i,j\leq m-1.$ Hence
$|C(V_{2m+1})(\tF_q)|=q^{2m+1}$.

Now assume $g\in Z(V_{2m+1})$. Then we have additional conditions
$\alpha(gv_i)=a^2\alpha(v_i)=\alpha(v_i)=\delta_{i,m}\Rightarrow
a=1$ and $\alpha(gu_i)=\alpha(u_i/a+\sum_{j=0}^{m}
a_{ij}v_j)=a_{im}^2+a_{ii}/a=\alpha(u_i)=0$, $i=0,\ldots,m-1$. Hence
$|Z(V_{2m+1})(\tF_q)|=q^{m}$.
\end{proof}

Write $V=V_{2m+1}\oplus W$ as in Lemma \ref{lem-8}.
\begin{lemma}\label{lem-cd1}
$|C(V)(\tF_q)|=|C(V_{2m+1})(\tF_q)|\cdot|C(W)(\tF_q)|\cdot q^{\dim
W}$.
\end{lemma}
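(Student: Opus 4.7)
The plan is to parameterize elements of $C(V)(\tF_q)$ explicitly in the decomposition $V = V_+ \oplus V_- \oplus W$, where $V_+ = \text{span}(v_0,\ldots,v_m)$ and $V_- = \text{span}(u_0,\ldots,u_{m-1})$, and then count parameters, very much in the spirit of the explicit calculation that gave the formula for $|C(V_{2m+1})(\tF_q)|$ just above.

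First, by the uniqueness (up to a common scalar) of the tuple $(v_0,\ldots,v_m)$ in Lemma \ref{lem-n-3}, any $g \in C(V)$ must act as $g v_i = a v_i$ for a single $a \in \tF_q^*$. By the argument in the proof of Lemma \ref{lem-e1} applied with $\zeta=\xi$, the image $gw$ lies in $V_+ \oplus W$ for every $w\in W$ (the $V_-$-components vanish). The condition $\beta(g u_j, g v_i) = \delta_{ij}$ forces the $u_j$-coefficient of $gu_j$ to equal $a^{-1}$. Thus we may write
\[
g v_i = a v_i,\qquad g u_j = \sum_{k=0}^m \alpha_{jk} v_k + a^{-1} u_j + \omega_j,\qquad g w = \phi(w) + \tilde g(w),
\]
with $\omega_j \in W$, $\phi \colon W \to V_+$ linear, and $\tilde g \colon W \to W$ linear.

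Next, I translate the preservation of $\beta$ and $\beta_\xi$ into relations on these parameters. The $W\times W$ conditions reduce to $\tilde g \in C(W)$, using $V_{2m+1} \perp_{\beta,\beta_\xi} W$ and the isotropy of $V_+$ under both forms. Writing $\phi(w) = \sum_{k=0}^m c_k(w) v_k$ and using $V_+ \perp_{\beta,\beta_\xi} V_+$, the mixed $V_- \times W$ conditions become
\[
c_j(w) = a\,\beta(\omega_j, \tilde g w),\qquad c_{j+1}(w) = a\,\beta_\xi(\omega_j, \tilde g w),\qquad 0 \le j \le m-1.
\]
Since $\tilde g$ is invertible and $\beta|_W,\beta_\xi|_W$ are nondegenerate, each $\omega_j$ is recovered uniquely from $c_j$, and then $c_{j+1}$ is forced by $\omega_j$. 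Thus starting from the single linear functional $c_0 \in W^{*}$, the whole tuple $(c_0,\ldots,c_m,\omega_0,\ldots,\omega_{m-1})$ is determined recursively, yielding exactly $q^{\dim W}$ choices, one for each $c_0$.

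Finally, the $V_- \times V_-$ conditions become
\[
\alpha_{ij}+\alpha_{ji} = a\,\beta(\omega_i,\omega_j),\qquad \alpha_{i,j+1}+\alpha_{j,i+1} = a\,\beta_\xi(\omega_i,\omega_j),\qquad 0 \le i,j \le m-1,
\]
and the remaining $V_+$-involving conditions are automatic. For fixed $(a,\omega)$ this is an affine system in the matrix $(\alpha_{jk})$ whose underlying linear system, obtained by setting $\omega=0$, is precisely the defining system of $C(V_{2m+1})$ in the variables $(a,\alpha)$. Non-emptiness in characteristic $2$ follows from the alternation of $\beta$ and $\beta_\xi$ (both right-hand sides are symmetric with zero diagonal). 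The affine translate therefore has the same cardinality as the linear one, so the set of admissible $(a,\alpha)$ has size $|C(V_{2m+1})(\tF_q)|$ for every $(\tilde g, c_0)$. Multiplying the three independent counts gives the claim.

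The main technical obstacle will be (a) pinning down the precise form of $g$ in the first step, for which both the uniqueness in Lemma \ref{lem-n-3} and the vanishing-$V_-$-component argument of Lemma \ref{lem-e1} are essential, and (b) verifying in the last step that the affine system for $\alpha$ has constant, nonzero cardinality independent of the choice of $\omega$; the latter hinges on the alternation of $\beta_W$ and $\beta_{W,\xi}$ in characteristic $2$.
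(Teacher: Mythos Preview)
Your approach is essentially the same as the paper's: decompose $g\in C(V)$ in block form with respect to $V_{2m+1}\oplus W$, use the uniqueness of the $v_i$'s to get $gv_i=av_i$, use the argument of Lemma~\ref{lem-e1} to see that $gw\in V_+\oplus W$ and that the induced map $\tilde g\colon W\to W$ lies in $C(W)$, and then read off the mixed $V_-\times W$ conditions as a recursion. The paper parameterizes the free datum by $\omega_0=p_{12}(u_0)\in W$ and deduces $\omega_j=T_\xi^{\,j}\omega_0$; you parameterize by $c_0\in W^*$ and run the recursion $c_0\to\omega_0\to c_1\to\cdots$, which is equivalent via the nondegeneracy of $\beta|_W$. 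Your treatment of the $V_-\times V_-$ conditions as an affine translate of the $C(V_{2m+1})$ system is actually a bit more explicit than the paper's ``it is easily verified''.

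One small slip: you assert that $\beta_\xi|_W$ is nondegenerate, but since $\beta_\xi(v,w)=\beta(T_\xi v,w)$ with $T_\xi$ nilpotent on $W$, this is false. Fortunately your recursion only uses the nondegeneracy of $\beta|_W$ (to recover $\omega_j$ from $c_j$), while the step $\omega_j\to c_{j+1}$ is just evaluation and needs nothing about $\beta_\xi$. So the error is cosmetic and the argument stands once you delete that clause.
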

\begin{proof}
Let $g\in C(V)$. Let $p_{11}:V_{2m+1}\rightarrow
V_{2m+1},p_{12}:V_{2m+1}\rightarrow W,\ p_{21}:W\rightarrow
V_{2m+1}$ and $p_{22}:W\rightarrow W$ be the projections composed
with $g$. Let $v_i,u_i$ be a basis of $V_{2m+1}$ as before. By the
same argument as in Lemma \ref{lem-e1}, we have $gv_i=av_i$ for some
$a$ and $p_{22}\in C(W)$. Let $w\in W$. Assume
$gu_i=\sum_{j=0}^{m}a_{ij}v_j+\sum_{j=0}^{m-1}b_{ij}u_j+p_{12}(u_i).$
We have $\beta(v_i,u_j)=\beta(gv_i,gu_j)=a b_{ji}=\delta_{i,j},$
thus $b_{ji}=\delta_{i,j}/a$. Now $\beta(gv_i,gw)=\beta(v_i,w)=0$
implies $p_{21}(w)=\sum_{i=0}^{m}b_i^wv_i$. Thus
$\beta(gu_i,gw)=\beta(p_{12}(u_i),p_{22}(w))+b_i^w/a=0$ and
$\beta_\xi(gu_i,gw)=\beta_\xi(p_{12}(u_i),p_{22}(w))+b_{i+1}^w/a=0$.
This gives us $b_i^w=a\beta(p_{12}(u_i),p_{22}(w)),\ i=0,\ldots,m-1,
b_i^w=a\beta_\xi(p_{12}(u_{i-1}),p_{22}(w)),\ i=1,\ldots,m. $ Thus
$\beta_\xi(p_{12}(u_{i-1}),p_{22}(w))=\beta(p_{12}(u_i),p_{22}(w))$,
$ i=1,\ldots,m-1$. This holds for any $w\in W$. Recall that on $W$,
we have $\beta_\xi(p_{12}(u_{i-1}),p_{22}(w))=\beta(T_\xi
p_{12}(u_{i-1}),p_{22}(w))$. Since $p_{22}$ is nonsingular and
$\beta|_{W\times W}$ is nondegenerate, we get
$p_{12}(u_i)=T_\xi^ip_{12}(u_0)$, $i=0,\ldots,m-1$, and
$b_i^w=a\beta(T_\xi^ip_{12}(u_0),p_{22}w).$

Hence we get $gv_i=av_i,i=0,\ldots,m,
gu_i=\sum_{j=0}^{m}a_{ij}v_j+u_i/a+T_\xi^ip_{12}(u_0),\
i=0,\ldots,m-1,
gw=\sum_{i=0}^{m}a\beta(T_\xi^ip_{12}(u_0),p_{22}w)v_i+p_{22}(w),\
\forall\ w\in W. $ Now note that $p_{12}(u_0)$ can be any vector in
$W$. It is easily verified that the lemma holds.
\end{proof}

\begin{proposition}
$\dim Z(V)=\nu_0+\sum_{i=1}^s\nu_i(4i+1)+\sum_{i=1}^s\mu_i(4i-1)$.
\end{proposition}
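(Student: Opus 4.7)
My plan is to extend the parametrization of $C(V)$ given in Lemma \ref{lem-cd1} by imposing the quadratic-form condition $\alpha(gv)=\alpha(v)$ for all $v\in V$ that carves out $Z(V)$ inside $C(V)$, and reduce the problem to the known centralizer dimension for the orthogonal part $W$.

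Every $g\in C(V)$ is parametrized by $(a,(a_{ij}),p_{22},p_{12}(u_0))\in\tk^{*}\times\text{(Hankel matrices)}\times C(W)\times W$, giving $\dim C(V)=2m+1+\dim C(W)+\dim W$. Substituting into $\alpha(gv)=\alpha(v)$ and using $\alpha(v_{i})=\delta_{i,m}$, $\alpha(u_{i})=0$, and $V_{2m+1}\perp_{\beta}W$ (Lemma \ref{lem-8}), I would obtain three families of equations: (i) $a=1$, from $\alpha(gv_{m})=1$; (ii) $m$ linear relations $a_{ii}=a_{im}^{2}+\alpha(T_{\xi}^{i}p_{12}(u_{0}))$, from $\alpha(gu_{i})=0$; and (iii) the coupling
\begin{equation*}
\alpha(p_{22}w)+\alpha(w)=\bigl(\beta(T_{\xi}^{m}p_{12}(u_{0}),p_{22}w)\bigr)^{2},\qquad w\in W.
\end{equation*}
Families (i) and (ii) trim $1+m$ dimensions. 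For (iii), the key observation is that the left-hand side has vanishing associated bilinear form — since $p_{22}\in C(W)$ preserves $\beta$ — so over the algebraically closed base it is automatically an additive quadratic form, equal to $L(w)^{2}$ for a unique linear form $L=L_{p_{22}}$; the coupling then determines the projection $T_{\xi}^{m}p_{12}(u_{0})$ in terms of $p_{22}$ via $\beta|_{W}$-duality. A direct dimension count on the resulting solution locus yields dimension $\dim Z(W)+\dim W$, leading to
\begin{equation*}
\dim Z(V)=\dim C(V)-1-m-\bigl(\dim C(W)-\dim Z(W)\bigr)=m+\dim Z(W)+\dim W.
\end{equation*}
Invoking $\dim Z(W)=\sum_{i}\bigl((4i-1)k_{i}-2l_{i}\bigr)$ for $W=\bigoplus_{i}W_{l_{i}}(k_{i})\subset\Lo(2\sum k_{i})$ (by the Grassmannian-orbit argument of Proposition \ref{prop-c1} adapted to the orthogonal setting, or directly from \cite{X}) together with $\dim W=2\sum_{i}k_{i}$, and substituting $\nu_{0}=m$, $\nu_{i}=k_{i}-l_{i}$, $\mu_{i}=l_{i}$, converts the identity into the claimed formula.

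The main obstacle is justifying the dimension count on the coupling (iii): in characteristic $2$, the additive quadratic form $\alpha\circ p_{22}+\alpha$ is automatically a square, so $p_{22}\notin Z(W)$ can in principle contribute to the solution locus. One must carefully verify — for instance by passing to $\tF_{q}$-points and comparing $|Z(V)(\tF_{q})|$ against the predicted $q^{\dim}$ — that these extra contributions are balanced by compensating loss in the $p_{12}(u_{0})$-direction, so that the total dimension equals $\dim Z(W)+\dim W$ as claimed.
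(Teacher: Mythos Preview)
Your route differs from the paper's. Rather than parametrizing $C(V)$ and cutting out the $\alpha$-condition, the paper argues via a Grassmannian orbit, exactly as in the symplectic case: it views $V_{2m+1}\in Gr(V,2m+1)$, computes $\dim C(V)V_{2m+1}=\dim C(V)-\dim C(V_{2m+1})-\dim C(W)=\dim W$ from the product stabilizer, asserts $\dim Z(V)V_{2m+1}=\dim C(V)V_{2m+1}$, and concludes $\dim Z(V)=\dim Z(V_{2m+1})+\dim Z(W)+\dim Z(V)V_{2m+1}=m+\dim Z(W)+\dim W$. Both routes land on the same intermediate identity before the final substitution.

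Your obstacle in (iii) is genuine and your proposed verification is circular. Rewriting the coupling with $y=T_\xi^{m}p_{12}(u_{0})$ and $\alpha_{y}(w')=\alpha(w')+\beta(y,w')^{2}$, the condition says exactly that $p_{22}$ is an isomorphism of form modules $(W,\alpha,T_\xi)\to(W,\alpha_{y},T_\xi)$; so for fixed $p_{12}(u_0)$ the fiber is either empty or a $Z(W)$-torsor, and your dimension count requires $(W,\alpha_{y},T_\xi)\cong(W,\alpha,T_\xi)$ for all $y\in\mathrm{im}\,T_\xi^{m}$. This is true, but it uses the normal-form constraint $m\ge k_{i}-l_{i}$ in an essential way: one checks that a perturbation $\beta(y,-)^{2}$ with $y\in\mathrm{im}\,T_\xi^{m}$ cannot lower the index function $\chi$, since $\chi_\alpha(j)\ge j-m$ for all $j$. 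Your suggested $\tF_{q}$-point check does not help here, because the paper's later $\tF_{q}$ count (Lemma~\ref{lem-c2}) takes $\dim Z(V)$ as input. The paper's orbit argument trades this difficulty for the bare assertion that the $Z(V)$- and $C(V)$-orbits of $V_{2m+1}$ coincide, which is cleaner to state though also left unproved there.
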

\begin{proof}
Let $V=V_{2m+1}\oplus W_{l_1}(m_1)\oplus\cdots\oplus
W_{l_s}(m_s)=(V,\alpha,\beta_\xi)$. Let
$W=W_{l_1}(m_1)\oplus\cdots\oplus W_{l_s}(m_s)$. We have $\dim
C(W)=\sum_{i=1}^s(4i-1)m_i$ and $\dim C(V_{2m+1})=2m+1$. By Lemma
\ref{lem-cd1}, $\dim C(V)=\dim C(W)+\dim V_{2m+1}+\dim
W=\sum_{i=1}^s(4i-1)m_i+2m+1+2\sum_{i=1}^sm_i$. Consider $V_{2m+1}$
as an element in the Grassmannian variety $Gr(V,2m+1)$. Let
$C(V)V_{2m+1}$ be the orbit of $V_{2m+1}$ under the action of
$C(V)$. The stabilizer of $V_{2m+1}$ in $C(V)$ is the product of
$C(V_{2m+1})$ and $C(W)$. Hence $\dim C(V)V_{2m+1}=\dim C(V)-\dim
C(V_{2m+1})-\dim C(W)=2\sum_{i=1}^sm_i$. We have $\dim
Z(V)V_{2m+1}=\dim C(V)V_{2m+1}$. Hence $\dim Z(V)=\dim
Z(V_{2m+1})+\dim Z(W)+\dim Z(V)V_{2m+1}
=m+\sum_{i=1}^s((4i+1)m_i-2l_i)=\nu_0+\sum_{i=1}^s\nu_i(4i+1)
+\sum_{i=1}^s\mu_i(4i-1)$.
\end{proof}

\begin{lemma}\label{lem-c2}
$|Z(V)(\tF_q)|=2^{k}q^{\dim Z(V)}+$ lower terms, where $k=\#\{i\geq
1|\nu_i<\mu_i\leq\nu_{i-1}\}$.
\end{lemma}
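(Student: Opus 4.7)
My plan is to count $|Z(V)(\tF_q)|$ directly via a product formula analogous to Lemma \ref{lem-cd1}, then identify the leading coefficient as a product of ``splitting'' counts coming from the $V_{2m+1}$--block and the even orthogonal part $W$ separately.

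First, I would adapt the proof of Lemma \ref{lem-cd1} from $C(V)$ to the smaller group $Z(V)$. The additional constraint $\alpha \circ g = \alpha$ forces the scalar $a$ in the formulas $gv_i = av_i$ to satisfy $a^2=1$, hence $a=1$, and turns the conditions on the parameters $a_{ij}$ (arising from $\alpha(gu_i)=0$) into linear conditions over $\tF_q$. Combined with orbit-stabilizer for the $Z(V)$-action on the Grassmannian $Gr(V, 2m+1)$, this gives a factorization
\[
|Z(V)(\tF_q)| = |Z(V_{2m+1})(\tF_q)| \cdot |Z(W)(\tF_q)| \cdot |(Z(V)\cdot V_{2m+1})(\tF_q)|,
\]
where the orbit factor contributes $q^{\dim Z(V)\cdot V_{2m+1}} + $ lower terms and $|Z(V_{2m+1})(\tF_q)| = q^m$ by the lemma already established.

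Next, I would invoke the analogous count for the even orthogonal form module $W = \oplus W_{l_i}^{\epsilon_i}(k_i)$ from \cite{X} (the counterpart of the present Lemma for even orthogonal groups, proved there by the same technique as Proposition \ref{prop-c1}): its leading term is $2^{k_W} q^{\dim Z(W)}$, where $k_W$ counts the indices $i \geq 2$ with $l_{i-1}+l_i \leq k_{i-1}$ and $l_i > (k_i-1)/2$, precisely those positions where Lemma \ref{lem-6} permits a genuine splitting into non-equivalent pairs over $\tF_q$. In the variables $\nu_i = k_i-l_i$, $\mu_i = l_i$, these are exactly the $i \geq 2$ with $\nu_i < \mu_i \leq \nu_{i-1}$.

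The remaining piece is to account for the possible extra factor of $2$ coming from the interaction of $V_{2m+1}$ with the top block $W_{l_1}^{\epsilon_1}(k_1)$. By Lemmas \ref{lem-c1} and \ref{lem-n1}, this splitting occurs precisely when $l_1 \leq m$ and $l_1 > (k_1-1)/2$, i.e., $\mu_1 \leq \nu_0$ and $\nu_1 < \mu_1$, which is exactly the $i=1$ term of the set $\{i \geq 1 \mid \nu_i < \mu_i \leq \nu_{i-1}\}$. Combining with Step~2 yields a total leading coefficient of $2^{k_W+\varepsilon} = 2^k$, and the dimensions add up to $\dim Z(V)$ by the preceding proposition.

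The main obstacle is the combinatorial accounting in the last step: one must verify that the splittings at $i = 1$ (from $V_{2m+1}$ vs.\ the top block) and at $i \geq 2$ (internal to $W$) are independent and account for the entire component group, without overlap or collapse. This is analogous to the argument in Proposition \ref{prop-c1} (and in Proposition \ref{prop-2}, where the $2^k$ bound on splittings of the $\bar\tF_q$-orbit was established via the chain $V_1^{\epsilon_1} \oplus \cdots \oplus V_k^{\epsilon_k} \oplus V_{k+1}$); one uses that decomposition to exhibit, for each of the $2^k$ sign patterns, a distinct rational orbit, whose centralizer gives a corresponding connected component of $Z(V)$. Conversely, the upper bound of $2^k$ on the number of components follows from the $2^k$ bound of Proposition \ref{prop-2}, so the leading coefficient is exactly $2^k$.
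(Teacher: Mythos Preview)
Your split at $V_{2m+1}$ versus $W$ is natural but differs from the paper's, and the difference matters. The paper does not separate $V_{2m+1}$ from $W$; instead it finds the \emph{first} index $t$ with $\nu_t<\mu_t\leq\nu_{t-1}$, sets $V_1=V_{2m+1}\oplus W_{l_1}(m_1)\oplus\cdots\oplus W_{l_{t-1}}(m_{t-1})$ and $W_2$ equal to the remaining blocks, and proves the \emph{exact} identity
\[
|Z(V)(\tF_q)|=|Z(V_1)(\tF_q)|\cdot|Z(W_2)(\tF_q)|\cdot q^{r_1},
\]
with $r_1$ an explicit integer. The key step is that for this particular split one has $C(V)\cdot V_1=Z(V)\cdot V_1$: any $g\in C(V)$ satisfies $g(V_1)\cong V_1$ as full $(\alpha,\beta,\beta_\xi)$-modules, so the $Z(V)$-orbit coincides with the (connected) $C(V)$-orbit and its $\tF_q$-point count is a pure power of $q$. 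Then $V_1$ has $k(V_1)=0$ (base case), and induction together with the even-orthogonal result from \cite{X} for $W_2$ finishes.

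Your factorization, by contrast, has a gap exactly at the point you flag. If the orbit factor $|(Z(V)\cdot V_{2m+1})(\tF_q)|$ really had leading coefficient $1$, the product would give $2^{k_W}$, not $2^k$; the missing $2^\varepsilon$ cannot then be recovered from Lemmas~\ref{lem-c1}--\ref{lem-n1}, which concern isomorphism of modules rather than point counts. In fact when the $i=1$ condition holds, the stabilizer $Z(V_{2m+1})\times Z(W)$ has strictly fewer components than $Z(V)$, so the orbit $Z(V)\cdot V_{2m+1}$ is disconnected and its leading coefficient is $2^\varepsilon$, not $1$; but establishing this is tantamount to already knowing $|Z(V)/Z^0(V)|=2^k$, which is what the lemma is meant to feed into (see the proof of Proposition~\ref{prop-c2}). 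Your fallback argument via ``exhibit $2^k$ distinct rational orbits'' runs into the same circularity: Proposition~\ref{prop-2} only gives $\leq 2^k$, and the equality is Corollary~\ref{coro-2}, proved by the Springer correspondence. The paper's choice of split point is precisely what breaks this circle, by making the orbit count an exact monomial in $q$ at every stage.
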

\begin{proof}
If $\#\{i\geq 1|\nu_i<\mu_i\leq\nu_{i-1}\}$=0, the assertion follows
from the classification of nilpotent orbits. Assume $1\leq t\leq s$
is the minimal integer such that $\nu_t<\mu_t\leq \nu_{t-1}$. Let
$V_1=V_{2m+1}\oplus W_1$ where $W_1=W_{l_1}(m_1)\oplus\cdots\oplus
W_{l_{t-1}}(m_{t-1})$ and $W_2=W_{l_t}(m_t)\oplus\cdots\oplus
W_{l_s}(m_s)$. We show that
\begin{equation}\label{eqn-1}
|Z(V)(\tF_q)|=|Z(V_1)(\tF_q)|\cdot|Z(W_2)(\tF_q)|\cdot q^{r_1},
\end{equation}
where $r_1=\dim W_2+\dim \Hom_A (W_1,W_2)$. We consider $V_1$ as an
element in the Grassmannian variety $Gr(V,\dim V_1)$. We have
\begin{eqnarray}\label{eqn-2}
|C(V)V_1(\tF_q)|&=&\frac{|C(V)(\tF_q)|}{|C(V_1)(\tF_q)|\cdot
|C(W_2)(\tF_q)|}\\
&=&\frac{|C(V_{2m+1})(\tF_q)|\cdot|C(W_1\oplus W_2)(\tF_q)|\cdot
q^{\dim(W_1+W_2)}} {|C(V_{2m+1})(\tF_q)|\cdot|C(W_1)(\tF_q)|\cdot
q^{\dim(W_1)}\cdot|C(W_2)(\tF_q)|}=q^{r_1}\nonumber. \end{eqnarray}
In fact, let $p_{ij},\ i,j=1,2,3$ be the projections of $g\in C(V)$.
Assume $g$ is in the stabilizer of $V_1$ in $C(V)$. Then we have
$p_{13}=p_{23}=0$. It follows from the same argument as in Lemma
\ref{lem-cd1} that $p_{22}$ is nonsingular and
$gv_i=av_i,i=0,\ldots,m,\ \
gu_i=\sum_{j=0}^{m}a_{ij}v_j+u_i/a+T_\xi^ip_{12}(u_0),\
i=0,\ldots,m-1,gw_1=\sum_{i=0}^{m}a\beta(T_\xi^ip_{12}(u_0),p_{22}w_1)v_i+p_{22}(w_1),\
\forall\ w_1\in
W_1,gw_2=\sum_{i=0}^{m}a\beta(T_\xi^ip_{12}(u_0),p_{22}w_2+p_{23}w_2)v_i+p_{22}(w_2)+p_{23}(w_2),\
\forall\ w_2\in W_2. $ Now
$\beta(gw_1,gw_2)=\beta(p_{22}(w_1),p_{22}(w_2)+p_{23}(w_2))=\beta(p_{22}(w_1),p_{22}(w_2))=0$,
for any $w_1\in W_1$ and $w_2\in W_2$. Since $p_{22}$ is nonsingular
and $\beta|_{W_1\times W_1}$ is nondegenerate, we get
$p_{22}(w_2)=0$ for any $w_2\in W_2$. Thus the stabilizer of $V_1$
in $C(V)$ is the product of $C(V_1)$ and $C(W_2)$ and (\ref{eqn-2})
follows.

We have $C(V)(V_1\oplus W_2)\cong C(V)(V_1)\oplus C(V)(W_2)$ implies
$C(V)(V_1)\cong V_1$ and $C(V)(W_2)\cong W_2$. Thus
$|C(V)(V_1)(\tF_q)|=|Z(V)V_1(\tF_q)|=q^{r_1}$. Since the stabilizer
of $V_1$ in $Z(V)$ is the product of $Z(V_1)$ and $Z(W_2)$,
(\ref{eqn-1}) follows. Now the lemma follows by induction hypothesis
since we have $\dim Z(V)=\dim Z(V_1)+\dim Z(W_2)+r_1$.
\end{proof}
\begin{proposition}\label{prop-c2}
The component group $Z(V)/Z^0(V)$ is $(\mathbb{Z}/2\mathbb{Z})^k$,
where $k=\#\{i\geq 1|\nu_i<\mu_i\leq\nu_{i-1}\}$.
\end{proposition}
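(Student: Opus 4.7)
My plan is to follow the strategy of Proposition \ref{prop-c1}: first combine Lemma \ref{lem-c2} with a Lang--Steinberg count to pin down $|Z(V)/Z^0(V)| = 2^k$, and then exhibit an abelian subgroup $(\mathbb{Z}/2\mathbb{Z})^k$ by a block decomposition of the normal form of $V$. If such a subgroup realizes the full upper bound, both the order and the abelian structure follow at once.

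For the order, I would take $q$ to be a power of $2$ large enough that the Frobenius $\mathfrak{F}_q$ acts trivially on the finite group $Z(V)/Z^0(V)$; this is possible because the action factors through a finite quotient. Since $Z^0(V)$ is connected, Lang's theorem gives $H^1(\mathfrak{F}_q,Z^0(V))=0$, so the short exact sequence $1\to Z^0(V)\to Z(V)\to Z(V)/Z^0(V)\to 1$ yields $|Z(V)(\tF_q)| = |Z^0(V)(\tF_q)|\cdot |Z(V)/Z^0(V)|$. Because $|Z^0(V)(\tF_q)|$ is a polynomial in $q$ with leading term $q^{\dim Z(V)}$, comparing leading coefficients with Lemma \ref{lem-c2} forces $|Z(V)/Z^0(V)|=2^k$.

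For the subgroup, let $i_1<\cdots<i_k$ be the indices satisfying $\nu_{i_j}<\mu_{i_j}\leq\nu_{i_j-1}$ and decompose the normal form $V = V_{2m+1}\oplus\bigoplus_{i=1}^s W_{l_i}(m_i)$ as $V^{(1)}\oplus\cdots\oplus V^{(k)}$, where $V^{(1)}$ absorbs $V_{2m+1}$ together with the summands $W_{l_i}(m_i)$ for $i\leq i_1$, the middle block $V^{(j)}$ ($2\leq j\leq k-1$) collects $W_{l_i}(m_i)$ for $i_{j-1}<i\leq i_j$, and $V^{(k)}$ collects all remaining summands through $W_{l_s}(m_s)$. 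Each block $V^{(j)}$ contains exactly one special index, so the order calculation above applied to $V^{(j)}$ alone gives $Z(V^{(j)})/Z^0(V^{(j)})\cong\mathbb{Z}/2\mathbb{Z}$. Choosing $g_j\in Z(V^{(j)})\setminus Z^0(V^{(j)})$ and setting $\tilde g_j = \mathrm{Id}\oplus\cdots\oplus g_j\oplus\cdots\oplus\mathrm{Id}$ produces $k$ elements of $Z(V)$ that commute pairwise because they act on disjoint blocks.

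The hardest step will be showing that the $2^k$ products $\prod_{j\in S}\tilde g_j$ for $S\subseteq\{1,\ldots,k\}$ are pairwise distinct modulo $Z^0(V)$. I would argue in the spirit of Proposition \ref{prop-c1}: if a nontrivial such product lay in $Z^0(V)$, one could deform the identity through $Z^0(V)$ to an element mixing two of the blocks $V^{(j)}$ while preserving the pair $(\alpha,\beta_\xi)$, which would exhibit an equivalence between a form module of type $W_{l_{i_j}}^0(m_{i_j})$ and one of type $W_{l_{i_j}}^\delta(m_{i_j})$ over $\tF_q$; the splitting condition $\mu_{i_j}>\nu_{i_j}$ (i.e.\ $l_{i_j}>m_{i_j}-l_{i_j}$ together with the inequalities between neighboring blocks) puts us exactly in the regime of Lemma \ref{lem-n1}, giving a contradiction. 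Once distinctness is established, the exhibited subgroup has order $2^k$ and must coincide with all of $Z(V)/Z^0(V)$ by the Lang--Steinberg count, which completes the proof.
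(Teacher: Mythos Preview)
Your proposal follows the paper's strategy closely: Lang--Steinberg together with Lemma~\ref{lem-c2} for the order, a block decomposition of the normal form for the subgroup, and then matching the two. The paper's cut points differ slightly from yours---it separates off a block $V_0=V_{2m+1}\oplus\bigoplus_{i<i_1}W_{l_i}(m_i)$ with trivial component group and lets each $W_j$ \emph{start} at a special index $i_j$, whereas you let each block \emph{end} at $i_j$ and absorb $V_{2m+1}$ into $V^{(1)}$. Both choices work, but note that for your blocks $V^{(j)}$ with $j\geq 2$ the ``order calculation above'' is not Lemma~\ref{lem-c2} itself (which needs a $V_{2m+1}$ summand) but its even-orthogonal analogue from \cite{X}; the paper has the same implicit dependence when it asserts $Z(W_j)/Z^0(W_j)=\mathbb{Z}/2\mathbb{Z}$. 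The paper also short-circuits one step by invoking the $\tF_q$-orbit classification to conclude ``abelian of order $2^k$'' up front, whereas you deduce abelianness only after exhibiting the full subgroup; either route is fine.

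The one place your sketch goes astray is the distinctness paragraph. The products $\prod_{j\in S}\tilde g_j$ are block-diagonal, so ``deforming the identity through $Z^0(V)$ to an element mixing two of the blocks'' is not what would occur, and Lemma~\ref{lem-n1} does not enter in the way you describe. The argument the paper leaves implicit is the Galois-cohomological one: for suitably chosen $\tF_q$-rational $g_j$, twisting by $\prod_{j\in S}\tilde g_j$ carries the $\tF_q$-form with all $\epsilon_i=0$ to the one with $\epsilon_j=\delta$ exactly for $j\in S$; since the classification (Proposition~\ref{prop-2} together with Corollary~\ref{coro-2}) shows these $2^k$ forms are pairwise inequivalent over $\tF_q$, the corresponding classes in $H^1(\mathfrak F_q,Z(V))\cong Z(V)/Z^0(V)$ are distinct. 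That is the role the non-equivalence lemmas actually play. The paper simply asserts distinctness without proof, so your attempt to justify it is more ambitious than the original---you just need to reroute it through the cocycle picture rather than a deformation argument.
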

\begin{proof}
Lemma \ref{lem-c2} and the classification of nilpotent orbits in
$\Lg(\tF_q)^*$($q$ large) show that $Z(V)/Z^0(V)$ is an abelian
group of order $2^k$. It is enough to show that there exists a
subgroup $(\mathbb{Z}/2\mathbb{Z})^k\subset Z(V)/Z^0(V)$. Assume
$V=V_{2m+1}\oplus W_{l_1}^{\epsilon_1}(m_1)\oplus\cdots\oplus
W_{l_s}^{\epsilon_s}(m_s)$. Let $i_1<i_2<\cdots<i_k$ be the $i$'s
such that $\nu_i<\mu_i\leq \nu_{i-1}$. Let $V_0=V_{2m+1}\oplus
W_{l_1}^{\epsilon_1}(m_1)\oplus\cdots\oplus
W_{l_{i_1-1}}^{\epsilon_{i_1-1}}(m_{i_1-1})$ and
$W_j=W_{l_{i_j}}^{\epsilon_{i_j}}(m_{i_j})\oplus\cdots\oplus
W_{l_{i_{j+1}-1}}^{\epsilon_{i_{j+1}-1}}(m_{i_{j+1}-1})$,
$j=1,\ldots,k$, where $i_{k+1}=s+1$. We have $Z(V_0)/Z^0(V_0)=\{1\}$
and $Z(W_j)/Z^0(W_j)=\mathbb{Z}/2\mathbb{Z}$, $j=1,\ldots,k$. Take
$g_j\in Z(W_j)$ such that $g_jZ^0(W_j)$ generates $Z(W_j)/Z^0(W_j)$.
Let $\tilde{g}_j=Id\oplus\cdots\oplus g_j\oplus\cdots\oplus Id$,
$j=1,\ldots,k$. Then $\tilde{g}_j\in Z(V)$, $\tilde{g}_j\notin
Z^0(V)$, $\tilde{g}_j^2\in Z^0(V)$ and
$\tilde{g}_{j_1}\tilde{g}_{j_2}\cdots\tilde{g}_{j_r}\notin Z^0(V)$
for any $1\leq j_1<j_2<\cdots<j_r\leq s$, $r=1,\ldots,k$. Thus
$\tilde{g_j}Z^0(W_j)$, $j=1,\ldots,k$ generate a subgroup
$\mathbb{Z}/2\mathbb{Z}^k$.
\end{proof}
\vskip 10pt {\noindent\bf\large Acknowledgement} \vskip 5pt I would
like to thank Professor George Lusztig for his guidance,
encouragement and many helpful discussions. I am also very grateful
to the referee for many valuable suggestions and comments.

\end{document}